\def\div{ \hbox{\rm div}\,  }
\newtheorem{theorem}{Theorem}[section]
\newtheorem{lemma}{Lemma}[section]
\newtheorem{cor}{Corollary}[section]
\newtheorem{remark}{Remark}[section]
\def\bma#1\ema{{\allowdisplaybreaks\begin{aligned}#1\end{aligned}}}
\def\var{\varepsilon}
\date{}
\numberwithin{equation}{section}
\begin{document}
\title{{\LARGE \textbf{The hyperbolic-parabolic chemotaxis system for vasculogenesis: global dynamics and relaxation limit toward a Keller-Segel model}}}

\author[a]{Timothée Crin-Barat \thanks{
E-mail: timotheecrinbarat@gmail.com (T. Crin-Barat).}}

\author[b,c]{Qingyou He  \thanks{
E-mail:		qyhe.cnu.math@qq.com (Q. He).}}

\author[b,c]{Ling-Yun Shou \thanks{E-mail: shoulingyun11@gmail.com (L.-Y Shou).
\\The authors: Timothée Crin-Barat, Qingyou He and Ling-Yun Shou claim that the work is not currently submitted to other journals and that it will not be submitted to other journals during the reviewing process.}}

\affil[a]{Chair of Computational Mathematics, Fundación Deusto, Avenida de las Universidades 24, 48007 Bilbao, Basque Country, Spain}
    \affil[b]{School of Mathematical Sciences,
	Capital Normal University, Beijing 100048, P.R. China}
\affil[c]{Academy for Multidisciplinary Studies, Capital Normal University, Beijing 100048, P.R. China}

\renewcommand*{\Affilfont}{\small\it}
\maketitle
\begin{abstract}
 An Euler-type hyperbolic-parabolic system of chemotactic aggregation describing the vascular network formation is investigated in the critical regularity setting. For initial data near a constant equilibrium state, the global well-posedness of the classical solution to the Cauchy problem with general pressure laws is proved in critical hybrid Besov spaces, and qualitative regularity estimates uniform with respect to the relaxation parameter are established. Then, the optimal time-decay rates of the global solution are analyzed under an additional regularity assumption on the initial data.
Furthermore, the relaxation limit of the hyperbolic-parabolic system toward a parabolic–elliptic Keller-Segel model is justified rigorously. It is shown that as the relaxation parameter tends to zero, the solutions of the hyperbolic-parabolic chemotaxis system converges to the solutions of the Keller-Segel model at an explicit rate of convergence. Our approach relies on the introduction of new effective unknowns in low frequencies and the construction of a Lyapunov functional in the spirit of that of Beauchard and Zuazua's in \cite{bea1} to treat the high frequencies. 
\end{abstract}
\noindent{\textbf{Keywords:}  Hyperbolic-parabolic chemotaxis, vascular network, Keller-Segel, critical regularity,  global well-posedness, optimal time-decay rate, relaxation limit}

\section{Introduction}

 Chemotaxis is a phenomenon describing the influence of environmental chemical substances on the motion of  various cells. It plays an important role in many biological process, such as cells aggregations (amoebae, bacteria, etc), embryonic development, cancer growth, vascular networks and so on, cf. \cite{murray1,murray2,li01,hel1}. Vasculogenesis, the process of new blood vessels formation led by chemotaxis, has complex and varied mechanisms as shown recently in numerous experimental investigations. Understanding the formation of blood vessels in organisms is a long-standing problem in the life sciences, cf. \cite{amb1,amb2,ch0,gamba1,hel1}.

In this paper, we consider the following $d$-dimensional  ($d\geq1$) hyperbolic-parabolic chemotaxis (HPC) system modelling vasculogenesis:
\begin{equation}\label{m1}\tag{HPC}
\left\{
\begin{aligned}
& \partial_{t}\rho+\div (\rho u)=0,\\
& \partial_{t}(\rho u)+\div (\rho u\otimes u)+\nabla P(\rho)+\frac{1}{\var} \rho u-\mu\rho \nabla \phi=0,\\
&\partial_{t}\phi-\Delta \phi-a\rho+b\phi=0,\quad\quad\quad \quad x\in\mathbb{R}^{d},\quad t>0,\\
\end{aligned}
\right.
\end{equation}
where $\rho=\rho(x,t)\geq0$ and $u=u(x,t)\in\mathbb{R}^d$ are, respectively, the cell density and the cell velocity, and $\phi=\phi(x,t)\geq0$ stands for the
concentration of the chemoattractant secreted by cells. Furthermore, the constant $\mu>0$ measures the intensity of cell response to the chemoattractant, $\displaystyle\frac{1}{\var}\rho u$ is a damping (friction) term due to the interaction between cells and the underlying substratum with the relaxation parameter $\var>0$, $a>0$ and $b>0$ are two constants denoting the growth and death rates of the chemoattractant, respectively, and the pressure $P(\rho)$ is assumed to be a smooth function depending only on the density.

System \eqref{m1} was recently developed by Gamba et.al. in\cite{gamba1} and by Ambrosi, Bussolino and Preziosi in \cite{amb1} to describe blood vessel networks from in vitro observations and has also been formally derived by Chavanis and Sire in \cite{ch1} from a nonlinear mean-field Fokker-Planck equation.

Without the effect of chemotaxis, System \eqref{m1} becomes the isentropic damped compressible Euler equations
\begin{equation}\label{euler}\tag{DCE}
\left\{
\begin{aligned}
& \partial_{t}\rho+\div (\rho u)=0,\\
& \partial_{t}(\rho u)+\div (\rho u\otimes u)+\nabla P(\rho)+\frac{1}{\var} \rho u=0,\quad\quad x\in\mathbb{R}^{d},\quad t>0.
\end{aligned}
\right.
\end{equation}
Recently, many advances have been made in the 
analysis of global solutions to System \eqref{euler}. For small initial data in the Sobolev spaces $H^{s}$ ($s>\frac{d}{2}+1$), the global well-posedness and time-asymptotical behaviors of classical solutions for \eqref{euler} have been studied in many significant works \cite{daf1, nas1,si1,wang1}. In particular, it was shown in \cite{si1,wang1} that the damping term $\rho u$ can prevent the formation of shock waves that would occurs in finite time without this term. Then, using the theory of partially dissipative hyperbolic system, Xu and Kawashima  in \cite{xu1} showed that System~(\ref{euler}) is globally well-posed for small initial erturbations in the inhomogeneous Besov space $B^{\frac{d}{2}+1}_{2,1}$. Very recently, inspired by the works \cite{hoff1,haspot1} of Hoff and Haspot for the compressible Navier-Stokes equations, the first author and Danchin in \cite{c1,c2} considered a new effective velocity (also called damped mode) in low frequencies to recover new and useful properties on the solution. Then, motivated by Beauchard and Zuazua's \textit{hyperbolic hypocoercivity} approach \cite{bea1}, they constructed a Lyapunov functional including lower-order terms to handle the high frequencies. These two elements allowed them to establish the global well-posedness and optimal time-decay rates of global solutions to (\ref{euler}) for small initial perturbations in the critical homogeneous Besov space $\dot{B}^{\frac{d}{2}}_{2,1}\cap\dot{B}^{\frac{d}{2}+1}_{2,1}$. Moreover, in  \cite{c3}, they established the global well-posedness in $L^p$ type hybrid space $\dot{B}^{\frac{d}{p},\frac{d}{2}+1}_{p,2}$ for some $2 \leq p \leq 4$. In addition, many important results  have been shown concerning the global dynamics of more general hyperbolic systems or hyperbolic-parabolic systems, refer to \cite{c2,c3,BH,liu1,kaw1,shi1,xu1,xu2,yong1} and references therein. To have a better understanding of this sequence of improvements, one must keep in mind the following embeddings:
$$
H^{s} (s>\frac{d}{2}+1)\hookrightarrow B^{\frac{d}{2}+1}_{2,1}\hookrightarrow \dot{B}^{\frac{d}{2}}_{2,1}\cap \dot{B}^{\frac{d}{2}+1}_{2,1}\hookrightarrow \dot{B}^{\frac{d}{p},\frac{d}{2}+1}_{p,2} (p>2)\hookrightarrow \mathcal{C}^1_b.
$$

It is in light of these recent advances that we study System \eqref{m1} here. First, we show the global well-posedness and optimal decay rates of solutions to System (\ref{m1}) close to constant equilibrium states in $\dot{B}^{\frac{d}{2}}_{2,1}\cap \dot{B}^{\frac{d}{2}+1}_{2,1}$. Our study is in line with the work \cite{d0} of Danchin concerning the study of the compressible Navier-Stokes system in critical spaces. It is motivated by the fact that this space appears to be the largest space in which one is able to justify the existence of unique global-in-time solution for systems with Euler-like structures. Indeed, it is well-known that a Lipschitz ($\dot{W}^{1,\infty}$)-bound on the solution, which can be obtained  via the end-point embedding $ \dot{B}^{\frac{d}{2}+1}_{2,1}\hookrightarrow \dot{W}^{1,\infty}$, allows to avoid the finite time blow-up for the compressible Euler equations and other hyperbolic models (cf. \cite{daf1,wang1,xu1,BCBT}). We also mention the work \cite{li010} where ill-posedness results for the Burgers equation are obtained for initial data belonging to $H^{s}$ with $s<d/2+1$. 
\medbreak
Next, we focus on the relaxation limit of the System (\ref{m1}). Let us recall some important works on the relaxation limit or large-time asymptotics of profiles for System~\eqref{euler}. Considering the diffusive rescaling $\displaystyle (\rho^{\var},u^{\var})(x,\tau):=(\rho,\frac{u}{\var})(x,t)$ for $\tau=\var t$, System \eqref{euler} becomes
\begin{equation}\label{DCEvar}\tag{${\rm{DCE}}_{\var}$}
\left\{
\begin{aligned}
&\partial_{t}\rho^{\var}+\div (\rho^{\var} u^{\var})=0,\\
&\var^2 \partial_{t}(\rho^{\var}u^{\var})+\var^2\div (\rho^{\var}u^{\var}\otimes u^{\var})+\nabla P(\rho^{\var})+\rho^{\var}u^{\var}=0.
\end{aligned}
\right.
\end{equation}
As $\var\rightarrow0$, the dynamics of System (\ref{DCEvar}) is formally governed by the porous media model
 \begin{equation}\label{pm}\tag{PM}
 \left\{
 \begin{aligned}
 &\partial_t\rho^{*}-\Delta P(\rho^{*})=0,\\
 & \rho^{*}u^{*}=-\nabla P(\rho^{*})\quad\quad ({\rm Darcy's~law}).
 \end{aligned}
 \right.
 \end{equation}
The rigorous derivation of the relaxation limit of (\ref{euler}) to (\ref{pm}) was studied in \cite{mar0,cou1,mar1,junca1,xu00}, and the time-asymptotical convergence of global solutions with vacuum for the system~(\ref{euler}) to the Barenblatt profile of (\ref{pm}) was made in \cite{huang1,luo1}. In \cite{c3}, the authors justified rigorously the relaxation limit of (\ref{euler}) and also derived an explicit convergence rate of the process in the multi-dimensional setting. The key point of their study is a spectral analysis of the system which reveals that the suitable threshold between the low and high frequencies to study the system should be $J_\varepsilon=\left\lfloor\rm -log_2\,\varepsilon\right\rfloor+k$ with $k$ a small enough constant. This threshold corresponds to the place where the 0-order terms and the 1-order terms have the same weight (parameter included) and allow them to recover suitable uniform bounds for such relaxation limit. It is this approach that we are going to adapt to our system here.

   A natural and important issue is to investigate the influences of the chemoattractant concentration $\phi$ secreted by cells on the well-posedness and the qualitative
behavior of solutions to System \eqref{m1}.  From the mathematical viewpoint, the main difficulties about this system are that it does not satisfy the usual symmetric conditions of the classical hyperbolic-parabolic systems nor the Shizuta-Kawashima (SK) condition, see \cite{liu1,kaw1,shi1,yong1}. Therefore, even though it seems to have "enough dissipation", the classical methods for partially dissipative systems can not be applied to System (\ref{m1}). These difficulties arise mainly from the term $\mu\rho\nabla\phi$ which forces us to rethink how to treat the low and high-frequency regimes. 
\medbreak

There is an extensive literature on the study of System \eqref{m1} \cite{fil1,nat1,nat2,russo1,russo2,ber1,hong1,kow1,liuq1,liuq2}.
Among them, the numerical simulations of System \eqref{m1} can be found in \cite{fil1,nat1,nat2,russo1}. For the one-dimensional case, the existence and nonlinear time-stability of steady states of (\ref{m1}) were established in \cite{ber1,hong1}, and the convergence to nonlinear diffusion waves of (\ref{m1}) in $\mathbb{R}$ was proved in \cite{liuq1}. As for the
multi-dimensional case, the linear stability of solutions of (\ref{m1}) was obtained  in \cite{kow1} under the assumption $P'(\bar{\rho})>\dfrac{a\mu}{b}\bar{\rho}$ with an additional viscous term $\Delta u$ added in ${\rm(\ref{m1})_{2}}$. The existence and time-decay estimates of global solutions of (\ref{m1}) around a constant equilibrium state $(\bar{\rho},0,\dfrac{a}{b}\bar{\rho})$ in the Sobolev spaces $H^{s}$ $(s>\frac{d}{2}+1)$ have been proved by Russo and Sepe in \cite{russo1,russo2} under a smallness assumption on the background density $\bar{\rho}>0$. Very recently, the convergence to linear diffusion waves \eqref{m1} with Sobolev regularity in three dimensions was derived by Liu, Peng and Wang in \cite{liuq2}.



\medbreak
Concerning the justification of the relaxation limit (or large friction limit) of System (\ref{m1}) as $\var\rightarrow0$, again we introduce the rescaled time variable $\tau=\var t$ and define
\begin{align}
(\rho^{\var},u^{\var},\phi^{\var})(x,\tau):=(\rho,\frac{u}{\var},\phi)(x,t)\label{scaling1}.
\end{align}
Then $(\rho^{\var},u^{\var},\phi^{\var})$ satisfies
\begin{equation}\label{HPCvar}\tag{${\rm{HPC}}_{\var}$}
\left\{
\begin{aligned}
&\partial_{t}\rho^{\var}+\div (\rho^{\var} u^{\var})=0,\\
&\var^2 \partial_{t}(\rho^{\var}u^{\var})+\var^2\div (\rho^{\var}u^{\var}\otimes u^{\var})+\nabla P(\rho^{\var})-\mu\rho^{\var}\nabla \phi^{\var}+\rho^{\var}u^{\var}=0,\\
&\var \partial_{t}\phi^{\var}-\Delta \phi^{\var}-a\rho^{\var}+b\phi^{\var}=0.
\end{aligned}
\right.
\end{equation}
 At the formal level, as $\var\rightarrow0$, the solution $(\rho^{\var},u^{\var},\phi^{\var})$ of System (\ref{HPCvar}) converges to the limit $(\rho^{*},u^{*},\phi^{*})$ solving the parabolic-elliptic Keller-Segel type model
\begin{equation}\label{KS}\tag{KS}
\left\{
\begin{aligned}
&\partial_{t}\rho^{*}-\div (\nabla P(\rho^*)-\mu\rho^*\nabla\phi^*)=0,\\
&\rho^{*}u^{*}=-\nabla P(\rho^{*})+\mu\rho^{*} \nabla \phi^{*},\\
&-\Delta \phi^{*}-a\rho^{*}+b\phi^{*}=0.
\end{aligned}
\right.
\end{equation}
Keller-Segel type models are widely used in biology to describe the collective motion of cells or the evolution of a density of bacteria in models involving chemotaxis, see \cite{aru1,keller1,perthame1,murray1} and references therein.
 
The momentum formula $({\rm\ref{KS}})_{2}$ is analogous to the  Darcy's law $({\rm\ref{pm}})_{2}$. To our knowledge, the only result about the relaxation limit from (\ref{m1}) to (\ref{KS}) is due to Francesco and Donatelli in \cite{di1} where they justified the local-in-time convergence of solutions for (\ref{m1}) to (\ref{KS}) in a two-dimensional periodic domain. Let us also mention the recent work by Lattanzio and Tzavaras \cite{la1} in which they justified the relaxation limit from the damped Euler-Poisson equations (i.e. without $\partial_{t}\phi$ in $\eqref{m1}_3$) to (\ref{KS}).

\medbreak
 The purpose of the present paper is to study the Cauchy problem for System~(\ref{m1}) in a critical regularity framework. First, under the natural stability assumption $P'(\bar{\rho})>\dfrac{a\mu}{b}\bar{\rho}$, we prove the global existence and uniqueness of classical solutions around the constant equilibrium state $(\bar{\rho},0,\dfrac{a}{b}\bar{\rho})$ in $L^{2}$-type hybrid Besov spaces with different regularity exponents
in low and high frequencies. Then, we derive the optimal time-decay rates of global solutions under an additional regularity assumption (weaker than $L^1$) on the initial perturbation.
And afterward, we provide a rigorous justification of the strong relaxation limit from (\ref{HPCvar}) to (\ref{KS}) and exhibit an explicit convergence rate of the process.

\vspace{2ex}

The rest of the paper is organized as follows. In Section \ref{sectionmain}, we state the main results and explain the strategies of this paper. In Section \ref{section3}, we establish a-priori estimates and prove
Theorem \ref{theorem11} concerning the global well-posedness for System \eqref{m1}. In Section \ref{section5}, we prove Theorem \ref{theorem14} concerning the optimal time-decay estimates. Section \ref{section4} is devoted to the proof of Theorems \ref{theorem12} and \ref{theorem13} about the global well-posedness for the Keller-Segel model and the relaxation limit process. Section \ref{section6} is an appendix regarding hybrid Besov spaces and tools that are used throughout the paper.

\section{Main results and strategy of proof}\label{sectionmain}

\subsection{Main results}

In this paper, the solution $(\rho,u,\phi)$ to System (\ref{m1}) is obtained as a small perturbation of the equilibrium state
\begin{equation}
\begin{aligned}
(\bar{\rho},0,\bar{\phi}),\label{equ}
\end{aligned}
\end{equation}
with the constant background density $\bar{\rho}>0$ and the constant background concentration $\bar{\phi}:=\dfrac{a}{b}\bar{\rho}>0$. We assume that the pressure law $P(\rho)$ satisfies
\begin{equation}\nonumber P'(\rho)>0\ \text{ for }\ \rho\ \text{ close to }\  \bar{\rho}.\end{equation}
Then, introducing the variables 
\begin{equation}\nonumber
\begin{aligned}
n:=\int_{\bar{\rho}}^{\rho} \frac{P'(s)}{s}ds,\quad\quad  \psi:=\phi-\bar{\phi},
\end{aligned}
\end{equation}
 System (\ref{m1}) can be reformulated as
\begin{equation}\label{m1n}
\left\{
\begin{aligned}
&\partial_{t}n+u\cdot\nabla n +c_{0}\div u+G(n)\div u=0,\\
&\partial_{t}u+u\cdot \nabla u+\frac{1}{\var} u+\nabla n-\mu\nabla \psi=0,\\
&\partial_{t}\psi-\Delta \psi+b\psi-c_{1}n- H(n)=0,\quad \quad x\in\mathbb{R}^{d},\quad t>0,
\end{aligned}
\right.
\end{equation}
where the constants $c_{i}$ $(i=1,2),G(n)$ and $H(n)$ are defined by
\begin{align}
&c_{0}:=P'(\bar{\rho}),\quad\quad c_{1}:=a\partial_{n}\rho|_{n=0}=\frac{a\rho}{P'(\rho)}\Big{|}_{n=0}=\frac{a\bar{\rho}}{P'(\bar{\rho})},\nonumber
\end{align}
and 
\begin{align}
G(n):=P'(\rho)-P'(\bar{\rho}),\quad\quad H(n):=a\big{(} \rho-\bar{\rho}- \frac{\bar{\rho}}{P'(\bar{\rho})}n \big{ )} .\nonumber
\end{align}

First, we state our result concerning the uniform-in-$\varepsilon$ global well-posedness to the Cauchy problem for System (\ref{m1n}) in hybrid Besov spaces. For this purpose, we denote the total energy
\begin{equation}\label{Xdef}
\begin{aligned}
&\mathcal{X}(t):=\mathcal{X}_{L}(t)+\mathcal{X}_{H}(t),
\end{aligned}
\end{equation}
with the low-frequency energy 
\begin{align*}
&\mathcal{X}_{L}(t):=\|(n,u,\psi)\|_{\widetilde{L}^{\infty}_{t}(\dot{B}^{\frac{d}{2}}_{2,1})}^{\ell}+\var\|(n,\psi)\|_{L^1_{t}(\dot{B}^{\frac{d}{2}+2}_{2,1})}^{\ell}\\
&\quad\quad\quad~+\|u\|_{L^1_{t}(\dot{B}^{\frac{d}{2}+1}_{2,1})}^{\ell}+\var^{-\frac{1}{2}}\|u\|_{\widetilde{L}^{2}_{t}(\dot{B}^{\frac{d}{2}}_{2,1})}^{\ell}+\|\partial_{t}\psi\|_{L^1_{t}(\dot{B}^{\frac{d}{2}}_{2,1})}^{\ell}\nonumber\\
&\quad\quad\quad~+\|\frac{1}{\var} u+\nabla n-\mu \nabla\psi\|_{L^1_{t}(\dot{B}^{\frac{d}{2}}_{2,1})}^{\ell}+\|\psi -(b-\Delta)^{-1}(c_{1}n+H(n) )\|_{L^1_{t}(\dot{B}^{\frac{d}{2}}_{2,1}\cap\dot{B}^{\frac{d}{2}+2}_{2,1})}^{\ell},
\end{align*}
and the high-frequency energy
\begin{align*}
&\mathcal{X}_{H}(t):=\var\|(n,u,\nabla\psi)\|_{\widetilde{L}^{\infty}_{t}(\dot{B}^{\frac{d}{2}+1}_{2,1})}^{h}+ \|(n,u)\|_{L^{1}_{t}(\dot{B}^{\frac{d}{2}+1}_{2,1})}^{h}+\|\psi\|_{L^{1}_{t}(\dot{B}^{\frac{d}{2}+3}_{2,1})}^{h}\\
&\quad\quad\quad~+\var^{-\frac{1}{2}}\|u\|_{\widetilde{L}^{2}_{t}(\dot{B}^{\frac{d}{2}}_{2,1})}^{h}+\|\partial_{t}\psi\|_{L^1_{t}(\dot{B}^{\frac{d}{2}+1}_{2,1})}^{h}
\end{align*}
where the notations related to the Besov functional framework are defined in Subsection \ref{sec:func-spaces}.
\begin{theorem} \label{theorem11} {\rm(}Global well-posedness for the HPC system{\rm)}
Let $d\geq1$, $0<\var<1$,
\begin{equation}\label{ABmu}
\begin{aligned}
P'(\bar{\rho})>\frac{a\mu}{b}\bar{\rho}>0
\end{aligned}
\end{equation}
\vspace{-0.3cm}
and \vspace{-0.3cm}
\begin{equation} \label{J0}
    \begin{aligned}
J_{\var}=[-\log_2{\var}]+k
\end{aligned}
\end{equation}
for a sufficiently small integer $k$ independent of $\var$. There exists a positive constant $\alpha$ independent of $\var$ such that if $(n_{0},u_{0})\in\dot{B}^{\frac{d}{2}}_{2,1}\cap \dot{B}^{\frac{d}{2}+1}_{2,1}$, $\psi_{0}\in \dot{B}^{\frac{d}{2}}_{2,1}\cap \dot{B}^{\frac{d}{2}+2}_{2,1}$ and
\begin{equation}\label{a1}
\begin{aligned}
\mathcal{X}_{0}:=\|(n_{0},u_{0},\psi_{0})\|_{\dot{B}^{\frac{d}{2}}_{2,1}}^{\ell}+\var\|(n_{0},u_{0},\nabla\psi_{0})\|_{\dot{B}^{\frac{d}{2}+1}_{2,1}}^{h} \leq \alpha,
\end{aligned}
\end{equation}
then System {\rm(\ref{m1n})} supplemented with the initial data $(n_{0},u_{0},\psi_{0})$ admits a unique global-in-time solution $(n,u,\psi)$ satisfying
\begin{equation}\label{r1}
\left\{
\begin{aligned}
&n^{\ell}\in \mathcal{C}_{b}(\mathbb{R}^{+};\dot{B}^{\frac{d}{2}}_{2,1})\cap L^1(\mathbb{R}^{+};\dot{B}^{\frac{d}{2}+2}_{2,1}),\quad\quad n^{h}\in \mathcal{C}_{b}(\mathbb{R}^{+};\dot{B}^{\frac{d}{2}+1}_{2,1})\cap L^1(\mathbb{R}^{+};\dot{B}^{\frac{d}{2}+1}_{2,1}),\\
&u^{\ell}\in \mathcal{C}_{b}(\mathbb{R}^{+};\dot{B}^{\frac{d}{2}}_{2,1})\cap L^1(\mathbb{R}^{+};\dot{B}^{\frac{d}{2}+1}_{2,1}),\quad\quad u^{h}\in \mathcal{C}_{b}(\mathbb{R}^{+};\dot{B}^{\frac{d}{2}+1}_{2,1})\cap L^1(\mathbb{R}^{+};\dot{B}^{\frac{d}{2}+1}_{2,1}),\\
&\psi^{\ell}\in \mathcal{C}_{b}(\mathbb{R}^{+};\dot{B}^{\frac{d}{2}}_{2,1})\cap L^1(\mathbb{R}^{+};\dot{B}^{\frac{d}{2}+2}_{2,1}),\quad \quad\psi^{h}\in \mathcal{C}_{b}(\mathbb{R}^{+};\dot{B}^{\frac{d}{2}+2}_{2,1})\cap L^1(\mathbb{R}^{+};\dot{B}^{\frac{d}{2}+3}_{2,1}),\\
\end{aligned}
\right.
\end{equation}
and
\begin{equation}\label{XX0}
\begin{aligned}
\mathcal{X}(t)\leq C\mathcal{X}_{0},\quad\text{for }  t>0,
\end{aligned}
\end{equation}
where the constant $C>0$  is independent of the time $t$ and $\var$.

\end{theorem}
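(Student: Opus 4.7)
The plan is a standard bootstrap: I would first establish local-in-time existence and uniqueness in the spaces prescribed by \eqref{r1} via a Friedrichs-type iteration together with standard paradifferential transport and parabolic estimates, and a blow-up criterion stating that the solution extends as long as $\mathcal{X}(t)$ remains finite. Then I would derive an a-priori bound of the form $\mathcal{X}(t) \leq C_{0}\mathcal{X}_{0} + C_{1}\mathcal{X}(t)^{2}$, uniform in $\var\in(0,1)$, which, combined with \eqref{a1} and a continuity argument, yields \eqref{XX0} and global existence. The entire difficulty lies in deriving this a-priori bound, and this must be done separately on low ($j\leq J_\var$) and high ($j\geq J_\var$) frequencies, with the threshold $J_\var=[-\log_2\var]+k$ corresponding exactly to the Fourier regime $|\xi|\sim\var^{-1}$ where the $\var^{-1}$ friction and the first-order pressure and chemotactic terms balance.

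For the low-frequency regime I would follow the effective-unknowns strategy of \cite{c1,c2,c3}, adapted to the additional coupling with $\psi$. Two effective unknowns appear naturally in $\mathcal{X}_L$: the damped mode
$$W := \frac{1}{\var} u + \nabla n - \mu\nabla\psi,$$
which satisfies a damped evolution whose source terms are quadratic or already controlled by other pieces of $\mathcal{X}_L$, and the quasi-static remainder
$$\Psi := \psi - (b-\Delta)^{-1}\bigl(c_{1}n + H(n)\bigr),$$
which measures the deviation of $\psi$ from its elliptic profile driven by $n$. Substituting the elliptic relation into the momentum equation yields, at leading order in low frequencies, an effective pressure gradient of symbol $\bigl(c_{0} - \mu c_{1}/b\bigr)\nabla n = \bigl(P'(\bar\rho) - \tfrac{a\mu\bar\rho}{b}\bigr)\nabla n$, positive exactly under the stability assumption \eqref{ABmu}. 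Combining parabolic estimates for $\Psi$, transport estimates on $n$, and the damped-mode equation for $W$, one recovers the $\var$-weighted $\dot B^{d/2+2}_{2,1}$ parabolic smoothing on $(n,\psi)$ and the integrability $u\in L^1_t\dot B^{d/2+1}_{2,1}$, thus closing the low-frequency norm.

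For the high-frequency regime I would implement a Beauchard--Zuazua type hypocoercivity argument \cite{bea1}. Localizing at each dyadic block $\dot\Delta_j$ with $j\geq J_\var$, I would build a Lyapunov functional
$$
\mathcal{L}_{j}^{2} := \|(n_{j},u_{j},\nabla\psi_{j})\|_{L^{2}}^{2} + \var\eta_{1}\langle u_{j},\nabla n_{j}\rangle - \var\eta_{2}\mu\langle u_{j},\nabla\psi_{j}\rangle,
$$
with small parameters $\eta_{1},\eta_{2}>0$ to be tuned. Differentiating along the linearized system, the friction contributes $\var^{-1}|u_{j}|^{2}$ and the parabolic term contributes $|\nabla^{2}\psi_{j}|^{2}+b|\nabla\psi_{j}|^{2}$, while the cross-terms, after integration by parts, produce a quadratic form of the type $c_{0}|\nabla n_{j}|^{2} - \mu\,|\nabla n_{j}|\,|\nabla\psi_{j}| + \cdots$ in $(\nabla n_{j},\nabla\psi_{j})$ that is positive definite exactly under \eqref{ABmu}. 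The constraint $j\geq J_\var$ is what guarantees that the dangerous $\var^{-1}|u_{j}|^{2}$ term controls the cross-terms of size $\var|u_j||\nabla n_j|$, so that $\mathcal{L}_{j}$ is equivalent to $\|(n_{j},u_{j},\nabla\psi_{j})\|_{L^{2}}^{2}$ uniformly in $j$ and $\var$; summation with weight $2^{2(d/2+1)j}$ reproduces $\mathcal{X}_H$.

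The nonlinear terms $u\cdot\nabla n$, $u\cdot\nabla u$, $G(n)\,\mathrm{div}\,u$, $H(n)$ and $(b-\Delta)^{-1}H(n)$ are handled by product and composition laws in hybrid Besov spaces (from Section \ref{section6}), each contributing an $\mathcal{X}(t)^{2}$ remainder that closes the bootstrap. The main obstacle is the high-frequency Lyapunov construction: since the chemotactic coupling $\mu\rho\nabla\phi$ violates the Shizuta--Kawashima condition, the auxiliary cross-terms must be weighted by $\var$ in such a way that $\mathcal{L}_{j}$ is simultaneously coercive, dissipative in every component, and compatible with the $\var$-weighted $\dot B^{d/2+1}_{2,1}$-bounds that will later be needed for the relaxation limit. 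The spectral picture from \cite{c3}, which fixes $J_\var$ as the place where zero-order and first-order contributions balance, is what makes all of this consistent uniformly in $\var$.
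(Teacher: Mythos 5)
Your low-frequency plan is essentially the paper's: the damped mode $\frac{1}{\var}u+\nabla n-\mu\nabla\psi$ and the elliptic remainder $\psi-(b-\Delta)^{-1}(c_1n+H(n))$ are exactly the effective unknowns used in Section \ref{subsection31}, the effective symbol $(P'(\bar\rho)-\tfrac{a\mu\bar\rho}{b})\Delta$ and the role of the stability condition \eqref{ABmu} are correctly identified, and the closure of $\mathcal{X}_L$ proceeds as you describe. The difficulties are concentrated in your high-frequency construction, where there are two genuine gaps.

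First, the proposed Lyapunov functional $\mathcal{L}_j^2=\|(n_j,u_j,\nabla\psi_j)\|_{L^2}^2+\var\eta_1\langle u_j,\nabla n_j\rangle-\var\eta_2\mu\langle u_j,\nabla\psi_j\rangle$ is not coercive on the whole high-frequency band. By Bernstein, $\var\langle u_j,\nabla n_j\rangle\sim\var\,2^{j}\|u_j\|_{L^2}\|n_j\|_{L^2}$, and while $\var\,2^{J_\var}\sim 2^{k}$ can be made small at the threshold by choosing $k$ small, the factor $\var\,2^{j}$ grows without bound as $j\to\infty$ with $\var$ fixed; for such $j$ the cross-term dominates $\|u_j\|^2+\|n_j\|^2$ and the claimed equivalence $\mathcal{L}_j\sim\|(n_j,u_j,\nabla\psi_j)\|_{L^2}$ fails. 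The heuristic that the dissipation $\var^{-1}\|u_j\|_{L^2}^2$ "controls" the cross-term conflates energy and dissipation: coercivity of $\mathcal{L}_j$ must be established at the level of the functional itself, independent of its time derivative. The resolution in the paper (see \eqref{Lyapunov} and Lemma \ref{ly}) is to weight the first-order cross-term by $\eta_0 2^{-2j}$ rather than $\var$, giving a contribution $\sim\eta_0 2^{-j}\|u_j\|\|n_j\|$ that is uniformly absorbable for all $j$, and to put the $\var$-weight on the main quadratic block, which is what matches the target norm $\var\|(n,u,\nabla\psi)\|_{\widetilde L^\infty_t(\dot B^{d/2+1}_{2,1})}^h$ in $\mathcal{X}_H$.

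Second, your proposal does not address the zero-order nonlinear source $H(n)$ in the $\psi$-equation, which is the main structural obstruction and which you relegate to "product and composition laws contributing $\mathcal{X}^2$ remainders." Testing $\partial_t\psi_j-\Delta\psi_j+b\psi_j-c_1n_j=H(n)_j$ against $\partial_t\psi_j$ (which is needed to generate the $|\partial_t\psi_j|^2$ dissipation and to cancel the dangerous $\mu\int w_j\,\psi_j\,\div u_j$ produced by the chemotaxis force in the momentum balance) leaves the term $\int H(n)_j\,\partial_t\psi_j\,dx$, which cannot be bounded by the available dissipation without a derivative loss. The paper handles it by integrating by parts in time — moving $H(n)_j\psi_j$ into the energy functional — and by inserting the compensating term $\frac{2^{-2j}}{2\eta_0}|H(n)_j|^2$ into $\mathcal{L}_j$ so that the resulting $\partial_tH(n)_j$ contributions close (see Lemma \ref{lemma24} and the remark after it). Your functional contains none of $|\psi_j|^2$, $n_j\psi_j$, $H(n)_j\psi_j$ or $2^{-2j}|H(n)_j|^2$, so this loop does not close. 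Two further omissions you would eventually hit: the weight $w_j=c_0+\dot S_{j-1}G(n)$ on $|u_j|^2$ is needed to symmetrize the quasilinear term $G(n)\div u$ (it is not a mere $\mathcal{O}(\mathcal{X}^2)$ remainder in the commutator estimates), and the quadratic composition bounds for $H$ in hybrid Besov norms are not covered by the standard composition lemma and require the tailored estimates of Lemma \ref{compositionlp}.
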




\begin{remark}
In Theorem \ref{theorem11}, we exhibit qualitative and uniform regularity properties of the solution $(n,u,\psi)$ of System {\rm(\ref{m1n})} in the low-frequency region $|\xi|\lesssim \frac{1}{\var}$ and high-frequency region $|\xi|\gtrsim \frac{1}{\var}$ separately. These properties are consistent with the spectral analysis of \eqref{m1n} done in Subsection \ref{subsections}. Moreover, the spectral analysis also shows that $(\ref{ABmu})$ is a natural hypothesis to ensure the stability of the associated linear system.
\end{remark}

\begin{remark}
Since the map $\displaystyle\rho\rightarrow \int^{\rho}_{\bar{\rho}}\frac{P'(s)}{s}ds$ is 
 diffeomorphism from $\mathbb{R}_{+}$ to a small neighborhood of $0$, by a direct composition estimate one can show that
\begin{equation}\nonumber
\begin{aligned}
\rho-\bar{\rho}, u\in \mathcal{C}(\mathbb{R}_{+}; \dot{B}^{\frac{d}{2}}_{2,1}\cap\dot{B}^{\frac{d}{2}+1}_{2,1})\hookrightarrow \mathcal{C}(\mathbb{R}_{+};\mathcal{C}^1),\quad \phi -\bar{\phi} \in \mathcal{C}(\mathbb{R}_{+}; \dot{B}^{\frac{d}{2}}_{2,1}\cap\dot{B}^{\frac{d}{2}+2}_{2,1})\hookrightarrow \mathcal{C}(\mathbb{R}_{+};\mathcal{C}^2).
\end{aligned}
\end{equation}
Thus, Theorem \ref{theorem11} also provides the unique global-in-time classical solution $(\rho ,u,\phi)$ to System {\rm{(\ref{m1})}}.
\end{remark}

\begin{remark}
The constant $k$ in {\rm{(\ref{J0})}} can be computed explicitly in terms of $d$, $\mu$, $a$, $b$ and $P$. It is chosen small enough in the computations so as to absorb bad linear terms in the low-frequency regime.
\end{remark}

\begin{remark}
It is shown in {\rm(\ref{XX0})}  that the effective velocity $u+\var\nabla n-\var\mu \nabla\psi$ and the effective concentration $\psi -(b-\Delta)^{-1}(c_{1}n+H(n))$ have better regularity properties than the whole solution. This comes from the fact that they are purely damped mode, as it can be osberved in $(\ref{Ga})_{2}$-$(\ref{Ga})_{3}$. These properties are crucial to derive uniform-in-$\var$ estimates and the convergence rate of the relaxation process in Theorem \ref{theorem13}.
\end{remark}

Next, we study the optimal time-decay rates of global solutions to System (\ref{m1n}).
\begin{theorem}\label{theorem14} {\rm(}Optimal time-decay rates for the HPC system{\rm)}
Assume $d\geq1$,  $0<\var<1$ and that {\rm{(\ref{ABmu})}}-{\rm{(\ref{a1})}} hold. Let $(n,u,\psi)$ be the corresponding global solution to System {\rm(\ref{m1n})} supplemented with the initial data $(n_{0},u_{0},\psi_{0})$ given by Theorem \ref{theorem11}. If it further holds
\begin{equation}\label{a3}
\begin{aligned}
(n_{0},  u_{0}, \psi_{0})^{\ell}\in \dot{B}^{\sigma_{0}}_{2,\infty}\quad\text{for}~\sigma_{0}\in[-\frac{d}{2},\frac{d}{2})\quad \text{uniformly in}~\var,
\end{aligned}
\end{equation}
 then for some constant $C>0$ independent of $\var$ and time, $(n,u,\psi)$ satisfies
 \begin{equation}\nonumber
\begin{aligned}
&\|(n,u,\psi)\|_{\widetilde{L}^{\infty}_{t}(\dot{B}^{\sigma_{0}}_{2,\infty})}^{\ell}\leq C(\mathcal{X}_{0}+\|(n_{0},  u_{0}, \psi_{0})\|_{\dot{B}^{\sigma_{0}}_{2,\infty}}^{\ell}),\quad\quad t>0,
\end{aligned}
\end{equation}
 and
\begin{equation}\label{decay1}
\left\{
\begin{aligned}
&\|(n,u, \psi)(t)\|_{\dot{B}^{\sigma}_{2,1}}\leq C(1+ \var t)^{-\frac{1}{2}(\sigma-\sigma_{0})},\quad\quad \sigma\in(\sigma_{0},\frac{d}{2}],\\
&\|(n,u, \nabla \psi)(t)\|_{\dot{B}^{\frac{d}{2}+1}_{2,1}}^{h} \leq \frac{C}{\var} (1+ \var t)^{-\frac{1}{2}(\frac{d}{2}-\sigma_{0})}.
\end{aligned}
\right.
\end{equation}
Furthermore, if $d\geq2$ and $\sigma_{0}\in[-\frac{d}{2},\frac{d}{2}-1)$, then the following time-decay estimates hold$:$
\begin{equation}\label{decay3}
\left\{
\begin{aligned}
&\|u(t)\|_{\dot{B}^{\sigma_{0}}_{2,\infty}}+\|(b\psi-c_{1}n-H(n))(t)\|_{\dot{B}^{\sigma_{0}}_{2,\infty}}\leq \frac{C}{\var}(1+\var t)^{-\frac{1}{2}},\\
&\|u(t)\|_{\dot{B}^{\sigma}_{2,1}}+\|(b\psi-c_{1}n-H(n))(t)\|_{\dot{B}^{\sigma}_{2,1}}\leq \frac{C}{\var}(1+\var t)^{-\frac{1}{2}(1+\sigma-\sigma_{0})},\quad\quad \sigma\in(\sigma_{0},\frac{d}{2}-1].
\end{aligned}
\right.
\end{equation}
\end{theorem}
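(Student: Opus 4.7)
The plan is to follow the hypocoercive time-decay scheme used in \cite{c3} for the damped compressible Euler system and adapt it to the chemotactic coupling. It rests on three ingredients: the uniform-in-$\var$ propagation of the negative Besov index $\sigma_{0}$ on low frequencies, the construction of a nonlinear Lyapunov functional equivalent to $\mathcal{X}(t)$ whose dissipation controls enough derivatives in both frequency regimes, and an interpolation inequality that turns the resulting differential inequality into an ODE of the form $\tfrac{d}{dt}\mathcal{L}+c\mathcal{L}^{1+s}\leq 0$ whose explicit solution yields the announced polynomial rates.

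First I would localize \eqref{m1n} in low frequencies via the Littlewood--Paley blocks $\dot{\Delta}_{j}$ with $j\leq J_{\var}$ and carry out energy estimates in $\dot{B}^{\sigma_{0}}_{2,\infty}$ for the system written in terms of the effective velocity $u+\var\nabla n-\var\mu\nabla\psi$ and of the effective concentration $\psi-(b-\Delta)^{-1}(c_{1}n+H(n))$ from Theorem \ref{theorem11}, whose linear parts are purely damped. Since $\sigma_{0}<d/2$, standard product and composition estimates in Besov spaces permit to absorb the nonlinearities by $\mathcal{X}(t)\,\|(n,u,\psi)\|^{\ell}_{\widetilde{L}^{\infty}_{t}(\dot{B}^{\sigma_{0}}_{2,\infty})}$, and the smallness of $\mathcal{X}_{0}$ granted by Theorem \ref{theorem11} together with Gronwall's lemma yields the first bound of the theorem. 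Next, one builds a Beauchard--Zuazua-type Lyapunov functional $\mathcal{L}(t)$, equivalent up to $\mathcal{X}_{0}$-smallness to the undifferentiated-in-time part of $\mathcal{X}(t)$, for which
\begin{equation*}
\frac{d}{dt}\mathcal{L}(t)+c_{*}\mathcal{D}(t)\leq 0,
\end{equation*}
where $\mathcal{D}(t)$ collects the dissipative quantities already controlled in $\mathcal{X}(t)$ (typically $\var\|(n,\psi)\|^{\ell}_{\dot{B}^{d/2+2}_{2,1}}$, $\|u\|^{\ell}_{\dot{B}^{d/2+1}_{2,1}}$ and their high-frequency counterparts). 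Interpolating between $\sigma_{0}$ and the dissipative index produces, for a suitable $s=s(\sigma_{0})>0$, an inequality of the form $\mathcal{L}(t)^{1+s}\leq C\,K^{2s}\var^{-1}\mathcal{D}(t)$ with $K:=\mathcal{X}_{0}+\|(n_{0},u_{0},\psi_{0})\|^{\ell}_{\dot{B}^{\sigma_{0}}_{2,\infty}}$; the $\var^{-1}$ factor is precisely what turns the physical time into the effective time $\var t$. Integrating the resulting ODE then gives $\mathcal{L}(t)\lesssim (1+\var t)^{-(d/2-\sigma_{0})/2}$, and combining this with the high-frequency part of Theorem \ref{theorem11} plus one further interpolation in the range $\sigma\in(\sigma_{0},d/2]$ yields both estimates in \eqref{decay1}.

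For the refined bounds \eqref{decay3}, I would exploit the purely damped ODEs satisfied by $u$ and by $b\psi-c_{1}n-H(n)$: from $(\ref{m1n})_{2}$ one reads $\partial_{t}u+\var^{-1}u=-u\cdot\nabla u-\nabla n+\mu\nabla\psi$, so that $u$ inherits one extra order of decay from $\nabla n,\nabla\psi$ together with an additional $\var^{-1}$ multiplier, while $(\ref{m1n})_{3}$ yields directly $b\psi-c_{1}n-H(n)=\Delta\psi-\partial_{t}\psi$, a high-order-plus-purely-damped combination. Propagating the $\dot{B}^{\sigma_{0}}_{2,\infty}$ regularity for these damped quantities through the same scheme as above and matching exponents in the interpolation then produces both the improved exponent $(1+\sigma-\sigma_{0})/2$ and the prefactor $1/\var$ of \eqref{decay3}, the restriction $\sigma_{0}<d/2-1$ being needed to ensure that $\nabla n,\nabla\psi$ still belong to $\dot{B}^{\sigma_{0}}_{2,\infty}$. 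The main obstacle throughout is the uniformity in $\var$: the chemotactic forcing $\mu\nabla\psi$ breaks the Shizuta--Kawashima structure and forces different effective decompositions on either side of the threshold $J_{\var}=[-\log_{2}\var]+k$, so interpolation and Gronwall constants have to be tracked carefully in order not to degenerate as $\var\to 0$. This is precisely where the spectral calibration of $J_{\var}$ and the effective unknowns of Theorem \ref{theorem11} play their decisive role.
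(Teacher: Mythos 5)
Your plan for the propagation of the negative regularity index $\sigma_{0}$ (step~1) and for the sharper rates \eqref{decay3} via the Duhamel form of the damped equations for $u$ and $b\psi-c_{1}n-H(n)$ (last step) coincides with what is done in the paper. The middle step, however, is where you genuinely depart: you propose to build a \emph{global pointwise-in-time} Lyapunov functional $\mathcal{L}(t)$ satisfying $\tfrac{d}{dt}\mathcal{L}+c_{*}\mathcal{D}\le 0$ and then close via the Gagliardo--Nirenberg-type interpolation $\mathcal{L}^{1+s}\lesssim K^{2s}\var^{-1}\mathcal{D}$ so as to integrate the ODE $\tfrac{d}{dt}\mathcal{L}+c\var K^{-2s}\mathcal{L}^{1+s}\le 0$. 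The paper does something structurally different: it introduces a \emph{time-weighted} functional $\mathcal{D}^{\theta}(t)$ (weight $t^{\theta}$ with $\theta>1+\tfrac12(\tfrac d2-\sigma_0)$), multiplies the equations for $(n,\varphi,v)$ and the per-mode Lyapunov inequality by $t^{\theta}$, and then uses the same interpolation argument but on \emph{time-integrated} quantities, in the spirit of Guo--Wang and Xin--Xu. The outcome is the same decay rate, but the paths differ.

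Your ODE approach buys a more concise derivation and is closer to the classical Matsumura--Nishida style. The paper's choice, however, is dictated by a real technical obstruction that you gloss over: in the low-frequency region the a-priori dissipation (which is what controls the nonlinear remainders $R_{1},R_{2},R_{3}$ as well as the coupling terms $L_{1},L_{2},L_{3}$ between $n$, $\varphi$ and $v$) is only available in \emph{mixed $L^{1}$-in-time and $L^{2}$-in-time} form and is split across three effective unknowns with three different time scales ($\var|\xi|^2$, $b$, and $\var^{-1}$). Turning the estimates of Subsections~3.1--3.2 into a single clean pointwise inequality $\tfrac{d}{dt}\mathcal{L}+c_{*}\mathcal{D}\le 0$ with $\mathcal{L}\sim\mathcal{X}(t)$ and $\mathcal{D}$ coercive is not an automatic rearrangement: it requires re-deriving per-mode estimates for the effective system~\eqref{Ga} with pointwise coercivity, absorbing the $L_{i}$'s pointwise using the threshold calibration, and keeping track that the three dissipation scales can all be lower-bounded by $\var K^{-2s}\mathcal{L}^{1+s}$. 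The paper's time-weighted formulation avoids reproving this coercivity by re-using directly the $L^{1}_{t}/L^{2}_{t}$ machinery of the existence proof. If you want to follow your route, you should spell out the per-mode Lyapunov for $(n,\varphi,v)$, verify the pointwise absorption of the linear coupling (which relies crucially on $\var 2^{J_\var}\sim 2^{k}$ with $k$ small and on the extra $\dot B^{\frac d2}_{2,1}\cap\dot B^{\frac d2+2}_{2,1}$ dissipation of $\varphi$), and justify that the nonlinear remainders, which are of the form $\mathcal{X}(t)\sqrt{\mathcal{L}\mathcal{D}}$ or $\mathcal{X}(t)\mathcal{D}$, can be absorbed by $\mathcal{D}$ pointwise using only $\mathcal{X}(t)\lesssim\mathcal{X}_{0}$. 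One merit of your phrasing is that you keep $K:=\mathcal{X}_{0}+\|(n_{0},u_{0},\psi_{0})\|^{\ell}_{\dot B^{\sigma_{0}}_{2,\infty}}$ as a mere multiplicative prefactor rather than assuming it small; this is the same improvement the paper obtains (see the remark after Theorem~\ref{theorem14}), but you should flag explicitly that it is nontrivial and that it is the reason the paper moved to the Guo--Wang/Xin--Xu framework instead of the more classical ODE one from \cite{DanchinXu}.
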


\begin{remark}
Due to the embedding $L^1\hookrightarrow \dot{B}^{-\frac{d}{2}}_{2,\infty}$, the above assumption {\rm(\ref{a3})} with $\sigma_{0}=-\frac{d}{2}$ covers the classical $L^1$ condition presented first by Matsumura and Nishida in {\rm\cite{mats1}}. Moreover, it should be noted that we do not require additional $\dot{B}^{\sigma_{0}}_{2,\infty}$-smallness assumption on the initial data. This is an improvement compared to previous researches in similar settings e.g. {\rm\cite{DanchinXu}} where such smallness assumption is used to derive decay rates. Our approach is based on new time-weighted energy
estimates and interpolation arguments, which is an adaptation of the previous approaches in {\rm\cite{guo1,xin1}} and enables us to deal with mixed $L^1$-in-time and $L^2$--in-time dissipation structures with different variables.
\end{remark}

\begin{remark}
Applying standard interpolation inequalities to $(\ref{decay1})$-$(\ref{decay3})$, for $r\geq 2$, we have the following classical $L^{r}$-type time-decay rates of the global solution $(\rho,u,\phi)$ to System {\rm(\ref{m1})}$:$
\begin{equation}\label{re1}
\begin{aligned}
&\|\Lambda^{\sigma} (\rho-\bar{\rho},u,\phi-\bar{\phi})(t)\|_{L^{r}}\leq C (1+\var t)^{-\frac{1}{2}(\sigma+\frac{d}{2}-\frac{d}{r}-\sigma_{0})},\quad\quad\sigma+\frac{d}{2}-\frac{d}{r}\in(\sigma_{0},\frac{d}{2}].
\end{aligned}
\end{equation}
Moreover, as $b\phi-a\rho=b\psi-c_{1}n-H(n)$, it holds for $d\geq2$ and $\sigma_{0}\in[-\frac{d}{2},\frac{d}{2}-1)$ that
\begin{equation}\label{re2}
\begin{aligned}
&\|\Lambda^{\sigma}u(t)\|_{L^{r}}+\|\Lambda^{\sigma}(b\phi-a\rho)(t)\|_{L^{r}}\leq \frac{C}{\var} (1+\var t)^{-\frac{1}{2}(1+\sigma+\frac{d}{2}-\frac{d}{r}-\sigma_{0})},\quad~\sigma+\frac{d}{2}-\frac{d}{r}\in(\sigma_{0},\frac{d}{2}-1].
\end{aligned}
\end{equation}
 Therefore, the time-decay estimates $(\ref{re1})$-$(\ref{re2})$ generalize the previous result of decay rates of global solutions to System {\rm(\ref{m1})} in {\rm\cite{liuq2,russo2}} subject to $L^1$-assumption on the initial perturbation.
\end{remark}

\begin{remark}
Theorem \ref{theorem14} implies that as $t\rightarrow\infty$, the density $\rho$ and the velocity $u$ converge to $\bar{\rho}$ and $0$, respectively, at the same decay rates as the global solutions of compressible Euler equations with damping, refer to {\rm{\cite{c1,c2,si1,xu2}}} and references therein. In addition, it is proved that the coupling term $b\phi-a\rho$ decays faster than the chemoattractant production $a\rho$ and the chemoattractant consumption $-b\phi$, which, as far as we know, is a new phenomenon observed in this paper.
\end{remark}

\vspace{1ex}

Next, we justify rigorously the relaxation limit from (\ref{HPCvar}) to (\ref{KS}) with an explicit convergence rate. To do this, we need the following global well-posedness result for System (\ref{KS}).
\begin{theorem}\label{theorem12} {\rm(}Global well-posedness for the Keller-Segel model{\rm)} Let $d\geq1$ and $p\in[1, \infty]$. Suppose that $(\ref{ABmu})$ holds. There exists a positive constant $\alpha^{*}$ such that if
\begin{equation}\label{a2}
\begin{aligned}
&\|\rho^{*}_{0}-\bar{\rho}\|_{\dot{B}^{\frac{d}{p}}_{p,1}}\leq \alpha^{*},
\end{aligned}
\end{equation}
then a unique solution $\rho^{*}$ to System {\rm(\ref{KS})} supplemented with the initial data $\rho^{*}_{0}$ exists globally in time and satisfies 
\begin{equation}\label{r2222}
\begin{aligned}
&\rho^{*}-\bar{\rho}\in \mathcal{C}_{b}(\mathbb{R}_{+};\dot{B}^{\frac{d}{p}}_{p,1})\cap L^1(\mathbb{R}_{+};\dot{B}^{\frac{d}{p}+2}_{p,1}),
\end{aligned}
\end{equation}
and
\begin{equation}\label{r2}
\begin{aligned}
&\|\rho^{*}-\bar{\rho}\|_{\widetilde{L}^{\infty}_{t}(\dot{B}^{\frac{d}{p}}_{p,1})}+\|\rho^{*}-\bar{\rho}\|_{L^1_{t}(\dot{B}^{\frac{d}{p}+2}_{p,1})}\leq C\|\rho^{*}_{0}-\bar{\rho}\|_{\dot{B}^{\frac{d}{p}}_{p,1}},\quad\quad t>0,
\end{aligned}
\end{equation}
where $C>0$ is a constant independent of time.
\end{theorem}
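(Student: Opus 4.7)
The plan is to reduce the (KS) system to a single quasi\-parabolic equation for the density perturbation $N:=\rho^*-\bar{\rho}$, establish a linear maximal regularity estimate for the associated second\-order Fourier multiplier in $\dot B^{d/p}_{p,1}$, and close a contraction argument in the space
$$E_T:=\widetilde{\mathcal C}_b([0,T];\dot B^{d/p}_{p,1})\cap L^1(0,T;\dot B^{d/p+2}_{p,1}).$$
First I would use the elliptic equation $-\Delta\psi^*+b\psi^*=aN$ with $\psi^*:=\phi^*-\bar{\phi}$ to express $\psi^*=a(b-\Delta)^{-1}N$. Substituting into $\partial_t N-\operatorname{div}(\nabla P(\rho^*)-\mu\rho^*\nabla\phi^*)=0$ and using $\operatorname{div}(\bar\rho\nabla\psi^*)=\bar\rho(b\psi^*-aN)$ to handle the linear part gives
$$\partial_t N+\mathcal A N=F(N),\qquad F(N):=\operatorname{div}\bigl((P'(\rho^*)-P'(\bar\rho))\nabla N\bigr)-\mu\operatorname{div}(N\nabla\psi^*),$$
where $\mathcal A$ is the Fourier multiplier with symbol
$$\widehat{\mathcal A}(\xi)=\frac{|\xi|^2(P'(\bar\rho)b-a\mu\bar\rho)}{b+|\xi|^2}+\frac{P'(\bar\rho)|\xi|^4}{b+|\xi|^2}.$$
The stability assumption~(\ref{ABmu}) makes both terms nonnegative and yields $\widehat{\mathcal A}(\xi)\gtrsim|\xi|^2$ uniformly in $\xi$: at low frequencies the first term dominates with coefficient $P'(\bar\rho)-a\mu\bar\rho/b>0$, and at high frequencies the second term behaves like $P'(\bar\rho)|\xi|^2$.

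Next I would establish the linear maximal regularity estimate: for the inhomogeneous problem $\partial_t N+\mathcal A N=f$ with $N|_{t=0}=N_0$, a dyadic localisation gives on each Littlewood\-Paley block $\dot\Delta_j$ a bound of the form $\partial_t\|\dot\Delta_j N\|_{L^p}+c\,2^{2j}\|\dot\Delta_j N\|_{L^p}\lesssim\|\dot\Delta_j f\|_{L^p}$ after the classical duality argument à la Danchin. Summing with the $\ell^1$ weight $2^{jd/p}$ yields
$$\|N\|_{\widetilde L^\infty_t(\dot B^{d/p}_{p,1})}+\|N\|_{L^1_t(\dot B^{d/p+2}_{p,1})}\leq C\bigl(\|N_0\|_{\dot B^{d/p}_{p,1}}+\|f\|_{L^1_t(\dot B^{d/p}_{p,1})}\bigr).$$
This step reproduces, for the multiplier $\mathcal A$, the standard heat\-equation estimate of Chemin\-Lerner type, valid on the whole scale $p\in[1,\infty]$.

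Then I would estimate $F(N)$ in $L^1_t(\dot B^{d/p}_{p,1})$. Using the composition estimate $\|P'(\rho^*)-P'(\bar\rho)\|_{\dot B^{d/p}_{p,1}}\lesssim\|N\|_{\dot B^{d/p}_{p,1}}$ for $N$ small, the product law in $\dot B^{d/p}_{p,1}$, and the elliptic smoothing $\|\nabla\psi^*\|_{\dot B^{d/p+1}_{p,1}}\lesssim\|N\|_{\dot B^{d/p}_{p,1}}$ obtained directly from the equation $(b-\Delta)\psi^*=aN$, I expect
$$\|F(N)\|_{L^1_t(\dot B^{d/p}_{p,1})}\leq C\,\|N\|_{\widetilde L^\infty_t(\dot B^{d/p}_{p,1})}\,\|N\|_{L^1_t(\dot B^{d/p+2}_{p,1})}.$$
Combining the two estimates and choosing $\alpha^*$ small enough, the fixed\-point map $N\mapsto\widetilde N$, defined by solving the linear problem with source $F(N)$, becomes a contraction on a small ball of $E_T$ for every $T>0$, yielding global existence, uniqueness, and the bound~(\ref{r2}). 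Uniqueness in the full class~(\ref{r2222}) follows by an energy estimate on the difference of two solutions, relying on the same product laws.

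The main obstacle is the simultaneous treatment of both endpoints $p=1$ and $p=\infty$ in the coupling term $\mu\operatorname{div}(N\nabla\psi^*)$: the zero\-order operator $(b-\Delta)^{-1}$ does not gain two derivatives uniformly on the full homogeneous Besov scale at very low frequencies. The remedy is to exploit the full elliptic equation rather than its Green function: applying $\dot\Delta_j$ to $(b-\Delta)\psi^*=aN$, taking $L^p$ norms and using $b>0$ provides the \emph{uniform} bound $b\|\psi^*\|_{\dot B^{s}_{p,1}}+\|\psi^*\|_{\dot B^{s+2}_{p,1}}\lesssim\|N\|_{\dot B^{s}_{p,1}}$, which is what is required to absorb the low\-frequency contribution of $\operatorname{div}(N\nabla\psi^*)$ irrespective of $p$. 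This single observation makes the whole scheme work throughout $p\in[1,\infty]$.
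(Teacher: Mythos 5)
Your proposal follows essentially the same route as the paper: both reduce (KS) to a scalar quasi-parabolic equation for $\rho^*-\bar\rho$ governed by the Fourier multiplier $\mathcal A=-\tilde\Delta_*$ with symbol $|\xi|^2\bigl(P'(\bar\rho)-\mu a\bar\rho/(b+|\xi|^2)\bigr)\gtrsim |\xi|^2$ under \eqref{ABmu}, prove the heat-type maximal regularity estimate in $\dot B^{d/p}_{p,1}$, and close with product and composition estimates plus the uniform Bessel-potential bound (the paper's \eqref{bessel}, your "remedy") valid on the whole range $p\in[1,\infty]$. The only cosmetic differences are that you write the pressure nonlinearity as $\div\bigl((P'(\rho^*)-P'(\bar\rho))\nabla N\bigr)$ rather than $\Delta(G_1(\rho^*)N)$ and invoke a contraction mapping instead of the paper's Friedrichs-plus-bootstrap scheme; these are interchangeable.
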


\begin{remark}
In {\rm\cite{di1}}, Di Francesco and Donatelli proved the existence of local-in-time solutions to \eqref{KS} in Sobolev Spaces via a relaxation weak-limit argument. We also mention the work {\rm{\cite{mu1}}} about the local well-posedness issue to \eqref{KS} in inhomogeneous Besov spaces $B^{s}_{p,r}$ with $1\leq p,r\leq \infty$ and $s>1+\frac{d}{p}$. In the context of initial data with finite mass and vacuum, we refer to the important results {\rm\cite{su1,su2,do1,perthame1,kim1,aru1}} and reference therein concerning the analysis of the Keller-Segel equations in Sobolev spaces.
\end{remark}

\begin{remark}
The low-frequency regularity properties in \eqref{r1} of the cell density $\rho$ to System \eqref{m1} obtained in Theorem \ref{theorem11} are consistent with the regularity properties in \eqref{r2222} of the solution $\rho^{*}$ to System \eqref{KS} for $p=2$. This is compatible with the relaxation limit result stated below.
\end{remark}

Employing \eqref{XX0}, one can establish the uniform-in-$\var$ regularity estimates of the solution to System \eqref{HPCvar} under the diffusive scaling \eqref{scaling1}, which leads to the justification of the weak relaxation limit of System (\ref{HPCvar}) to System (\ref{KS}) via standard compactness arguments. However, such method may not provide a convergence rate. In the next theorem, we prove a strong relaxation result and exhibit an explicit convergence rate of the relaxation process.

\begin{theorem}\label{theorem13} {\rm(}Relaxation limit{\rm)}
 Assume $d\geq1$, $0<\var<1$, $(\ref{ABmu})$-$(\ref{a1})$ and that $(\ref{a2})$ is satisfied with $p=2$. Let $(\rho,u,\phi)$ be the global solution to System \eqref{m1} associated to the initial data $(\rho_{0},u_{0},\phi_{0})$ obtained with Theorem \ref{theorem11}, and $(\rho^{*},u^*,\phi^*)$ be the global solution to System {\rm(\ref{KS})} supplemented with the initial data $\rho^{*}_{0}$ given by Theorem \ref{theorem12}. Then, for any $t>0$, there exists a constant $C>0$ independent of $\var$ and time such that
 \begin{equation}\label{uniform1}
 \begin{aligned}
 &\|\nabla P(\rho^{\var})-\mu\rho^{\var}\nabla\phi^{\var}+\rho^{\var}u^{\var}\|_{L^1_{t}(\dot{B}^{\frac{d}{2}}_{2,1})}+\|-\Delta\phi^{\var}-a\rho^{\var}+b\phi^{\var}\|_{L^1_{t}(\dot{B}^{\frac{d}{2}}_{2,1})}\leq C\var,
 \end{aligned}
 \end{equation}
 where $(\rho^\var,u^\var,\phi^\var)$ are defined by \eqref{scaling1}.
If it further holds
\begin{equation}\label{ar}
\begin{aligned}
&\|\rho_{0}-\rho^{*}_{0}\|_{\dot{B}^{\frac{d}{2}-1}_{2,1}}=\mathcal{O}(\var),
\end{aligned}
\end{equation}
then for any $t>0$,  we have the quantitative error estimates
\begin{equation}\label{error}
\begin{aligned}
&\|\rho^{\var}-\rho^{*}\|_{\widetilde{L}^{\infty}_{t}(\dot{B}^{\frac{d}{2}-1}_{2,1})}+\|\rho^{\var}-\rho^{*}\|_{L^{1}_{t}(\dot{B}^{\frac{d}{2}+1}_{2,1})}\\
&\quad\quad+\|u^{\var}-u^{*}\|_{L^{1}_{t}(\dot{B}^{\frac{d}{2}}_{2,1})}+\|\phi^{\var}-\phi^{*}\|_{L^{1}_{t}(\dot{B}^{\frac{d}{2}+1}_{2,1}\cap \dot{B}^{\frac{d}{2}+2}_{2,1})}\leq C\var.
\end{aligned}
\end{equation}
Consequently, as $\var\rightarrow0$, $(\rho^{\var},u^{\var},\phi^{\var})$ converges to $(\rho^{*},u^{*},\phi^{*})$ in the following sense$:$
\begin{equation}\label{conver}
\left\{
\begin{aligned}
&\rho^{\var}\rightarrow \rho^{*}\quad\quad\text{strongly in}\quad \widetilde{L}^{\infty}(\mathbb{R}_{+};\dot{B}^{\frac{d}{2}-1}_{2,1})\cap L^{1}(\mathbb{R}_{+};\dot{B}^{\frac{d}{2}+1}_{2,1}),\\
&u^{\var}\rightarrow u^{*}  \quad\quad \text{strongly in}\quad L^{1}(\mathbb{R}_{+}; \dot{B}^{\frac{d}{2}}_{2,1}),\\
&\phi^{\var}\rightarrow \phi^{*}\quad\quad\text{strongly in}\quad L^{1}(\mathbb{R}_{+};\dot{B}^{\frac{d}{2}+1}_{2,1}\cap\dot{B}^{\frac{d}{2}+2}_{2,1}).
\end{aligned}
\right.
\end{equation}

\end{theorem}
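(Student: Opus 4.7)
The plan is to split the argument into two parts: first, to establish the uniform remainder estimate \eqref{uniform1} directly from the bounds of Theorem~\ref{theorem11} under the diffusive scaling \eqref{scaling1}; and second, to derive the quantitative error estimate \eqref{error} by setting up a parabolic system for the differences $(\rho^\var-\rho^*,u^\var-u^*,\phi^\var-\phi^*)$, from which the strong convergence \eqref{conver} follows immediately.

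For \eqref{uniform1}, I would exploit the algebraic identity $\rho\,\nabla n=\nabla P(\rho)$ coming from the definition of $n$, which in the original (non-rescaled) variables reads
\[
\rho\bigl(\var^{-1}u+\nabla n-\mu\nabla\psi\bigr)=\rho^\var u^\var+\nabla P(\rho^\var)-\mu\rho^\var\nabla\phi^\var.
\]
A Besov product estimate combining the low-frequency bound on the damped mode $\var^{-1}u+\nabla n-\mu\nabla\psi$ encoded in $\mathcal{X}_L(t)$ with the high-frequency bounds on $(n,u,\psi)$ from $\mathcal{X}_H(t)$ and the smallness of $\rho-\bar\rho$ controls the right-hand side in $L^1_t(\dot{B}^{\frac{d}{2}}_{2,1})$ uniformly in $\var$; the rescaling $\tau=\var t$ then produces an extra $\var$ factor, yielding the first inequality of \eqref{uniform1}. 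The second inequality follows analogously from the identity $-\Delta\phi-a\rho+b\phi=\partial_t\psi$ (third equation of \eqref{m1n}) together with the $L^1_t(\dot{B}^{\frac{d}{2}}_{2,1})$-bound on $\partial_t\psi$ provided by $\mathcal{X}(t)$.

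For \eqref{error}, set $R:=\rho^\var-\rho^*$, $U:=u^\var-u^*$, $\Phi:=\phi^\var-\phi^*$, and extract the $O(\var)$ remainders
\[
E_1:=\rho^\var u^\var+\nabla P(\rho^\var)-\mu\rho^\var\nabla\phi^\var,\qquad E_2:=-\Delta\phi^\var-a\rho^\var+b\phi^\var,
\]
whose $L^1_\tau(\dot{B}^{\frac{d}{2}}_{2,1})$-norms are bounded by $C\var$ thanks to \eqref{uniform1}. Substituting the expression of $\rho^\var u^\var$ furnished by $E_1$ into $\partial_\tau\rho^\var+\div(\rho^\var u^\var)=0$ and subtracting the Keller--Segel equation yields a scalar parabolic equation for $R$,
\[
\partial_\tau R-\Delta\bigl(P(\rho^\var)-P(\rho^*)\bigr)+\mu\,\div\bigl(\rho^\var\nabla\phi^\var-\rho^*\nabla\phi^*\bigr)=-\div E_1,
\]
coupled with the elliptic identity $-\Delta\Phi-aR+b\Phi=E_2$. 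Linearizing around $(\bar\rho,0,\bar\phi)$ and eliminating $\Phi=(b-\Delta)^{-1}(aR+E_2)$, the leading operator acting on $R$ becomes
\[
\mathcal{L}:=-P'(\bar\rho)\Delta+a\mu\bar\rho\,\Delta(b-\Delta)^{-1},\qquad \widehat{\mathcal{L}}(\xi)=|\xi|^2\Bigl(P'(\bar\rho)-\tfrac{a\mu\bar\rho}{b+|\xi|^2}\Bigr)\geq c_0|\xi|^2,
\]
where the positivity follows \emph{precisely} from the stability assumption \eqref{ABmu}. This spectral cancellation eliminates the naively destabilizing zero-order contribution $-a\mu\bar\rho R$ produced by $\mu\bar\rho\Delta\Phi$ and makes $\mathcal{L}$ uniformly second-order elliptic.

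With this structure at hand, a standard Besov energy estimate at the critical regularity $\dot{B}^{\frac{d}{2}-1}_{2,1}$ yields
\[
\|R\|_{\widetilde L^\infty_\tau(\dot{B}^{\frac{d}{2}-1}_{2,1})}+\|R\|_{L^1_\tau(\dot{B}^{\frac{d}{2}+1}_{2,1})}\leq C\|\rho_0-\rho_0^*\|_{\dot{B}^{\frac{d}{2}-1}_{2,1}}+C\|(E_1,E_2)\|_{L^1_\tau(\dot{B}^{\frac{d}{2}}_{2,1})}\leq C\var,
\]
where the nonlinear contributions---the pressure remainder $P(\rho^\var)-P(\rho^*)-P'(\bar\rho)R$ and the chemotaxis cross term $\mu\,\div((\rho^\var-\bar\rho)\nabla\Phi+R\nabla(\phi^*-\bar\phi))$---are absorbed by standard product and composition estimates in Besov spaces, using the smallness of $\mathcal{X}_0$ from Theorem~\ref{theorem11}, of $\|\rho^*-\bar\rho\|_{\dot{B}^{\frac{d}{2}}_{2,1}}$ from Theorem~\ref{theorem12}, and the initial-data assumption \eqref{ar}. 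The bounds on $\Phi$ in $L^1_\tau(\dot{B}^{\frac{d}{2}+1}_{2,1}\cap\dot{B}^{\frac{d}{2}+2}_{2,1})$ are then obtained by elliptic regularity applied to $-\Delta\Phi+b\Phi=aR+E_2$ together with interpolation between the $R$-bounds, while the bound on $U$ in $L^1_\tau(\dot{B}^{\frac{d}{2}}_{2,1})$ is deduced from $\rho^\var U=(\rho^*-\rho^\var)u^*+(\rho^\var u^\var-\rho^*u^*)$ combined with Darcy's law for $\rho^*u^*$ and the control of $E_1$. The main technical obstacle is precisely the spectral cancellation described above: uncovering the effective elliptic operator $\mathcal{L}$ hidden in the coupled parabolic--elliptic system for $(R,\Phi)$, identifying its coercivity constant with the one provided by \eqref{ABmu}, and then closing the nonlinear estimates at the critical level $\dot{B}^{\frac{d}{2}-1}_{2,1}$---one derivative below the natural setting of Theorems~\ref{theorem11}--\ref{theorem12}---uniformly in $\var$.
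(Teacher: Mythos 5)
Your proposal is correct and follows essentially the same route as the paper: \eqref{uniform1} is obtained from the $L^1_t(\dot{B}^{d/2}_{2,1})$-bound on the damped mode $\var^{-1}u+\nabla n-\mu\nabla\psi$ and on $\partial_t\psi$ encoded in $\mathcal{X}(t)$, and \eqref{error} from a parabolic estimate on $\rho^\var-\rho^*$ in $\dot{B}^{d/2-1}_{2,1}$ driven by the effective operator $\mathcal{L}=-\tilde{\Delta}_*$, whose coercivity under \eqref{ABmu} is exactly the content of the paper's Lemma~\ref{lemmadelta}. The only cosmetic difference is the algebraic packaging: you recover $u^\var-u^*$ from $\rho^\var U=(\rho^*-\rho^\var)u^*+(\rho^\var u^\var-\rho^*u^*)$, whereas the paper uses the equivalent identity $\delta u^\var=v^\var-\nabla\int_{\rho^*}^{\rho^\var}\frac{P'(s)}{s}\,ds+\mu\nabla\delta\phi^\var$, and the paper groups the $\mathcal{O}(\var)$ remainder as $\mathcal{R}^\var=-\div(\rho^\var v^\var)+\var\mu\bar\rho\Delta(b-\Delta)^{-1}\partial_t\phi^\var$ rather than your $(E_1,E_2)$.
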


\begin{remark}To the best of our knowledge, Theorem \ref{theorem13} is the first result concerning the rigorous justification of the uniform-in-time relaxation process from the HPC system to the Keller-Segel model.
\end{remark}


\subsection{Strategy of proofs}\label{subsections}

Before explaining the strategies and ingredients to prove the above theorems, let us illustrate the main difficulties. First, the analysis of System \eqref{m1} may not be included in the classical theory for partial dissipative hyperbolic systems \cite{c2,xu1,xu2,yong1} or hyperbolic-parabolic systems \cite{kaw1, shi1}. In fact, to avoid the smallness assumption on the constant state $\bar{\rho}$, we should not study System (\ref{m1n}) as a damped compressible Euler equations and a damped heat equation with additional source terms (i.e. by treating $\mu\nabla\psi$ and $c_1n$ as source terms). And due to the difference between the hyperbolic scaling in $\eqref{m1n}_{1}$-$\eqref{m1n}_{2}$ and the parabolic scaling in $\eqref{m1n}_{3}$, we are not able to perform a rescaling as in \cite{c1,c2,c3} to reduce the proofs to the case $\var=1$ which allowed the authors to easily recover the exact dependency with respect to the parameter $\var$ thanks to the homogeneity of the norms. In addition, the nonlinear term $H(n)$ is of order $0$ (which is unusual in the analysis of compressible flows) in $\eqref{m1n}_{3}$, and the classical composition estimates may not be applied here. To overcome these diffucilties, we need delicate energy estimates with mixed $L^1$-time and $L^2$-time regularities of dissipation terms in both high and low frequencies.

In order to understand the behavior of the solution to \eqref{m1n} with respect to $\var$, we perform a spectral analysis of the associated linearized system of \eqref{m1n} as follows. By the Hodge decomposition, we denote the compressible part $m=\Lambda^{-1}\div u$ and the incompressible part $\omega=\Lambda^{-1}\nabla\times u$ and rewrite the linearized system of \eqref{m1n} as
 \begin{equation}\nonumber
\begin{aligned}
& \partial_{t}
\left(\begin{matrix}
   n  \\
   u  \\
   \psi \\
  \end{matrix}\right)
  =\mathbb{A}\left(\begin{matrix}
  n \\
  u\\
  \psi
     \end{matrix}\right),\quad  \mathbb{A}:=\left(\begin{matrix}
0                                 &    -\Lambda                                   &     0\\
\Lambda                           &    -\frac{1}{\var}                            &      -\mu\Lambda \\          
c_1                               &0                                              & -b-{\Lambda}^2\\
  \end{matrix}\right),
 \quad\quad \partial_{t} \omega +\frac{1}{\var}\omega=0.
   \end{aligned}
\end{equation}
The eigenvalues of the matrix $\widehat{\mathbb{A}}(\xi)$ satisfy
\begin{equation}\nonumber
\begin{aligned}
&\lambda^3+(\frac{1}{\var}+b+|\xi|^2  )\lambda^2+\big( \frac{b}{\var}+ \frac{|\xi|^2}{\var}+ |\xi|^2\big) \lambda+ |\xi|^2(|\xi|^2+b-c_{1}\mu)=0.
\end{aligned}
\end{equation}
By Taylor's expansion, for $\var|\xi|<<1$, the eigenvalues $\lambda_{1}$, $\lambda_{2}$ and $\lambda_{3}$ are real and asymptotically equal to
\begin{equation}\nonumber
\left\{
\begin{aligned}
&\lambda_{1}=-\Big(P'(\bar{\rho})-\frac{a\mu\bar{\rho}}{b}\Big)\var |\xi|^2+\frac{1}{\var}O(|\var \xi |^{4}),\\
&\lambda_{2}=-\frac{1 }{\var}+\frac{1}{\var}O(|\var \xi |^{2}),\\
&\lambda_{3}=-b+\Big(-\frac{1}{\var}+O(1)\Big)|\var \xi |^{2} + \frac{1}{\var}O(|\var \xi |^{2}).
\end{aligned}
\right.
\end{equation}
Similarly, for $\var|\xi|>>1$, there exists two conjugate complex eigenvalues $\lambda_{1}, \lambda_{2}$ and a real eigenvalue $\lambda_{3}$ which satisfy
\begin{equation}\nonumber
\left\{
\begin{aligned}
&\lambda_{1}=-\frac{1}{2\var}+i | \xi|+ \frac{1}{\var} O(\frac{1}{|\var \xi|} ),\\
&\lambda_{2}=-\frac{1}{2\var}-i | \xi|+ \frac{1}{\var} O(\frac{1}{|\var \xi|} ),\\
&\lambda_{3}=-b-|\xi|^2+\frac{1}{\var} O(\frac{1}{|\var \xi|} ).
\end{aligned}
\right.
\end{equation}
One can  observe that the behaviors of spectrum in low and high-frequency regimes are fundamentally different. For instance, $\lambda_1$ is associated to a parabolic behavior in low frequencies and to an exponentially damped behavior in high frequencies. Therefore, in order to capture the optimal regularity properties, we separate the frequency space into the above two regimes $|\xi|\lesssim \frac{1}{\var}$ and $|\xi|\gtrsim \frac{1}{\var}$ with  a threshold $J_{\var}$ satisfying $J_{\var}\sim \log_2(\frac{1}{\var})$, where the binary logarithm comes from the dyadic decomposition induced by the Littlewood-Paley theory.
Another key observation from these spectral structures is that the eigenvalues in low frequencies are purely real, and thus we expect to be able to partially diagonalize the system in this regime and study it in a decoupled manner.
This argument is central to overcome the so-called overdamping phenomenon (cf. \cite{zuazua1}) stating that the behavior of the whole solution is proportional to $\min\{1/\varepsilon,\varepsilon\}$ and therefore it is hard difficult to recover uniform-in-$\varepsilon$ bounds. Here, by splitting the frequencies and studying the low-frequency regime in a decoupled manner, we are able to derive the uniformity-in-$\varepsilon$ and describe the overdamping phenomenon for the HPC system.

Moreover, the choice of this particular threshold implies that $J_\varepsilon\to\infty$ as $\varepsilon\to0$ and thus that the low-frequency region recovers the whole frequency-space and the high frequencies disappears in the large-friction limit. This is coherent with the associated relaxation limit since the behavior of the limit system \eqref{KS} is similar to the behavior of System \eqref{m1} in low frequencies.

\medbreak
\textbf{Global well-posedness for the HPC system} 

The first step is to establish global a-priori estimates. In the low frequencies region, we introduce two new effective variables
\begin{equation}\nonumber
\begin{aligned}
&\varphi:=\psi -(b-\Delta)^{-1}(c_{1}n+H(n) )\quad\text{and}\quad v:= u+\varepsilon\nabla n-\varepsilon\mu \nabla\psi,
\end{aligned}
\end{equation}
so that $( n, \varphi,v)$ satisfies the equations
\begin{equation}\nonumber
\left\{
\begin{aligned}
&\partial_{t}n-\tilde{\Delta}_1n= L_{1}+R_{1},\\
&\partial_{t}\varphi-\tilde{\Delta}_2\varphi= L_{2}+R_{2},\\
&\partial_{t} v+\var^{-1} v= L_{3}+R_{3},
\end{aligned}
\right.
\end{equation}
where the operators $\tilde{\Delta}_{i}$ $(i=1,2)$ satisfy the dissipative properties (\ref{dis1})-(\ref{dis2}), the high-order linear terms $L_{i}$ $(i=1,2,3)$ will be absorbed if the threshold $J_{\var}$ takes the form (\ref{J0}) for a $k$ chosen small enough, and the nonlinear terms $R_{i}$ $(i=1,2,3)$ will be estimated by some non so-classical composition lemmas and product estimates, see Subsection \ref{subsection31}. Both these new unknowns can be considered as low-frequency damped modes for which we can recover optimal dissipative properties. The second unknown is reminiscent of the effective velocity introduced by Hoff and Haspot in \cite{hoff1,haspot1} to treat the high frequencies for the compressible Navier-Stokes system and by Crin-Barat and Danchin in \cite{c1} to analyze the low frequencies for the compressible Euler equations with damping.

In the high frequencies region, we are able to derive the Lyapunov type inequality
\begin{equation}\nonumber
\begin{aligned}
&\frac{d}{dt}\mathcal{L}_{j}(t)+\mathcal{H}_{j}(t)\lesssim (\text{nonlinear terms} )\sqrt{\mathcal{L}_{j}(t)},\quad\quad j\geq J_{\var}-1,
\end{aligned}
\end{equation}
where $\mathcal{L}_{j}(t)$ is the nonlinear energy functional
\begin{equation}\label{Lyapunov} 
\begin{aligned}
\mathcal{L}_{j}(t):&=\var\int[ \frac{1}{2}|n_{j}|^2+\frac{2^{-2j}}{2\eta_{0}} |H(n)_{j}|^2+\frac{1}{2}w_{j}|u_{j}|^2+\frac{\mu b}{2c_{1}} |\psi_{j}|^2+\frac{\mu}{2c_{1}} |\nabla \psi_{j}|^2\\
&\quad-\mu n_{j} \psi_{j}-H(n)_{j} \psi_{j}]dx +\eta_{0}2^{-2j}\int (\frac{\mu}{2c_{1}}|\nabla \psi_{j}|^2+u_{j} \cdot\nabla n_{j} )dx, 
\end{aligned}
\end{equation}
and $\mathcal{H}_{j}(t)$ is the corresponding dissipation
\begin{equation}\nonumber
\begin{aligned}
&\mathcal{H}_{j}(t):= \var\int  (\frac{1}{\var} w_{j}|u_{j}|^2+|\partial_{t}\psi_{j}|^2)dx+\eta_{0} 2^{-2j}\int \big{(}|\nabla n_{j}|^2+\frac{\mu b}{c_{1}}|\nabla \psi_{j}|^2+\frac{\mu}{c_{1}}|\Delta\psi_{j}|^2\\
&\quad\quad\quad\quad-2\mu \nabla n_{j}\cdot \nabla \psi_{j}-w_{j}|\div u_{j}|^2+\frac{1}{\var} u_{j} \cdot\nabla n_{j} \big{)}dx,
\end{aligned}
\end{equation}
with $\eta_{0}$ a well-chosen constant and the weight function $w_{j}:=c_{0}+\dot{S}_{j-1}G(n)$. The construction of the Lyapunov functional with lower-order terms is motivated by the previous work by Beauchard and Zuazua in \cite{bea1} and the basic energy equality of the HPC system. The weight function satisfying $w_{j}\sim1$ by the smallness of energy is used to get rid of the nonlinear term $G(n)\div u$ which breaks the symmetry in $\eqref{m1n}_1$. It should be noted that to compensate $\mu\nabla\psi$ in $\eqref{m1n}_2$, one has to take the $L^2$ inner of the equation for $\psi_{j}$ with $\partial_{t}\psi_{j}$. Consequently, we need to control
\begin{equation}\nonumber
\begin{aligned}
&\int H(n)_{j} \partial_t\psi_{j}dx=\frac{d}{dt}\int 2^{-j}H(n)_{j} 2^{j}\psi_{j}dx-\int 2^{-j}\partial_{t}H(n)_{j} 2^{j} \psi_{j}dx,
\end{aligned}
\end{equation} 
and thus $2^{-2j}\|H(n)_{j}\|_{L^2}^2$ is added into the energy functional $\mathcal{L}_{j}(t)$ to avoid the difficulty caused by additional nonlinearities. It is also noted that the dissipation term $\|\partial_{t}\psi_{j}\|_{L^2}^2$ in $\mathcal{H}_{j}(t)$ plays a key role in deriving the dissipation term $2^{-2j}\|H(n)_{j}\|_{L^2}^2$. In addition, we show new composition estimates in Lemma \ref{compositionlp} to deal with the quadratic nonlinear term $H(n)$. With the help of these observations, we are able to establish the expected estimates in the high frequencies region. The details will be presented in Subsection \ref{subsection32}.

\textbf{Optimal time-decay rates}

To show Theorem \ref{theorem14}, we adapt the works by Guo-Wang \cite{guo1} and Xin-Xu \cite{xin1} about optimal time-decay rates for the compressible Navier-Stokes equations without additional smallness assumptions. However, the strategy here is slightly different due to the specific dissipative structures in low frequencies and in high frequencies of our system.

 First, we assume the additional regularity on the initial data \eqref{a3} which is a weaker assumption than the usual $L^1$ assumption, and we show that this regularity is propagated to the solution constructed in the existence theorem (see (\ref{lowd})). Then, we introduce a new time-weighted functional $\mathcal{D}^{\theta}(t)$, defined in (\ref{Xdef}) with the time weight $t^{\theta}$ for $\theta>1+\frac{1}{2}(\frac{d}{2}-\sigma_{0})$. Since it involves the higher decay information $t^{-\theta}$, one needs to bound $\mathcal{D}^{\theta}(t)$ by $t^{\theta-\frac{1}{2}(\frac{d}{2}-\sigma_{0})}$ in which $\frac{1}{2}(\frac{d}{2}-\sigma_{0})$ matches the index of the time-decay rate for $\sigma=\frac{d}{2}$ in $(\ref{decay1})_{1}$. To derive the time-weighted estimates in low frequencies, we multiply the equations of $(n,v,\varphi)$ by $t^{\theta}$ to obtain
\begin{equation}\nonumber
\left\{
\begin{aligned}
&\partial_{t}(t^{\theta}n)-\tilde{\Delta}_1(t^{\theta}n)=\theta t^{\theta-1}n+ t^{\theta} L_{1}+t^{\theta}R_{1},\\
&\partial_{t}( t^{\theta}\varphi)-\tilde{\Delta}_2(t^{\theta}\varphi)=\theta t^{\theta-1}\varphi+ t^{\theta} L_{2}+t^{\theta} R_{2},\\
&\partial_{t} (t^{\theta}v)+\var^{-1} ( t^{\theta}v)=\theta t^{\theta-1}v+t^{\theta} L_{3}+t^{\theta} R_{3}.
\end{aligned}
\right.
\end{equation}
 Here the right-hand terms of the above equations can be estimated by similar arguments as in the low-frequency analysis to prove the global existence. The key point here is to make use of an interpolation inequality in both time and space, e.g., 
\begin{equation}\nonumber
\begin{aligned}
&\int_{0}^{t}\tau^{\theta-1}\|n^{\ell}\|_{\dot{B}^{\frac{d}{2}}_{2,1}}d\tau\lesssim \big{(} \|n^{\ell}\|_{\widetilde{L}^{\infty}_{t}(\dot{B}^{\sigma_{0}}_{2,\infty})} t^{\theta-\frac{1}{2}(\frac{d}{2}-\sigma_{0})} \big{)}^{\frac{2}{\frac{d}{2}+2-\sigma_{0}}}\big{(} \var\|n^{\ell}\|_{L^1_{t}(\dot{B}^{\frac{d}{2}+2}_{2,1})}\big{)}^{\frac{\frac{d}{2}-\sigma_{0}}{\frac{d}{2}+2-\sigma_{0}}}.
\end{aligned}
\end{equation}
For the high frequencies, multiplying the Lyapunov type inequality by $t^{\theta}$ yields
\begin{equation}\nonumber
\begin{aligned}
&\frac{d}{dt}\big{(} t^{\theta} \mathcal{L}_{j}(t) \big{)}+t^{\theta}\mathcal{H}_{j}(t)\lesssim t^{\theta-1} \mathcal{L}_{j}(t)+t^{\theta}(\text{nonlinear terms} )\sqrt{\mathcal{L}_{j}(t)},\quad\quad j\geq J_{\var}-1.
\end{aligned}
\end{equation}
Arguing similarly as in the high-frequency analysis of global existence to control the nonlinear terms and taking advantage of the inequality
\begin{equation}\nonumber
\begin{aligned}
&\int_{0}^{t} \tau^{\theta-1}  \|n\|_{\dot{B}^{\frac{d}{2}+1}_{2,1}}^{h}d\tau\lesssim \big{(} \|n\|_{\widetilde{L}^{\infty}_{t}(\dot{B}^{\frac{d}{2}+1}_{2,1})}^{h} t^{\theta-\frac{1}{2}(\frac{d}{2}-\sigma_{0})}\big{)}^{\frac{2}{\frac{d}{2}+2-\sigma_{0}}} \big{(}  \|n\|_{L^{1}_{t}(\dot{B}^{\frac{d}{2}+1}_{2,1})}^{h} \big{)}^{\frac{\frac{d}{2}-\sigma_{0}}{\frac{d}{2}+2-\sigma_{0}}},
\end{aligned}
\end{equation}
we can prove the high-frequency time-weighted estimates. Adding these time-weighted estimates in both low and high frequencies, we are able to get (\ref{decay1}). Then, the faster time-decay rates in (\ref{decay1})-(\ref{decay3}) are shown by isolating and capturing the damping effects in the equations $\eqref{m1n}_{2}$ and $\eqref{m1n}_{3}$.

\textbf{Relaxation limit}

To justify the relaxation limit, we first establish the global well-posedness for System (\ref{KS}). This can be done by using the reformulation
\begin{equation}\label{reKS2}
\begin{aligned}
&\partial_{t} \rho^\var-\tilde{\Delta}_{*} \rho^\var=\text{nonlinear terms}.
\end{aligned}
\end{equation}
The key observation is that the differential operator $\tilde{\Delta}_{*}:=(P'(\bar{\rho})-\mu a\bar{\rho} (b-\Delta)^{-1} )\Delta$ behaves like the Laplacian $\Delta$ under the condition $P'(\bar{\rho})>\dfrac{\mu a}{b}\bar{\rho}$. Thus, treating (\ref{reKS2}) as the heat diffusion, we can obtain the expected a-priori estimates, see Subsection \ref{subsection35}.

Finally, to justify the strong convergence and derive a convergence rate of the relaxation process, we estimate the error equations
 \begin{equation}\nonumber
 \left\{
 \begin{aligned}
 &\partial_{t}\delta \rho^{\var}-\tilde{\Delta}_{*}\delta \rho^{\var}=\mathcal{G}^{\var}+\text{nonlinear terms},\\
 &\delta\phi^{\var}=(b-\Delta)^{-1}( -\var \partial_{t}\phi^{\var}+a\delta \rho^\var ),
 \end{aligned}
 \right.
 \end{equation}
 with $\delta \rho^\var=\rho^\var-\rho^*$ and $\delta \phi^\var=\phi^\var-\phi^*$ for $(\rho^\var,\phi^\var)$ defined in \eqref{scaling1}.
Thanks to the uniform-in-$\var$ estimates \eqref{XX0} on the effective velocity $\dfrac{1}{\var}(u^\var+\dfrac{1}{\rho^{\var}}\nabla P(\rho^{\var})-\mu\nabla\psi^\var)$ and the time derivative $\partial_{t}\phi^{\var}$, we are able to show $\|\mathcal{G}^{\var}\|_{L^1_t(B^{\frac d2-1}_{2,1})}=\mathcal{O}(\var)$, which plays a key role in deriving the estimate for $\delta \rho^{\var}$ and justify the relaxation limit of global classical solutions from (\ref{HPCvar}) to (\ref{KS}) with the convergence rate $\var$.

\section{Global well-posedness for the HPC system}\label{section3}

In this section, we prove Theorem \ref{theorem11} pertaining to the global well-posedness of classical solutions to the Cauchy problem for the HPC system (\ref{m1n}). The key points are to derive the a-priori estimates for any classical solution $(n,u,\psi)$ uniformly with respect to time and then apply a bootstrap argument. To this end, we assume that we are given a $(n,u,\psi)$ is a smooth solution of \eqref{m1n} and we do the following a-priori assumption:  
\begin{equation}\label{aLinfty}
\begin{aligned}
\mathcal{X}(t)\leq  C_{0}\mathcal{X}_{0}\leq C_{0}\alpha,
\end{aligned}
\end{equation}
where $C_{0}>0$ and $\alpha>0$ are two constants which will be determined later. This assumption will enable us to use freely the composition estimates.
 
\subsection{Low-frequency analysis}\label{subsection31}

In the low-frequency region $ |\xi|\leq 2^{J_{\var}}$, we introduce two new effective unknowns
\begin{equation}\label{vw}
\begin{aligned}
&\varphi:=\psi -(b-\Delta)^{-1}(c_{1}n+H(n) )\quad\text{and}\quad v:= u+\varepsilon\nabla n-\varepsilon\mu \nabla\psi,
\end{aligned}
\end{equation}
so as to obtain the fine structure
\begin{equation}\label{good}
\left\{
    \begin{aligned}
    &\partial_{t}u+u\cdot\nabla u+\frac{1}{\var}v=0,\\
    &\partial_{t}\psi+b\varphi-\Delta \varphi=0.
    \end{aligned}
    \right.
\end{equation}
Making use of (\ref{good}) and substituting
\begin{equation}\label{vw1}
\left\{
    \begin{aligned}
    &\psi=\varphi+(b-\Delta)^{-1}(c_{1}n+H(n) ),\\
    &u=v -\var \nabla (1-\mu c_{1}(b-\Delta)^{-1})n+\var\mu\nabla\varphi+ \var \mu \nabla(b-\Delta)^{-1} H(n)
    \end{aligned}
    \right.
\end{equation}
into (\ref{m1n}), we can verify that $(n,\varphi,v)$ satisfies the equations
\begin{equation}\label{Ga}
\left\{
\begin{aligned}
&\partial_{t}n-\tilde{\Delta}_1n= L_{1}+R_{1},\\
&\partial_{t}\varphi-\tilde{\Delta}_2\varphi= L_{2}+R_{2},\\
&\partial_{t} v+\var^{-1}v= L_{3}+R_{3},\\
\end{aligned}
\right.
\end{equation}
where $\tilde{\Delta}_{i}$ $(i=1,2)$ are the differential operators
\begin{equation}\label{delta1delta2}
\left\{
    \begin{aligned}
        & \tilde{\Delta}_1:=\var c_{0}(1- \mu c_{1} \Delta (b-\Delta)^{-1})\Delta,\\
        &\tilde{\Delta}_2:=-b+(1+\var \mu  c_{0}c_{1} (b-\Delta)^{-1}) \Delta,
    \end{aligned}
    \right.
\end{equation}
$L_{i}$ ($i=1,2,3$) are the linear terms
\begin{equation}\label{L123}
\left\{
    \begin{aligned}
    &L_{1}:=-\var \mu c_{0} \Delta\psi- c_{0}\div v,\\
        &L_{2}:=-\var c_{0}c_{1}(b-\Delta)^{-1}( 1-c_{1}(b-\Delta)^{-1}) \Delta n + c_{0}c_{1} (b-\Delta)^{-1} \div v,\\
        &L_{3}:=\var^2 c_{0}\nabla (1-c_{1}\Delta (b-\Delta)^{-1})\Delta n-\var^2 \mu c_{0}\nabla\Delta \varphi+\var\mu\nabla(b-\Delta)\varphi - \var c_{0}\nabla\div v,
    \end{aligned}
    \right.
\end{equation}
and $R_{i}$ ($i=1,2,3$) are the nonlinear terms
\begin{equation}\label{R123}
\left\{
    \begin{aligned}
&R_{1}:=-u\cdot\nabla n-G(n)\div u-\var \mu c_{0} \Delta(b-\Delta)^{-1} H(n),\\
&R_{2}:= -(b-\Delta)^{-1} (c_{1} R_{1}+\partial_{t} H(n) ),\\
&R_{3}:= \var \nabla R_{1}-u\cdot \nabla u.
    \end{aligned}
    \right.
\end{equation}

Before estimating (\ref{Ga}), we consider the following linear problems 
\begin{equation}\label{linear1}
\left\{
\begin{aligned}
&\partial_{t}f-\tilde{\Delta}_1f=g,\quad\quad x\in\mathbb{R}^{d},\quad t>0,\\
&f(x,0)=f_{0}(x),\quad \quad x\in\mathbb{R}^{d},
\end{aligned}
\right.
\end{equation}
and
\begin{equation}\label{linear2}
\left\{
\begin{aligned}
&\partial_{t}f-\tilde{\Delta}_2f=g,\quad\quad x\in\mathbb{R}^{d},\quad t>0\\
&f(x,0)=f_{0}(x),\quad \quad x\in\mathbb{R}^{d}.
\end{aligned}
\right.
\end{equation}
The following lemma will be very useful to obtain our a-priori estimates.
\begin{lemma}\label{lemma21} Let $r\in[1, \infty]$, $s\in\mathbb{R}$ and $T>0$. Assume $u_{0}\in\dot{B}^{s}_{2,r}$ and $g\in L^{1}(0,T;\dot{B}^{s}_{2,r})$.

(1) If $P'(\bar{\rho})>\dfrac{\mu a}{b}\bar{\rho}$ holds and $f$ is a solution to the problem $(\ref{linear1})$, then there exists a universal constant $C>0$ such that
\begin{equation}\label{diss1}
\begin{aligned}
&\|f\|_{\widetilde{L}^{\infty}_{t}(\dot{B}^{s}_{2,r})}+\var\|f\|_{\widetilde{L}^1_{t}(\dot{B}^{s+2}_{2,r})}\leq C(\|f_{0}\|_{\dot{B}^{s}_{2,r}}+\|g\|_{\widetilde{L}^1_{t}(\dot{B}^{s}_{2,r})}),\quad~ 0<t<T.
\end{aligned}
\end{equation}

(2) If $f$ is a solution to the problem $(\ref{linear2})$, then it holds that
\begin{equation}\label{diss2}
\begin{aligned}
&\|f\|_{\widetilde{L}^{\infty}_{t}(\dot{B}^{s}_{2,r})}+\|f\|_{\widetilde{L}^1_{t}(\dot{B}^{s}_{2,r}\cap\dot{B}^{s+2}_{2,r})}\leq C(\|f_{0}\|_{\dot{B}^{s}_{2,r}}+\|g\|_{\widetilde{L}^1_{t}(\dot{B}^{s}_{2,r})}),\quad 0<t<T.
\end{aligned}
\end{equation}
\end{lemma}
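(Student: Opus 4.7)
The plan is to reduce $(\ref{diss1})$ and $(\ref{diss2})$ to dyadic block energy estimates in the standard Littlewood-Paley fashion, exploiting the fact that $-\tilde{\Delta}_1$ and $-\tilde{\Delta}_2$ are Fourier multipliers of parabolic type whose symbols are uniformly coercive under the stated assumptions.

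First I would perform a spectral analysis of the two operators. Writing $-\tilde{\Delta}_i$ as a Fourier multiplier with symbol $M_i(\xi)$, a direct computation (using, for example, the identity $\Delta(b-\Delta)^{-1}=-1+b(b-\Delta)^{-1}$) yields a rational expression in $|\xi|^2$ for each $M_i$. Under the stability condition $P'(\bar\rho)>a\mu\bar\rho/b$, which is equivalent to $b>\mu c_1$, one verifies that $M_1(\xi)\gtrsim \var|\xi|^2$ and $M_2(\xi)\gtrsim b+|\xi|^2$ for all $\xi\in\mathbb{R}^d$, with implicit constants depending only on $c_0,c_1,\mu,b$. The role of the stability inequality here is to prevent the contribution stemming from the $-\mu c_1 (b-\Delta)^{-1}$-type term inside $\tilde{\Delta}_1$ from cancelling the Laplacian at low frequencies.

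Next, I would localize in frequency: apply $\dot{\Delta}_j$ to each of $(\ref{linear1})$ and $(\ref{linear2})$, test against $f_j:=\dot{\Delta}_j f$ in $L^2$, and invoke Plancherel together with the quasi-orthogonality property $|\xi|\sim 2^{j}$ on the Fourier support of $f_j$. For $(\ref{linear1})$ this yields, with some $c>0$ independent of $j$ and $\var$,
\begin{equation*}
\tfrac{1}{2}\tfrac{d}{dt}\|f_j\|_{L^2}^2 + c\var 2^{2j}\|f_j\|_{L^2}^2\leq \|g_j\|_{L^2}\|f_j\|_{L^2},
\end{equation*}
and the analogous inequality for $(\ref{linear2})$ with $c\var 2^{2j}$ replaced by $c(b+2^{2j})$. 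Dividing by $\|f_j\|_{L^2}$ and integrating in time produces the block bounds
\begin{equation*}
\|f_j\|_{L^\infty_t(L^2)}+c\var 2^{2j}\|f_j\|_{L^1_t(L^2)}\lesssim \|\dot{\Delta}_j f_0\|_{L^2}+\|g_j\|_{L^1_t(L^2)},
\end{equation*}
and its counterpart with $c(b+2^{2j})$ in place of $c\var 2^{2j}$. Multiplying each block bound by $2^{js}$ and taking the $\ell^r(\mathbb{Z})$ norm in $j$ then delivers exactly $(\ref{diss1})$ and $(\ref{diss2})$ respectively, in line with the Chemin-Lerner norms defined in Section \ref{section6}.

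The main obstacle I expect is the spectral lower bound for $M_1$: it is precisely the stability assumption $b>\mu c_1$ that ensures $-\tilde{\Delta}_1$ stays positive at all frequencies and behaves like $\var|\xi|^2$ uniformly, and obtaining clean explicit constants requires carefully reducing $\tilde{\Delta}_1$ to a manageable rational symbol so that the sign-indefinite piece coming from $(b-\Delta)^{-1}$ is absorbed by the Laplacian part. Once this coercivity is secured, the remaining steps are routine dyadic heat-type arguments.
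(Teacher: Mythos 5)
Your proof is correct and follows essentially the same route as the paper: you express $-\tilde{\Delta}_i$ as a positive Fourier multiplier, use the stability condition $b>\mu c_1$ (equivalent to $P'(\bar\rho)>a\mu\bar\rho/b$) to get the coercive lower bounds $M_1(\xi)\gtrsim\var|\xi|^2$ and $M_2(\xi)\gtrsim b+|\xi|^2$ via Plancherel, and then run the standard localized-in-frequency energy estimate and $\ell^r$ summation. This is precisely the argument the authors sketch (they compute $-\int\tilde{\Delta}_i f\,f\,dx$ in Fourier variables, deduce the dissipative bounds (3.14)--(3.15), and invoke dyadic heat-type estimates), so there is nothing to add.
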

\begin{proof}
Under the assumption $P'(\bar{\rho})>\dfrac{\mu a}{b}\bar{\rho}$, it can be verified with the Plancherel formula that $\tilde{\Delta}_1$ satisfies the dissipative property
\begin{equation}\label{dis1}
\begin{aligned}
&-\int \tilde{\Delta}_1f\,fdx=\var c_{0}\int \big(1-\mu c_{1}b^{-1}+\frac{\mu c_{1}}{b+|\xi|^2}\big)|\xi|^2|\mathcal{F}(f)|^2d\xi \geq \var c_{0}( 1-\mu c_{1}b^{-1})\|\nabla f\|_{L^2}^2.
\end{aligned}
\end{equation}
 In addition, one has
\begin{equation}\label{dis2}
\begin{aligned}
&-\int \tilde{\Delta}_2f\,fdx=\int \big( b+(1+\frac{\var \mu c_{0}c_{1}}{b+|\xi|^2})|\xi|^2\big)|\mathcal{F}(f)|^2dx \geq b\|f\|_{L^2}^2+\|\nabla f\|_{L^2}^2.
\end{aligned}
\end{equation}
Thus, performing localized-in-frequency energy estimates, making use of \eqref{dis1}-\eqref{dis2} and summing these over $j\in\mathbb{Z}$ with suitable weights, we can show (\ref{diss1})-(\ref{diss2}) by using \eqref{ABmu} and the definition of $c_1$. For brevity, we omit the details.
\end{proof}

The key point in the low frequencies region is the following a-priori estimates of $(n,\varphi,v)$.
\begin{lemma}\label{lemma22} 
Let $T>0$ be any given time,  $J_{\var}=[-\log_2{\var}]+k$ for a sufficiently small constant $k$, and $(n,u,\psi)$ for $t\in(0,T)$ be the solution of System \eqref{m1n} with the initial data $(n_{0},u_{0},\phi_{0})$. Then, it holds under the assumptions {\rm(\ref{ABmu})} and {\rm(\ref{aLinfty})} that
\begin{equation}\label{XLvw}
\begin{aligned}
&\|(n,\varphi,v)\|_{\widetilde{L}^{\infty}_{t}(\dot{B}^{\frac{d}{2}}_{2,1})}^{\ell}+\var\|n\|_{L^{1}_{t}(\dot{B}^{\frac{d}{2}+2}_{2,1})}^{\ell}+\|\varphi\|_{L^{1}_{t}(\dot{B}^{\frac{d}{2}}_{2,1}\cap\dot{B}^{\frac{d}{2}+2}_{2,1})}^{\ell}+\frac{1}{\var}\|v\|_{L^{1}_{t}(\dot{B}^{\frac{d}{2}}_{2,1})}^{\ell}\\
&\quad\leq C\big(\mathcal{X}_{0}+\mathcal{X}^2(t) \big),\quad 0<t<T,
\end{aligned}
\end{equation}
where $\mathcal{X}(t)$, $\mathcal{X}_{0}$ and $(\varphi,v)$ are defined by \eqref{Xdef}, \eqref{a1} and \eqref{vw1}, respectively.
\end{lemma}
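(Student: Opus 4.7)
The plan is to derive decoupled estimates for the three equations in \eqref{Ga}, combine them, and absorb the linear coupling terms $L_{i}$ into the left-hand side using the smallness provided by the choice of $J_{\var}$. First, applying Lemma~\ref{lemma21} with $s=\frac{d}{2}$ and $r=1$ to $\eqref{Ga}_{1}$ and $\eqref{Ga}_{2}$, and then restricting to the low frequencies $j\leq J_{\var}$, yields
\begin{align*}
&\|n\|_{\widetilde{L}^{\infty}_{t}(\dot{B}^{\frac{d}{2}}_{2,1})}^{\ell}+\var\|n\|_{L^{1}_{t}(\dot{B}^{\frac{d}{2}+2}_{2,1})}^{\ell}\lesssim \|n_{0}\|_{\dot{B}^{\frac{d}{2}}_{2,1}}^{\ell}+\|L_{1}+R_{1}\|_{L^{1}_{t}(\dot{B}^{\frac{d}{2}}_{2,1})}^{\ell},\\
&\|\varphi\|_{\widetilde{L}^{\infty}_{t}(\dot{B}^{\frac{d}{2}}_{2,1})}^{\ell}+\|\varphi\|_{L^{1}_{t}(\dot{B}^{\frac{d}{2}}_{2,1}\cap\dot{B}^{\frac{d}{2}+2}_{2,1})}^{\ell}\lesssim \|\varphi_{0}\|_{\dot{B}^{\frac{d}{2}}_{2,1}}^{\ell}+\|L_{2}+R_{2}\|_{L^{1}_{t}(\dot{B}^{\frac{d}{2}}_{2,1})}^{\ell},
\end{align*}
while a standard localized $L^{2}$ energy estimate on the purely damped equation $\eqref{Ga}_{3}$ gives
\begin{equation*}
\|v\|_{\widetilde{L}^{\infty}_{t}(\dot{B}^{\frac{d}{2}}_{2,1})}^{\ell}+\var^{-1}\|v\|_{L^{1}_{t}(\dot{B}^{\frac{d}{2}}_{2,1})}^{\ell}\lesssim \|v_{0}\|_{\dot{B}^{\frac{d}{2}}_{2,1}}^{\ell}+\|L_{3}+R_{3}\|_{L^{1}_{t}(\dot{B}^{\frac{d}{2}}_{2,1})}^{\ell}.
\end{equation*}

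Next, I handle the initial data and the nonlinear remainders by standard Besov product and composition laws, combined with the a-priori assumption \eqref{aLinfty}. Using the definitions \eqref{vw}, the embedding $\dot{B}^{\frac{d}{2}}_{2,1}\hookrightarrow L^{\infty}$, the uniform boundedness of $(b-\Delta)^{-1}$ on low-frequency blocks, the quadratic vanishing $H(0)=H'(0)=0$ (so that $\|H(n_{0})\|_{\dot{B}^{\frac{d}{2}}_{2,1}}\lesssim \|n_{0}\|_{\dot{B}^{\frac{d}{2}}_{2,1}}^{2}$), and the elementary bound $\var 2^{j}\leq 2^{k+1}$ valid for $j\leq J_{\var}$, I will verify that $\|v_{0}\|_{\dot{B}^{\frac{d}{2}}_{2,1}}^{\ell}+\|\varphi_{0}\|_{\dot{B}^{\frac{d}{2}}_{2,1}}^{\ell}\leq C(\mathcal{X}_{0}+\mathcal{X}_{0}^{2})$. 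The quadratic source terms $R_{1}$ and $R_{3}$ are then controlled via Moser-type product estimates on $u\cdot\nabla n$, $G(n)\div u$ and $u\cdot\nabla u$, a composition estimate for $H(n)$, and the smoothing of $(b-\Delta)^{-1}$, producing $\mathcal{O}(\mathcal{X}^{2}(t))$ bounds. The only real subtlety is the $\partial_{t}H(n)$ contribution inside $R_{2}$: I will eliminate the time derivative by substituting $\partial_{t}n=-u\cdot\nabla n-c_{0}\div u-G(n)\div u$ from $\eqref{m1n}_{1}$, and then treat the resulting purely spatial quadratic expression through the same product/composition laws, using $H'(n)=\mathcal{O}(n)$.

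The heart of the argument, and the main obstacle, lies in absorbing the linear sources $L_{1},L_{2},L_{3}$. Each $L_{i}$ decomposes into a $v$-dependent piece ($\div v$ or $\nabla\div v$), which thanks to $\var 2^{j}\leq 2^{k+1}$ is bounded by $2^{k+1}\cdot\var^{-1}\|v\|_{L^{1}_{t}(\dot{B}^{\frac{d}{2}}_{2,1})}^{\ell}$, plus $\var$- or $\var^{2}$-weighted higher derivatives of $n$ and $\varphi$, optionally composed with $(b-\Delta)^{-1}$. Using once more the bound $\var 2^{j}\leq 2^{k+1}$ and the uniform boundedness of $(b-\Delta)^{-1}$, each such piece is dominated by $2^{k+1}$ times a norm already present on the left-hand side; for instance, writing $\psi=\varphi+(b-\Delta)^{-1}(c_{1}n+H(n))$ one gets
\begin{equation*}
\|\var\mu c_{0}\Delta\psi\|_{L^{1}_{t}(\dot{B}^{\frac{d}{2}}_{2,1})}^{\ell}\lesssim 2^{k+1}\big(\|\varphi\|_{L^{1}_{t}(\dot{B}^{\frac{d}{2}+2}_{2,1})}^{\ell}+\|n\|_{L^{1}_{t}(\dot{B}^{\frac{d}{2}}_{2,1})}^{\ell}\big)+\mathcal{X}^{2}(t).
\end{equation*}
Choosing the integer $k$ negative enough so that $2^{k+1}$ is sufficiently small, all such contributions are absorbed after summing the three dissipative estimates with appropriate weights, yielding \eqref{XLvw}. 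The delicate bookkeeping is to check term-by-term that every $\var$-weighted derivative appearing in $L_{2}$ and $L_{3}$ combined with $(b-\Delta)^{-1}$ remains controlled by a single piece of $\mathcal{X}_{L}(t)$, which is exactly what the threshold choice \eqref{J0} guarantees.
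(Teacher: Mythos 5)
Your high-level plan matches the paper's: apply Lemma~\ref{lemma21} to $\eqref{Ga}_{1}$--$\eqref{Ga}_{2}$, use the damping in $\eqref{Ga}_{3}$, bound the nonlinear sources by product/composition laws (with the $\partial_t n$ substitution for $\partial_t H(n)$), and absorb the linear couplings. However, your concrete absorption estimate contains a genuine error that reveals a misreading of the mechanism at play.

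You write
\begin{equation*}
\|\var\mu c_{0}\Delta\psi\|_{L^{1}_{t}(\dot{B}^{\frac{d}{2}}_{2,1})}^{\ell}\lesssim 2^{k+1}\big(\|\varphi\|_{L^{1}_{t}(\dot{B}^{\frac{d}{2}+2}_{2,1})}^{\ell}+\|n\|_{L^{1}_{t}(\dot{B}^{\frac{d}{2}}_{2,1})}^{\ell}\big)+\mathcal{X}^{2}(t).
\end{equation*}
The quantity $\|n\|_{L^{1}_{t}(\dot{B}^{\frac{d}{2}}_{2,1})}^{\ell}$ is \emph{not} controlled by the left-hand side of \eqref{XLvw}: the only $n$-dissipation available there is $\var\|n\|_{L^{1}_{t}(\dot{B}^{\frac{d}{2}+2}_{2,1})}^{\ell}$, and for low frequencies $2^{j\frac{d}{2}}\|n_j\|$ can be arbitrarily much larger than $\var 2^{j(\frac{d}{2}+2)}\|n_j\|$ (indeed $2^{j\frac{d}{2}}=2^{-2j}\cdot 2^{j(\frac{d}{2}+2)}$, and $2^{-2j}\to\infty$ as $j\to-\infty$). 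So multiplying by $2^{k+1}$ does not help: no choice of $k$ makes this term absorbable.

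The point you are missing is that after substituting $\psi=\varphi+(b-\Delta)^{-1}(c_{1}n+H(n))$ into $\var\mu c_{0}\Delta\psi$, the $n$-dependent linear piece is $\var\mu c_{0}c_{1}\Delta(b-\Delta)^{-1}n$, whose symbol $\frac{|\xi|^{2}}{b+|\xi|^{2}}\leq\frac{|\xi|^{2}}{b}$ gives the sharp bound $\var\mu c_{0}c_{1}\|\Delta(b-\Delta)^{-1}n\|^{\ell}_{\dot B^{\frac d2}_{2,1}}\leq \frac{\var\mu c_{0}c_{1}}{b}\|n\|^{\ell}_{\dot B^{\frac d2+2}_{2,1}}$. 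This \emph{is} comparable to the dissipation, but the prefactor $\mu c_{1}/b=\mu a\bar\rho/(bP'(\bar\rho))$ is a fixed constant independent of $k$; it can be absorbed only because the stability condition \eqref{ABmu} ensures $\mu c_{1}/b<1$. This is exactly why the paper folds the $n$-coupling into the operator $\tilde{\Delta}_{1}$ (so that $L_{1}$ contains only $\Delta\varphi$ and $\div v$) and isolates the dissipativity of $\tilde{\Delta}_{1}$ in Lemma~\ref{lemma21} under \eqref{ABmu}. The threshold parameter $k$ is needed solely to absorb the $\varphi$- and $v$-cross terms, which come with the $\var 2^{j}$ scaling; it cannot be used to absorb a coupling whose coefficient has no $\var$ or $2^{J_\var}$ dependence. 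Your write-up conflates these two distinct absorption mechanisms, which is the core subtlety of the low-frequency analysis for this system.
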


\begin{proof}
First, by $(\ref{Ga})_{1}$ and (\ref{diss1}), we have 
\begin{equation}\label{221}
\begin{aligned}
&\|n\|_{\widetilde{L}^{\infty}_{t}(\dot{B}^{\frac{d}{2}}_{2,1})}^{\ell}+\var\|n\|_{L^{1}_{t}(\dot{B}^{\frac{d}{2}+2}_{2,1})}^{\ell}\\
&\quad\lesssim \|n_{0}\|_{\dot{B}^{\frac{d}{2}}_{2,1}}^{\ell}+\|L_{1}\|_{L^{1}_{t}(\dot{B}^{\frac{d}{2}}_{2,1})}^{\ell}+\|R_{1}\|_{L^1_{t}(\dot{B}^{\frac{d}{2}}_{2,1})}^{\ell}\\
&\quad\lesssim \|n_{0}\|_{\dot{B}^{\frac{d}{2}}_{2,1}}^{\ell}+\var\|\varphi\|_{L^{1}_{t}(\dot{B}^{\frac{d}{2}+2}_{2,1})}^{\ell}+\|v\|_{L^{1}_{t}(\dot{B}^{\frac{d}{2}+1}_{2,1})}^{\ell}+\|R_{1}\|_{L^1_{t}(\dot{B}^{\frac{d}{2}}_{2,1})}^{\ell}\\
&\quad\lesssim \|n_{0}\|_{\dot{B}^{\frac{d}{2}}_{2,1}}^{\ell}+\var\|\varphi\|_{L^{1}_{t}(\dot{B}^{\frac{d}{2}+1}_{2,1}\cap\dot{B}^{\frac{d}{2}+3}_{2,1})}^{\ell}+\|v\|_{L^{1}_{t}(\dot{B}^{\frac{d}{2}+1}_{2,1})}^{\ell}+\|R_{1}\|_{L^1_{t}(\dot{B}^{\frac{d}{2}}_{2,1})}^{\ell},
\end{aligned}
\end{equation}
where in the last inequality one has used the embedding $\dot{B}^{\frac{d}{2}+1}_{2,1}\cap\dot{B}^{\frac{d}{2}+3}_{2,1}\hookrightarrow \dot{B}^{\frac{d}{2}+2}_{2,1}$. It is clear that the Bessel potential $(b-\Delta)^{-1}$ satisfies
\begin{equation}\label{bessel}
\begin{aligned}
&\|(b-\Delta)^{-1}f\|_{\dot{B}^{s}_{p,r}\cap \dot{B}^{s+2}_{p,r}}\lesssim \|f\|_{\dot{B}^{s}_{p,r}},\quad \forall f\in \dot{B}^{s}_{p,r}, \quad s\in\mathbb{R},\quad p,r\in[1,\infty].
\end{aligned}
\end{equation}
This combined with $(\ref{Ga})_{2}$, (\ref{diss1}) and (\ref{221}) implies
\begin{equation}\label{222}
\begin{aligned}
&\|\varphi\|_{\widetilde{L}^{\infty}_{t}(\dot{B}^{\frac{d}{2}}_{2,1})}^{\ell}+\|\varphi\|_{L^{1}_{t}(\dot{B}^{\frac{d}{2}}_{2,1}\cap \dot{B}^{\frac{d}{2}+2}_{2,1})}^{\ell}\\
&\quad\lesssim \|\varphi|_{t=0}\|_{\dot{B}^{\frac{d}{2}}_{2,1}}^{\ell}+\|L_{2}\|_{L^1_{t}(\dot{B}^{\frac{d}{2}}_{2,1})}^{\ell}+\|R_{2}\|_{L^1_{t}(\dot{B}^{\frac{d}{2}}_{2,1})}^{\ell}\\
&\quad\lesssim \|\varphi|_{t=0}\|_{\dot{B}^{\frac{d}{2}}_{2,1}}^{\ell}+\var\|n\|_{L^1_{t}(\dot{B}^{\frac{d}{2}+2}_{2,1})}^{\ell}+\|v\|_{L^1_{t}(\dot{B}^{\frac{d}{2}+1}_{2,1})}^{\ell}+\|R_{2}\|_{L^1_{t}(\dot{B}^{\frac{d}{2}}_{2,1})}^{\ell}\\
&\quad\lesssim \|(n,\varphi)|_{t=0}\|_{\dot{B}^{\frac{d}{2}}_{2,1}}^{\ell}+\var\|\varphi\|_{L^{1}_{t}(\dot{B}^{\frac{d}{2}+1}_{2,1}\cap \dot{B}^{\frac{d}{2}+3}_{2,1})}^{\ell}+\|v\|_{L^{1}_{t}(\dot{B}^{\frac{d}{2}+1}_{2,1})}^{\ell}+\|(R_{1},R_{2})\|_{L^1_{t}(\dot{B}^{\frac{d}{2}}_{2,1})}^{\ell}.
\end{aligned}
\end{equation}
 Similarly, it also holds by $(\ref{Ga})_{3}$ and (\ref{222}) that
\begin{equation}\label{223}
\begin{aligned}
&\|v\|_{\widetilde{L}^{\infty}_{t}(\dot{B}^{\frac{d}{2}}_{2,1})}^{\ell}+\frac{1}{\var}\|v\|_{L^{1}_{t}(\dot{B}^{\frac{d}{2}}_{2,1})}^{\ell}\\
&\quad\lesssim \|v|_{t=0}\|_{\dot{B}^{\frac{d}{2}}_{2,1}}^{\ell}+\|L_{3}\|_{L^{1}_{t}(\dot{B}^{\frac{d}{2}}_{2,1})}^{\ell}+\|R_{3}\|_{L^1_{t}(\dot{B}^{\frac{d}{2}}_{2,1})}^{\ell}\\
&\quad\lesssim \|v|_{t=0}\|_{\dot{B}^{\frac{d}{2}}_{2,1}}^{\ell}+\var^2\|n\|_{L^1_{t}(\dot{B}^{\frac{d}{2}+3}_{2,1})}^{\ell}+\var\|\varphi\|_{L^1_{t}(\dot{B}^{\frac{d}{2}+1}_{2,1}\cap\dot{B}^{\frac{d}{2}+3}_{2,1})}^{\ell}+\var\|v\|_{L^1_{t}(\dot{B}^{\frac{d}{2}+2}_{2,1})}^{\ell}+\|R_{3}\|_{L^1_{t}(\dot{B}^{\frac{d}{2}}_{2,1})}^{\ell}.
\end{aligned}
\end{equation}
Combining (\ref{222})-(\ref{223}) together and using (\ref{lh}), we have
\begin{equation}\label{224}
\begin{aligned}
&\|(n,\varphi,v)\|_{\widetilde{L}^{\infty}_{t}(\dot{B}^{\frac{d}{2}}_{2,1})}^{\ell}+\var\|n\|_{L^{1}_{t}(\dot{B}^{\frac{d}{2}+2}_{2,1})}^{\ell}+\|\varphi\|_{L^{1}_{t}(\dot{B}^{\frac{d}{2}}_{2,1}\cap \dot{B}^{\frac{d}{2}+2}_{2,1})}^{\ell}+\frac{1}{\var}\|v\|_{L^{1}_{t}(\dot{B}^{\frac{d}{2}}_{2,1})}^{\ell}\\
&\quad\lesssim \|(n,\varphi,v)|_{t=0}\|_{\dot{B}^{\frac{d}{2}}_{2,1}}^{\ell}+\var^2\|n\|_{L^1_{t}(\dot{B}^{\frac{d}{2}+3}_{2,1})}^{\ell}+\var\|\varphi\|_{L^{1}_{t}(\dot{B}^{\frac{d}{2}+1}_{2,1}\cap \dot{B}^{\frac{d}{2}+3}_{2,1})}^{\ell}\\
&\quad\quad+\|v\|_{L^1_{t}(\dot{B}^{\frac{d}{2}+1}_{2,1})}^{\ell}+\var\|v\|_{L^1_{t}(\dot{B}^{\frac{d}{2}+2}_{2,1})}^{\ell}+\|(R_{1},R_{2},R_{3})\|_{L^1_{t}(\dot{B}^{\frac{d}{2}}_{2,1})}^{\ell}\\
&\quad\leq  \|(n,\varphi,v)|_{t=0}\|_{\dot{B}^{\frac{d}{2}}_{2,1}}^{\ell}+\var^2 2^{J_{\var}}\|n\|_{L^{1}_{t}(\dot{B}^{\frac{d}{2}+2}_{2,1})}^{\ell}+\var2^{J_{\var}}\|\varphi\|_{L^{1}_{t}(\dot{B}^{\frac{d}{2}}_{2,1}\cap\dot{B}^{\frac{d}{2}+2}_{2,1})}^{\ell}\\
&\quad\quad+\var 2^{J_{\var}}(1+ \var 2^{J_{\var}})\frac{1}{\var}\|v\|_{L^{1}_{t}(\dot{B}^{\frac{d}{2}}_{2,1})}^{\ell}+\|(R_{1},R_{2},R_{3})\|_{L^1_{t}(\dot{B}^{\frac{d}{2}}_{2,1})}^{\ell}\\
\end{aligned}
\end{equation}
If we set $J_{\var}=[-\log_2{\var}]+k$, then one concludes from (\ref{224}) and $\var 2^{J_{\var}}\sim 2^{k}$ that there is a sufficiently small constant $k$ such that
 \begin{equation}\label{226}
\begin{aligned}
&\|(n,\varphi,v)\|_{\widetilde{L}^{\infty}_{t}(\dot{B}^{\frac{d}{2}}_{2,1})}^{\ell}+\var\|n\|_{L^{1}_{t}(\dot{B}^{\frac{d}{2}+2}_{2,1})}^{\ell}+\|\varphi\|_{L^{1}_{t}(\dot{B}^{\frac{d}{2}}_{2,1}\cap \dot{B}^{\frac{d}{2}+2}_{2,1})}^{\ell}+\frac{1}{\var}\|v\|_{L^{1}_{t}(\dot{B}^{\frac{d}{2}}_{2,1})}^{\ell}\\
&\quad\lesssim \|(n,\varphi,v)|_{t=0}\|_{\dot{B}^{\frac{d}{2}}_{2,1}}^{\ell}+\|(R_{1},R_{2},R_{3})\|_{L^1_{t}(\dot{B}^{\frac{d}{2}}_{2,1})}^{\ell}.
\end{aligned}
\end{equation}
By (\ref{lh}), we obtain for $p,r\in[1,\infty]$, $s\in\mathbb{R}$ and $s'>0$ that
\begin{equation}\label{hl}
\begin{aligned}
&\|f^{\ell}\|_{\dot{B}^{s}_{p,r}}\leq \|f\|_{\dot{B}^{s}_{p,r}}^{\ell}\lesssim\var^{-s'}\|f\|_{\dot{B}^{s-s'}_{p,r}}^{\ell},\quad\quad\|f^{h}\|_{\dot{B}^{s}_{p,r}}\leq \|f\|_{\dot{B}^{s}_{p,r}}^{h}\leq \var^{s'}\|f\|_{\dot{B}^{s+s'}_{p,r}}^{h}.
\end{aligned}
\end{equation} 
 According to (\ref{bessel}), (\ref{hl}) and (\ref{F1}), the first term on the right-hand side of (\ref{226}) satisfies
\begin{equation}\label{inv}
\begin{aligned}
& \|(n,\varphi,v)|_{t=0}\|_{\dot{B}^{\frac{d}{2}}_{2,1}}^{\ell}\lesssim  \|(n_{0},u_{0},\psi_{0})\|_{\dot{B}^{\frac{d}{2}}_{2,1}}^{\ell}+\var\|n_{0}\|_{\dot{B}^{\frac{d}{2}+1}_{2,1}}^{h}.
\end{aligned}
\end{equation}
The nonlinear terms $R_{i}$ ($i=1,2,3$) can be estimated as follows. By (\ref{uv2}) and standard interpolations, one has
\begin{equation}\nonumber
\begin{aligned}
&\|u\cdot\nabla n\|_{L^1_{t}(\dot{B}^{\frac{d}{2}}_{2,1})}^{\ell}\\
&\quad\lesssim \|u\|_{L^2_{t}(\dot{B}^{\frac{d}{2}}_{2,1})}\|n\|_{L^2_{t}(\dot{B}^{\frac{d}{2}+1}_{2,1})}\\
&\quad\lesssim \var^{-\frac{1}{2}} \big(\|u\|_{L^2_{t}(\dot{B}^{\frac{d}{2}}_{2,1})}^{\ell}+\|u\|_{L^2_{t}(\dot{B}^{\frac{d}{2}}_{2,1})}^{h}\big) \Big( \big( \|n\|_{L^{\infty}_{t}(\dot{B}^{\frac{d}{2}}_{2,1})}^{\ell} \big)^{\frac{1}{2}} \big( \var\|n\|_{L^1_{t}(\dot{B}^{\frac{d}{2}+2}_{2,1})}^{\ell} \big)^{\frac{1}{2}}\\
&\quad\quad+ \big( \var\|n\|_{L^{\infty}_{t}(\dot{B}^{\frac{d}{2}+1}_{2,1})}^{h} \big)^{\frac{1}{2}} \big(\|n\|_{L^1_{t}(\dot{B}^{\frac{d}{2}+1}_{2,1})}^{h} \big)^{\frac{1}{2}} \Big)\lesssim \mathcal{X}^2(t).
\end{aligned}
\end{equation}
From (\ref{hl}) and the embedding $\dot{B}^{\frac{d}{2}}_{2,1}\hookrightarrow L^{\infty}$, we also get
\begin{equation}\label{ninfty}
\begin{aligned}
&\|n\|_{L^{\infty}_{t}(L^{\infty})}\lesssim \|n\|_{L^{\infty}_{t}(\dot{B}^{\frac{d}{2}}_{2,1})}\lesssim \|n\|_{L^{\infty}_{t}(\dot{B}^{\frac{d}{2}}_{2,1})}^{\ell}+\var\|n\|_{L^{\infty}_{t}(\dot{B}^{\frac{d}{2}+1}_{2,1})}^{h}\lesssim \mathcal{X}(t),
\end{aligned}
\end{equation}
which together with (\ref{aLinfty}), (\ref{uv2}) and $(\ref{F1})$ yields
\begin{equation}\nonumber
\begin{aligned}
&\|G(n)\div u\|_{L^1_{t}(\dot{B}^{\frac{d}{2}}_{2,1})}^{\ell}\\
&\quad\lesssim \|G(n)\|_{L^{\infty}_{t}(\dot{B}^{\frac{d}{2}}_{2,1})}\|u\|_{L^1_{t}(\dot{B}^{\frac{d}{2}+1}_{2,1})}\\
&\quad\lesssim (         \|n\|_{L^{\infty}_{t}(\dot{B}^{\frac{d}{2}}_{2,1})}^{\ell}+\var \|n\|_{L^{\infty}_{t}(\dot{B}^{\frac{d}{2}+1}_{2,1})}^{h})(\|u\|_{L^1_{t}(\dot{B}^{\frac{d}{2}+1}_{2,1})}^{\ell}+\|u\|_{L^1_{t}(\dot{B}^{\frac{d}{2}+1}_{2,1})}^{h})\lesssim \mathcal{X}^2(t).
\end{aligned}
\end{equation}
In addition, we derive from (\ref{ninfty}) and the composition estimate (\ref{q1}) that
\begin{equation}\label{R12}
\begin{aligned}
&\var\|H(n)\|_{L^1_{t}(\dot{B}^{\frac{d}{2}+2}_{2,1})}^{\ell}\lesssim (         \|n\|_{L^{\infty}_{t}(\dot{B}^{\frac{d}{2}}_{2,1})}^{\ell}+\var \|n\|_{L^{\infty}_{t}(\dot{B}^{\frac{d}{2}+1}_{2,1})}^{h})(\var\|n\|_{L^1_{t}(\dot{B}^{\frac{d}{2}+2}_{2,1})}^{\ell}+\|n\|_{L^1_{t}(\dot{B}^{\frac{d}{2}+1}_{2,1})}^{h})\lesssim \mathcal{X}^2(t).
\end{aligned}
\end{equation}
It thus holds that
\begin{equation}\label{R1}
\begin{aligned}
        &\|R_{1}\|_{L^1_{t}(\dot{B}^{\frac{d}{2}}_{2,1})}^{\ell}\lesssim \mathcal{X}^2(t).
\end{aligned}
\end{equation}
Similarly to (\ref{R1}), it follows from $\eqref{m1n}_{1}$, (\ref{ninfty}), (\ref{uv2}) and (\ref{F1}) that
\begin{equation}\nonumber
\begin{aligned}
&\|\partial_{t}H(n)\|_{L^1_{t}(\dot{B}^{\frac{d}{2}}_{2,1})}^{\ell}\\
&\quad\lesssim \|u\cdot \nabla H(n)\|_{L^1_{t}(\dot{B}^{\frac{d}{2}}_{2,1})}^{\ell}+\|  \big( H(n)+c_{0}H'(n)+G(n)H'(n) \big)\div u\|_{L^1_{t}(\dot{B}^{\frac{d}{2}}_{2,1})}^{\ell}\lesssim \mathcal{X}^2(t).
\end{aligned}
\end{equation}
The above estimate and (\ref{R1}) imply
\begin{equation}\label{R2}
\begin{aligned}
&\|R_{2}\|_{L^1_{t}(\dot{B}^{\frac{d}{2}}_{2,1})}^{\ell}\lesssim \|R_{1}\|_{L^1_{t}(\dot{B}^{\frac{d}{2}}_{2,1})}^{\ell}+\|\partial_{t}H(n)\|_{L^1_{t}(\dot{B}^{\frac{d}{2}}_{2,1})}^{\ell}\lesssim\mathcal{X}^2(t).
\end{aligned}
\end{equation}
Finally, one deduces by (\ref{hl}), (\ref{R1}) and (\ref{uv2}) that
\begin{equation}\label{R3}
\begin{aligned}
&\|R_{3}\|_{L^1_{t}(\dot{B}^{\frac{d}{2}}_{2,1})}^{\ell}\lesssim \|R_{1}\|_{L^1_{t}(\dot{B}^{\frac{d}{2}}_{2,1})}^{\ell}+(\|u\|_{L^{\infty}_{t}(\dot{B}^{\frac{d}{2}}_{2,1})}^{\ell}+\var\|u\|_{L^{\infty}_{t}(\dot{B}^{\frac{d}{2}+1}_{2,1})}^{h}) \|u\|_{L^1_{t}(\dot{B}^{\frac{d}{2}+1}_{2,1})}\lesssim\mathcal{X}^2(t).
\end{aligned}
\end{equation}
Substituting the above estimates (\ref{inv})-(\ref{R3}) into (\ref{226}), we end up with (\ref{XLvw}). The proof of Lemma \ref{lemma22} is complete.
\end{proof}

The estimates of $(n,\varphi,v)$ implies the expected low-frequency estimates on the original unknowns $(n,\psi,u)$. This is illustrated by the following lemma.
\begin{lemma}\label{lemma23}
Let $T>0$ be any given time,  $J_{\var}=[-\log_2{\var}]+k$ for a sufficiently small constant $k$, and $(n,u,\psi)$ for $t\in(0,T)$ be the solution of System \eqref{m1n} with the initial data $(n_{0},u_{0},\phi_{0})$. Then, under the assumptions {\rm(\ref{ABmu})} and {\rm(\ref{aLinfty})}, we have
\begin{equation}\label{XL}
\begin{aligned}
&\|(u,\psi)\|_{\widetilde{L}^{\infty}_{t}(\dot{B}^{\frac{d}{2}}_{2,1})}^{\ell}+\|u\|_{L^{1}_{t}(\dot{B}^{\frac{d}{2}+1}_{2,1})}^{\ell}+\var^{-\frac{1}{2}}\|u\|_{\widetilde{L}^2_{t}(\dot{B}^{\frac{d}{2}}_{2,1})}^{\ell}\\
&\quad\quad+\var\|\psi\|_{L^{1}_{t}(\dot{B}^{\frac{d}{2}+2}_{2,1})}^{\ell}+\|\partial_{t}\psi\|_{L^{1}_{t}(\dot{B}^{\frac{d}{2}}_{2,1})}^{\ell}\\
&\quad\leq C\big( \mathcal{X}_{0}+C\mathcal{X}^2(t) \big),\quad\quad 0<t<T,
\end{aligned}
\end{equation}
where $\mathcal{X}(t)$, $\mathcal{X}_{0}$ and $(\varphi,v)$ are defined by \eqref{Xdef}, \eqref{a1} and \eqref{vw1}, respectively.
\end{lemma}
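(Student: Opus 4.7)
The plan is to translate the estimates on the effective variables $(n,\varphi,v)$ supplied by Lemma \ref{lemma22} into estimates on the original unknowns $(n,u,\psi)$ via the inversion formulas \eqref{vw1} together with a key algebraic identity linking $\partial_t\psi$ to $\varphi$. Throughout, the choice $J_{\var}=[-\log_2 \var]+k$ is essential: it makes $\var\,2^{J_\var} \sim 2^k$, so that the derivative losses in $u = v - \var\nabla n + \var\mu\nabla\psi$ remain bounded in the low-frequency regime via \eqref{hl}.

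The first key step is to derive the identity $\partial_t \psi = -(b-\Delta)\varphi$ by substituting the definition of $\varphi$ into the third equation of \eqref{m1n}: the source $c_1 n + H(n)$ cancels by construction. This immediately yields $\|\partial_t\psi\|_{L^1_t(\dot B^{d/2}_{2,1})}^\ell \lesssim \|\varphi\|_{L^1_t(\dot B^{d/2}_{2,1}\cap\dot B^{d/2+2}_{2,1})}^\ell$, which is controlled by Lemma \ref{lemma22}. The $\widetilde L^\infty_t(\dot B^{d/2}_{2,1})$-bound on $\psi$ is then obtained from $\psi = \varphi + (b-\Delta)^{-1}(c_1 n + H(n))$ via \eqref{bessel}, plus a standard composition estimate on $H(n)$ using \eqref{aLinfty}.

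For $u$, I use $u = v - \var\nabla n + \var\mu\nabla\psi$. The $\widetilde L^\infty_t(\dot B^{d/2}_{2,1})^\ell$ and $L^1_t(\dot B^{d/2+1}_{2,1})^\ell$ bounds follow directly: a derivative applied to a low-frequency function yields a factor of $2^{J_\var} \sim \var^{-1}$ which is precisely absorbed by the prefactor $\var$ of the correctors, while $\var^{-1}\|v\|_{L^1_t(\dot B^{d/2}_{2,1})}^\ell$ and $\var\|n\|_{L^1_t(\dot B^{d/2+2}_{2,1})}^\ell$ are supplied by Lemma \ref{lemma22}. To obtain the auxiliary bound $\var\|\psi\|_{L^1_t(\dot B^{d/2+2}_{2,1})}^\ell$ that I need to close the $u$ estimate (and which also appears directly in the statement), I decompose $\psi$ via \eqref{vw1} and apply \eqref{bessel} at regularity $d/2+2$, so that the known bound on $\var\|n\|_{L^1_t(\dot B^{d/2+2}_{2,1})}^\ell$ is re-used without loss; the nonlinear piece $\var \|H(n)\|_{L^1_t(\dot B^{d/2+2}_{2,1})}^\ell$ is controlled by a product estimate together with $\|n\|_{L^\infty_t L^\infty} \lesssim \mathcal X(t)$ as in \eqref{ninfty}.

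The main obstacle is the $\widetilde L^2$-in-time bound: for $\var^{-1/2}\|u\|_{\widetilde L^2_t(\dot B^{d/2}_{2,1})}^\ell$, I combine the inversion formula with the Chemin--Lerner interpolation $\|f\|_{\widetilde L^2_t(\dot B^{s}_{2,1})} \leq \|f\|_{\widetilde L^\infty_t(\dot B^{s}_{2,1})}^{1/2}\|f\|_{L^1_t(\dot B^{s}_{2,1})}^{1/2}$ applied to each piece. The delicate point is that the powers of $\var$ produced by the interpolation must combine to exactly $\var^{1/2}$, matching the prefactor $\var^{-1/2}$ in the target. For the $v$-piece this uses $\|v\|_{L^1_t(\dot B^{d/2}_{2,1})}^\ell \lesssim \var\mathcal X(t)$; for the $\var\nabla n$ and $\var\nabla \psi$ pieces I interpolate between $\widetilde L^\infty_t(\dot B^{d/2}_{2,1})$ and the weighted $\var\|\cdot\|_{L^1_t(\dot B^{d/2+2}_{2,1})}$ bounds, landing on the midpoint $\dot B^{d/2+1}_{2,1}$ where the natural $\var^{-1/2}$ factor exactly neutralizes the prefactor. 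This precise algebraic balance is not accidental and reflects the spectral picture of Subsection \ref{subsections}, in which $v$ and $\varphi$ are the genuine damped modes while $u$ carries an overdamping behavior.
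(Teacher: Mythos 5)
Your proposal is correct and follows essentially the same route as the paper's proof: invert \eqref{vw1} to recover $u$ and $\psi$ from $(v,\varphi,n)$, exploit the identity $\partial_t\psi=-(b-\Delta)\varphi$ from \eqref{good}$_2$, use the threshold $J_\var$ to absorb the derivative factors of $\var^{-1}$, and obtain the $\widetilde L^2_t$ bound by Chemin--Lerner interpolation with the $\var$-exponents balanced exactly as in \eqref{235}. The only cosmetic difference is that you work with the raw form $u=v-\var\nabla n+\var\mu\nabla\psi$ and then re-expand $\psi$, whereas the paper uses the pre-substituted expression \eqref{vw1}$_2$ directly; the resulting estimates are identical.
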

\begin{proof}
Since $u$ satisfies $(\ref{vw1})_{2}$, applying (\ref{XLvw}), (\ref{bessel}), (\ref{hl}), (\ref{R12}) and (\ref{F1}), we obtain
\begin{equation}\label{231}
\left\{
\begin{aligned}
\|u\|_{\widetilde{L}^{\infty}_{t}(\dot{B}^{\frac{d}{2}}_{2,1})}^{\ell}&\lesssim\|v\|_{\widetilde{L}^{\infty}_{t}(\dot{B}^{\frac{d}{2}}_{2,1})}^{\ell}+\var\|(n,H(n))\|_{\widetilde{L}^{\infty}_{t}(\dot{B}^{\frac{d}{2}+1}_{2,1})}^{\ell}+ \var\|\varphi\|_{\widetilde{L}^{\infty}_{t}(\dot{B}^{\frac{d}{2}+1}_{2,1})}^{\ell}\\
&\lesssim \|(n,v,\varphi)\|_{\widetilde{L}^{\infty}_{t}(\dot{B}^{\frac{d}{2}}_{2,1})}^{\ell}+\var\|n\|_{\widetilde{L}^{\infty}_{t}(\dot{B}^{\frac{d}{2}+1}_{2,1})}^{h}\\
&\lesssim \mathcal{X}_{0}+\mathcal{X}^2(t),\\
\|u\|_{L^{1}_{t}(\dot{B}^{\frac{d}{2}+1}_{2,1})}^{\ell}&\lesssim \|v\|_{L^{1}_{t}(\dot{B}^{\frac{d}{2}+1}_{2,1})}^{\ell}+\var\|(n,H(n))\|_{L^{1}_{t}(\dot{B}^{\frac{d}{2}+2}_{2,1})}^{\ell}+\var\|\varphi\|_{L^{1}_{t}(\dot{B}^{\frac{d}{2}+1}_{2,1})}^{\ell}\\
&\lesssim \frac{1}{\var} \|v\|_{L^{1}_{t}(\dot{B}^{\frac{d}{2}}_{2,1})}^{\ell} +\var\|(n,H(n))\|_{L^{1}_{t}(\dot{B}^{\frac{d}{2}+2}_{2,1})}^{\ell} +\|\varphi\|_{L^{1}_{t}(\dot{B}^{\frac{d}{2}}_{2,1})}^{\ell}\\
&\lesssim \mathcal{X}_{0}+\mathcal{X}^2(t).
\end{aligned}
\right.
\end{equation}
In addition, as $(\ref{good})_{2}$, $(\ref{vw1})_{1}$, (\ref{XLvw}), (\ref{R12}) and (\ref{F1}), it holds that
\begin{equation}\label{232}
\left\{
\begin{aligned}
\|\psi\|_{\widetilde{L}^{\infty}_{t}(\dot{B}^{\frac{d}{2}}_{2,1})}^{\ell}&\lesssim \|\varphi\|_{\widetilde{L}^{\infty}_{t}(\dot{B}^{\frac{d}{2}}_{2,1})}^{\ell}+\|(n,H(n))\|_{\widetilde{L}^{\infty}_{t}(\dot{B}^{\frac{d}{2}}_{2,1})}^{\ell}\\
&\lesssim \mathcal{X}_{0}+\mathcal{X}^2(t),\\
\var\|\psi\|_{L^{1}_{t}(\dot{B}^{\frac{d}{2}+2}_{2,1})}^{\ell}&\lesssim\|\varphi\|_{L^{1}_{t}(\dot{B}^{\frac{d}{2}+2}_{2,1})}^{\ell}+\var\|(n,H(n))\|_{L^{1}_{t}(\dot{B}^{\frac{d}{2}+2}_{2,1})}^{\ell}\\
&\lesssim \mathcal{X}_{0}+\mathcal{X}^2(t),\\
\|\partial_{t}\psi\|_{L^{1}_{t}(\dot{B}^{\frac{d}{2}}_{2,1})}^{\ell}&\lesssim \|\varphi\|_{L^{1}_{t}(\dot{B}^{\frac{d}{2}}_{2,1}\cap \dot{B}^{\frac{d}{2}+2}_{2,1})}^{\ell}\\
&\lesssim \mathcal{X}_{0}+\mathcal{X}^2(t).
\end{aligned}
\right.
\end{equation}
The combination of (\ref{XLvw}), (\ref{232}) and $\var^{-\frac{1}{2}}u=\var^{-\frac{1}{2}}v-\var^{\frac{1}{2}}(\nabla n-\mu\nabla \psi)$ gives rise to
\begin{equation}\label{235}
\begin{aligned}
\var^{-\frac{1}{2}}\|u\|_{\widetilde{L}^2_{t}(\dot{B}^{\frac{d}{2}}_{2,1})}^{\ell}&\lesssim  \var^{-\frac{1}{2}}\|v\|_{\widetilde{L}^{2}_{t}(\dot{B}^{\frac{d}{2}}_{2,1})}^{\ell}+\var^{\frac{1}{2}} \|(n,\psi)\|_{\widetilde{L}^2_{t}(\dot{B}^{\frac{d}{2}+1}_{2,1})}^{\ell}\\
&\lesssim \big ( \|v\|_{\widetilde{L}^{\infty}_{t}(\dot{B}^{\frac{d}{2}}_{2,1})}^{\ell} \big)^{\frac{1}{2}}\big ( \frac{1}{\var}\|v\|_{L^{1}_{t}(\dot{B}^{\frac{d}{2}}_{2,1})}^{\ell} \big)^{\frac{1}{2}}\\
&\quad+\big ( \|(n,\psi)\|_{\widetilde{L}^{\infty}_{t}(\dot{B}^{\frac{d}{2}}_{2,1})}^{\ell} \big)^{\frac{1}{2}}\big ( \var\|(n,\psi)\|_{L^{1}_{t}(\dot{B}^{\frac{d}{2}+2}_{2,1})}^{\ell} \big)^{\frac{1}{2}}\\
&\lesssim  \mathcal{X}_{0}+\mathcal{X}^2(t).
\end{aligned}
\end{equation}
 By (\ref{XLvw}) and (\ref{231})-(\ref{235}), (\ref{XL}) follows.
\end{proof}
This concludes the low-frequency analysis for the global existence. 

\subsection{High-frequency analysis}\label{subsection32}

In this subsection, we establish the corresponding a-priori estimates in high-frequency regime $ |\xi|\geq 2^{J_{\var}-1}$. To do so, we will differentiate in time the energy functional $\mathcal{L}_{j}(t)$ defined through \eqref{Lyapunov}.

First, based on some new observations about hyperbolic-parabolic dissipative structures for System (\ref{m1n}), we localize in frequencies and derive the following nonlinear energy estimates.
\begin{lemma}\label{lemma24}
Let $(n,u,\psi)$ be a solution of System \eqref{m1n}. For any $j\in\mathbb{Z}$ and the constant $\eta_{0}>0$ to be chosen later, it holds that
\begin{equation}\label{E1}
\begin{aligned}
&\frac{d}{dt}\int[ \frac{1}{2}|n_{j}|^2+\frac{1}{2\eta_{0}} 2^{-2j}|H(n)_{j}|^2+\frac{1}{2}w_{j}|u_{j}|^2+\frac{\mu b}{2c_{1}} |\psi_{j}|^2+\frac{\mu}{2c_{1}} |\nabla \psi_{j}|^2\\
&\quad-\mu n_{j} \psi_{j}-H(n)_{j} \psi_{j}]dx+\int  (\frac{1}{\var} w_{j}|u_{j}|^2+|\partial_{t}\psi_{j}|^2)dx\\
&\lesssim (1+\|w_{j}\|_{L^{\infty}})( \|\div u\|_{L^{\infty}} \|(n_{j},u_{j})\|_{L^2}+\|(\mathcal{R}_{1,j},\mathcal{R}_{2,j})\|_{L^2} ) \|(n_{j},u_{j},\psi_{j})\|_{L^2}\\
&\quad\quad+\|\partial_{t}w_{j}\|_{L^{\infty}}\|u_{j}\|_{L^2}^2+\| \nabla w_{j}\|_{L^{\infty}}(1+\|u\|_{L^{\infty}})\|u_{j}\|_{L^2}\|(n_{j},u_{j},\psi_{j})\|_{L^2} \\
&\quad\quad+2^{-j}\|\partial_{t}H(n)_{j}\|_{L^2}(2^{j}\|\psi_{j}\|_{L^2}+\frac{1}{\eta_{0}}2^{-j}\|H(n)_{j}\|_{L^2})\\
&\quad\quad+ \| u\|_{L^{\infty}} 2^{-j}\|\nabla (n_{j},H(n)_{j})\|_{L^2}2^{j}\|\psi_{j}\|_{L^2}  ,
\end{aligned}
\end{equation}
where $w_{j}$ is the the weight function
\begin{equation}\label{wj}
\begin{aligned}
&w_{j}:=c_{0}+\dot{S}_{j-1}G(n),
\end{aligned}
\end{equation}
and $\mathcal{R}_{i,j}$ $(i=1,2)$ are the commutator terms
\begin{equation}\label{R12j}
\left\{
\begin{aligned}
&\mathcal{R}_{1,j}:=\dot{S}_{j-1} u\cdot \nabla n_{j}-\dot{\Delta}_{j} (u\cdot \nabla n)+ \dot{S}_{j-1}G(n) \div u_{j} - \dot{\Delta}_{j} (G(n)\div u),\\
&\mathcal{R}_{2,j}:=\dot{S}_{j-1} u\cdot \nabla u_{j}-\dot{\Delta}_{j} (u\cdot \nabla u).
\end{aligned}
\right.
\end{equation}
\end{lemma}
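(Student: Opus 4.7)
The plan is to perform localized-in-frequency energy estimates with a carefully constructed functional in the spirit of Beauchard--Zuazua. First I would apply the dyadic block $\dot\Delta_j$ to each equation of \eqref{m1n} and paralinearize (pulling out the low-frequency truncation $\dot S_{j-1}$ of the nonlinear coefficients), so as to rewrite the localized system as
\begin{align*}
\partial_{t}n_{j}+\dot{S}_{j-1}u\cdot\nabla n_{j}+w_{j}\div u_{j}&=\mathcal{R}_{1,j},\\
\partial_{t}u_{j}+\dot{S}_{j-1}u\cdot\nabla u_{j}+\tfrac{1}{\var}u_{j}+\nabla n_{j}-\mu\nabla\psi_{j}&=\mathcal{R}_{2,j},\\
\partial_{t}\psi_{j}-\Delta\psi_{j}+b\psi_{j}-c_{1}n_{j}-H(n)_{j}&=0,
\end{align*}
with the remainders $\mathcal R_{i,j}$ exactly as in \eqref{R12j}. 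The role of the weight $w_j=c_0+\dot S_{j-1}G(n)$ is precisely to absorb the pressure nonlinearity $G(n)\div u$ into the principal $c_0\div u_j$, so that the hyperbolic symmetry between $\nabla n_j$ and $w_j\div u_j$ is restored up to a harmless commutator in $w_j$.

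Next I would test these three equations respectively against $n_j$, $w_ju_j$ and $\tfrac{\mu}{c_1}\partial_t\psi_j$ and integrate. Testing $(1)$ and $(2)$ yields the $n$- and $u$-parts of the energy together with the damping dissipation $\tfrac{1}{\var}\int w_j|u_j|^2$. The convective terms produce the $\|\div u\|_{L^\infty}$-type bounds on the right of \eqref{E1}; the term $-\tfrac12\int \partial_tw_j|u_j|^2$ yields the $\|\partial_tw_j\|_{L^\infty}\|u_j\|_{L^2}^2$ contribution; and the hyperbolic coupling $\int w_ju_j\cdot\nabla n_j+\int w_jn_j\div u_j$ combines by one integration by parts to leave only the harmless residue $-\int(\nabla w_j\cdot u_j)n_j$, bounded by $\|\nabla w_j\|_{L^\infty}\|u_j\|_{L^2}\|n_j\|_{L^2}$. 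Testing $(3)$ with $\tfrac{\mu}{c_1}\partial_t\psi_j$ produces the $\psi$-part of the energy with coefficients $\tfrac{\mu}{2c_1}|\nabla\psi_j|^2+\tfrac{\mu b}{2c_1}|\psi_j|^2$, the dissipation $\tfrac{\mu}{c_1}\int|\partial_t\psi_j|^2$, and the two source terms $\mu\int n_j\partial_t\psi_j+\tfrac{\mu}{c_1}\int H(n)_j\partial_t\psi_j$.

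The two cross terms in the energy arise from processing the sole non-dissipative coupling $-\mu\nabla\psi_j$ in $(2)$. Integrating $-\mu\int w_ju_j\cdot\nabla\psi_j$ by parts and substituting $w_j\div u_j=-\partial_tn_j-\dot S_{j-1}u\cdot\nabla n_j+\mathcal R_{1,j}$ from the localized continuity equation produces $-\mu\tfrac{d}{dt}\int n_j\psi_j+\mu\int n_j\partial_t\psi_j$ plus residual terms bounded by $\|\mathcal R_{1,j}\|_{L^2}\|\psi_j\|_{L^2}$, $\|\nabla w_j\|_{L^\infty}\|u_j\|_{L^2}\|\psi_j\|_{L^2}$, and the transport residue $\mu\int(\dot S_{j-1}u\cdot\nabla n_j)\psi_j\leq \|u\|_{L^\infty}\|\nabla n_j\|_{L^2}\|\psi_j\|_{L^2}$. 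Folding the total derivative $-\mu\tfrac{d}{dt}\int n_j\psi_j$ into the $d/dt$ of the energy yields the cross term $-\mu n_j\psi_j$ in $\mathcal L_j$, and the leftover $\mu\int n_j\partial_t\psi_j$ cancels exactly the same-signed source from the $\psi$-test. The remaining piece $\int H(n)_j\partial_t\psi_j$ is processed via the time-integration-by-parts identity
\[
\int H(n)_j\partial_t\psi_j=\frac{d}{dt}\int H(n)_j\psi_j-\int\partial_tH(n)_j\psi_j,
\]
so that folding the total derivative into the LHS produces the cross term $-H(n)_j\psi_j$ (absorbing the constant $\mu/c_1$ in $\lesssim$), while the leftover $-\int\partial_tH(n)_j\psi_j$ is estimated directly by $\|\partial_tH(n)_j\|_{L^2}\|\psi_j\|_{L^2}$. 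To prevent $\partial_tH(n)_j$ from having to be expanded via $\partial_tH(n)=-u\cdot\nabla H(n)-H'(n)w\div u$ at this stage (which would spawn extra nonlinearities), I augment the functional by the bookkeeping term $\tfrac{1}{2\eta_0}2^{-2j}|H(n)_j|^2$, whose derivative $\tfrac{1}{\eta_0}2^{-2j}\int H(n)_j\partial_tH(n)_j$ is bounded directly by the second piece of the $\partial_tH(n)_j$ term on the right of \eqref{E1}.

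The main obstacle is the careful bookkeeping of these cancellations, in particular the exact balancing of the two $\mu\int n_j\partial_t\psi_j$ contributions coming respectively from the velocity-continuity substitution and from the $\psi$-test, together with tracking the residual transport-type integrals $\int(\dot S_{j-1}u\cdot\nabla(n_j,H(n)_j))\psi_j$ (the $\nabla H(n)_j$ factor emerging when $\partial_tH(n)_j$ is eventually controlled through $\partial_tH(n)=-u\cdot\nabla H(n)-H'(n)w\div u$); these match the last line of \eqref{E1}, with the $2^{\pm j}$ shifts serving only as harmless dimensional adjustments at the dyadic level. Once all these cancellations and bounds are assembled, summing the three tested equations together with the auxiliary $\tfrac{1}{2\eta_0}2^{-2j}|H(n)_j|^2$ identity and regrouping delivers \eqref{E1}.
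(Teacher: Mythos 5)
Your proof is correct and follows essentially the same route as the paper: localize the system, test against $n_j$, $w_j u_j$ and $\partial_t\psi_j$, use the localized continuity equation to trade $w_j\div u_j$ for $-\partial_t n_j$ (generating the cross term $-\mu n_j\psi_j$), handle $\int H(n)_j\partial_t\psi_j$ by time integration by parts (generating $-H(n)_j\psi_j$), and add the auxiliary $\frac{1}{2\eta_0}2^{-2j}|H(n)_j|^2$ piece. The only cosmetic difference is that you substitute the continuity equation into the $u$-test's $\mu\int w_j\div u_j\,\psi_j$ contribution, whereas the paper feeds it into the $\psi$-test's $-c_1\int n_j\partial_t\psi_j$ term via identity \eqref{apsij}; both reorganizations of the same algebra yield the identical cancellation.
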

\begin{proof}
Applying the operator $\dot{\Delta}_{j}$ to the equations (\ref{m1n}), we have
\begin{equation}\label{m1j}
\left\{
\begin{aligned}
&\partial_{t}n_{j} +\dot{S}_{j-1}u\cdot \nabla n_{j} +w_{j} \div u_{j} =\mathcal{R}_{1,j},\\
&\partial_{t}u_{j} +\dot{S}_{j-1}u\cdot \nabla u_{j}+\frac{1}{\var} u_{j} +\nabla n_{j} -\mu\nabla \psi_{j} =\mathcal{R}_{2,j},\\
&\partial_{t}\psi_{j} -\Delta\psi_{j} +b\psi_{j} -c_{1}n_{j} -H(n)_{j}=0.
\end{aligned}
\right.
\end{equation}
We take the $L^2$ inner product of $(\ref{m1j})_{1}$ with $n_{j}$ to obtain
 \begin{equation}\label{aj}
\begin{aligned}
&\frac{d}{dt}\int \frac{1}{2}|n_{j}|^2 dx+\int w_{j}\div u_{j} n_{j}dx=\int (\frac{1}{2} \dot{S}_{j-1}\div u| n_{j}|^2+\mathcal{R}_{1,j} n_{j})dx.
\end{aligned}
\end{equation}
Meanwhile, taking the $L^2$ inner product of $(\ref{m1j})_{2}$ with $w_{j}u_{j}$, we get
\begin{equation}\label{uj}
\begin{aligned}
&\frac{d}{dt}\int \frac{1}{2}w_{j} |u_{j}|^2dx+\int  [ \frac{1}{\var} w_{j}|u_{j}|^2-w_{j}n_{j}\div u_{j}+\mu w_{j}\psi_{j} \div u_{j}]dx\\
&=\int [\frac{1}{2} w_{j}\dot{S}_{j-1}\div u |u_{j}|^2+w_{j}\mathcal{R}_{2,j}\cdot u_{j} +\frac{1}{2}\partial_{t} w_{j} |u_{j}|^2\\
&\quad+\nabla w_{j}\cdot(\frac{1}{2}\dot{S}_{j-1}u |u_{j}|^2+n_{j} u_{j}-\mu\psi_{j}u_{j})]dx. 
\end{aligned}
\end{equation}
In addition, one derives after multiplying $(\ref{m1j})_{3}$ by $\partial_{t}\psi_{j}$ and integrating the resulting equation over $\mathbb{R}^{d}$ that
\begin{equation}\label{psij1111}
\begin{aligned}
&\frac{d}{dt}\int (\frac{b}{2}|\psi_{j}|^2+\frac{1}{2}|\nabla \psi_{j}|^2)dx+\int |\partial_{t}\psi_{j}|^2dx-c_{1}\int n_{j}\partial_{t}\psi_{j} dx\\
&=\int H(n)_{j} \partial_{t}\psi_{j}dx=\frac{d}{dt}\int H(n)_{j} \psi_{j}dx-\int \partial_{t}H(n)_{j}\psi_{j}dx.
\end{aligned}
\end{equation}
Due to $(\ref{m1j})_{1}$, it holds that
\begin{equation}\label{apsij}
\begin{aligned}
-\int  n_{j}\partial_{t}\psi_{j} dx&=-\frac{d}{dt}\int n_{j}\psi_{j}dx-\int w_{j}\div u_{j} \psi_{j} dx\\
&\quad+\int (-\dot{S}_{j-1}u\cdot \nabla n_{j}+\mathcal{R}_{1,j})\psi_{j}dx.
\end{aligned}
\end{equation}
Finally, we have
\begin{equation}\label{apsij112}
\begin{aligned}
&\frac{d}{dt}\int \frac{2^{-2j}}{2} |H(n)_{j}|^2dx\leq 2^{-j}\|\partial_{t}H(n)_{j}\|_{L^2}2^{-j}\|H(n)_{j}\|_{L^2}.
\end{aligned}
\end{equation}
Thus the combination of (\ref{aj})-(\ref{apsij112}) leads to $(\ref{E1})$. The proof of Lemma \ref{lemma24} is complete.
\end{proof}
\begin{remark}
Note that the crucial point in the above proof is the cancellation of the term \newline$\displaystyle\int w_j\div u_j \psi_j dx$ that appears in \eqref{aj} thanks to the same term with the opposite sign and different coefficients appearing in \eqref{apsij}.
\end{remark}

\begin{remark}
The main difficulty is to deal with the nonlinear term $H(n)$. At first glance, we need
\begin{align}\label{thisprop}
\int H(n)_{j} \partial_{t}\psi_{j}dx\leq \frac{1}{4}\|\partial_{t}\psi_{j}\|_{L^2}^2+\|H(n)_{j}\|_{L^2}^2.
\end{align}
However, the property {\rm(\ref{thisprop})} may not be applied to derive the $L^1$-time type estimates since we can only obtain $2^{-2j}\|H(n)_{j}\|_{L^2}^2$ from the dissipation $\mathcal{H}_{j}(t)$, see Lemma \ref{ly}. The main ingredient here is that we insert the nonlinear term $2^{-2j}\|H(n)_{j}\|_{L^2}^2$ into the energy functional to cancel the term $\int H(n)_{j} \partial_{t}\psi_{j}dx$. It should be noted that {\rm(\ref{thisprop})} will be useful to show the uniqueness in Subsection \ref{subsection34}.
\end{remark}

Next, we differentiate in time the terms of lower order in the Lyapunov functional. It gives us the dissipation estimates for the non-directly damped unknowns $n$ and $\psi$.
\begin{lemma}\label{lemma25}
Let $(n,u,\psi)$ be a solution of System \eqref{m1n}. For any $j\in\mathbb{Z}$, we have
\begin{equation}\label{E2}
\begin{aligned}
&\frac{d}{dt} \int (\frac{\mu}{2c_{1}}|\nabla \psi_{j}|^2+u_{j} \cdot\nabla n_{j})dx+\int  [|\nabla n_{j}|^2+\frac{\mu b}{c_{1}}|\nabla \psi_{j}|^2\\
&\quad+\frac{\mu}{c_{1}}|\Delta\psi_{j}|^2-2\mu \nabla n_{j}\cdot \nabla \psi_{j}-w_{j}|\div u_{j}|^2+\frac{1}{\var} u_{j} \cdot\nabla n_{j} ] dx\\
&\lesssim \|\nabla u\|_{L^{\infty}} \|\nabla n_{j}\|_{L^2}\|u_{j}\|_{L^2}+\|(\mathcal{R}_{1,j},\mathcal{R}_{2,j})\|_{L^2}\|\nabla (n_{j},u_{j})\|_{L^2}\\
&\quad+\|H(n)_{j}\|_{L^2}\|\Delta\psi_{j}\|_{L^2}.
\end{aligned}
\end{equation}
\end{lemma}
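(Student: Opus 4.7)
The strategy mirrors the Beauchard--Zuazua hypocoercivity construction: we augment the basic $L^2$-type energy of Lemma \ref{lemma24} with cross-term identities that capture dissipation for the variables $n$ and $\psi$ which are not directly damped in the system. The plan is to derive two independent identities, one producing the $\nabla\psi$-dissipation and one producing the $\nabla n$-dissipation, and then sum them.

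First, apply $\nabla$ to $(\ref{m1j})_3$ and take the $L^2$ inner product with $\frac{\mu}{c_{1}}\nabla\psi_j$. Using $-\int \Delta\nabla\psi_j\cdot\nabla\psi_j\,dx = \int|\nabla^2\psi_j|^2\,dx = \int|\Delta\psi_j|^2\,dx$, this yields the identity
\begin{equation*}
\frac{d}{dt}\int \frac{\mu}{2c_{1}}|\nabla\psi_j|^2\,dx + \int\Big(\frac{\mu}{c_{1}}|\Delta\psi_j|^2 + \frac{\mu b}{c_{1}}|\nabla\psi_j|^2 - \mu\nabla n_j\cdot\nabla\psi_j\Big)dx = \frac{\mu}{c_{1}}\int \nabla H(n)_j\cdot\nabla\psi_j\,dx,
\end{equation*}
and a further integration by parts on the right-hand side controls it by $\frac{\mu}{c_{1}}\|H(n)_j\|_{L^2}\|\Delta\psi_j\|_{L^2}$, matching the last term of \eqref{E2}.

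Second, compute $\frac{d}{dt}\int u_j\cdot\nabla n_j\,dx = \int \partial_t u_j\cdot\nabla n_j\,dx - \int \partial_t n_j\,\div u_j\,dx$ and substitute the expressions for $\partial_t u_j$ and $\partial_t n_j$ from $(\ref{m1j})_1$--$(\ref{m1j})_2$. The linear contributions from the pressure, damping, chemoattractant and weighted divergence terms reproduce exactly $\int|\nabla n_j|^2\,dx$, $\frac{1}{\var}\int u_j\cdot\nabla n_j\,dx$, $-\mu\int\nabla\psi_j\cdot\nabla n_j\,dx$ and $-\int w_j|\div u_j|^2\,dx$ that appear on the left-hand side of \eqref{E2}. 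Adding this identity to the one from the previous step combines the two $-\mu\nabla n_j\cdot\nabla\psi_j$ contributions into the $-2\mu\nabla n_j\cdot\nabla\psi_j$ term of the statement.

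It remains to bound the source terms. The commutator contributions $\int \mathcal{R}_{2,j}\cdot\nabla n_j\,dx - \int \mathcal{R}_{1,j}\,\div u_j\,dx$ are immediately controlled by $\|(\mathcal{R}_{1,j},\mathcal{R}_{2,j})\|_{L^2}\|\nabla(n_j,u_j)\|_{L^2}$. The delicate point is the pair of transport contributions
\begin{equation*}
T_1 := -\int (\dot S_{j-1}u\cdot\nabla u_j)\cdot\nabla n_j\,dx, \qquad T_2 := \int (\dot S_{j-1}u\cdot\nabla n_j)\,\div u_j\,dx,
\end{equation*}
since a naive bound produces an unwanted $\|u\|_{L^\infty}$ factor absent from the statement. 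Setting $U:=\dot S_{j-1}u$ and integrating $T_1$ by parts twice (once transferring $\partial_k$ from $u_j$ onto $U$ and $n_j$, then transferring $\partial_l$ from $n_j$ back onto $U$ and $u_j$), the second-order derivatives on $u_j$ and $n_j$ cancel exactly against $T_2$ and leave only the commutator-type remainder
\begin{equation*}
T_1 + T_2 = \int\!\big(\partial_k U^k\,u_j^l\,\partial_l n_j - \partial_l U^k\,u_j^l\,\partial_k n_j\big)dx,
\end{equation*}
which is bounded by $\|\nabla u\|_{L^\infty}\|u_j\|_{L^2}\|\nabla n_j\|_{L^2}$. The main obstacle is precisely this bookkeeping: identifying the cancellation between $T_1$ and $T_2$ is essential to avoid an $\|u\|_{L^\infty}$-factor that would be incompatible with the claimed bound. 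Once this is established, collecting the contributions completes the proof of \eqref{E2}.
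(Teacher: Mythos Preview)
Your proposal is correct and follows essentially the same route as the paper: the paper multiplies $(\ref{m1j})_3$ by $-\Delta\psi_j$ (equivalent to your gradient-then-inner-product step after one integration by parts), and for the cross term $\frac{d}{dt}\int u_j\cdot\nabla n_j\,dx$ it records exactly the same cancellation between the two transport contributions, arriving at $\int[(\nabla\dot S_{j-1}u\cdot\nabla n_j)\cdot u_j - \dot S_{j-1}\div u\,u_j\cdot\nabla n_j]\,dx$, which is your $T_1+T_2$ up to relabelling. The only cosmetic difference is that the paper keeps $\int u_j\cdot\nabla\partial_t n_j\,dx$ rather than integrating by parts to $-\int\partial_t n_j\,\div u_j\,dx$ before substituting, but this leads to the identical remainder.
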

\begin{proof}
One concludes from $(\ref{m1j})_{1}$-$(\ref{m1j})_{2}$ that
\begin{equation}\label{251}
\begin{aligned}
&\frac{d}{dt} \int u_{j} \cdot\nabla n_{j}dx+\int  [ |\nabla n_{j}|^2-w_{j}|\div u_{j}|^2+\frac{1}{\var} u_{j} \cdot\nabla n_{j}-\mu \nabla n_{j}\cdot \nabla \psi_{j} ] dx\\
&=\int [-(\nabla \dot{S}_{j-1}u\cdot \nabla n_{j})\cdot u_{j}-\dot{S}_{j-1}\div u  u_{j} \cdot \nabla n_{j}-\mathcal{R}_{1,j}\div u_{j}+\mathcal{R}_{2,j}\cdot\nabla n_{j}]dx,
\end{aligned}
\end{equation}
where we has used
\begin{equation}\nonumber
\begin{aligned}
&\int [\nabla (\dot{S}_{j-1}u\cdot \nabla n_{j})\cdot u_{j}+(\dot{S}_{j-1}u\cdot \nabla u_{j})\cdot \nabla n_{j}]dx\\
&\quad=\int [(\nabla \dot{S}_{j-1}u\cdot \nabla n_{j})\cdot u_{j}-\dot{S}_{j-1}\div u  u_{j} \cdot \nabla n_{j}]dx.
\end{aligned}
\end{equation}
 Thence we get after multiplying $(\ref{m1j})_{3}$ by $-\Delta\psi_{j}$ and integrating the resulting equation by parts that
\begin{equation}\label{252}
\begin{aligned}
&\frac{d}{dt}\int \frac{1}{2}|\nabla \psi_{j}|^2dx+\int (b|\nabla \psi_{j}|^2+|\Delta\psi_{j}|^2-c_{1}\nabla n_{j}\cdot \nabla \psi_{j}+ H(n)_{j} \Delta\psi_{j})dx=0. 
\end{aligned}
\end{equation}
By (\ref{251})-(\ref{252}), (\ref{E2}) follows. 
\end{proof}

Then, we have the following high-frequency estimates:
\begin{lemma}\label{lemma26} Let $T>0$ be any given time, $J_{\var}=[-\log_2{\var}]+k$ for a sufficiently small constant $k$, and $(n,u,\psi)$ for $t\in(0,T)$ be the solution of System \eqref{m1n} with the initial data $(n_{0},u_{0},\phi_{0})$. Then, under the assumptions {\rm(\ref{ABmu})} and {\rm(\ref{aLinfty})}, $(n,u,\psi)$ satisfies
\begin{equation}
\begin{aligned}
&\mathcal{X}_{H}(t)\leq C\big( \mathcal{X}_{0}+C\mathcal{X}^2(t) \big),\quad\quad  0<t<T,\label{XH}
\end{aligned}
\end{equation}
where $\mathcal{X}(t)$, $\mathcal{X}_{H}(t)$ and $\mathcal{X}_{0}$ are defined by  are defined by \eqref{Xdef}, \eqref{Xdef} and \eqref{a1}, respectively.

\end{lemma}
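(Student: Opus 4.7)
The plan is to combine the localized energy identity \eqref{E1} weighted by $\var$ with the cross-term identity \eqref{E2} weighted by $\eta_0\,2^{-2j}$, where $\eta_0>0$ is a sufficiently small constant independent of $\var$. This produces exactly the Lyapunov functional $\mathcal{L}_j(t)$ and its dissipation $\mathcal{H}_j(t)$ displayed in Subsection \ref{subsections}, together with the differential inequality
\begin{equation}\nonumber
\frac{d}{dt}\mathcal{L}_j(t) + c\,\mathcal{H}_j(t) \;\leq\; C\,\mathcal{N}_j(t)\,\sqrt{\mathcal{L}_j(t)},\qquad j\geq J_\var -1,
\end{equation}
where $\mathcal{N}_j(t)$ collects the nonlinear contributions from the right-hand sides of \eqref{E1}--\eqref{E2}: the commutators $\mathcal{R}_{1,j}, \mathcal{R}_{2,j}$, the time derivative $\partial_t H(n)_j$ (into which $\partial_t n$ is substituted using $\eqref{m1n}_{1}$), and the low-order products involving $\|\nabla u\|_{L^\infty}$, $\|\partial_t w_j\|_{L^\infty}$, $\|\nabla w_j\|_{L^\infty}$ and $\|u\|_{L^\infty}$.

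Next I would verify the coercivity of $\mathcal{L}_j$ and $\mathcal{H}_j$ in the high-frequency regime $j\geq J_\var-1$. Using the a-priori smallness \eqref{aLinfty} and composition estimates applied to the quadratic nonlinearity $H(n)$, the weight $w_j=c_0+\dot{S}_{j-1}G(n)$ stays close to $c_0$. The quadratic forms
\[
|n_j|^2 -2\mu\,n_j\psi_j+\tfrac{\mu b}{c_1}|\psi_j|^2 \quad\text{and}\quad |\nabla n_j|^2-2\mu\,\nabla n_j\!\cdot\!\nabla\psi_j+\tfrac{\mu b}{c_1}|\nabla\psi_j|^2
\]
are positive definite thanks to the stability condition $P'(\bar{\rho})>\tfrac{\mu a}{b}\bar{\rho}$, which is equivalent to $b/c_1>\mu$. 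Combined with the auxiliary term $\tfrac{2^{-2j}}{2\eta_0}|H(n)_j|^2$ packaged into $\mathcal{L}_j$ to neutralize the boundary contribution $\int H(n)_j\partial_t\psi_j\,dx$ arising in the derivation of \eqref{E1}, this yields $\mathcal{L}_j\sim \var\|(n_j,u_j,\psi_j,\nabla\psi_j)\|_{L^2}^2$ modulo $2^{-2j}$-small corrections. For $\mathcal{H}_j$, the high-frequency Bernstein inequality $\|\div u_j\|_{L^2}\lesssim 2^j\|u_j\|_{L^2}$ allows $-\eta_0\,2^{-2j} w_j|\div u_j|^2$ to be absorbed into $w_j|u_j|^2/\var$ for $\eta_0$ small, while the indefinite cross term $\eta_0\,\var^{-1}2^{-2j}u_j\!\cdot\!\nabla n_j$ is handled via Young's inequality by exploiting $2^{-2j}/\var\lesssim\var$ for $j\geq J_\var-1$ (recall $J_\var\sim \log_2(1/\var)$). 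The resulting lower bound $\mathcal{H}_j\gtrsim \|u_j\|_{L^2}^2+\var\|\partial_t\psi_j\|_{L^2}^2+\eta_0(\|n_j\|_{L^2}^2+\|\psi_j\|_{L^2}^2+\|\nabla\psi_j\|_{L^2}^2)+\eta_0 2^{-2j}\|\Delta\psi_j\|_{L^2}^2$ in turn gives $\mathcal{H}_j\gtrsim \mathcal{L}_j$ as soon as $\var$ is smaller than a multiple of $\eta_0$.

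Taking the square root of the differential inequality and exploiting $\mathcal{H}_j\gtrsim \mathcal{L}_j$, one obtains $\tfrac{d}{dt}\sqrt{\mathcal{L}_j}+\tfrac{c'}{2}\sqrt{\mathcal{H}_j}\lesssim \mathcal{N}_j$ with constants independent of $\var$. Integrating in time yields $\sqrt{\mathcal{L}_j(t)}+\int_0^t\sqrt{\mathcal{H}_j(\tau)}d\tau\lesssim \sqrt{\mathcal{L}_j(0)}+\int_0^t\mathcal{N}_j(\tau)d\tau$, and multiplying by $\sqrt{\var}\cdot 2^{j(d/2+1)}$ and summing over $j\geq J_\var-1$ recovers exactly the five components of $\mathcal{X}_H(t)$: the $\widetilde{L}^\infty_t(\dot{B}^{d/2+1}_{2,1})$ piece with prefactor $\var$ (the two $\sqrt{\var}$ factors combine), the $L^1_t(\dot{B}^{d/2+1}_{2,1})$ norms of $(n,u)$ and $\partial_t\psi$ arising from $\|\sqrt{\mathcal{H}_j}\|_{L^1_t}$, the $L^1_t(\dot{B}^{d/2+3}_{2,1})$ norm of $\psi$ coming from $\|\Delta\psi_j\|_{L^2}$ in $\mathcal{H}_j$ (shifted by $-2j$ then gained back by the $2^{2j}$ of $\Delta$), and the $\widetilde L^2_t(\dot{B}^{d/2}_{2,1})$ piece with prefactor $\var^{-1/2}$ coming from the damping $w_j|u_j|^2/\var$. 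The initial-data side sums to $\mathcal{X}_0$, while the nonlinear side $\sum \sqrt{\var}\,2^{j(d/2+1)}\int_0^t \mathcal{N}_j\,d\tau$ is bounded by $C\mathcal{X}^2(t)$ using standard commutator estimates for $\mathcal{R}_{i,j}$, the composition estimates for $H(n)$ and $\partial_t H(n)$, the paraproduct estimates \eqref{uv2} and the low/high splittings \eqref{hl} already employed in Lemmas \ref{lemma22}--\ref{lemma23}.

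The main obstacle is the delicate balance between the three small parameters $\var$, $\eta_0$ and $2^{-J_\var}\sim\var$: the constant $\eta_0$ must be small enough to absorb both $-w_j|\div u_j|^2$ and the indefinite cross term $\var^{-1}u_j\cdot\nabla n_j$ into the genuine dissipation, yet it simultaneously provides the coercive coefficient of $\|\nabla n_j\|_{L^2}^2$ and $\|\nabla\psi_j\|_{L^2}^2$, forcing tight bookkeeping of all constants; concurrently, the threshold $J_\var=[-\log_2\var]+k$ must be tuned with $k$ sufficiently small so that $\var\,2^{J_\var}$ and $2^{-J_\var}/\var$ are bounded by a universal constant. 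A secondary difficulty is the quadratic zero-order nonlinearity $H(n)$ in $\eqref{m1n}_3$: because it cannot be treated as a standard source term, it is packaged inside $\mathcal{L}_j$ via the extra summand $\tfrac{2^{-2j}}{2\eta_0}|H(n)_j|^2$ and handled through the time-derivative identity $\int H(n)_j\partial_t\psi_j\,dx = \tfrac{d}{dt}\int H(n)_j\psi_j\,dx - \int\partial_t H(n)_j\,\psi_j\,dx$ that underpins the derivation of \eqref{E1}; propagating these contributions through the summation in $j$ with the correct $\var$-weights is where the bulk of the technical work lies.
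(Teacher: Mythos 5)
Your overall architecture matches the paper's: combine \eqref{E1} (weighted by $\var$) with \eqref{E2} (weighted by $\eta_0 2^{-2j}$) to build $\mathcal{L}_j$ and $\mathcal{H}_j$, prove coercivity and an absorption inequality, take a square root, integrate, and sum. However there are two concrete gaps in your write-up.

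First, the coercivity conclusion you draw is too weak. You correctly display $\mathcal{H}_j\gtrsim \|u_j\|_{L^2}^2+\var\|\partial_t\psi_j\|_{L^2}^2+\eta_0(\|n_j\|_{L^2}^2+\|\psi_j\|_{L^2}^2+\|\nabla\psi_j\|_{L^2}^2)+\cdots$ together with $\mathcal{L}_j\sim\var\|(n_j,u_j,\psi_j,\nabla\psi_j,2^{-j}H(n)_j)\|_{L^2}^2$, which implies the \emph{stronger} bound $\mathcal{H}_j\gtrsim \frac{1}{\var}\mathcal{L}_j$ that the paper establishes in Lemma \ref{ly}. Stating only $\mathcal{H}_j\gtrsim\mathcal{L}_j$ and propagating $\sqrt{\mathcal{H}_j}$ as such leaves an extra $\sqrt{\var}$ on the time-integrated dissipation, so the $L^1_t(\dot B^{d/2+1}_{2,1})$ norms of $(n,u)$ would come out with a spurious $\sqrt{\var}$ prefactor instead of the scale-free ones in $\mathcal{X}_H$. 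The damping gain $\frac{1}{\var}\mathcal{L}_j$ is precisely what cancels the $\sqrt{\var}$ you introduce by multiplying by $\sqrt{\var}\,2^{j(d/2+1)}$, and it must be kept explicit.

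Second, and more seriously, the claim that multiplying by $\sqrt{\var}\,2^{j(d/2+1)}$ and summing "recovers exactly the five components of $\mathcal{X}_H(t)$" is false. The Lyapunov argument only yields $\var\|(n,u,\psi,\nabla\psi)\|_{\widetilde L^\infty_t(\dot B^{d/2+1}_{2,1})}^h + \|(n,u,\psi,\nabla\psi)\|_{L^1_t(\dot B^{d/2+1}_{2,1})}^h$ (this is \eqref{271}). It does not produce $\|\psi\|_{L^1_t(\dot B^{d/2+3}_{2,1})}^h$: the term $\eta_0 2^{-2j}\|\Delta\psi_j\|_{L^2}^2$ in $\mathcal{H}_j$ scales, after taking a square root and multiplying by $2^{j(d/2+1)}$, as $2^{j(d/2+2)}\|\psi_j\|_{L^2}$, i.e.\ only a $\dot B^{d/2+2}_{2,1}$-level quantity, one full derivative short. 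Likewise, the $\var\|\partial_t\psi_j\|_{L^2}^2$ piece only yields $\var\|\partial_t\psi\|_{L^1_t(\dot B^{d/2+1}_{2,1})}^h$, which is $\var$-weaker than what $\mathcal{X}_H$ requires, and the $\widetilde L^2_t(\dot B^{d/2}_{2,1})$ norm of $u$ cannot arise from an $L^1$ time-integral of $\sqrt{\mathcal{H}_j}$. The paper closes these three components with two additional steps you omit: an interpolation between the $\widetilde L^\infty_t$ and $L^1_t$ bounds to produce $\var^{-1/2}\|u\|_{\widetilde L^2_t(\dot B^{d/2}_{2,1})}^h$ (estimate \eqref{274}), and a maximal regularity estimate for the heat equation $\eqref{m1n}_3$ in high frequencies to produce $\|\psi\|_{L^1_t(\dot B^{d/2+3}_{2,1})}^h+\|\partial_t\psi\|_{L^1_t(\dot B^{d/2+1}_{2,1})}^h$ once the $L^1_t(\dot B^{d/2+1}_{2,1})$ control on $(n,\psi)$ and the composition bound \eqref{q2} for $H(n)$ are available (estimate \eqref{275}). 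Without these two supplementary derivations the lemma is not proved.
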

\begin{proof}
Let the constants $\alpha$ and $\eta_{0}$ be chosen later. We recall that
\begin{equation}
\left\{
\begin{aligned}
&\mathcal{L}_{j}(t):=\var\int[ \frac{1}{2}|n_{j}|^2+\frac{2^{-2j}}{2\eta_{0}} |H(n)_{j}|^2+\frac{1}{2}w_{j}|u_{j}|^2+\frac{\mu b}{2c_{1}} |\psi_{j}|^2+\frac{\mu}{2c_{1}} |\nabla \psi_{j}|^2\nonumber\\
&\quad\quad\quad\quad\quad-\mu n_{j} \psi_{j}-H(n)_{j} \psi_{j}]dx +\eta_{0}  2^{-2j}\int (\frac{\mu}{2c_{1}}|\nabla \psi_{j}|^2+u_{j} \cdot\nabla n_{j} )dx,\nonumber\\
&\mathcal{H}_{j}(t):= \var\int  (\frac{1}{\var} w_{j}|u_{j}|^2+|\partial_{t}\psi_{j}|^2)dx+\eta_{0}  2^{-2j}\int \big{(}|\nabla n_{j}|^2+\frac{\mu b}{c_{1}}|\nabla \psi_{j}|^2+\frac{\mu}{c_{1}}|\Delta\psi_{j}|^2\nonumber\\
&\quad\quad\quad\quad\quad-2\mu \nabla n_{j}\cdot \nabla \psi_{j}-w_{j}|\div u_{j}|^2+\frac{1}{\var} u_{j} \cdot\nabla n_{j} \big{)}dx.\nonumber
\end{aligned}
\right.
\end{equation}
 By (\ref{aLinfty}) and (\ref{wj}), there is a constant $C_{*}>0$ independent of time and $\var$ such that
\begin{equation}\label{wjc0}
\begin{aligned}
\frac{c_{0}}{2}\leq c_{0}-C_{*}\mathcal{X}_{0}\leq w_{j}\leq c_{0}+C_{*}\mathcal{X}_{0}\leq \frac{3c_{0}}{2},
\end{aligned}
\end{equation}
provided 
\begin{align}
\mathcal{X}_{0} \leq \alpha\leq \frac{C_{*}}{2}.\label{var0}
\end{align}
One gets from (\ref{hl}) that
\begin{equation}\label{uinfty}
\begin{aligned}
&\|u\|_{L^{\infty}_{t}(L^{\infty})}+\var\|\nabla u\|_{L^{\infty}_{t}(L^{\infty})}\\
&\quad\lesssim \|u\|_{L^{\infty}_{t}(\dot{B}^{\frac{d}{2}}_{2,1})}^{\ell}+\var \|u\|_{L^{\infty}_{t}(\dot{B}^{\frac{d}{2}+1}_{2,1})}^{h}\lesssim \mathcal{X}(t).
\end{aligned}
\end{equation}
In accordance with (\ref{E1}), (\ref{E2}), (\ref{wjc0}), (\ref{uinfty}), the Bernstein inequality and the fact $2^{-j}\lesssim \var$ for any $j\geq J_{\var}-1$, we have
\begin{equation}\label{ly1}
    \begin{aligned}
       & \frac{d}{dt}\mathcal{L}_{j}(t)+\mathcal{H}_{j}(t)\\
       &\lesssim \var \big{(} \|\nabla u\|_{L^{\infty}}\|(n_{j},u_{j})\|_{L^2}+\| u\|_{L^{\infty}}\|(n_{j},H(n)_{j})\|_{L^2}\\
       &\quad+\|(\partial_{t} G(n),\nabla G(n))\|_{L^{\infty}}\|u_{j}\|_{L^2}+\|(\mathcal{R}_{1,j},\mathcal{R}_{2,j})\|_{L^2}\\
       &\quad+\|H(n)_{j}\|_{L^2}+\frac{2^{-j}}{\eta_{0}}\|\partial_{t}H(n)_{j}\|_{L^2}\big{)}\|(n_{j},u_{j},\psi_{j},\nabla\psi_{j},2^{-j}H(n)_{j})\|_{L^2}.
    \end{aligned}
\end{equation}

We now show the following lemma.
\begin{lemma} \label{ly} Let the assumptions of Lemma \ref{lemma26} be in force. Then it holds for any $j\geq J_{\var}-1$ that
\begin{equation}\label{267}
\left\{
\begin{aligned}
&\mathcal{L}_{j}(t)\sim \var\|(n_{j},u_{j},\psi_{j},\nabla\psi_{j},2^{-j}H(n)_{j})\|_{L^2}^2,\\
&\mathcal{H}_{j}(t)\gtrsim \frac{1}{\var}\mathcal{L}_{j}(t).
\end{aligned}
\right.
\end{equation} 
\end{lemma}
\begin{proof}
Note that we have
\begin{equation}\nonumber
 |n_{j}|^2+\frac{\mu b}{c_{1}}|\psi_{j}|^2-2\mu n_{j} \psi_{j}=(n_{j},\psi_{j})\mathbb{M}(n_{j},\psi_{j})^{\top},\quad\quad \mathbb{M}:=
 \left(
 \begin{matrix}  
    1 & -\mu  \\
   -\mu & \dfrac{\mu b}{c_{1}} 
  \end{matrix}
  \right).
\end{equation}
It is easy to verify that $\mathbb{M}$ is a positive definite matrix under the condition $\dfrac{\mu c_{1}}{b}<1$, i.e. $\dfrac{\mu a}{b}\bar{\rho}<P'(\bar{\rho})$. Hence, there holds
\begin{equation}\label{2631}
\begin{aligned}
& |n_{j}|^2+\frac{\mu b}{c_{1}}|\psi_{j}|^2-2\mu n_{j} \psi_{j}\gtrsim |n_{j}|^2+|\psi_{j}|^2.
\end{aligned}
\end{equation}
Similarly, one has
\begin{equation}\label{263}
\begin{aligned}
& |\nabla n_{j}|^2+ \frac{\mu b}{c_{1}}|\nabla \psi_{j}|^2-2\mu \nabla n_{j}\cdot \nabla \psi_{j}\gtrsim  |\nabla n_{j}|^2+|\nabla \psi_{j}|^2.
\end{aligned}
\end{equation}
For any $j\geq J_{\var}-1$, it follows by (\ref{wjc0}), $(\ref{2631})$, the Bernstein inequality and $2^{-j}\leq \var$ that
\begin{equation}\nonumber
\left\{
\begin{aligned}
&\mathcal{L}_{j}(t)\leq\var \int  [C (1-\eta_{0})(|n_{j}|^2+|u_{j}|^2+|\psi_{j}|^2+2^{2j}|\psi_{j}|^2)+2^{-2j}(\frac{1}{\eta_{0}}+C)|H(n)_{j}|^2]dx,  \\
&\mathcal{L}_{j}(t)\geq\var \int  [ \frac{1}{C} (1-\eta_{0})(|n_{j}|^2+|u_{j}|^2+|\psi_{j}|^2+2^{2j}|\psi_{j}|^2)+2^{-2j}(\frac{1}{\eta_{0}}-C)|H(n)_{j}|^2]dx.
\end{aligned}
\right.
\end{equation}
Noting that
\begin{equation}\nonumber
\begin{aligned}
|\Delta\psi_{j}|^2\geq \frac{1}{2}|H(n)_{j}+c_{1}n_{j}-b\psi_{j}|^2-|\partial_{t}\psi_{j}|^2,
\end{aligned}
\end{equation}
we deduce from $\eqref{263}$, the Bernstein inequality and $2^{-j}\leq \var$ for any $j\geq J_{\var}-1$ that
\begin{equation}\nonumber
\begin{aligned}
\mathcal{H}_{j}(t)&\geq \frac{1}{C} \int [  (1-\eta_{0}) |u_{j}|^2+\var(1-\eta_{0})|\partial_{t}\psi_{j}|^2+2^{-2j}\eta_{0} (|\nabla n_{j}|^2+ |\nabla \psi_{j}|^2+|\Delta\psi_{j}|^2)dx \\
&\geq\frac{1}{C} \int [  (1-\eta_{0}) |u_{j}|^2+\var(1-\eta_{0})|\partial_{t}\psi_{j}|^2\\
&\quad\quad+\frac{\eta_{0}}{C} (|n_{j}|^2+ |\psi_{j}|^2+2^{2j}|\psi_{j}|^2+2^{-2j} |H(n)_{j}+c_{1}n_{j}-b\psi_{j}|^2)]dx \\
&\geq \frac{1}{C} \int [ (1-\eta_{0}) |u_{j}|^2+\var(1-\eta_{0})|\partial_{t}\psi_{j}|^2+\frac{\eta_{0}}{C} (|n_{j}|^2+ |\psi_{j}|^2+2^{2j}|\psi_{j}|^2+ 2^{-2j}|H(n)_{j}|^2)]dx.
\end{aligned}
\end{equation}
Thus, choosing a suitably small constant $\eta_{0}\in(0,1)$, we derive  (\ref{267}).
\end{proof}

We now resume the proof of Lemma \ref{lemma26}. Combining (\ref{ly1})-(\ref{267}) together yields
\begin{equation}\label{264}
\begin{aligned}
&\frac{d}{dt}\mathcal{L}_{j}(t)+\frac{1}{\var}\mathcal{L}_{j}(t)\\
&\lesssim\Big{(} \|\nabla u\|_{L^{\infty}}\var\|(n_{j},u_{j})\|_{L^2}+\|u\|_{L^{\infty}}\var\|n_{j}\|_{L^2}\\
&\quad+\|(\partial_{t} G(n),\nabla G(n))\|_{L^{\infty}}\var\|u_{j}\|_{L^2}+\|(\mathcal{R}_{1,j},\mathcal{R}_{2,j})\|_{L^2}\\
&\quad+\|H(n)_{j}\|_{L^2}+2^{-j}\|\partial_{t}H(n)_{j}\|_{L^2}\Big{)} \sqrt{\frac{1}{\var}\mathcal{L}_{j}(t)},\quad j\geq J_{\var}-1.
\end{aligned}
\end{equation}
After dividing both sides of (\ref{264}) by $(\dfrac{1}{\var}\mathcal{L}_{j}(t)+\eta^2)^{\frac{1}{2}}$ for $\eta>0$, integrating the resulting inequality over $[0,t]$ and then taking the limit as $\eta\rightarrow0$, we get
\begin{equation}\label{26555}
\begin{aligned}
&\var\|(n_{j},u_{j},\psi_{j},\nabla\psi_{j})\|_{L^2}+ \int_{0}^{t}       \|(n_{j},u_{j},\psi_{j},\nabla\psi_{j})\|_{L^2} d\tau\\
&\quad\leq \var\|(n_{j},u_{j},\psi_{j},\nabla\psi_{j})(0)\|_{L^2}+\var2^{-j}\|H(n)_{j}(0)\|_{L^2}\\
&\quad\quad+\int_{0}^{t} \Big{(} \var\|\nabla u\|_{L^{\infty}}\|(n_{j},u_{j})\|_{L^2}+ \var\| u\|_{L^{\infty}}\|(n_{j},H(n)_{j})\|_{L^2}\\
&\quad\quad+ \var\|\big(\nabla G(n),\partial_{t}G(n)\big)\|_{L^{\infty}} \| u_{j}\|_{L^2}+ \var\|(\mathcal{R}_{1,j},\mathcal{R}_{2,j})\|_{L^2}\\
&\quad\quad+\var\|H(n)_{j}\|_{L^2}+2^{-j}\|\partial_{t}H(n)_{j}\|_{L^2}\Big{)} d\tau.
\end{aligned}
\end{equation}
After multiplying (\ref{26555}) by $2^{j(\frac{d}{2}+1)}$ and summing over $j\geq J_{\var}-1$, we infer that
\begin{equation}\label{26677}
\begin{aligned}
&\var\|(n,u,\psi,\nabla\psi)\|_{\widetilde{L}^{\infty}_{t}(\dot{B}^{\frac{d}{2}+1}_{2,1})}^{h} +\|(n,u,\psi,\nabla\psi)\|_{L^{1}_{t}(\dot{B}^{\frac{d}{2}+1}_{2,1})}^{h}\\
&\quad\lesssim \mathcal{X}_{0}+\|\nabla u\|_{L^1_{t}(L^{\infty})} \var\|(n,u)\|_{L^{\infty}_{t}(\dot{B}^{\frac{d}{2}+1}_{2,1})}^{h}+\|u\|_{L^{2}_{t}(L^{\infty})}\|(n,H(n))\|_{L^{2}_{t}(\dot{B}^{\frac{d}{2}+1}_{2,1})}^{h}\\
&\quad\quad+\var\|\big(\nabla G(n),\partial_{t} G(n)\big)\|_{L^{\infty}_{t}(L^{\infty})} \|u\|_{L^1_{t}(\dot{B}^{\frac{d}{2}+1}_{2,1})}^{h}\\
&\quad\quad+\var\sum_{j\geq J_{\var}-1} 2^{j(\frac{d}{2}+1)} \|(\mathcal{R}_{1,j},\mathcal{R}_{2,j})\|_{L^1_{t}(L^2)}\\
&\quad\quad+\|H(n)\|_{L^1_{t}(\dot{B}^{\frac{d}{2}+1}_{2,1})}^{h}+\|\partial_{t}H(n)\|_{L^1_{t}(\dot{B}^{\frac{d}{2}}_{2,1})}^{h}.
\end{aligned}
\end{equation}
The nonlinear terms on the right-hand side of (\ref{26677}) can be estimated as follows. First, it is easy to see that
\begin{equation}\label{I1}
\begin{aligned}
&\|\nabla u\|_{L^1_{t}(L^{\infty})} \var\|(n,u)\|_{L^{\infty}_{t}(\dot{B}^{\frac{d}{2}+1}_{2,1})}^{h}\\
&\quad\leq \big{(} \|u\|_{L^1_{t}(\dot{B}^{\frac{d}{2}+1}_{2,1})}^{\ell}+\|u\|_{L^1_{t}(\dot{B}^{\frac{d}{2}+1}_{2,1})}^{h}\big{)} \var \|(n,u)\|_{L^{\infty}_{t}(\dot{B}^{\frac{d}{2}+1}_{2,1})}^{h}\lesssim \mathcal{X}^2(t).
\end{aligned}
\end{equation}
Due to (\ref{ninfty}), (\ref{F1}) and standard interpolations, one has
\begin{equation}\label{I2}
\begin{aligned}
&\|u\|_{L^{2}_{t}(L^{\infty})}\|(n,H(n))\|_{L^{2}_{t}(\dot{B}^{\frac{d}{2}+1}_{2,1})}^{h}\\
&\quad\lesssim \var^{-\frac{1}{2}}\big( \|u\|_{L^2_{t}(\dot{B}^{\frac{d}{2}}_{2,1})}^{\ell}+\|u\|_{L^{2}_{t}(\dot{B}^{\frac{d}{2}}_{2,1})}^{h})\var^{\frac{1}{2}} (\|n\|_{L^2_{t}(\dot{B}^{\frac{d}{2}+1}_{2,1})}^{\ell}+\|n\|_{L^2_{t}(\dot{B}^{\frac{d}{2}+1}_{2,1})}^{h})\lesssim \mathcal{X}^2(t).
\end{aligned}
\end{equation}
It also holds by ${\rm{\eqref{m1n}_{1}}}$, (\ref{hl}) and (\ref{ninfty}) that
\begin{equation}\label{I3}
\begin{aligned}
&\var\|\big(\nabla G(n),\partial_{t} G(n)\big)\|_{L^{\infty}_{t}(L^{\infty})} \|u\|_{L^1_{t}(\dot{B}^{\frac{d}{2}+1}_{2,1})}^{h}\\
&\quad\lesssim   \big( \var\|\nabla n\|_{L^{\infty}_{t}(L^{\infty})}(1+\|u\|_{L^{\infty}_{t}(L^{\infty})})\\
&\quad\quad+\var\|\nabla u\|_{L^2_{t}(L^{\infty})}(1+\|n\|_{L^{\infty}_{t}(L^{\infty})}) \big) \|u\|_{L^1_{t}(\dot{B}^{\frac{d}{2}+1}_{2,1})}^{h}\\
 &\quad\lesssim \big( \|(n,u)\|_{L^{\infty}_{t}(\dot{B}^{\frac{d}{2}}_{2,1})}^{\ell} +\var \|(n,u)\|_{L^{\infty}_{t}(\dot{B}^{\frac{d}{2}+1}_{2,1})}^{h} \big)\|u\|_{L^1_{t}(\dot{B}^{\frac{d}{2}+1}_{2,1})}^{h}
\lesssim \mathcal{X}^2(t).
\end{aligned}
\end{equation}
We turn to estimate the commutator terms. It follows from (\ref{hl}) and (\ref{commutator}) that
\begin{equation}\nonumber
\begin{aligned}
&\var\sum_{j\geq J_{\var}-1} 2^{j(\frac{d}{2}+1)} \big( \|\dot{S}_{j-1}u\cdot \nabla n_{j}-\dot{\Delta}_{j}(u\cdot\nabla n)\|_{L^1_{t}(L^2)}+\|\dot{S}_{j-1}u\cdot \nabla u_{j}-\dot{\Delta}_{j}(u\cdot\nabla u)\|_{L^1_{t}(L^2)}\big)\\
&\quad\lesssim \|u\|_{L^1_{t}(\dot{B}^{\frac{d}{2}+1}_{2,1})}\var\|(n,u)\|_{L^{\infty}_{t}(\dot{B}^{\frac{d}{2}+1}_{2,1})}\\
&\quad\lesssim (\|u\|_{L^1_{t}(\dot{B}^{\frac{d}{2}+1}_{2,1})}^{\ell}+\|u\|_{L^1_{t}(\dot{B}^{\frac{d}{2}+1}_{2,1})}^{h}) (\|(n,u)\|_{L^{\infty}_{t}(\dot{B}^{\frac{d}{2}}_{2,1})}^{\ell}+\var\|(n,u)\|_{L^{\infty}_{t}(\dot{B}^{\frac{d}{2}+1}_{2,1})}^{h})\lesssim \mathcal{X}^2(t).
\end{aligned}
\end{equation}
And we obtain by using (\ref{ninfty}), (\ref{F1}) and $G(0)=0$ that
\begin{equation}\nonumber
\begin{aligned}
&\var\|G(n)\|_{L^{\infty}_{t}(\dot{B}^{\frac{d}{2}+1}_{2,1})}\lesssim \|n\|_{L^{\infty}_{t}(\dot{B}^{\frac{d}{2}}_{2,1})}^{\ell}+\var\|n\|_{L^{\infty}_{t}(\dot{B}^{\frac{d}{2}+1}_{2,1})}^{h}\lesssim\mathcal{X}(t),
\end{aligned}
\end{equation}
which together with (\ref{lh}) and (\ref{commutator}) leads to
\begin{equation}\nonumber
\begin{aligned}
&\var\sum_{j\geq J_{\var} -1} 2^{j(\frac{d}{2}+1)} \|\dot{S}_{j-1}G(n) \div u_{j} - \dot{\Delta}_{j} (G(n)\div u))\|_{L^1_{t}(L^2)}\\
&\quad\lesssim \|u\|_{L^1_{t}(\dot{B}^{\frac{d}{2}+1}_{2,1})}\var\|G(n)\|_{L^{\infty}_{t}(\dot{B}^{\frac{d}{2}+1}_{2,1})}\lesssim\mathcal{X}^2(t).
\end{aligned}
\end{equation}
Therefore, the commutator terms can be controlled by
\begin{equation}
    \begin{aligned}
        \var\sum_{j\geq J_{\var}-1} 2^{j(\frac{d}{2}+1)} \|(\mathcal{R}_{1,j},\mathcal{R}_{2,j})\|_{L^1_{t}(L^2)}\lesssim \mathcal{X}^2(t).
    \end{aligned}
\end{equation}
With the help of the composition estimate (\ref{q2}) for the quadratic function $H(n)$, it holds that
\begin{equation}\label{I5}
\begin{aligned}
&\|H(n)\|_{L^1_{t}(\dot{B}^{\frac{d}{2}+1}_{2,1})}^{h}\\
&\quad\lesssim \big{(} \|n\|_{L^{\infty}_{t}(\dot{B}^{\frac{d}{2}}_{2,1})}^{\ell}+\var\|n\|_{L^{\infty}_{t}(\dot{B}^{\frac{d}{2}+1}_{2,1})}^{h}\big{)} \big{(} \var\|n\|_{L^1_{t}(\dot{B}^{\frac{d}{2}+2}_{2,1})}^{\ell}+\|n\|_{L^1_{t}(\dot{B}^{\frac{d}{2}+1}_{2,1})}^{h} \big{)}\\
&\quad\lesssim \mathcal{X}^2(t).
\end{aligned}
\end{equation}
Finally, one deduces by ${\rm{\eqref{m1n}_{1}}}$, (\ref{ninfty}), (\ref{hl}), (\ref{uv2}) and (\ref{F1}) that
\begin{equation}\label{I6}
\begin{aligned}
&\|\partial_{t}H(n)\|_{L^1_{t}(\dot{B}^{\frac{d}{2}}_{2,1})}^{h}\\
&\quad\lesssim\big( \|H'(n)-H'(0)\|_{L^{\infty}_{t}(\dot{B}^{\frac{d}{2}}_{2,1})}+H'(0)\big)\|\partial_{t}n\|_{L^1_{t}(\dot{B}^{\frac{d}{2}}_{2,1})}\\
&\quad\lesssim \|n\|_{L^{\infty}_{t}(\dot{B}^{\frac{d}{2}}_{2,1})}\big( \|u\|_{L^2_{t}(\dot{B}^{\frac{d}{2}}_{2,1})}\|n\|_{L^2_{t}(\dot{B}^{\frac{d}{2}+1}_{2,1})}+(1+\|n\|_{L^{\infty}_{t}(\dot{B}^{\frac{d}{2}}_{2,1})})\|u\|_{L^1_{t}(\dot{B}^{\frac{d}{2}+1}_{2,1})}\big)\\
&\quad\lesssim \mathcal{X}^2(t).
\end{aligned}
\end{equation}
Inserting the above estimates (\ref{I1})-(\ref{I6}) into (\ref{26677}), we derive
\begin{equation}\label{271}
\begin{aligned}
&\var\|(n,u,\psi,\nabla\psi)\|_{\widetilde{L}^{\infty}_{t}(\dot{B}^{\frac{d}{2}+1}_{2,1})}^{h}+\|(n,u,\psi,\nabla\psi)\|_{L^{1}_{t}(\dot{B}^{\frac{d}{2}+1}_{2,1})}^{h}\lesssim \mathcal{X}_{0}+\mathcal{X}^2(t).
\end{aligned}
\end{equation}

Furthermore, since $u$ satisfies
\begin{equation}\nonumber
\begin{aligned}
&\|u\|_{\widetilde{L}^{\infty}_{t}(\dot{B}^{\frac{d}{2}}_{2,1})}^{h}\lesssim \var \|u\|_{\widetilde{L}^{\infty}_{t}(\dot{B}^{\frac{d}{2}+1}_{2,1})}^{h},\quad\quad\frac{1}{\var} \|u\|_{L^{1}_{t}(\dot{B}^{\frac{d}{2}}_{2,1})}^{h}\lesssim \|u\|_{L^{1}_{t}(\dot{B}^{\frac{d}{2}+1}_{2,1})}^{h},
\end{aligned}
\end{equation}
one infers from (\ref{271}) that
\begin{equation}\label{274}
\begin{aligned}
&\var^{-\frac{1}{2}}\|u\|_{\widetilde{L}^2_{t}(\dot{B}^{\frac{d}{2}}_{2,1})}^{h}\lesssim \big( \|u\|_{\widetilde{L}^{\infty}_{t}(\dot{B}^{\frac{d}{2}}_{2,1})}^{h} \big)^{\frac{1}{2}}\big( \frac{1}{\var}\|u\|_{L^{1}_{t}(\dot{B}^{\frac{d}{2}}_{2,1})}^{h} \big)^{\frac{1}{2}}\lesssim \mathcal{X}_{0}+\mathcal{X}^2(t).
\end{aligned}
\end{equation}
According to (\ref{hl}), (\ref{I5})-(\ref{271}) and maximal regularity estimates for the heat equation ${\rm\eqref{m1n}_{3}}$ (e.g., see \cite[Page 157]{bahouri1}), it follows that
\begin{equation}\label{275}
\begin{aligned}
&\|\psi\|_{L^1_{t}(\dot{B}^{\frac{d}{2}+3}_{2,1})}^{h}+\|\partial_{t}\psi\|_{L^1_{t}(\dot{B}^{\frac{d}{2}+1}_{2,1})}^{h}\\
&\quad\lesssim \|\psi_{0}\|_{\dot{B}^{\frac{d}{2}+1}_{2,1}}^{h}+\|-b\psi+c_{1}n+H(n)\|_{L^1_{t}(\dot{B}^{\frac{d}{2}+1}_{2,1})}^{h}\\
&\quad\lesssim \var\|\psi_{0}\|_{\dot{B}^{\frac{d}{2}+2}_{2,1}}^{h}+\|(n,\psi)\|_{L^1_{t}(\dot{B}^{\frac{d}{2}+1}_{2,1})}^{h}+\|H(n)\|_{L^1_{t}(\dot{B}^{\frac{d}{2}+1}_{2,1})}^{h} \lesssim \mathcal{X}_{0}+\mathcal{X}^2(t).
\end{aligned}
\end{equation}
 Collecting (\ref{271})-(\ref{275}), we have (\ref{XH}) and finish the proof of Lemma \ref{lemma26}.
 \end{proof}

 \subsection{Global existence}\label{subsection33}

In this subsection, we construct a local Friedrichs approximation (see, e.g., \cite[Page 440]{bahouri1}) and extend the local approximate sequence to a global one by the a-priori estimates established in Subsections \ref{subsection31}-\ref{subsection32}. Then we show the convergence of the approximate sequence to the expected global solution to the Cauchy problem for System (\ref{m1n}).

 \underline{\it\textbf{Proof of Theorem \ref{theorem11}:~}}Denote the Friedrichs projector
 \begin{equation}\nonumber
\begin{aligned}
&\dot{\mathbb{E}}_{q}f:=\mathcal{F}^{-1}(\mathbf{1}_{\mathcal{C}_{q}}\mathcal{F}f),\quad\forall f\in L^2_{q},
\end{aligned}
\end{equation}
where $L^2_{q}$ is the set of $L^2$ functions spectrally supported in the annulus $\mathcal{C}_{q}:=\{\xi\in\mathbb{R}^{d}~|~\frac{1}{q}\leq|\xi|\leq q\}$ endowed with the standard $L^2$ topology, and $\mathbf{1}_{\mathcal{C}_{q}}$ is the characteristic function on the annulus $\mathcal{C}_{q}$.

We are ready to solve the following approximate problem for $q\geq1$:
\begin{equation}\label{m1nn}
\left\{
\begin{aligned}
&\partial_{t}n^{q}+\dot{\mathbb{E}}_{q}(u^{q}\cdot\nabla n^{q}) +c_{0}\div \dot{\mathbb{E}}_{q} u^{q}+\dot{\mathbb{E}}_{q}(G(n^{q})\div u^{q})=0,\\
&\partial_{t}u^{q}+\dot{\mathbb{E}}_{q}(u^{q}\cdot \nabla u^{q})+\frac{1}{\var} \dot{\mathbb{E}}_{q} u^{q}+\nabla \dot{\mathbb{E}}_{q} n^{q}-\mu\nabla \dot{\mathbb{E}}_{q} \psi^{q}=0,\\
&\partial_{t}\psi^{q}-\Delta \dot{\mathbb{E}}_{q}\psi^{q}+b\dot{\mathbb{E}}_{q} \psi^{q}-c_{1}\dot{\mathbb{E}}_{q} n^{q}-\dot{\mathbb{E}}_{q}H(n^{q})=0,\quad\quad x\in\mathbb{R}^{d},\quad t>0,\\
&(n^{q},u^{q},\psi^{q})(x,0)=(\dot{\mathbb{E}}_{q}n_{0}, \dot{\mathbb{E}}_{q}u_{0},\dot{\mathbb{E}}_{q}\psi_{0})(x), \quad\quad  \quad\quad\quad ~ x\in\mathbb{R}^{d}.
\end{aligned}
\right.
\end{equation}
It is classical to show that $(\dot{\mathbb{E}}_{q}n_{0}, \dot{\mathbb{E}}_{q}u_{0},\dot{\mathbb{E}}_{q}\psi_{0})$ satisfies (\ref{a1}) uniformly with respect to $q\geq 1$ and converges to $(n_{0},u_{0},\psi_{0})$ strongly in the sense  (\ref{a1}). Since all the Sobolev norms are equivalent in (\ref{m1nn}) due to the Bernstein inequality, we can check that (\ref{m1nn}) is a system of ordinary differential equations in $L^2_{q}\times L^2_{q}\times L^2_{q}$ and locally Lipschitz with respect to the variable $(n^{q},u^{q},\psi^{q})$ for every $q\geq 1$. By virtue of the Cauchy-Lipschitz theorem in \cite[Page 124]{bahouri1}, there exists a maximal time $T^{*}_{q}>0$ such that the problem (\ref{m1nn}) admits a unique solution $(n^{q},u^{q},\psi^{q})\in C([0,T^{*}_{q});L^2_{q})$ on $[0,T_{q}^{*})$.

Let $\mathcal{X}(n,u,\psi):=\mathcal{X}(t)$ be given by $(\ref{Xdef})$. Define the maximal time
\begin{equation}\label{T*}
\begin{aligned}
&T_{q}:=\sup\big{\{}t\geq 0~| ~\text{there exists a unique solution $(n^{q},u^{q},\psi^{q})$} \\
 &\quad\quad\quad\quad\quad\quad\qquad\text{to the Cauchy problem (\ref{m1nn}) on $[0,t]$~satisfying}\\
  &\quad\quad\quad\quad\quad\quad\qquad~\mathcal{X}(n^{q},u^{q},\psi^{q})\leq  C_{0}\mathcal{X}_{0}   \big{\}}.
\end{aligned}
\end{equation}
By the time continuity of $(n^{q},u^{q},\psi^{q})$, we have $0<T_{q}\leq T_{q}^{*}$.

We claim $T_{q}=T_{q}^{*}$. To prove it, we assume that $T_{q}<T_{q}^{*}$ and use a contradiction argument. Since $(n^{q},u^{q},\psi^{q})=\dot{\mathbb{E}}_{n}(n^{q},u^{q},\psi^{q})$, the orthogonal projector $\dot{\mathbb{E}}_{n}$ has no effect on the energy estimates established in Lemmas \ref{lemma22}-\ref{lemma23} and \ref{lemma26}. By virtue of (\ref{XLvw}), (\ref{XL}), (\ref{XH}) and (\ref{T*}), as long as $\mathcal{X}_{0}$ satisfies (\ref{var0}), we have
\begin{equation}
\begin{aligned}
\mathcal{X}(n^{q},u^{q},\psi^{q})(t)\leq C\mathcal{X}_{0}+C\big{(} \mathcal{X}(n^{q},u^{q},\psi^{q})\big{)}^{2}(t),\quad 0< t<T_{q}.\label{X2}
\end{aligned}
\end{equation}
 By (\ref{X2}), we first choose $C_{0}=4c$ and then take 
$$
\mathcal{X}_{0}\leq \alpha:=\min\{\frac{1}{16 C^2},\frac{C_{*}}{2}\}
$$
 to derive 
\begin{equation}\label{X22}
\begin{aligned}
\mathcal{X}(n^{q},u^{q},\psi^{q})(t)\leq \frac{1}{2} C_{0} \mathcal{X}_{0},\quad 0< t<T_{q}.
\end{aligned}
\end{equation}
By (\ref{X22}) and the time continuity property, $T_{q}$ is not the maximal time such that $\mathcal{X}(n^{q},u^{q},\psi^{q})(t)\leq C_{0}\mathcal{X}_{0}$ holds. This contradicts the definition of $T_{q}$. 

If $T_{q}^{*}<\infty$, by (\ref{X22}) and $T_{q}=T_{q}^{*}$, we can take $(n^{q},u^{q},\psi^{q})(t)$ for $t$ sufficiently close to $T_{q}^{*}$ as the new initial data and obtain the existence from $t$ to some $t+\eta^{*}>T_{q}^{*}$ with a suitably small constant $\eta^{*}>0$ by the Cauchy-Lipschitz theorem, which contradicts the definition of $T_{q}^{*}$. Therefore, we have $T_{q}^{*}=\infty$, and $(n^{q},u^{q},\psi^{q})$ is indeed a global solution to the Cauchy problem (\ref{m1nn}).

From the uniform estimates $\mathcal{X}_{q}(t)\leq C_{0}\mathcal{X}_{0}$ and the equations (\ref{m1nn}), one can estimate the time derivatives $(\partial_{t}n^{q},\partial_{t}u^{q},\partial_{t}\psi^{q})$ in a suitable sense uniform with respect to $q$. According to these uniform estimates, the Aubin-Lions lemma and the Cantor diagonal process, there is a limit $(n,u,\psi)$ such that as $n\rightarrow\infty$, it holds, up to a subsequence (still denoted by $(n^{q},u^{q},\psi^{q})$), that
\begin{align}
&(\chi n^{q}, \chi u^{q}, \chi\psi^{q})\rightarrow (\chi n,\chi u,\chi\psi) \quad\text{strongly in}~L^{2}(0,T;\dot{B}^{\frac{d}{2}}_{2,1}),\quad \forall T>0,\quad \forall \chi \in\mathcal{D}(\mathbb{R}^{d}\times(0,T)).\nonumber
\end{align}
Thus, it is easy to prove that the limit $(n,u,\psi)$ solves System (\ref{m1n}) with the initial data $(n_{0},u_{0},\psi_{0})$ in the sense of distributions, and thanks to the uniform estimates $\mathcal{X}_{q}(t)\leq C_{0}\mathcal{X}_{0}$, the global solution $(n,u,\psi)$ to System (\ref{m1n}) is indeed a classical one and satisfies the properties (\ref{r1})-(\ref{XX0}). To finish the proof of Theorem \ref{theorem11}, we show that the solution constructed in this section is unique.

\subsection{Uniqueness}\label{subsection34}

  Suppose that (\ref{ABmu})-(\ref{a1}) of Theorem \ref{theorem11} are in force. Without loss of generation, we assume $\var=1$ in this subsection. For any given time $T>0$, let $(n_i,u_i,\psi_i)$ $(i=1,2)$ be two solutions to System (\ref{m1n}) satisfying (\ref{r1})-(\ref{XX0}) with the same initial data $(n_{0},u_{0},\psi_{0})$ on $[0,T]$. Denote the discrepancy 
 $$
(\widetilde{n},\widetilde{u},\widetilde{\psi}):=(n_{1}-n_{2},u_{1}-u_{2},\psi_{1}-\psi_{2}).
 $$
 Then it is easy to verify that $(\widetilde{n}_{j},\widetilde{u}_{j},\widetilde{\psi}_{j})$ satisfies the equations
\begin{equation}
\left\{
\begin{aligned}
&\partial_{t}\widetilde{n}_{j}+u_{1}\cdot \nabla \widetilde{n}_{j} +(c_{0}+G(n_{1}))\div \widetilde{u}_{j}=\widetilde{\mathcal{R}}_{1,j},\\
&\partial_{t}\widetilde{u}_{j}+u_{1}\cdot \nabla \widetilde{u}_{j}+\frac{1}{\varepsilon} \widetilde{u}_{j}+\nabla \widetilde{n}_{j}-\mu \nabla \widetilde{\psi}_{j}=\widetilde{ \mathcal{R}}_{2,j},\\
&\partial_{t}\widetilde{\psi}_{j} -\Delta \widetilde{\psi}_{j} +B\widetilde{\psi}_{j} -c_{1}\widetilde{n}_{j}=H(n_{1})_{j}-H(n_{2})_{j},
\end{aligned}
\right.
\end{equation}
where $w_{1,j}$ is the weight function $w_{1,j}=c_{0}+\dot{S}_{j-1}G(n_{1})$, and $\widetilde{\mathcal{R}}_{i,j}$ $(i=1,2)$ are the nonlinear terms 
\begin{equation}\nonumber
\left\{
\begin{aligned}
&\widetilde{ \mathcal{R}}_{1,j}:=\dot{S}_{j-1} u_{1}\cdot \nabla \widetilde{n}_{j}-\dot{\Delta}_{j} (u_{1}\cdot \nabla\widetilde{n})+\dot{S}_{j-1}G(n_{1}) \div \widetilde{u}_{j} - \dot{\Delta}_{j} (G(n_{1})\div \widetilde{u}_{j})\\
&\quad\quad~\quad-\dot{\Delta}_{j}(\widetilde{u}\cdot\nabla n_{2})-\dot{\Delta}_{j}\big((G(n_{1})-G(n_{2}))\div u_{2}\big),\\
&\widetilde{ \mathcal{R}}_{2,j}:=\dot{S}_{j-1} u_{1}\cdot \nabla \widetilde{u}_{j}-\dot{\Delta}_{j} (u_{1}\cdot \nabla \widetilde{u})-\dot{\Delta}_{j}(\widetilde{u}\cdot \nabla u_{2}).
\end{aligned}
\right.
\end{equation}
From (\ref{r1}), there holds
\begin{equation}\label{rdelta}
\begin{aligned}
&\|(n_{i},u_{i})\|_{L^{\infty}}+\|(\nabla n_{i},\nabla u_{i})\|_{L^{\infty}}+\|(n_{i},u_{i})\|_{\dot{B}^{\frac{d}{2}}_{2,1}\cap\dot{B}^{\frac{d}{2}+1}_{2,1}}\lesssim 1,\quad\quad t\in[0,T],\quad i=1,2.
\end{aligned}
\end{equation}
By similar arguments as used in (\ref{m1j})-(\ref{apsij}), one can obtain
\begin{equation}\label{deltaau}
\begin{aligned}
&\frac{d}{dt}\int[ \frac{1}{2}|\widetilde{n}_{j}|^2+\frac{1}{2} w_{1,j}|\widetilde{u}_{j}|^2+\frac{\mu b}{2c_{1}} |\widetilde{\psi}_{j}|^2+\frac{\mu}{2c_{1}} |\nabla \widetilde{\psi}_{j}|^2-\mu \widetilde{n}_{j} \widetilde{\psi}_{j}]dx\\
&\quad+\int  (\frac{1}{\varepsilon} w_{1,j}|\widetilde{u}_{j}|^2+|\partial_{t}\widetilde{\psi}_{j}|^2)dx\\
&=\int \Big{(} \frac{1}{2} \dot{S}_{j-1}\div u_{1} |\widetilde{n}_{j}|^2+\widetilde{\mathcal{R}}_{1,j} \widetilde{n}_{j}+\frac{1}{2}w_{1,j}\dot{S}_{j-1}\div u_{1} |\widetilde{u}_{j}|^2+w_{1,j} \widetilde{ \mathcal{R}}_{2,j} \cdot \widetilde{u}_{j}\\
&\quad+\frac{1}{2}\partial_{t}w_{1,j} |\widetilde{u}_{j}|^2+\nabla w_{1,j}\cdot ( \frac{1}{2} u_{1} |\widetilde{u}_{j}|^2+\widetilde{n}_{j} \widetilde{u}_{j}-\mu\widetilde{\psi}_{j}\widetilde{u}_{j})\\
&\quad+\partial_{t}\widetilde{\psi}_{j}(H(n_{1})_{j}-H(n_{2})_{j}) -c_{1}(-\dot{S}_{j-1} u_{1} \cdot \nabla \widetilde{n}_{j}+\widetilde{\mathcal{R}}_{1,j})\widetilde{\psi}_{j} \Big{)}dx, \quad j\in\mathbb{Z}.
\end{aligned}
\end{equation}
Here we omit the details for the sake of simplicity. Owing to (\ref{wjc0})-(\ref{var0}), we have
\begin{equation}\label{deltaau2}
\begin{aligned}
&\frac{c_{0}}{2}\leq w_{1,j}(x,t)\leq \frac{3c_{0}}{2},\quad (x,t)\in\mathbb{R}^{d}\times(0,T),\quad j\in\mathbb{Z}.
\end{aligned}
\end{equation}
By (\ref{ABmu}), (\ref{XX0}), (\ref{deltaau})-(\ref{deltaau2}) and the fact
\begin{equation}\nonumber
\begin{aligned}
&\int\partial_{t}\widetilde{\psi}_{j}(H(n_{1})_{j}-H(n_{2})_{j})dx\leq \frac{1}{4}\|\partial_{t}\widetilde{\psi}_{j}\|_{L^2}^2+\|H(n_{1})_{j}-H(n_{2})_{j}\|_{L^2}^2,
\end{aligned}
\end{equation}
we deduce after direct computations that
\begin{equation}\label{delta11}
\begin{aligned}
&\|(\widetilde{n},\widetilde{u},\widetilde{\psi},\nabla\widetilde{\psi})\|_{\dot{B}^{\frac{d}{2}}_{2,1}}^2\\
&\quad\leq C\int_{0}^{t}\big{(} \|(\widetilde{n},\widetilde{u},\widetilde{\psi},\nabla\widetilde{\psi})\|_{\dot{B}^{\frac{d}{2}}_{2,1}}^2 +\|H(n_{1})-H(n_{2})\|_{\dot{B}^{\frac{d}{2}}_{2,1}}^2\\
&\quad\quad+\sum_{j\in\mathbb{Z}} 2^{\frac{d}{2}j} \|(\widetilde{ \mathcal{R}}_{1,j}\widetilde{ \mathcal{R}}_{2,j})\|_{L^2} \|(\widetilde{n},\widetilde{u},\widetilde{\psi},\nabla\widetilde{\psi})\|_{\dot{B}^{\frac{d}{2}}_{2,1}} \big) d\tau.
\end{aligned}
\end{equation}
One gets from (\ref{rdelta}), (\ref{uv2}), (\ref{commutator}) and (\ref{F3}) that
\begin{equation}\label{delta13}
\begin{aligned}
&\sum_{j\in\mathbb{Z}} 2^{j\frac{d}{2}}\|\widetilde{\mathcal{R}}_{1,j}\|_{L^2}\\
&\quad\lesssim \|u_{1}\|_{\dot{B}^{\frac{d}{2}+1}_{2,1}}\|\widetilde{n}\|_{\dot{B}^{\frac{d}{2}}_{2,1}}+\|G(n_{1})\|_{\dot{B}^{\frac{d}{2}+1}_{2,1}}\|\widetilde{u}\|_{\dot{B}^{\frac{d}{2}}_{2,1}}\\
&\quad\quad+\|n_{2}\|_{\dot{B}^{\frac{d}{2}+1}_{2,1}}\|\widetilde{u}\|_{\dot{B}^{\frac{d}{2}}_{2,1}}+\|u_{2}\|_{\dot{B}^{\frac{d}{2}+1}_{2,1}} \|G(n_{1})-G(n_{2})\|_{\dot{B}^{\frac{d}{2}}_{2,1}}\\
&\quad\lesssim \|(\widetilde{n},\widetilde{u})\|_{\dot{B}^{\frac{d}{2}}_{2,1}}.
\end{aligned}
\end{equation}
Similarly, it holds that
\begin{equation}\label{delta14}
\begin{aligned}
&\sum_{j\in\mathbb{Z}} 2^{j\frac{d}{2}}\|\widetilde{ \mathcal{R}}_{2,j}\|_{L^2}\lesssim \|(u_{1},u_{2})\|_{\dot{B}^{\frac{d}{2}+1}_{2,1}} \|\widetilde{u}\|_{\dot{B}^{\frac{d}{2}}_{2,1}}\lesssim \|\widetilde{u}\|_{\dot{B}^{\frac{d}{2}}_{2,1}}.
\end{aligned}
\end{equation}
Finally, using the composition estimate (\ref{F3}), we derive
\begin{equation}\label{delta15}
\begin{aligned}
&\|H(n_{1})-H(n_{2})\|_{\dot{B}^{\frac{d}{2}}_{2,1}}^2\lesssim\|\widetilde n\|_{\dot{B}^{\frac{d}{2}}_{2,1}}^2.
\end{aligned}
\end{equation}
Combining (\ref{delta11})-(\ref{delta15}) together and employing the Gr${\rm{\ddot{o}}}$nwall inequality, we prove $(\widetilde{n},\widetilde{u},\widetilde{\psi})=0$ a.e. in $\mathbb{R}^{d}\times(0,T)$.
 
 \section{Optimal time-decay rates}\label{section5}

\subsection{The evolution of negative Besov norms}
In this section, we establish the time-decay estimates of the global solution given by Theorem \ref{theorem11} without additional smallness condition on the initial data. The following lemma shows that the additional low-frequency regularity assumption \eqref{a3} is propagated in time.
\begin{lemma}\label{lemma51}
Let $\sigma_{0}\in[-\frac{d}{2},\frac{d}{2})$. Then, under the assumptions {\rm(\ref{ABmu})}, {\rm(\ref{a1})} and {\rm(\ref{a3})}, the following inequality holds:
\begin{equation}
\begin{aligned}
&\mathcal{X}_{L}^{\sigma_{0}}(t)\leq C\big{(} \mathcal{X}_{0}+\|(n_{0},  u_{0}, \psi_{0})\|_{\dot{B}^{\sigma_{0}}_{2,\infty}}^{\ell} \big{)},\quad\quad t>0,\label{lowd}
\end{aligned}
\end{equation}
where $\mathcal{X}_{L}^{\sigma_{0}}(t)$ is defined by
\begin{equation}\label{XLsigma}
\begin{aligned}
&\mathcal{X}_{L}^{\sigma_{0}}(t):=\|(n,u,\psi)\|_{\widetilde{L}^{\infty}_{t}(\dot{B}^{\sigma_{0}}_{2,\infty})}^{\ell}+\var\|(n,\psi)\|_{\widetilde{L}^1_{t}(\dot{B}^{\sigma_{0}+2}_{2,\infty})}^{\ell}+\|u\|_{\widetilde{L}^1_{t}(\dot{B}^{\sigma_{0}+1}_{2,\infty})}^{\ell}+\var^{-\frac{1}{2}}\|u\|_{\widetilde{L}^2_{t}(\dot{B}^{\sigma_{0}}_{2,\infty})}^{\ell}.
\end{aligned}
\end{equation}
\end{lemma}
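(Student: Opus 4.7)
The plan is to mirror the low-frequency analysis of Lemma \ref{lemma22}--\ref{lemma23}, but with the regularity index $\sigma_0$ in place of $\frac{d}{2}$ and the third Besov index $r=\infty$ in place of $r=1$. I would again work on the reformulated system \eqref{Ga} for the effective unknowns $(n,\varphi,v)$, and then transfer the estimates back to $(n,u,\psi)$ via \eqref{vw1}. Since the initial data $(n_0,u_0,\psi_0)^\ell \in \dot{B}^{\sigma_0}_{2,\infty}$ uniformly in $\var$, the effective initial data $(n,\varphi,v)|_{t=0}$ also lie in $\dot{B}^{\sigma_0}_{2,\infty}$ at low frequencies, with a bound of the form $\mathcal{X}_0+\|(n_0,u_0,\psi_0)\|_{\dot{B}^{\sigma_0}_{2,\infty}}^\ell$, using the Bessel potential bound \eqref{bessel} and the composition estimate for $H(n)$.

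First, since Lemma \ref{lemma21} is valid for all $r\in[1,\infty]$, applying it with $r=\infty$ and $s=\sigma_0$ to each of the three equations in \eqref{Ga} yields
\begin{align*}
&\|n\|_{\widetilde{L}^\infty_t(\dot B^{\sigma_0}_{2,\infty})}^\ell+\var\|n\|_{\widetilde{L}^1_t(\dot B^{\sigma_0+2}_{2,\infty})}^\ell\lesssim \|n_0\|_{\dot B^{\sigma_0}_{2,\infty}}^\ell+\|L_1+R_1\|_{\widetilde{L}^1_t(\dot B^{\sigma_0}_{2,\infty})}^\ell,\\
&\|\varphi\|_{\widetilde{L}^\infty_t(\dot B^{\sigma_0}_{2,\infty})}^\ell+\|\varphi\|_{\widetilde{L}^1_t(\dot B^{\sigma_0}_{2,\infty}\cap\dot B^{\sigma_0+2}_{2,\infty})}^\ell\lesssim \|\varphi|_{t=0}\|_{\dot B^{\sigma_0}_{2,\infty}}^\ell+\|L_2+R_2\|_{\widetilde{L}^1_t(\dot B^{\sigma_0}_{2,\infty})}^\ell,\\
&\|v\|_{\widetilde{L}^\infty_t(\dot B^{\sigma_0}_{2,\infty})}^\ell+\tfrac{1}{\var}\|v\|_{\widetilde{L}^1_t(\dot B^{\sigma_0}_{2,\infty})}^\ell\lesssim \|v|_{t=0}\|_{\dot B^{\sigma_0}_{2,\infty}}^\ell+\|L_3+R_3\|_{\widetilde{L}^1_t(\dot B^{\sigma_0}_{2,\infty})}^\ell.
\end{align*}
The linear terms $L_1,L_2,L_3$ from \eqref{L123} are dealt with exactly as in the derivation of \eqref{224}--\eqref{226}: each one carries at most two extra spatial derivatives and an $\var$-prefactor, so using the low-frequency Bernstein property $\|f^\ell\|_{\dot B^{\sigma_0+2}_{2,\infty}}\lesssim 2^{J_\var}\|f^\ell\|_{\dot B^{\sigma_0+1}_{2,\infty}}$ and $\var 2^{J_\var}\sim 2^k$, one can absorb all $L_i$ contributions on the left-hand side provided $k$ is chosen small enough (the same $k$ as in Theorem \ref{theorem11}).

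Second, the nonlinear terms $R_i$ are estimated by the paraproduct law that, for $\sigma_0\in[-\tfrac{d}{2},\tfrac{d}{2})$, takes the form $\|fg\|_{\dot B^{\sigma_0}_{2,\infty}}\lesssim \|f\|_{\dot B^{d/2}_{2,1}}\|g\|_{\dot B^{\sigma_0}_{2,\infty}}$. A typical contribution such as $u\cdot\nabla n$ is split via Bony's decomposition and interpolated in time,
\[
\|u\cdot\nabla n\|_{\widetilde L^1_t(\dot B^{\sigma_0}_{2,\infty})}^\ell \lesssim \|u\|_{\widetilde L^2_t(\dot B^{d/2}_{2,1})}\,\|\nabla n\|_{\widetilde L^2_t(\dot B^{\sigma_0}_{2,\infty})},
\]
where the first factor is $\lesssim\mathcal{X}(t)^{1/2}$ via $\var^{-1/2}\|u\|_{L^2_t(\dot B^{d/2}_{2,1})}\lesssim \mathcal{X}(t)$ and the second is bounded, by interpolation between $\widetilde L^\infty_t\dot B^{\sigma_0}_{2,\infty}$ and $\widetilde L^1_t\dot B^{\sigma_0+2}_{2,\infty}$, by $(\mathcal{X}_L^{\sigma_0}(t))^{1/2}\var^{-1/2}$ up to $\mathcal X(t)$-factors. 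The composition terms $G(n)\div u$, $H(n)$, and $\partial_t H(n)$ are handled exactly as in \eqref{R12}--\eqref{R3} but with one copy of $n$ measured in $\dot B^{\sigma_0}_{2,\infty}$; this uses a variant of Lemma \ref{compositionlp} (for $H$, quadratic vanishing at $0$; for $G$, standard with $G(0)=0$) together with $\mathcal{X}(t)\leq C\mathcal{X}_0\leq C\alpha$ from \eqref{aLinfty}. Altogether the nonlinear contributions are majorized by $C\mathcal{X}(t)\,\mathcal{X}_L^{\sigma_0}(t)$.

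Third, translating the $(n,\varphi,v)$ bounds back to $(n,u,\psi)$ through \eqref{vw1} reproduces, line by line, the computations \eqref{231}--\eqref{235} but with the index $\sigma_0$ and $r=\infty$, so one obtains
\[
\mathcal{X}_L^{\sigma_0}(t)\leq C\bigl(\mathcal{X}_0+\|(n_0,u_0,\psi_0)\|_{\dot B^{\sigma_0}_{2,\infty}}^\ell\bigr)+C\mathcal X(t)\,\mathcal X_L^{\sigma_0}(t).
\]
Since $\mathcal X(t)\leq C\mathcal X_0\leq C\alpha$ with $\alpha$ small from Theorem \ref{theorem11}, the last term is absorbed into the left-hand side and \eqref{lowd} follows.

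The main obstacle is the lower-endpoint case $\sigma_0=-\tfrac{d}{2}$, where the paraproduct estimate $\|fg\|_{\dot B^{-d/2}_{2,\infty}}\lesssim\|f\|_{\dot B^{d/2}_{2,1}}\|g\|_{\dot B^{-d/2}_{2,\infty}}$ is borderline and the remainder term $T(f,g)$ must be controlled via $\widetilde L^r_t$ versions of the product laws (this is why tilde Chemin--Lerner norms appear in $\mathcal X_L^{\sigma_0}$). A second delicate point is the order-zero quadratic nonlinearity $H(n)$: one cannot directly apply the standard chain-rule composition estimate at regularity $\sigma_0<0$, so one has to exploit the factorization $H(n)=n^2\tilde H(n)$ and bound one copy of $n$ in $\dot B^{d/2}_{2,1}\hookrightarrow L^\infty$ to avoid a low regularity composition — the same mechanism used in Lemma \ref{compositionlp} but adapted to the $\dot B^{\sigma_0}_{2,\infty}$ scale.
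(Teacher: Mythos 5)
Your proposal follows essentially the same route as the paper's proof: apply Lemma~\ref{lemma21} with $r=\infty$ and $s=\sigma_0$ to the reformulated system \eqref{Ga} for $(n,\varphi,v)$, absorb the linear terms $L_i$ using the threshold $J_\var$ and $\var 2^{J_\var}\sim 2^k$, bound the nonlinear terms by product and composition estimates (the paper's \eqref{q5} plays the role of your ``variant of Lemma~\ref{compositionlp}''), transfer back to $(n,u,\psi)$ as in \eqref{231}--\eqref{235}, and close with the smallness of $\mathcal X(t)$. This is exactly what the paper does in \eqref{sigma1}--\eqref{sigma15}.

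One remark on your ``main obstacle'' paragraph: the endpoint $\sigma_0=-\tfrac d2$ is not rescued by passing to Chemin--Lerner norms in time (those fix summability issues in $t$, not in the dyadic index $j$). What makes the product estimate work at $s=-\tfrac d2$ is that the remainder $R(f,g)$ is paired with a factor measured in $\dot B^{d/2}_{2,1}$, whose $\ell^1$-summability over frequency blocks controls the sum $\sum_{k\geq j}$ even when the weights $2^{(j-k)(\sigma_0+d/2)}$ degenerate to $1$; the endpoint is admissible precisely because $u,n\in\dot B^{d/2}_{2,1}$ and not merely $\dot B^{d/2}_{2,\infty}$. Likewise, the intermediate powers in your interpolation display are slightly off (the first factor should be $\var^{1/2}\mathcal X(t)$, not $\mathcal X(t)^{1/2}$, and the second $\var^{-1/2}\mathcal X_L^{\sigma_0}(t)$), though your final bound $C\,\mathcal X(t)\,\mathcal X_L^{\sigma_0}(t)$ is correct and matches the paper up to the harmless extra $\mathcal X^2(t)$ term from the high-frequency contributions.
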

\begin{proof}
Let the effective unknowns $(\varphi,v)$ be denoted by (\ref{vw}). Owing to $(\ref{Ga})_{1}$ and (\ref{diss1}), we have
\begin{equation}\label{sigma1}
\begin{aligned}
&\|n\|_{\widetilde{L}^{\infty}_{t}(\dot{B}^{\sigma_{0}}_{2,\infty})}^{\ell}+\var\|n\|_{\widetilde{L}^{1}_{t}(\dot{B}^{\sigma_{0}+2}_{2,\infty})}^{\ell}\\
&\quad\lesssim \|n_{0}\|_{\dot{B}^{\sigma_{0}}_{2,\infty}}^{\ell}+\var\|\varphi\|_{\widetilde{L}^{1}_{t}(\dot{B}^{\sigma_{0}+2}_{2,\infty})}^{\ell}+\|v\|_{\widetilde{L}^{1}_{t}(\dot{B}^{\sigma_{0}+1}_{2,\infty})}^{\ell}+\|R_{1}\|_{\widetilde{L}^1_{t}(\dot{B}^{\sigma_{0}}_{2,\infty})}^{\ell}\\
&\quad\lesssim \|n_{0}\|_{\dot{B}^{\sigma_{0}}_{2,\infty}}^{\ell}+\var\|\varphi\|_{\widetilde{L}^{1}_{t}(\dot{B}^{\sigma_{0}+1}_{2,\infty}\cap\dot{B}^{\sigma_{0}+3}_{2,\infty})}^{\ell}+\|v\|_{\widetilde{L}^{1}_{t}(\dot{B}^{\sigma_{0}+1}_{2,\infty})}^{\ell}+\|R_{1}\|_{\widetilde{L}^1_{t}(\dot{B}^{\sigma_{0}}_{2,\infty})}^{\ell},
\end{aligned}
\end{equation}
which together with $(\ref{Ga})_{2}$, (\ref{diss1}) and (\ref{bessel}) gives
\begin{equation}\label{sigma13}
\begin{aligned}
&\|\varphi\|_{\widetilde{L}^{\infty}_{t}(\dot{B}^{\sigma_{0}}_{2,\infty})}^{\ell}+\|\varphi\|_{\widetilde{L}^{1}_{t}(\dot{B}^{\sigma_{0}}_{2,\infty}\cap\dot{B}^{\sigma_{0}+2}_{2,\infty})}^{\ell}\\
&\quad\lesssim \|\varphi|_{t=0}\|_{\dot{B}^{\sigma_{0}}_{2,\infty}}^{\ell}+\var\|n\|_{\widetilde{L}^1_{t}(\dot{B}^{\sigma_{0}+2}_{2,\infty})}^{\ell}+\|v\|_{\widetilde{L}^1_{t}(\dot{B}^{\sigma_{0}+1}_{2,\infty})}^{\ell}+\|R_{2}\|_{\widetilde{L}^1_{t}(\dot{B}^{\sigma_{0}}_{2,\infty})}^{\ell}\\
&\quad\lesssim \|(n,\varphi)|_{t=0}\|_{\dot{B}^{\sigma_{0}}_{2,\infty}}^{\ell}+\var\|\varphi\|_{\widetilde{L}^{1}_{t}(\dot{B}^{\sigma_{0}+1}_{2,\infty}\cap\dot{B}^{\sigma_{0}+3}_{2,\infty})}^{\ell}\\
&\quad\quad+\|v\|_{\widetilde{L}^{1}_{t}(\dot{B}^{\sigma_{0}+1}_{2,\infty})}^{\ell}+\|(R_{1},R_{2})\|_{\widetilde{L}^1_{t}(\dot{B}^{\sigma_{0}}_{2,\infty})}^{\ell}.
\end{aligned}
\end{equation}
 Making use of $(\ref{Ga})_{3}$ and $(\ref{sigma13})$, we also get
\begin{equation}\label{sigma14}
\begin{aligned}
&\|v\|_{\widetilde{L}^{\infty}_{t}(\dot{B}^{\sigma_{0}}_{2,\infty})}^{\ell}+\frac{1}{\var}\|v\|_{\widetilde{L}^{1}_{t}(\dot{B}^{\sigma_{0}}_{2,\infty})}^{\ell}\\
&\quad\lesssim \|v|_{t=0}\|_{\dot{B}^{\sigma_{0}}_{2,\infty}}^{\ell}+\var^2\|n\|_{\widetilde{L}^1_{t}(\dot{B}^{\sigma_{0}+3}_{2,\infty})}^{\ell}+\var\|\varphi\|_{\widetilde{L}^1_{t}(\dot{B}^{\sigma_{0}+1}_{2,\infty}\cap\dot{B}^{\sigma_{0}+3}_{2,\infty})}^{\ell}\\
&\quad\quad+\var\|v\|_{\widetilde{L}^1_{t}(\dot{B}^{\sigma_{0}+2}_{2,\infty})}^{\ell}+\|R_{3}\|_{\widetilde{L}^1_{t}(\dot{B}^{\sigma_{0}}_{2,\infty})}^{\ell}.
\end{aligned}
\end{equation}
The combination of (\ref{lh}) and (\ref{sigma1})-(\ref{sigma14}) implies that for $J_{\var}$ given by (\ref{J0}) with a sufficiently small constant $k$, we have
\begin{equation}\label{sigma16}
\begin{aligned}
&\|(n,\varphi,v)\|_{\widetilde{L}^{\infty}_{t}(\dot{B}^{\sigma_{0}}_{2,\infty})}^{\ell}+\var\|n\|_{\widetilde{L}^{1}_{t}(\dot{B}^{\sigma_{0}+2}_{2,\infty})}^{\ell}+\|\varphi\|_{\widetilde{L}^{1}_{t}(\dot{B}^{\sigma_{0}}_{2,\infty}\cap\dot{B}^{\sigma_{0}+2}_{2,\infty})}^{\ell}+\frac{1}{\var}\|v\|_{\widetilde{L}^{1}_{t}(\dot{B}^{\sigma_{0}}_{2,\infty})}^{\ell}\\
&\quad\lesssim \|(n,\varphi,v)|_{t=0}\|_{\dot{B}^{\sigma_{0}}_{2,\infty}}^{\ell}+\|(R_{1},R_{2},R_{3})\|_{\widetilde{L}^1_{t}(\dot{B}^{\sigma_{0}}_{2,\infty})}^{\ell}.
\end{aligned}
\end{equation}
We turn to estimate the nonlinear terms $R_{i}$ $(i=1,2,3)$. One infers from (\ref{ninfty}), (\ref{hl}) and the composition estimate (\ref{q5}) that
\begin{equation}\label{sigma161}
\begin{aligned}
&\var\|H(n)\|_{\widetilde{L}^1_{t}(\dot{B}^{\sigma_{0}+2}_{2,\infty})}^{\ell}\\
&\quad\lesssim (\|n\|_{\widetilde{L}^{\infty}_{t}(\dot{B}^{\frac{d}{2}}_{2,1})}^{\ell}+\var\|n\|_{\widetilde{L}^{\infty}_{t}(\dot{B}^{\frac{d}{2}+1}_{2,1})}^{h} ) (\var\|n\|_{\widetilde{L}^1_{t}(\dot{B}^{\sigma_{0}+2}_{2,\infty})}^{\ell}+\|n\|_{L^1_{t}(\dot{B}^{\frac{d}{2}+1}_{2,1})}^{h})\\
&\quad\lesssim  \mathcal{X}(t)\mathcal{X}^{\sigma_{0}}_{L}(t) +\mathcal{X}^2(t).
\end{aligned}
\end{equation}
And due to (\ref{ninfty}), (\ref{hl}) and (\ref{F1}), we get
\begin{equation}\label{sigma162}
\begin{aligned}
&\|u\cdot \nabla u\|_{\widetilde{L}^1_{t}(\dot{B}^{\sigma_{0}}_{2,\infty})}^{\ell}+\|u\cdot \nabla n\|_{\widetilde{L}^1_{t}(\dot{B}^{\sigma_{0}}_{2,\infty})}^{\ell}+\|u\cdot \nabla G(n)\|_{\widetilde{L}^1_{t}(\dot{B}^{\sigma_{0}}_{2,\infty})}^{\ell}+\|\partial_{t}H(n)\|_{\widetilde{L}^1_{t}(\dot{B}^{\sigma_{0}}_{2,\infty})}^{\ell}\\
&\quad\lesssim \|u\|_{\widetilde{L}^2_{t}(\dot{B}^{\frac{d}{2}}_{2,1})}\|(n,u)\|_{\widetilde{L}^2_{t}(\dot{B}^{\sigma_{0}+1}_{2,\infty})}+\|n\|_{\widetilde{L}^{\infty}_{t}(\dot{B}^{\frac{d}{2}}_{2,1})}\|u\|_{\widetilde{L}^1_{t}(\dot{B}^{\sigma_{0}+1}_{2,\infty})}\\
&\quad\lesssim \var^{-\frac{1}{2}}\|u\|_{\widetilde{L}^2_{t}(\dot{B}^{\frac{d}{2}}_{2,1})}\Big( \big( \|(n,u)\|_{\widetilde{L}^{\infty}_{t}(\dot{B}^{\sigma_{0}}_{2,\infty})}^{\ell}\big)^{\frac{1}{2}}\big( \var\|(n,u)\|_{\widetilde{L}^{1}_{t}(\dot{B}^{\sigma_{0}+2}_{2,\infty})}^{\ell} \big)^{\frac{1}{2}}\\
&\quad\quad+\big( \var\|(n,u)\|_{\widetilde{L}^{\infty}_{t}(\dot{B}^{\frac{d}{2}+1}_{2,1})}^{h} \big)^{\frac{1}{2}} \big( \|(n,u)\|_{L^{1}_{t}(\dot{B}^{\frac{d}{2}+1}_{2,1})}^{h} \big)^{\frac{1}{2}} \Big)\\
&\quad\quad+ \big(\|n\|_{\widetilde{L}^{\infty}_{t}(\dot{B}^{\frac{d}{2}}_{2,1})}^{\ell}+\var\|n\|_{\widetilde{L}^{\infty}_{t}(\dot{B}^{\frac{d}{2}+1}_{2,1})}^{h} \big) \big( \|u\|_{\widetilde{L}^1_{t}(\dot{B}^{\sigma_{0}+1}_{2,\infty})}^{\ell}+\|u\|_{L^1_{t}(\dot{B}^{\frac{d}{2}+1}_{2,1})}^{h}\big)\\
&\quad\lesssim \mathcal{X}(t)\mathcal{X}^{\sigma_{0}}_{L}(t) +\mathcal{X}^2(t).
\end{aligned}
\end{equation}
One deduces by (\ref{R123}) and (\ref{sigma161})-(\ref{sigma162}) that
\begin{equation}\label{sigma15}
\begin{aligned}
&\|(R_{1},R_{2},R_{3})\|_{\widetilde{L}^1_{t}(\dot{B}^{\sigma_{0}}_{2,\infty})}^{\ell}\\
&\quad\lesssim \var\|H(n)\|_{\widetilde{L}^1_{t}(\dot{B}^{\sigma_{0}+2}_{2,\infty})}^{\ell}+\|u\cdot \nabla n\|_{\widetilde{L}^1_{t}(\dot{B}^{\sigma_{0}}_{2,\infty})}^{\ell}+\|u\cdot \nabla u\|_{\widetilde{L}^1_{t}(\dot{B}^{\sigma_{0}}_{2,\infty})}^{\ell}\\
&\quad\quad+\|G(n)\div u\|_{\widetilde{L}^1_{t}(\dot{B}^{\sigma_{0}}_{2,\infty})}^{\ell}+\|\partial_{t}H(n)\|_{\widetilde{L}^1_{t}(\dot{B}^{\sigma_{0}}_{2,\infty})}^{\ell}\\
&\quad\lesssim \mathcal{X}(t)\mathcal{X}^{\sigma_{0}}_{L}(t) +\mathcal{X}^2(t).
\end{aligned}
\end{equation}
Combining (\ref{sigma16}) and (\ref{sigma15}) together, we gain
\begin{equation}\nonumber
\begin{aligned}
&\|(n,\varphi,v)\|_{\widetilde{L}^{\infty}_{t}(\dot{B}^{\sigma_{0}}_{2,\infty})}^{\ell}+\var\|n\|_{\widetilde{L}^{1}_{t}(\dot{B}^{\sigma_{0}+2}_{2,\infty})}^{\ell}+\|\varphi\|_{\widetilde{L}^{1}_{t}(\dot{B}^{\sigma_{0}}_{2,\infty}\cap\dot{B}^{\sigma_{0}+2}_{2,\infty})}^{\ell}+\frac{1}{\var}\|v\|_{\widetilde{L}^{1}_{t}(\dot{B}^{\sigma_{0}}_{2,\infty})}^{\ell}\\
&\quad\lesssim\mathcal{X}_{0}+\|(n_{0},  u_{0}, \psi_{0})\|_{\dot{B}^{\sigma_{0}}_{2,\infty}}^{\ell} +\mathcal{X}(t)\mathcal{X}^{\sigma_{0}}_{L}(t) +\mathcal{X}^2(t).
\end{aligned}
\end{equation}
Therefore, by similar computations as in (\ref{231})-(\ref{235}), one has
\begin{equation}\nonumber
\begin{aligned}
&\mathcal{X}^{\sigma_0}_{L}(t)\lesssim\mathcal{X}_{0}+\|(n_{0},  u_{0}, \psi_{0})\|_{\dot{B}^{\sigma_{0}}_{2,\infty}}^{\ell} +\mathcal{X}(t)\mathcal{X}^{\sigma_{0}}_{L}(t) +\mathcal{X}^2(t).
\end{aligned}
\end{equation}
This together with $\mathcal{X}(t)\lesssim \mathcal{X}_{0}<<1$ leads to (\ref{lowd}). 
\end{proof}

\subsection{New time-weighted estimates}

We apply a modified energy argument to prove the optimal time-decay rates (\ref{decay1})-(\ref{decay3}) of global solutions to the Cauchy problem of System (\ref{m1n}). For a suitably large constant $\theta$, we introduce the time-weighted functional
\begin{equation}\label{decayX}
\begin{aligned}
\mathcal{D}^{\theta}(t):=\mathcal{D}_{L}^{\theta}(t)+\mathcal{D}_{H}^{\theta}(t),
\end{aligned}
\end{equation}
with
\begin{align}
&\mathcal{D}_{L}^{\theta}(t):=\|\tau^{\theta}(n,u,\psi)\|_{\widetilde{L}^{\infty}_{t}(\dot{B}^{\frac{d}{2}}_{2,1})}^{\ell}+\var\|\tau^{\theta}(n,\psi)\|_{L^1_{t}(\dot{B}^{\frac{d}{2}+2}_{2,1})}^{\ell}+\|\tau^{\theta}u\|_{L^1_{t}(\dot{B}^{\frac{d}{2}+1}_{2,1})}^{\ell},\nonumber \\
&\mathcal{D}_{H}^{\theta}(t):=\var\|\tau^{\theta}(n,u,\nabla\psi)\|_{\widetilde{L}^{\infty}_{t}(\dot{B}^{\frac{d}{2}+1}_{2,1})}^{h}+ \|\tau^{\theta}(n,u,\nabla\psi)\|_{L^{1}_{t}(\dot{B}^{\frac{d}{2}+1}_{2,1})}^{h}.\nonumber
\end{align}
Compared with $\mathcal{X}(t)$, there are some time-weighted dissipation estimates missing in $\mathcal{D}^{\theta}(t)$. Here we omit these since $\mathcal{D}^{\theta}(t)$ is sufficient to derive the desired time-decay rates.

 In low frequencies, we have the following estimates.
\begin{lemma}\label{lemma52}
Let $\sigma_{0}\in[-\frac{d}{2},\frac{d}{2})$ and $\theta>1+\frac{1}{2}(\frac{d}{2}-\sigma_{0})$. Then, under the assumptions of {\rm(\ref{ABmu})}, {\rm(\ref{a1})} and {\rm(\ref{a3})}, it holds for any constant $\zeta>0$ that
\begin{equation}\label{decayL}
\begin{aligned}
&\mathcal{D}_{L}^{\theta}(t)\leq \frac{C\big{(} \mathcal{X}(t)+\mathcal{X}_{L}^{\sigma_{0}}(t)\big{)}}{\zeta} t^{\theta}(\var t)^{-\frac{1}{2}(\frac{d}{2}-\sigma_{0})}+C\big{(}\zeta+\mathcal{X}(t)\big{)}\mathcal{D}^{\theta}(t),\quad \quad t>0.
\end{aligned}
\end{equation}
\end{lemma}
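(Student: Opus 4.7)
My plan is to adapt the low-frequency analysis of Lemmas \ref{lemma22} and \ref{lemma23} to the time-weighted framework. I would multiply each equation of \eqref{Ga} by $\tau^\theta$ to obtain
\begin{equation*}
\left\{
\begin{aligned}
&\partial_t(\tau^\theta n)-\tilde\Delta_1(\tau^\theta n)=\theta\tau^{\theta-1}n+\tau^\theta L_1+\tau^\theta R_1,\\
&\partial_t(\tau^\theta\varphi)-\tilde\Delta_2(\tau^\theta\varphi)=\theta\tau^{\theta-1}\varphi+\tau^\theta L_2+\tau^\theta R_2,\\
&\partial_t(\tau^\theta v)+\var^{-1}\tau^\theta v=\theta\tau^{\theta-1}v+\tau^\theta L_3+\tau^\theta R_3,
\end{aligned}
\right.
\end{equation*}
and apply Lemma \ref{lemma21} (with zero initial data, since $\tau^\theta|_{\tau=0}=0$) in the low-frequency regime. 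Redoing the absorption of the bad linear contributions $\tau^\theta L_i$ via the threshold $J_\var$ with $k$ small, as in \eqref{224}--\eqref{226}, reduces the problem to controlling
\begin{equation*}
\theta\int_0^t \tau^{\theta-1}\|(n,\varphi,v)\|_{\dot B^{\frac{d}{2}}_{2,1}}^\ell\, d\tau + \|\tau^\theta(R_1,R_2,R_3)\|_{L^1_t(\dot B^{\frac{d}{2}}_{2,1})}^\ell.
\end{equation*}

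For the first term, I would combine the real interpolation
\begin{equation*}
\|f\|_{\dot B^{\frac{d}{2}}_{2,1}}\lesssim \|f\|_{\dot B^{\sigma_0}_{2,\infty}}^{\alpha}\|f\|_{\dot B^{\frac{d}{2}+2}_{2,1}}^{1-\alpha},\qquad\alpha=\tfrac{2}{\frac{d}{2}+2-\sigma_0},
\end{equation*}
with H\"older in time to obtain
\begin{equation*}
\int_0^t\tau^{\theta-1}\|n\|_{\dot B^{\frac{d}{2}}_{2,1}}^\ell d\tau\lesssim \bigl(\|n\|_{\widetilde L^\infty_t(\dot B^{\sigma_0}_{2,\infty})}^\ell\, t^{\theta-\frac12(\frac d2-\sigma_0)}\bigr)^{\alpha}\bigl(\var\|n\|_{L^1_t(\dot B^{\frac{d}{2}+2}_{2,1})}^\ell\bigr)^{1-\alpha},
\end{equation*}
and analogously for $\varphi$ (which shares the parabolic structure) and for $v$ (using the damping dissipation $\var^{-1}\|v\|_{L^1_t(\dot B^{\frac{d}{2}}_{2,1})}^\ell$ together with the $\widetilde L^\infty_t(\dot B^{\sigma_0}_{2,\infty})$ control coming from $\mathcal{X}_L^{\sigma_0}(t)$ via Lemma \ref{lemma51}). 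Young's inequality $a^\alpha b^{1-\alpha}\leq C\zeta^{-\alpha/(1-\alpha)}a+\zeta b$ then splits each contribution into a term bounded by $C\zeta^{-1}(\mathcal{X}(t)+\mathcal{X}_L^{\sigma_0}(t))\,t^\theta(\var t)^{-\frac12(\frac d2-\sigma_0)}$ and a term $\zeta \mathcal{D}^\theta(t)$ to be absorbed into the left-hand side once $\zeta$ is taken small.

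For the nonlinear pieces $\|\tau^\theta R_i\|_{L^1_t(\dot B^{\frac{d}{2}}_{2,1})}^\ell$, I would reproduce the bounds \eqref{R1}--\eqref{R3}, systematically placing the weight $\tau^\theta$ on the factor controlled by $\mathcal{D}^\theta(t)$ and the uniform-in-time control on the remaining factor via $\mathcal{X}(t)$. For example,
\begin{equation*}
\|\tau^\theta(u\cdot\nabla n)\|_{L^1_t(\dot B^{\frac{d}{2}}_{2,1})}^\ell\lesssim \|u\|_{\widetilde L^\infty_t(\dot B^{\frac{d}{2}}_{2,1})}\|\tau^\theta n\|_{L^1_t(\dot B^{\frac{d}{2}+1}_{2,1})}\lesssim \mathcal{X}(t)\mathcal{D}^\theta(t),
\end{equation*}
and analogous bounds for $G(n)\,\div u$, $u\cdot\nabla u$, the $H(n)$-contribution (via the composition estimate) and $\partial_tH(n)$ (using $\eqref{m1n}_{1}$). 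Finally, transferring the resulting estimates from $(n,\varphi,v)$ back to the original unknowns $(n,u,\psi)$ through the identities \eqref{vw1}, exactly as in Lemma \ref{lemma23}, yields \eqref{decayL}.

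The main obstacle I foresee is the simultaneous handling of the three distinct dissipative structures at the time-weighted level: the variables $n$ and $\varphi$ enjoy a parabolic gain of two derivatives, which fits the interpolation between $\dot B^{\sigma_0}_{2,\infty}$ and $\dot B^{\frac{d}{2}+2}_{2,1}$, whereas $v$ is purely damped at rate $\var^{-1}$ without derivative gain, forcing a different interpolation scheme. Ensuring that all three contributions share the common decay factor $(\var t)^{-\frac12(\frac d2-\sigma_0)}$ and choosing the Young parameter $\zeta$ (independently of $\var$, but before the smallness of $\mathcal{X}(t)$ is invoked) so that $\zeta\mathcal{D}^\theta(t)$ can indeed be absorbed without the constant deteriorating as $\var\to 0$ is the delicate technical step.
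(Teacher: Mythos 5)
Your strategy is exactly the paper's: multiply the equations for $(n,\varphi,v)$ by $\tau^\theta$, invoke Lemma \ref{lemma21} with vanishing initial data, absorb the remainders $\tau^\theta L_i$ via the threshold $J_\var$, interpolate $\int_0^t\tau^{\theta-1}\|\cdot\|_{\dot B^{d/2}_{2,1}}^\ell d\tau$ between $\dot B^{\sigma_0}_{2,\infty}$ and $\dot B^{d/2+2}_{2,1}$ with exponent $\alpha=\frac{2}{d/2+2-\sigma_0}$, split by Young, show $\|\tau^\theta R_i\|^\ell_{L^1_t(\dot B^{d/2}_{2,1})}\lesssim\mathcal{X}(t)\mathcal{D}^\theta(t)$, and transfer back to $(n,u,\psi)$. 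You also correctly identify the need to handle the non-parabolic variable $v$ differently. All this matches (\ref{321})--(\ref{325}).

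There is, however, a flaw in your sample nonlinear bound. You write
\begin{equation*}
\|\tau^\theta(u\cdot\nabla n)\|_{L^1_t(\dot B^{\frac{d}{2}}_{2,1})}^\ell\lesssim \|u\|_{\widetilde L^\infty_t(\dot B^{\frac{d}{2}}_{2,1})}\|\tau^\theta n\|_{L^1_t(\dot B^{\frac{d}{2}+1}_{2,1})},
\end{equation*}
but the low-frequency part $\|\tau^\theta n\|_{L^1_t(\dot B^{d/2+1}_{2,1})}^\ell$ is not controlled by $\mathcal{D}^\theta_L(t)$: in $\mathcal{D}^\theta_L$ one has the $\widetilde L^\infty_t(\dot B^{d/2}_{2,1})^\ell$ norm and $\var L^1_t(\dot B^{d/2+2}_{2,1})^\ell$ norm, and an $L^1$-in-time norm at the intermediate regularity $d/2+1$ cannot be interpolated between an $L^\infty$-in-time and an $L^1$-in-time norm without picking up an extra $t^{1/2}$ factor; nor can (\ref{hl}) trade a derivative for an $\var$ in the \emph{right} direction on low frequencies. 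The correct splitting, which is what the paper's ``similar calculations'' to (\ref{221})--(\ref{R3}) really perform, is a $\widetilde L^2_t/\widetilde L^2_t$ H\"older in time:
\begin{equation*}
\|\tau^\theta(u\cdot\nabla n)\|_{L^1_t(\dot B^{d/2}_{2,1})}^\ell\lesssim \|u\|_{\widetilde L^2_t(\dot B^{d/2}_{2,1})}\,\|\tau^\theta n\|_{\widetilde L^2_t(\dot B^{d/2+1}_{2,1})}.
\end{equation*}
Then $\|u\|_{\widetilde L^2_t(\dot B^{d/2}_{2,1})}\lesssim\var^{1/2}\mathcal{X}(t)$, and by the $L^\infty/L^1$ interpolation (which is legitimate \emph{at the $\widetilde L^2_t$ level})
\begin{equation*}
\|\tau^\theta n\|_{\widetilde L^2_t(\dot B^{d/2+1}_{2,1})}^\ell\lesssim\bigl(\|\tau^\theta n\|_{\widetilde L^\infty_t(\dot B^{d/2}_{2,1})}^\ell\bigr)^{1/2}\bigl(\|\tau^\theta n\|_{L^1_t(\dot B^{d/2+2}_{2,1})}^\ell\bigr)^{1/2}\lesssim\var^{-1/2}\mathcal{D}^\theta_L(t),
\end{equation*}
and similarly for the high-frequency block, so the powers of $\var$ cancel and the product is $\lesssim\mathcal{X}(t)\mathcal{D}^\theta(t)$, uniformly in $\var$. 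This $\widetilde L^2_t$-based bookkeeping must be applied consistently to all the quadratic nonlinearities.
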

\begin{proof}

Let $(\varphi,v)$ be the effective unknowns defined in (\ref{vw}). Multiplying the equations (\ref{Ga}) by $t^{\theta}$, we have
\begin{equation}\nonumber
\left\{
\begin{aligned}
&\partial_{t}(t^{\theta}n)-\tilde{\Delta}_1(t^{\theta}n)=\theta t^{\theta-1}n+ t^{\theta} L_{1}+t^{\theta}R_{1},\\
&\partial_{t}( t^{\theta}\varphi)-\tilde{\Delta}_2(t^{\theta}\varphi)=\theta t^{\theta-1}\varphi+ t^{\theta} L_{2}+t^{\theta} R_{2},\\
&\partial_{t} (t^{\theta}v)+\var^{-1} ( t^{\theta}v)=\theta t^{\theta-1}v+t^{\theta} L_{3}+t^{\theta} R_{3},
\end{aligned}
\right.
\end{equation}
with $\tilde{\Delta}_{i}$ $(i=1,2)$, $L_{i}$ $(i=1,2,3)$ and $R_{i}$ $(i=1,2,3)$ given by (\ref{delta1delta2}), (\ref{L123}) and (\ref{R123}), respectively.
By similar calculations as used in (\ref{221})-(\ref{R3}), we can derive
\begin{equation}\label{321}
\begin{aligned}
&\|\tau^{\theta}(n,v,\psi)\|_{\widetilde{L}^{\infty}_{t}(\dot{B}^{\frac{d}{2}}_{2,1})}^{\ell}+\var\|\tau^{\theta}n\|_{L^{1}_{t}(\dot{B}^{\frac{d}{2}+2}_{2,1})}^{\ell}+\|\tau^{\theta}\varphi\|_{L^{1}_{t}(\dot{B}^{\frac{d}{2}}_{2,1}\cap\dot{B}^{\frac{d}{2}+2}_{2,1})}^{\ell}+\frac{1}{\var}\|\tau^{\theta}v\|_{L^{1}_{t}(\dot{B}^{\frac{d}{2}}_{2,1})}^{\ell}\\
&\quad\lesssim \int_{0}^{t}\tau^{\theta-1}\|(n,v,\varphi)\|_{\dot{B}^{\frac{d}{2}}_{2,1}}^{\ell}d\tau+\mathcal{X}(t)\mathcal{D}^{\theta}(t).
\end{aligned}
\end{equation}
It can be proved as in (\ref{231})-(\ref{235}) that the estimates (\ref{321}) implies
\begin{equation}\label{322}
\begin{aligned}
&\mathcal{D}_{L}^{\theta}(t)\lesssim \int_{0}^{t}\tau^{\theta-1}\big(\|(n,u,\psi)\|_{\dot{B}^{\frac{d}{2}}_{2,1}}^{\ell}+\|H(n)\|_{\dot{B}^{\frac{d}{2}}_{2,1}}^{\ell}\big)d\tau+\mathcal{X}(t)\mathcal{D}^{\theta}(t).
\end{aligned}
\end{equation}
We omit the details of the above estimates and focus on controlling the first term on the right-hand side of (\ref{322}) which is the key point. It follows from the interpolation inequality (\ref{inter}) and the Young inequality that
\begin{equation}
\begin{aligned}
&\int_{0}^{t}\tau^{\theta-1} \|(n^{\ell},u^{\ell},\psi^{\ell})\|_{\dot{B}^{\frac{d}{2}}_{2,1}}d\tau \\
&\quad\lesssim \int_{0}^{t}\tau^{\theta-1}\Big{(} \|(n^{\ell},u^{\ell},\psi^{\ell})\|_{\dot{B}^{\sigma_{0}}_{2,\infty}}\Big{)}^{\frac{2}{\frac{d}{2}+2-\sigma_{0}}}\Big{(} \|(n^{\ell},u^{\ell},\psi^{\ell})\|_{\dot{B}^{\frac{d}{2}+2}_{2,1}}\Big{)}^{\frac{\frac{d}{2}-\sigma_{0}}{\frac{d}{2}+2-\sigma_{0}}}d\tau \\
&\quad\lesssim \Big{(}\mathcal{X}_{L}^{\sigma_{0}}(t)\int_{0}^{t}\tau^{\theta-\frac{1}{2}(\frac{d}{2}-\sigma_{0})-1}d\tau\Big{)}^{\frac{2}{\frac{d}{2}+2-\sigma_{0}}}\Big{(} \frac{1}{\var} \big( \var \|\tau^{\theta}(n,\psi)\|_{L^1_{t}(\dot{B}^{\frac{d}{2}+2}_{2,1})}^{\ell}+\|\tau^{\theta}u\|_{L^1_{t}(\dot{B}^{\frac{d}{2}+1}_{2,1})}^{\ell} \big) \Big{)}^{\frac{\frac{d}{2}-\sigma_{0}}{\frac{d}{2}+2-\sigma_{0}}} \\
&\quad\lesssim \frac{\mathcal{X}_{L}^{\sigma_{0}}(t)}{\zeta} t^{\theta}(\var t)^{-\frac{1}{2}(\frac{d}{2}-\sigma_{0})}+\zeta \mathcal{D}_{L}^{\theta}(t).\label{323}
\end{aligned}
\end{equation}
It also holds by (\ref{hl}) that
\begin{equation}\label{324}
\begin{aligned}
&\int_{0}^{t}\tau^{\theta-1} \|(n^{h},u^{h},\psi^{h})\|_{\dot{B}^{\frac{d}{2}}_{2,1}} d\tau\\
&\quad\lesssim \int_{0}^{t}\tau^{\theta-1}\Big{(} \var\|(n,u,\psi)\|_{\dot{B}^{\frac{d}{2}+1}_{2,1}}^{h} \Big{)}^{\frac{2}{\frac{d}{2}+2-\sigma_{0}}}\Big{(} \|(n,u,\psi)\|_{\dot{B}^{\frac{d}{2}+1}_{2,1}}^{h}\Big{)}^{\frac{\frac{d}{2}-\sigma_{0}}{\frac{d}{2}+2-\sigma_{0}}}d\tau\\
&\quad \lesssim\Big{(} \mathcal{X} (t) \int_{0}^{t}\tau^{\theta-\frac{1}{2}(\frac{d}{2}-\sigma_{0})-1}d\tau\Big{)}^{\frac{2}{\frac{d}{2}+2-\sigma_{0}}} \Big{(} \|\tau^{\theta}(n,u,\psi)\|_{L^1_{t}(\dot{B}^{\frac{d}{2}+1}_{2,1})}^{h}\Big{)}^{\frac{\frac{d}{2}-\sigma_{0}}{\frac{d}{2}+2-\sigma_{0}}}\\
&\quad\lesssim \frac{\mathcal{X}(t)}{\zeta} t^{\theta-\frac{1}{2}(\frac{d}{2}-\sigma_{0})}+\zeta\mathcal{D}_{H}^{\theta}(t).
\end{aligned}
\end{equation}
In addition, we obtain from (\ref{323})-(\ref{324}) and the composition estimate (\ref{F1}) that
\begin{equation}\label{3241}
\begin{aligned}
\int_{0}^{t}\tau^{\theta-1} \|H(n)\|_{\dot{B}^{\frac{d}{2}}_{2,1}}^{\ell}d\tau&\lesssim \int_{0}^{t}\tau^{\theta-1} \big( \|n\|_{\dot{B}^{\frac{d}{2}}_{2,1}}^{\ell}+\|n\|_{\dot{B}^{\frac{d}{2}}_{2,1}}^{h}\big)dx\\
&\lesssim \frac{\mathcal{X}(t)}{\zeta} t^{\theta-\frac{1}{2}(\frac{d}{2}-\sigma_{0})}+\zeta\mathcal{D}_{H}^{\theta}(t).
\end{aligned}
\end{equation}
One combines (\ref{323})-(\ref{3241}) together to obtain
\begin{equation}\label{325}
\begin{aligned}
&\int_{0}^{t}\tau^{\theta-1}\|(n,u,\psi)\|_{\dot{B}^{\frac{d}{2}}_{2,1}}^{\ell} d\tau\\
&\quad\lesssim \int_{0}^{t}\tau^{\theta-1}\big{(} \|(n^{\ell},u^{\ell},\psi^{\ell})\|_{\dot{B}^{\frac{d}{2}}_{2,1}}+\|(n^{h},u^{h},\psi^{h})\|_{\dot{B}^{\frac{d}{2}}_{2,1}}\big{)}d\tau\\
&\quad\lesssim \frac{\mathcal{X}_{L}^{\sigma_{0}}(t)+\mathcal{X}(t)}{\zeta} t^{\theta}(\var t)^{-\frac{1}{2}(\frac{d}{2}-\sigma_{0})}+\zeta\mathcal{D}^{\theta}(t).
\end{aligned}
\end{equation}
Inserting (\ref{325}) into (\ref{322}), we prove (\ref{decayL}).
\end{proof}

Then, we have the time-weighted estimates in high frequencies.
\begin{lemma}\label{lemma53}
Let $\sigma_{0}\in[-\frac{d}{2},\frac{d}{2})$ and $\theta>1+\frac{1}{2}(\frac{d}{2}-\sigma_{0})$. Then, under the assumptions of {\rm(\ref{ABmu})}, {\rm(\ref{a1})} and {\rm(\ref{a3})}, it holds for any constant $\zeta>0$ that
\begin{equation}\label{decayH}
\begin{aligned}
&\mathcal{D}_{H}^{\theta}(t)\leq \frac{C\mathcal{X}(t)}{\zeta } t^{\theta-\frac{1}{2}(\frac{d}{2}-\sigma_{0})}+C\big{(}\zeta+ \mathcal{X}(t) \big{)}\mathcal{D}^{\theta}(t),\quad\quad t>0.
\end{aligned}
\end{equation}
\end{lemma}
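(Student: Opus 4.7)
The plan is to replay the high-frequency Lyapunov analysis of Lemma \ref{lemma26} with the extra weight $t^\theta$ inserted, in the same spirit that Lemma \ref{lemma52} adapted the low-frequency analysis. The equivalences $\mathcal{L}_j\sim \var\|(n_j,u_j,\psi_j,\nabla\psi_j,2^{-j}H(n)_j)\|_{L^2}^2$ and $\mathcal{H}_j\gtrsim \mathcal{L}_j/\var$ of Lemma \ref{ly} remain the backbone of the argument, so the guiding principle is to multiply the differential inequality (\ref{264}) by $t^\theta$ and pay the price of the time derivative of the weight.

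Concretely, I would start from
$$
\frac{d}{dt}\bigl(t^\theta \mathcal{L}_j\bigr) + \frac{t^\theta}{\var}\mathcal{L}_j \lesssim t^{\theta-1}\mathcal{L}_j + t^\theta\mathcal{N}_j\sqrt{\tfrac{1}{\var}\mathcal{L}_j}, \qquad j\geq J_\var-1,
$$
where $\mathcal{N}_j$ denotes the nonlinear bracket on the right-hand side of (\ref{264}). After dividing by $(t^\theta\mathcal{L}_j/\var+\eta^2)^{1/2}$, passing to the limit $\eta\to 0$, integrating over $[0,t]$, multiplying by $2^{j(d/2+1)}$ and summing over $j\geq J_\var-1$, I would obtain the time-weighted analogue of (\ref{26677}) with an additional linear contribution $\int_0^t \tau^{\theta-1}\|(n,u,\psi,\nabla\psi)\|^h_{\dot B^{d/2+1}_{2,1}}d\tau$ on the right-hand side, together with the $\tau^\theta$-weighted version of the nonlinear terms already treated in (\ref{I1})-(\ref{I6}) and the heat-type maximal regularity estimate (\ref{275}) for $\psi$.

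The lower-order linear contribution is then absorbed by means of the interpolation
$$
\int_0^t \tau^{\theta-1}\|f\|^h_{\dot B^{d/2+1}_{2,1}}d\tau \lesssim \bigl(\|f\|^h_{\widetilde L^\infty_t(\dot B^{d/2+1}_{2,1})}\,t^{\theta-\frac{1}{2}(d/2-\sigma_0)}\bigr)^{\frac{2}{d/2+2-\sigma_0}}\bigl(\|f\|^h_{L^1_t(\dot B^{d/2+1}_{2,1})}\bigr)^{\frac{d/2-\sigma_0}{d/2+2-\sigma_0}}
$$
applied to $f\in\{n,u,\nabla\psi\}$, where the condition $\theta>1+\frac{1}{2}(d/2-\sigma_0)$ guarantees the integrability of $\tau^{\theta-1-\frac{1}{2}(d/2-\sigma_0)}$ underlying the interpolation. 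Using $\var\|f\|^h_{\widetilde L^\infty_t(\dot B^{d/2+1}_{2,1})}\leq \mathcal{X}(t)$ and $\|f\|^h_{L^1_t(\dot B^{d/2+1}_{2,1})}\leq \mathcal{X}(t)$, together with Young's inequality with parameter $\zeta$, the linear term is bounded by $\frac{C\mathcal{X}(t)}{\zeta}t^{\theta-\frac{1}{2}(d/2-\sigma_0)}+\zeta\mathcal{D}^\theta(t)$. The weighted nonlinear contributions are then handled by rerunning (\ref{I1})-(\ref{I6}) with $\tau^\theta$ placed on the factor carrying an integrated-in-time norm and the companion factor kept in a $\widetilde L^\infty_t$ norm, so that every product naturally decomposes into $\mathcal{X}(t)\mathcal{D}^\theta(t)$.

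The main obstacle will be the careful bookkeeping of the time weight in the quadratic nonlinearity $H(n)$, in its time derivative $\partial_t H(n)$, and in the commutator terms $\mathcal{R}_{1,j},\mathcal{R}_{2,j}$, since in each product or composition estimate the $\tau^\theta$ factor must be allocated to a single factor in such a way that the outcome falls either into $\mathcal{X}(t)t^{\theta-\frac{1}{2}(d/2-\sigma_0)}$ or into $\mathcal{X}(t)\mathcal{D}^\theta(t)$. An additional subtlety is to derive the weighted heat bound on $\tau^\theta\psi$ in high frequencies purely from the already-controlled terms $\tau^\theta(n,\psi)$ and $\tau^\theta H(n)$, so that the final high-frequency $\psi$-norms do not reintroduce an uncontrolled $L^\infty_t$ piece.
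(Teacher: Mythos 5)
Your overall strategy — multiply the Lyapunov inequality (\ref{264}) by $t^\theta$, integrate the square root, sum over $j\geq J_\var-1$ with weight $2^{j(d/2+1)}$, absorb the lower-order linear term by interpolation plus Young, and treat the weighted nonlinearities as in (\ref{I1})--(\ref{I6}) — is exactly the paper's route in the proof of Lemma \ref{lemma53}.

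However, the interpolation inequality you state is incorrect and could not produce the claimed bound. You write
$$
\int_0^t \tau^{\theta-1}\|f\|^h_{\dot B^{d/2+1}_{2,1}}\,d\tau \lesssim \Bigl(\|f\|^h_{\widetilde L^\infty_t(\dot B^{d/2+1}_{2,1})}\,t^{\theta-\frac{1}{2}(\frac{d}{2}-\sigma_0)}\Bigr)^{\frac{2}{\frac{d}{2}+2-\sigma_0}}\Bigl(\|f\|^h_{L^1_t(\dot B^{d/2+1}_{2,1})}\Bigr)^{\frac{\frac{d}{2}-\sigma_0}{\frac{d}{2}+2-\sigma_0}},
$$
with the $L^1_t$ factor \emph{unweighted}. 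Testing on $\|f(\tau)\|\equiv N$ gives LHS $\sim N\,t^{\theta}$ while, writing $a=\frac{2}{\frac{d}{2}+2-\sigma_0}$ and using $\frac{a}{2}(\frac{d}{2}-\sigma_0)=1-a$, the RHS $\sim N\,t^{a(\theta-\frac12(\frac{d}{2}-\sigma_0))+(1-a)}=N\,t^{a\theta}$, so the inequality fails for large $t$ since $a<1$. Moreover your own conclusion contains the term $\zeta\,\mathcal{D}^\theta(t)$, yet if both factors on your RHS are bounded by $\mathcal{X}(t)$ as you claim, then no $\mathcal{D}^\theta(t)$ can appear — the argument is internally inconsistent. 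What the paper actually uses in (\ref{333}) is the Hölder splitting
$$
\int_0^t \tau^{\theta-1}\var\|f\|^h\,d\tau \lesssim \Bigl(\int_0^t \tau^{\theta-\frac12(\frac{d}{2}-\sigma_0)-1}\var\|f\|^h\,d\tau\Bigr)^{a}\Bigl(\int_0^t\tau^\theta\|f\|^h\,d\tau\Bigr)^{1-a},
$$
whose first factor is controlled by $\mathcal{X}(t)\,t^{\theta-\frac12(\frac{d}{2}-\sigma_0)}$ (using $\theta>1+\frac12(\frac{d}{2}-\sigma_0)$ for integrability at $\tau=0$) and whose second factor is precisely $\mathcal{D}^\theta_H(t)$; Young's inequality then yields the stated bound. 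Replacing your interpolation by this weighted version, and making the analogous weighted splitting for the $\var\|H(n)\|^h_{\dot B^{d/2}_{2,1}}$ contribution (where the paper's (\ref{334}) also pulls in $\mathcal{X}^{\sigma_0}_L(t)$), repairs the argument. A smaller point: it is cleaner to divide (\ref{264}) by $(\mathcal{L}_j/\var+\eta^2)^{1/2}$ \emph{before} inserting the weight $t^\theta$, rather than dividing the weighted inequality by $(t^\theta\mathcal{L}_j/\var+\eta^2)^{1/2}$ as you suggest, since the latter does not recombine nicely with the time derivative of the weight.
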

\begin{proof}
Multiplying the inequality (\ref{264}) by $t^{\theta}$, we obtain for any $j\geq J_{\var}-1$ that
\begin{equation}\label{331}
\begin{aligned}
&\frac{d}{dt}\big{(} t^{\theta}\mathcal{L}_{j}(t) \big{)}+\frac{1}{\var}t^{\theta}\mathcal{L}_{j}(t)\\
&\lesssim t^{\theta-1} \mathcal{L}_{j}(t)+t^{\theta}\Big{(} \|\nabla u\|_{L^{\infty}}\var\|(n_{j},u_{j})\|_{L^2}+\|u\|_{L^{\infty}}\var\|(n_{j},H(n)_{j})\|_{L^2} \\
&\quad+\|(\partial_{t} G(n),\nabla G(n))\|_{L^{\infty}}\var\|u_{j}\|_{L^2}+\|(\mathcal{R}_{1,j},\mathcal{R}_{2,j})\|_{L^2}\\
&\quad+\|H(n)_{j}\|_{L^2}+2^{-j}\|\partial_{t}H(n)_{j}\|_{L^2}\Big{)}\sqrt{\frac{1}{\var}\mathcal{L}_{j}(t)},
\end{aligned}
\end{equation}
which together with $(\ref{267})_{1}$ and $t^{\theta} \sqrt{\mathcal{L}_{j}(t)}|_{t=0}=0$ gives rise to
\begin{equation}\nonumber
\begin{aligned}
&\var t^{\theta} \|(n_{j},u_{j},\psi_{j},\nabla\psi_{j})\|_{L^2}+ \int_{0}^{t} \tau^{\theta}\|(n_{j},u_{j},\psi_{j},\nabla\psi_{j})\|_{L^2}d\tau\\
&\quad\lesssim \int_{0}^{t} \tau^{\theta-1}\big{(} \|(n_{j},u_{j},\psi_{j},\nabla\psi_{j})\|_{L^2}+2^{-j}\|H(n)_{j}\|_{L^2}\big{)}d\tau\\
&\quad\quad+\int_{0}^{t} \tau^{\theta}\Big{(} \var\|\nabla u\|_{L^{\infty}}\big{(}\|(n_{j},u_{j})\|_{L^2}+2^{-j}\|H(n)_{j}\|_{L^2}\big{)}+ \var\| u\|_{L^{\infty}}\|(n_{j},H(n)_{j})\|_{L^2}\\
&\quad\quad+  \var\|\big(\nabla G(n) \big)\|_{L^{\infty}} \| u_{j}\|_{L^2}+ \var\|(\mathcal{R}_{1,j},\mathcal{R}_{2,j})\|_{L^2}+\|H(n)_{j}\|_{L^2}+2^{-j}\|\partial_{t}H(n)_{j}\|_{L^2}\Big{)} d\tau.
\end{aligned}
\end{equation}
Performing similar arguments as in (\ref{I1})-(\ref{271}), we get
\begin{equation}\label{332}
\begin{aligned}
&\var\|\tau^{\theta}(n,u,\psi,\nabla\psi)\|_{\widetilde{L}^{\infty}_{t}(\dot{B}^{\frac{d}{2}+1}_{2,1})}^{h}+\|\tau^{\theta}(n,u,\psi,\nabla\psi)\|_{L^{1}_{t}(\dot{B}^{\frac{d}{2}+1}_{2,1})}^{h}\\
&\quad\lesssim \var\int_{0}^{t} \tau^{\theta-1}\big{(}  \|(n,u,\psi,\nabla\psi)\|_{\dot{B}^{\frac{d}{2}+1}_{2,1}}^{h}+\|H(n)\|_{\dot{B}^{\frac{d}{2}}_{2,1}}^{h}\big{)}d\tau+\mathcal{X}(t)\mathcal{D}^{\theta}(t).
\end{aligned}
\end{equation}
The details are omitted for brevity. In view of the dissipative structures in high frequencies, the first term on the right-hand side of (\ref{332}) can be estimated by
\begin{equation}\label{333}
\begin{aligned}
&\var\int_{0}^{t} \tau^{\theta-1}  \|(n,u,\psi,\nabla\psi)\|_{\dot{B}^{\frac{d}{2}+1}_{2,1}}^{h}d\tau\\
&\quad\lesssim\int_{0}^{t} \tau^{\frac{2}{\frac{d}{2}+2-\sigma_{0}}\big(\theta-\frac{1}{2}(\frac{d}{2}-\sigma_{0})\big)} \Big{(} \var\|(n,u,\psi,\nabla\psi)\|_{\dot{B}^{\frac{d}{2}+1}_{2,1}}^{h}\Big{)}^{\frac{2}{\frac{d}{2}+2-\sigma_{0}}} \Big{(}\|(n,u,\psi,\nabla\psi)\|_{\dot{B}^{\frac{d}{2}+1}_{2,1}}^{h}\Big{)}^{\frac{\frac{d}{2}-\sigma_{0}}{\frac{d}{2}+2-\sigma_{0}}}d\tau\\
&\quad\lesssim \Big{(} \mathcal{X}(t) t^{\theta-\frac{1}{2}(\frac{d}{2}-\sigma_{0})}\Big{)}^{\frac{2}{\frac{d}{2}+2-\sigma_{0}}} \Big{(} \mathcal{D}_{H}^{\theta}(t) \Big{)}^{\frac{\frac{d}{2}-\sigma_{0}}{\frac{d}{2}+2-\sigma_{0}}}\\
&\quad\lesssim \frac{\mathcal{X}(t)}{\zeta}t^{\theta-\frac{1}{2}(\frac{d}{2}-\sigma_{0})}+\zeta \mathcal{D}_{H}^{\theta}(t).
\end{aligned}
\end{equation}
In accordance with (\ref{hl}) and (\ref{F1}), it follows that
\begin{equation}\label{334}
\begin{aligned}
&\var\int_{0}^{t} \tau^{\theta-1}\|H(n)\|_{\dot{B}^{\frac{d}{2}}_{2,1}}^{h}d\tau\\
&\quad\lesssim \int_{0}^{t} \tau^{\theta-1}\var\big{(} \|n\|_{\dot{B}^{\frac{d}{2}}_{2,1}}^{\ell}+\var\|n\|_{\dot{B}^{\frac{d}{2}+1}_{2,1}}^{h}\big{)}d\tau\\
&\quad\lesssim \Big{(}\mathcal{X}_{L}^{\sigma_{0}}(t)t^{\theta-\frac{1}{2}(\frac{d}{2}-\sigma_{0})}\Big{)}^{\frac{2}{\frac{d}{2}+2-\sigma_{0}}}\Big{(}  \var\|\tau^{\theta}n\|_{L^1_{t}(\dot{B}^{\frac{d}{2}+2}_{2,1})}^{\ell}\Big{)}^{\frac{\frac{d}{2}-\sigma_{0}}{\frac{d}{2}+2-\sigma_{0}}}\\
&\quad\quad+ \Big{(}\mathcal{X}(t)t^{\theta-\frac{1}{2}(\frac{d}{2}-\sigma_{0})}\Big{)}^{\frac{2}{\frac{d}{2}+2-\sigma_{0}}}\Big{(}  \|\tau^{\theta}n\|_{L^1_{t}(\dot{B}^{\frac{d}{2}+1}_{2,1})}^{h}\Big{)}^{\frac{\frac{d}{2}-\sigma_{0}}{\frac{d}{2}+2-\sigma_{0}}}\\
&\quad\lesssim \frac{\mathcal{X}_{L}^{\sigma_{0}}(t)+\mathcal{X}(t)}{\zeta}t^{\theta-\frac{1}{2}(\frac{d}{2}-\sigma_{0})}+\zeta \mathcal{D}^{\theta}(t).
\end{aligned}
\end{equation}
The combination of (\ref{332})-(\ref{334}) gives rise to (\ref{decayL}).
\end{proof}

\vspace{2ex}

\underline{\it\textbf{Proof of Theorem \ref{theorem14}:}}~
Assume that (\ref{ABmu}), (\ref{a1}) and (\ref{a3}) holds. Let $(n,u,\psi)$ be the global solution to System (\ref{m1n}) given by Theorem \ref{theorem11}, and $\mathcal{X}(t)$, $\mathcal{X}_{L}^{\sigma_{0}}(t)$ and $\mathcal{D}^{\theta}(t)$ be defined by (\ref{Xdef}), $(\ref{XLsigma})$ and $(\ref{decayX})$, respectively. For any constants $\theta>1+\frac{1}{2}(\frac{d}{2}-\sigma_{0})$ and $\zeta>0$, it follows from Lemmas \ref{lemma52}-\ref{lemma53} that
\begin{equation}\label{ooo}
\begin{aligned}
&\mathcal{D}^{\theta}(t)\lesssim \frac{\mathcal{X}(t)+\mathcal{X}_{L}^{\sigma_{0}}(t)}{\zeta}t^{\theta}(\var t)^{-\frac{1}{2}(\frac{d}{2}-\sigma_{0})}+  \big{(}\zeta+\mathcal{X}(t) \big{)}\mathcal{D}^{\theta}(t),\quad\quad t>0.
\end{aligned}
\end{equation}
 By (\ref{XX0}), (\ref{lowd}) and (\ref{ooo}), we deduce after choosing a suitably small constant $\zeta>0$ that
\begin{equation}\label{Xtimetime}
\begin{aligned}
&t^{\theta}\big{(}\|(n,u,\psi)\|_{\dot{B}^{\frac{d}{2}}_{2,1}}^{\ell}+\var\|(n,u,\nabla\psi )\|_{\dot{B}^{\frac{d}{2}+1}_{2,1}}^{h}\big{)}\lesssim \mathcal{D}^{\theta}(t)\lesssim t^{\theta} (\var t)^{-\frac{1}{2}(\frac{d}{2}-\sigma_{0})}.
\end{aligned}
\end{equation}
Using (\ref{hl}), (\ref{Xtimetime}) and $\mathcal{X}(t)\lesssim \mathcal{X}_{0}$, we infer that
\begin{equation}\label{d21}
\begin{aligned}
&\|(n,u,\psi)\|_{\dot{B}^{\frac{d}{2}}_{2,1}}\lesssim\|(n,u,\psi)\|_{\dot{B}^{\frac{d}{2}}_{2,1}}^{\ell}+\var\|(n,u,\nabla\psi )\|_{\dot{B}^{\frac{d}{2}+1}_{2,1}}^{h}\lesssim (1+\var t)^{-\frac{1}{2}(\frac{d}{2}-\sigma_{0})}.
\end{aligned}
\end{equation}
Thence, it holds by (\ref{hl}), (\ref{lowd}), (\ref{d21}) and (\ref{inter}) that
\begin{equation}\nonumber
\begin{aligned}
&\|(n,u,\psi)\|_{\dot{B}^{\sigma}_{2,1}}\lesssim \|(n,u,\psi)\|_{\dot{B}^{\sigma_{0}}_{2,\infty}}^{\frac{\frac{d}{2}-\sigma}{\frac{d}{2}-\sigma_{0}}} \|(n,u,\psi)\|_{\dot{B}^{\frac{d}{2}}_{2,1}}^{\frac{\sigma-\sigma_{0}}{\frac{d}{2}-\sigma_{0}}}\\
&\quad\quad\quad\quad\quad\quad\lesssim \big{(}\|(n,u,\psi)\|_{\dot{B}^{\sigma_{0}}_{2,\infty}}^{\ell}+\var\|(n,u,\nabla\psi)\|_{\dot{B}^{\frac{d}{2}+1}_{2,1}}^{h}\big{)}^{\frac{\frac{d}{2}-\sigma}{\frac{d}{2}-1-\sigma_{0}}}(1+\var t)^{-\frac{\sigma-\sigma_{0}}{\frac{d}{2}-\sigma_{0}}\frac{1}{2}(\frac{d}{2}-\sigma_{0})}\\
&\quad\quad\quad\quad\quad\quad\lesssim (1+\var t)^{-\frac{1}{2}(\sigma-\sigma_{0})},\quad\quad \sigma\in (\sigma_{0},\frac{d}{2}).
\end{aligned}
\end{equation}
This together with (\ref{d21}) yields the time-decay estimates $(\ref{decay1})$.

We turn to prove $(\ref{decay3})$ for $d\geq2$ and $\sigma_{0}\in[-\frac{d}{2},\frac{d}{2}-1)$. Note that the equation (\ref{m1n}) can be rewritten by
\begin{equation}\label{u}
\begin{aligned}
u= e^{-\frac{1}{\var} t} u_{0}+\int_{0}^{t} e^{-\frac{1}{\var}(t-\tau)} \big{(} -c_{0}\nabla n+\mu \nabla \psi-u\cdot\nabla u \big{)} d\tau,
\end{aligned}
\end{equation}
which gives
\begin{equation}\label{432}
\begin{aligned}
&\|u\|_{\dot{B}^{\sigma_{0}}_{2,\infty}}^{\ell}\lesssim e^{-\frac{1}{\var} t} \|u_{0}\|_{\dot{B}^{\sigma_{0}}_{2,\infty}}^{\ell}+\int_{0}^{t} e^{-\frac{1}{\var}(t-\tau)} \big{(} \|(n,\psi)\|_{\dot{B}^{\sigma_{0}+1}_{2,\infty}}^{\ell}+\|u\cdot \nabla u\|_{\dot{B}^{\sigma_{0}}_{2,\infty}}^{\ell}\big{)}d\tau.
\end{aligned}
\end{equation}
Owing to $\sigma_{0}\leq \frac{d}{2}-1$ and $(\ref{decay1})_{1}$, we get
\begin{equation}\label{433}
\begin{aligned}
\|(n,\psi)\|_{\dot{B}^{\sigma_{0}+1}_{2,\infty}}^{\ell}\lesssim  (1+\var t)^{-\frac{1}{2}},
\end{aligned}
\end{equation}
It also holds by (\ref{decay1}), (\ref{uv3}) and $\|u\|_{\dot{B}^{\frac{d}{2}}_{2,1}}\lesssim \mathcal{X}(t)\lesssim \mathcal{X}_{0}$ that
\begin{equation}\label{434}
\begin{aligned}
\|u\cdot \nabla u\|_{\dot{B}^{\sigma_{0}}_{2,\infty}}^{\ell}\lesssim \|u\|_{\dot{B}^{\frac{d}{2}}_{2,1}}\| u\|_{\dot{B}^{\sigma_{0}+1}_{2,\infty}}\lesssim (1+\var t)^{-\frac{1}{2}}.
\end{aligned}
\end{equation}
Inserting (\ref{433})-(\ref{434}) into (\ref{432}) shows
\begin{equation}\nonumber
\begin{aligned}
\|u\|_{\dot{B}^{\sigma_{0}}_{2,\infty}}&\lesssim \|u\|_{\dot{B}^{\sigma_{0}}_{2,\infty}}^{\ell}+\|u\|_{\dot{B}^{\frac{d}{2}}_{2,1}}^{h}\lesssim e^{-\frac{1}{\var} t} +\int_{0}^{t}e^{-\frac{1}{\var} (t-\tau)} (1+ \var \tau)^{-\frac{1}{2}}d\tau\lesssim (1+\var t )^{-\frac{1}{2}},
\end{aligned}
\end{equation}
where one has used (\ref{d21}) and the estimate
\begin{equation}\nonumber
\begin{aligned}
    &\int_{0}^{t}e^{-\frac{1}{\var} (t-\tau)} (1+ \var \tau)^{-\frac{1}{2}}d\tau\\
    &\quad\lesssim e^{-\frac{1}{2\var}t}\int_{0}^{\frac{t}{2}} (1+\var\tau)^{-\frac{1}{2}}d\tau+(1+\frac{\var}{2}t)^{-\frac{1}{2}}\int_{\frac{t}{2}}^{t} e^{-\frac{1}{\var}(t-\tau)}d\tau\lesssim \frac{1}{\var}(1+\var t)^{-\frac{1}{2}}.
\end{aligned}
\end{equation}
In addition, it is easy to verify that $\widetilde{\varphi}:=b\psi-c_{1}n$ satisfies
\begin{equation}\label{widephi}
\begin{aligned}
&\partial_{t}\widetilde{\varphi}+b\widetilde{\varphi}=b\Delta\psi+c_{1}c_{0}\div u+b H(n)+c_{1} u\cdot\nabla u+c_{1}G(n)\div u.
\end{aligned}
\end{equation}
Similarly, by virtue of (\ref{decay1}), (\ref{hl}), (\ref{widephi}), (\ref{uv3}) and (\ref{F1}), we get
\begin{equation}\label{widephi11}
\begin{aligned}
&\|\widetilde{\varphi}\|_{\dot{B}^{\sigma_{0}}_{2,\infty}}^{\ell}\lesssim e^{-bt} \|(n_{0},\psi_{0})\|_{\dot{B}^{\sigma_{0}}_{2,\infty}}^{\ell}+\int_{0}^{t}e^{-b(t-\tau)}\big(\frac{1}{\var} \|\psi\|_{\dot{B}^{\sigma_{0}+1}_{2,\infty}}^{\ell}+\|u\|_{\dot{B}^{\sigma_{0}+1}_{2,\infty}}^{\ell}\\
&\quad\quad\quad\quad~~+\|(n,u)\|_{\dot{B}^{\frac{d}{2}}_{2,1}} (\|(n,u)\|_{\dot{B}^{\sigma_{0}+1}_{2,\infty}}^{\ell}+\|(n,u)\|_{\dot{B}^{\frac{d}{2}}_{2,1}}^{h})+\|H(n)\|_{\dot{B}^{\sigma_{0}}_{2,\infty}}^{\ell}\big)dx.
\end{aligned}
\end{equation}
Owing to (\ref{hl}), (\ref{lowd}), (\ref{d21}) and the composition estimates (\ref{F1}) and (\ref{q5}), there holds
\begin{equation}\label{widephi12}
\left\{
\begin{aligned}
&\|H(n)\|_{\dot{B}^{\frac{d}{2}}_{2,1}}\lesssim \|n\|_{\dot{B}^{\frac{d}{2}}_{2,1}}\lesssim (1+\var t)^{-\frac{1}{2}(\frac{d}{2}-\sigma_{0})}\lesssim  (1+\var t)^{-\frac{1}{2}},\\
&\|H(n)\|_{\dot{B}^{\sigma_{0}}_{2,\infty}}^{\ell}\lesssim \|n\|_{\dot{B}_{2,1}^{\frac{d}{2}}}(\|n\|_{\dot{B}^{\sigma_{0}}_{2,\infty}}^{\ell}+\var\|n\|_{\dot{B}^{\frac{d}{2}+1}_{2,1}})\lesssim (1+\var t)^{-\frac{1}{2}(\frac{d}{2}-\sigma_{0})}\lesssim  (1+\var t)^{-\frac{1}{2}}.
\end{aligned}
\right.
\end{equation}
Combining (\ref{hl}), (\ref{d21}) and (\ref{widephi11})-(\ref{widephi12}) together, we derive
\begin{equation}\nonumber
\begin{aligned}
&\|B\psi-c_{1}n-H(n)\|_{\dot{B}^{\sigma_{0}}_{2,\infty}}\\
&\quad\lesssim\|\widetilde{\varphi}\|_{\dot{B}^{\sigma_{0}}_{2,\infty}}^{\ell}+\|H(n)\|_{\dot{B}^{\sigma_{0}}_{2,\infty}}^{\ell}+\|(n,\nabla\psi)\|_{\dot{B}^{\frac{d}{2}}_{2,1}}^{h}+\|H(n)\|_{\dot{B}^{\frac{d}{2}}_{2,1}}^{h}\lesssim  \frac{1}{\var}(1+\var t)^{-\frac{1}{2}}.
\end{aligned}
\end{equation}

Finally, we take the $\dot{B}^{\sigma}_{2,1}$-norm of (\ref{u}) for any $\sigma\in (\sigma_{0},\frac{d}{2}-1]$ to have
\begin{equation}\nonumber
\begin{aligned}
&\|u\|_{\dot{B}^{\sigma}_{2,1}}^{\ell}\lesssim e^{-\frac{1}{\var} t} \|u_{0}\|_{\dot{B}^{\sigma}_{2,1}}^{\ell}+\int_{0}^{t} e^{-\frac{1}{\var}(t-\tau)} \big{(} \|(n,\psi)\|_{\dot{B}^{\sigma+1}_{2,1}}^{\ell}+\|u\|_{\dot{B}^{\frac{d}{2}}_{2,1}}\| u\|_{\dot{B}^{\sigma+1}_{2,1}}\big{)}d\tau\\
&\quad\quad\quad~\lesssim\frac{1}{\var} (1+\var t)^{-\frac{1}{2}(1+\sigma-\sigma_{0})},
\end{aligned}
\end{equation}
where one has used $\|(n,u,\psi)\|_{\dot{B}^{\sigma+1}_{2,1}}^{\ell}\lesssim  (1+\var t)^{-\frac{1}{2}(1+\sigma-\sigma_{0})}$ derived from $(\ref{decay1})_{1}$. Therefore, the following decay rate holds:
\begin{equation}\nonumber
\begin{aligned}
&\|u\|_{\dot{B}^{\sigma}_{2,1}}\lesssim \|u\|_{\dot{B}^{\sigma}_{2,1}}^{\ell}+\|u\|_{\dot{B}^{\frac{d}{2}}_{2,1}}^{h}\lesssim \frac{1}{\var} (1+\var t)^{-\frac{1}{2}(1+\sigma-\sigma_{0})}.
\end{aligned}
\end{equation}
Similarly, 
we can show
\begin{equation}\nonumber
\begin{aligned}
&\|B\psi-c_{1}n-H(n)\|_{\dot{B}^{\sigma}_{2,1}}^{\ell}~\lesssim\frac{1}{\var} (1+\var t)^{-\frac{1}{2}(1+\sigma-\sigma_{0})}.
\end{aligned}
\end{equation}
The details are omitted here. The proof of Theorem \ref{theorem14} is complete.

 \section{Relaxation limit}\label{section4}

 \subsection{Global well-posedness problem for the Keller-Segel equations}\label{subsection35}
 
 In this subsection, we prove Theorem \ref{theorem12} related to the global well-posedness of strong solutions to the Cauchy problem for System (\ref{KS}). Thanks to the Taylor formula, there exists a function $G_{1}$ vanishing at $\bar{\rho}$ such that
\begin{equation}\label{Ptay}
\begin{aligned}
 P(\rho^*)-P(\bar{\rho})= P'(\bar{\rho})\,(\rho^*-\bar{\rho})+ G_1(\rho^*)\,(\rho^*-\bar{\rho}).
 \end{aligned}
 \end{equation}
We also notice that
 \begin{equation}\label{Ptay2}
 \begin{aligned}
 &\Delta\phi^{*}=\mu a\bar{\rho}(b-\Delta)^{-1} \Delta \rho^{*}.
 \end{aligned}
 \end{equation}
 Substituting (\ref{Ptay})-(\ref{Ptay2}) into ${\rm{(\ref{KS})_{1}}}$, we can rewrite System (\ref{KS}) as
\begin{equation}\label{KS1}
\left\{
\begin{aligned}
&\partial_{t}\rho^{*}-\tilde{\Delta}_{*}\rho^*=\Delta(G_1(\rho^*)\,(\rho^*-\bar{\rho}))-\mu\div((\rho^*-\bar{\rho})\nabla \phi^*),\\
&\phi^{*}-\bar{\phi}=a(b-\Delta)^{-1}(\rho^*-\bar{\rho}),
\end{aligned}
\right.
\end{equation}
where $\tilde{\Delta}_{*}$ is the differential operator
\begin{equation}\label{widedelta}
\begin{aligned}
\tilde{\Delta}_{*}:=(P'(\bar{\rho})-\mu a\bar{\rho} (b-\Delta)^{-1} )\Delta.
\end{aligned}
\end{equation}
Consider the linear problem
\begin{equation}\label{linear3}
\left\{
\begin{aligned}
&\partial_{t}f-\tilde{\Delta}_{*}f=g ,\quad\quad x\in\mathbb{R}^{d},\quad t>0,\\
&f(x,0)=f_{0}(x),\quad~~ x\in\mathbb{R}^{d}.
\end{aligned}
\right.
\end{equation}
\begin{lemma}\label{lemmadelta}
Let $P'(\bar{\rho})>\dfrac{\mu a}{b}\bar{\rho}$. For $s\in\mathbb{R}$ and $p\in[1, \infty]$, assume $u_{0}\in\dot{B}^{s}_{p,1}$ and $g\in L^{1}(0,T;\dot{B}^{s}_{p,1})$. If $f$ is a solution to the problem {\rm(\ref{linear3})}, then there exists a universal constant $C>0$ such that
\begin{equation}\label{deltalp}
\begin{aligned}
&\|f\|_{\widetilde{L}^{\infty}_{t}(\dot{B}^{s}_{p,1})}+\|f\|_{L^{1}_{t}(\dot{B}^{s+2}_{p,1})}\leq C(\|f_{0}\|_{\dot{B}^{s}_{p,1}}+\|g\|_{L^{1}_{t}(\dot{B}^{s}_{p,1})}),\quad 0<t<T.
\end{aligned}
\end{equation}
\end{lemma}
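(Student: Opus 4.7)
The plan is to reduce the problem to a heat-equation-type maximal regularity estimate by exploiting the global positivity of the symbol of $-\tilde\Delta_*$. The Fourier symbol of $\tilde\Delta_*$ is
\[ -m(\xi):=-\left(P'(\bar\rho)-\frac{\mu a\bar\rho}{b+|\xi|^2}\right)|\xi|^2, \]
so that under the assumption $P'(\bar\rho)>\mu a\bar\rho/b$ one has the uniform lower bound $m(\xi)\ge\kappa|\xi|^2$ with $\kappa:=P'(\bar\rho)-\mu a\bar\rho/b>0$. This is the key observation that allows us to treat $\tilde\Delta_*$ as a small and well-controlled perturbation of $P'(\bar\rho)\Delta$.

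The central step is to establish the frequency-localized semigroup bound
\[ \|e^{t\tilde\Delta_*}\dot\Delta_j h\|_{L^p}\le Ce^{-c\,2^{2j}t}\|\dot\Delta_j h\|_{L^p}, \]
uniformly in $j\in\mathbb{Z}$, $t\ge 0$ and $p\in[1,\infty]$. To prove this, I would write the associated convolution kernel via Fourier inversion and rescale by $\xi=2^j\eta$, so that the rescaled symbol $m_j(\eta):=2^{-2j}m(2^j\eta)=(P'(\bar\rho)-\mu a\bar\rho/(b+2^{2j}|\eta|^2))|\eta|^2$ and all of its derivatives remain uniformly bounded on the annulus $\mathrm{supp}\,\varphi$, while still satisfying $m_j(\eta)\ge\kappa|\eta|^2\gtrsim 1$ there. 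A standard Mikhlin-type multiplier argument then shows that the rescaled kernel lies in $L^1$ with a norm controlled by $e^{-c(2^{2j}t)}$, uniformly in $j$, and Young's convolution inequality yields the claimed $L^p$ bound after scaling back.

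With this semigroup estimate in hand, applying $\dot\Delta_j$ to \eqref{linear3} and using Duhamel's formula gives
\[ \|f_j(t)\|_{L^p}\le Ce^{-c\,2^{2j}t}\|\dot\Delta_jf_0\|_{L^p}+C\int_0^t e^{-c\,2^{2j}(t-s)}\|g_j(s)\|_{L^p}\,ds. \]
Taking $L^\infty$ in time, multiplying by $2^{js}$ and summing over $j\in\mathbb{Z}$ produces the $\widetilde L^\infty_t(\dot B^s_{p,1})$ part. For the parabolic gain, take $L^1$ in time, use $\|e^{-c\,2^{2j}(t-s)}\|_{L^1_t}\le c^{-1}2^{-2j}$ together with Young's inequality in the time variable to obtain $2^{2j}\|f_j\|_{L^1_t(L^p)}\lesssim\|\dot\Delta_jf_0\|_{L^p}+\|g_j\|_{L^1_t(L^p)}$, then multiply by $2^{js}$ and sum over $j$ to deliver the $L^1_t(\dot B^{s+2}_{p,1})$ piece. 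Combining both halves yields \eqref{deltalp}.

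The main obstacle is the semigroup estimate, and specifically the verification that the $L^1$-norm of the rescaled convolution kernel is bounded uniformly over all $j\in\mathbb{Z}$, including the low-frequency range where the nonlocal contribution $\mu a\bar\rho(b-\Delta)^{-1}\Delta$ is not negligible compared with the principal part $P'(\bar\rho)\Delta$. The rescaling trick reduces the question to a symbol analysis on a fixed annulus, on which uniform ellipticity $m_j(\eta)\ge\kappa|\eta|^2$ and uniform smoothness of $m_j$ in $j$ hold; a standard Mikhlin/stationary-phase argument then closes the estimate. Once this multiplier bound is secured, the rest of the argument is a routine application of the classical heat-maximal-regularity template in Besov spaces, mirroring the $L^2$ case treated in Lemma \ref{lemma21}.
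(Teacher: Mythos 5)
Your proposal is correct and takes essentially the same route as the paper. The paper also identifies the lower bound $m(\xi)\ge\big(P'(\bar\rho)-\tfrac{\mu a}{b}\bar\rho\big)|\xi|^2$ on the symbol of $-\tilde\Delta_*$, establishes the frequency-localized semigroup decay bound \eqref{splocal} by "similar calculations as in \cite[Page 54]{bahouri1}" (which is precisely the rescaling-and-Mikhlin argument you spell out), and then concludes via the standard heat-type Besov maximal regularity argument; you merely make explicit the kernel estimate that the paper abbreviates to a citation.
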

\begin{proof}
Let the semi-group operator $e^{\tilde{\Delta}_{*} t}$ be defined as
\begin{equation}\nonumber
\begin{aligned}
e^{\tilde{\Delta}_{*} t}  f= \mathcal{F}^{-1} \big{(} e^{-(P'(\bar{\rho})-\frac{\mu a}{b}\bar{\rho})|\xi|^2t-\frac{\mu a\bar{\rho} |\xi|^4}{b+|\xi|^2}t} \mathcal{F}(f)(\xi)\Big{)}.
\end{aligned}
\end{equation}
By similar calculations as in \cite[Page 54]{bahouri1}, one can show that
\begin{equation}\label{splocal}
\begin{aligned}
&\text{Supp}~\mathcal{F}(f)\subset \lambda \mathbf{C} \Rightarrow   \|e^{\tilde{\Delta}_{*} t} f\|_{L^{p}}\leq C e^{-C(P'(\bar{\rho})-\frac{\mu a}{b}\bar{\rho})\lambda^2 t}\|f\|_{L^{p}}.
\end{aligned}
\end{equation}
for any annulus $\mathbf{C}$. Thus, since $P'(\bar{\rho})>\dfrac{\mu a}{b}\bar{\rho}$, we are able to obtain (\ref{deltalp}) by virtue of usual energy estimates similar to the heat equation. For brevity, the details are omitted.
\end{proof}

 \underline{\it\textbf{Proof of Theorem \ref{theorem12}:~}}
 It follows from (\ref{KS1}) and Lemma \ref{lemmadelta} that
 \begin{equation}\label{KSest}
\begin{aligned}
&\|\rho^*-\bar{\rho}\|_{\widetilde{L}^{\infty}_{t}(\dot{B}^{\frac{d}{p}}_{p,1})}+\|\rho^*-\bar{\rho}\|_{L^1_{t}(\dot{B}^{\frac{d}{p}+2}_{p,1})}\\
&\quad\lesssim \|\rho^{*}_0-\bar{\rho}\|_{\dot{B}^{\frac{d}{p}}_{p,1}}+\|G_1(\rho^*)\,(\rho^*-\bar{\rho})\|_{L^1_{t}(\dot{B}^{\frac{d}{p}+2}_{p,1})}+\|(\rho^*-\bar{\rho})\nabla \phi^*\|_{L^1_{t}(\dot{B}^{\frac{d}{p}+1}_{p,1})}.
\end{aligned}
\end{equation}
Making use of $\eqref{KS1}_{2}$ and the properties (\ref{bessel}) of the Bessel potential $(b-\Delta)^{-1}$ yield
\begin{equation}\label{KSest13}
\left\{
\begin{aligned}
&\|\phi^*-\bar{\phi}\|_{L^1_{t}(\dot{B}^{\frac{d}{p}+2}_{p,1})}\leq \|\rho-\bar{\rho}\|_{L^1_{t}(\dot{B}^{\frac{d}{p}+2}_{p,1})},\\
&\|\phi^*-\bar{\phi}\|_{\widetilde{L}^2_{t}(\dot{B}^{\frac{d}{p}+1}_{p,1})}\lesssim \|\rho-\bar{\rho}\|_{\widetilde{L}^2_{t}(\dot{B}^{\frac{d}{p}+1}_{p,1})}\lesssim \|\rho^*-\bar{\rho}\|_{\widetilde{L}^{\infty}_{t}(\dot{B}^{\frac{d}{p}}_{p,1})}+\|\rho^*-\bar{\rho}\|_{L^1_{t}(\dot{B}^{\frac{d}{p}+2}_{p,1})}.
\end{aligned}
\right.
\end{equation}
We employ (\ref{KSest13}), the product law (\ref{uv1}) and the composition estimate (\ref{F1}) to have
 \begin{equation}\label{510}
\begin{aligned}
&\|G_1(\rho^*)\,(\rho^*-\bar{\rho})\|_{L^1_{t}(\dot{B}^{\frac{d}{p}+2}_{p,1})}+\|(\rho^*-\bar{\rho})\nabla \phi^*\|_{L^1_{t}(\dot{B}^{\frac{d}{p}+1}_{p,1})}\\
&\quad\lesssim \|\rho^*-\bar{\rho}\|_{L^{\infty}_{t}(\dot{B}^{\frac{d}{p}}_{p,1})}\|\rho^*-\bar{\rho}\|_{L^1_{t}(\dot{B}^{\frac{d}{p}+2}_{p,1})}+\|\rho^*-\bar{\rho}\|_{L^{\infty}_{t}(\dot{B}^{\frac{d}{p}}_{p,1})}\|\phi^*-\bar{\phi}\|_{L^1_{t}(\dot{B}^{\frac{d}{p}+2}_{p,1})}\\
&\quad\quad+\|\rho^*-\bar{\rho}\|_{L^{2}_{t}(\dot{B}^{\frac{d}{p}+1}_{p,1})}\|\phi^*-\bar{\phi}\|_{L^2_{t}(\dot{B}^{\frac{d}{p}+1}_{p,1})}\\
&\quad\lesssim \big( \|\rho^*-\bar{\rho}\|_{L^{\infty}_{t}(\dot{B}^{\frac{d}{p}}_{p,1})}+\|\rho^*-\bar{\rho}\|_{L^1_{t}(\dot{B}^{\frac{d}{p}+2}_{p,1})}\big)^2.
\end{aligned}
\end{equation}
 By (\ref{KSest}) and (\ref{510}), one obtains
 \begin{equation}\label{KSest2}
\begin{aligned}
&\|\rho^*-\bar{\rho}\|_{\widetilde{L}^{\infty}_{t}(\dot{B}^{\frac{d}{p}}_{p,1})}+\|\rho^*-\bar{\rho}\|_{L^1_{t}(\dot{B}^{\frac{d}{p}+2}_{p,1})}\\
&\quad\lesssim \|\rho^{*}_0-\bar{\rho}\|_{\dot{B}^{\frac{d}{p}}_{p,1}}+\big( \|\rho^*-\bar{\rho}\|_{\widetilde{L}^{\infty}_{t}(\dot{B}^{\frac{d}{p}}_{p,1})}+\|\rho^*-\bar{\rho}\|_{L^1_{t}(\dot{B}^{\frac{d}{p}+2}_{p,1})}\big)^2.
\end{aligned}
\end{equation}
Hence, in view of the Friedrichs approximation and bootstrap argument, the left-hand side of (\ref{KSest2}) can be bounded
for all $T>0$ provided that \eqref{a2} is 
satisfied with $\alpha^*$ small enough. Then it is easy to deduce the global existence of strong solutions to the Cauchy problem for System \eqref{KS}. And the uniqueness in the space $\widetilde{L}^{\infty}(0,T;\dot{B}^{\frac{d}{p}-1}_{p,1})\cap L^1(0,T;\dot{B}^{\frac{d}{p}+1}_{p,1})$ can be proved in a simple way. We omit the details for brevity.

 \subsection{Quantitative error estimates}\label{subsection42}

 This subsection is devoted to the proof of Theorem \ref{theorem13} pertaining to the relaxation limit from the HPC system to the Keller-Segel model. Recall that $(\rho^{\var},u^{\var},\phi^{\var})$ is defined through the following diffusive scaling (\ref{scaling1}) and it satisfies System (\ref{HPCvar}).

From the bounds obtained with Theorem \ref{theorem11}, we deduce the following lemma concerning the unknowns $(\rho^\var, u^\var,\phi^\var)$ under the diffusive scaling.
\begin{cor} \label{boundsrho}
Let the assumptions {\rm(\ref{ABmu})}, {\rm{(\ref{a1})}} and {\rm{(\ref{a2})}} hold and let $(n, u,\psi)$ be the global solution to System {\rm(\ref{m1n})} given by Theorem \ref{theorem11}. Then $(\rho^\var, u^\var,\phi^\var)$ satisfies
\begin{equation}\label{boundsRelax}
\left\{
\begin{aligned}
&\varepsilon\|u^\varepsilon\|^{\ell}_{\widetilde{L}^\infty_t(\dot{B}^{\frac{d}{2}}_{2,1})}+\varepsilon^2\| u^\varepsilon\|^{h}_{\widetilde{L}^\infty_t(\dot{B}^{\frac{d}{2}+1}_{2,1})}+\|u^\varepsilon\|_{L^1_t(\dot{B}^{\frac{d}{2}+1}_{2,1})}^{\ell}+\|u^\varepsilon\|_{\widetilde{L}^2_t(\dot{B}^{\frac{d}{2}}_{2,1})}\leq C\mathcal{X}_{0},\\
&\|\phi^\var-\bar{\phi}\|_{\widetilde{L}^{\infty}_{t}(\dot{B}^{\frac{d}{2}}_{2,1})}^{\ell}+\var\|\phi^\var-\bar{\phi}\|_{\widetilde{L}^{\infty}_{t}(\dot{B}^{\frac{d}{2}+2}_{2,1})}^{h}+\|\phi^\var-\bar{\phi}\|_{L^1_{t}(\dot{B}^{\frac{d}{2}+2}_{2,1})}^{\ell}+\|\phi^\var-\bar{\phi}\|_{L^{1}_{t}(\dot{B}^{\frac{d}{2}+3}_{2,1})}^{h}\leq C\mathcal{X}_{0},\\
&\|\rho^\varepsilon-\bar{\rho}\|_{\widetilde{L}^\infty_t(\dot{B}^{\frac{d}{2}}_{2,1})}^{\ell}+\var\|\rho^\varepsilon-\bar{\rho}\|_{\widetilde{L}^\infty_t(\dot{B}^{\frac{d}{2}+1}_{2,1})}^{h}+\|\rho^\varepsilon-\bar{\rho}\|_{L^1_t(\dot{B}^{\frac{d}{2}+2}_{2,1})}^{\ell}+\var^{-1}\|\rho^\varepsilon-\bar{\rho}\|_{L^1_t(\dot{B}^{\frac{d}{2}+1}_{2,1})}^{h}\leq C\mathcal{X}_{0},\\
&\|\rho^\varepsilon-\bar{\rho}\|_{\widetilde{L}^\infty_t(\dot{B}^{\frac{d}{2}}_{2,1})}+\|(\rho^\var-\bar{\rho},\phi^\var-\bar{\phi})\|_{\widetilde{L}^2_t(\dot{B}^{\frac{d}{2}+1}_{2,1})}\leq C\mathcal{X}_{0},\\
&\| v^{\var}\|_{L^1_t(\dot{B}^{\frac{d}{2}}_{2,1})}+\var\|\partial_t\phi^\var\|_{L^1_t(\dot{B}^{\frac{d}{2}}_{2,1})}\leq C\mathcal{X}_{0}\var,
\end{aligned}
\right.
\end{equation}
with $v^{\var}= u^\varepsilon+\dfrac{1}{\rho^{\var}}\nabla P(\rho^\varepsilon)-\mu\nabla\phi^\var$.
\end{cor}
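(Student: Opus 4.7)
The strategy is a direct change of variables: every norm of $(\rho^{\var},u^{\var},\phi^{\var})$ in the $\tau$--variable is rewritten as a norm of $(\rho-\bar{\rho},u,\psi)$ in the $t$--variable using the diffusive rescaling \eqref{scaling1}, and then bounded by $\mathcal{X}_{0}$ via Theorem \ref{theorem11}. No new energy estimate is needed; the only care is (i) tracking the powers of $\var$ produced by the rescaling, (ii) passing between $n$ and $\rho-\bar{\rho}$ through the diffeomorphism defined in Section \ref{section3}, and (iii) handling the effective velocity $v^{\var}$ which carries a favorable $\var$ gain.

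First I would tabulate the rescaling of Bochner norms. Writing $t=\tau/\var$, a direct computation gives $\|f(\cdot,\tau/\var)\|_{L^{\infty}_{\tau}(X)}=\|f\|_{L^{\infty}_{t}(X)}$, $\|f(\cdot,\tau/\var)\|_{L^{1}_{\tau}(X)}=\var\|f\|_{L^{1}_{t}(X)}$ and $\|f(\cdot,\tau/\var)\|_{L^{2}_{\tau}(X)}=\var^{1/2}\|f\|_{L^{2}_{t}(X)}$, together with the extra factor $1/\var$ on $u^{\var}$ coming from $u^{\var}=u/\var$. Combining this with Theorem \ref{theorem11} (in particular the pieces $\|u\|^{\ell}_{\widetilde{L}^{\infty}_{t}(\dot{B}^{d/2}_{2,1})}$, $\var\|u\|^{h}_{\widetilde{L}^{\infty}_{t}(\dot{B}^{d/2+1}_{2,1})}$, $\|u\|^{\ell}_{L^{1}_{t}(\dot{B}^{d/2+1}_{2,1})}$, and $\var^{-1/2}\|u\|_{\widetilde{L}^{2}_{t}(\dot{B}^{d/2}_{2,1})}\lesssim\mathcal{X}_{0}$) immediately yields the first line of \eqref{boundsRelax}. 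Since $\phi^{\var}-\bar{\phi}=\psi(\cdot,\tau/\var)$, the same bookkeeping applied to $\mathcal{X}_{L}$ and $\mathcal{X}_{H}$ (which contain $\|\psi\|^{\ell}_{\widetilde{L}^{\infty}_{t}(\dot{B}^{d/2}_{2,1})}$, $\var\|\psi\|^{\ell}_{L^{1}_{t}(\dot{B}^{d/2+2}_{2,1})}$, $\var\|\nabla\psi\|^{h}_{\widetilde{L}^{\infty}_{t}(\dot{B}^{d/2+1}_{2,1})}$ and $\|\psi\|^{h}_{L^{1}_{t}(\dot{B}^{d/2+3}_{2,1})}$) gives the second line.

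For the density bounds, I would invoke the fact that $\rho\mapsto n=\int_{\bar{\rho}}^{\rho}P'(s)/s\,ds$ is a diffeomorphism near $\bar{\rho}$, so the composition estimate (\ref{F1}) (or (\ref{F3})) gives $\|\rho-\bar{\rho}\|_{\dot{B}^{s}_{2,1}}\lesssim\|n\|_{\dot{B}^{s}_{2,1}}$ for the relevant indices, with constants that are harmless under the a-priori smallness $\mathcal{X}(t)\lesssim\mathcal{X}_{0}$. Applying this together with the rescaling of the $L^{p}_{t}$ norms delivers the third line of \eqref{boundsRelax}. The fourth line then follows by interpolation: $\|\rho^{\var}-\bar{\rho}\|_{\widetilde{L}^{\infty}_{\tau}(\dot{B}^{d/2}_{2,1})}$ is obtained by summing the low-frequency piece of line 3 with the high-frequency piece (lossily using $\|\cdot\|^{h}_{\dot{B}^{d/2}_{2,1}}\lesssim\var\|\cdot\|^{h}_{\dot{B}^{d/2+1}_{2,1}}$), and the $L^{2}_{\tau}(\dot{B}^{d/2+1}_{2,1})$ bounds on $\rho^{\var}-\bar{\rho}$ and $\phi^{\var}-\bar{\phi}$ come from $\|\cdot\|_{L^{2}_{\tau}(X)}\leq\|\cdot\|^{1/2}_{L^{\infty}_{\tau}(X)}\|\cdot\|^{1/2}_{L^{\infty}_{\tau}(X)}$ or, more efficiently, from interpolating the available $L^{\infty}_{\tau}\cap L^{1}_{\tau}$ bounds at the shifted regularity level.

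The only step that deserves some care is the last line. Observing that $\nabla n=\frac{1}{\rho}\nabla P(\rho)$, the low-frequency effective unknown $v=u+\var\nabla n-\var\mu\nabla\psi$ satisfies $v(x,\tau/\var)=\var\bigl(u^{\var}+\tfrac{1}{\rho^{\var}}\nabla P(\rho^{\var})-\mu\nabla\phi^{\var}\bigr)=\var v^{\var}$, so $\|v^{\var}\|_{L^{1}_{\tau}(\dot{B}^{d/2}_{2,1})}=\|v\|_{L^{1}_{t}(\dot{B}^{d/2}_{2,1})}$ and it suffices to show $\|v\|_{L^{1}_{t}(\dot{B}^{d/2}_{2,1})}\lesssim\var\mathcal{X}_{0}$. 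The low-frequency piece is exactly $\var$ times the sharp bound $\|\tfrac{1}{\var}u+\nabla n-\mu\nabla\psi\|^{\ell}_{L^{1}_{t}(\dot{B}^{d/2}_{2,1})}\lesssim\mathcal{X}_{0}$ that is already built into $\mathcal{X}_{L}$; the high-frequency piece is handled by $\|v\|^{h}_{L^{1}_{t}(\dot{B}^{d/2}_{2,1})}\lesssim\var\bigl(\|u\|^{h}_{L^{1}_{t}(\dot{B}^{d/2+1}_{2,1})}+\var\|(n,\psi)\|^{h}_{L^{1}_{t}(\dot{B}^{d/2+1}_{2,1})}\bigr)\lesssim\var\mathcal{X}_{0}$, using the Bernstein-type inequality $\|\cdot\|^{h}_{\dot{B}^{d/2}_{2,1}}\lesssim\var\|\cdot\|^{h}_{\dot{B}^{d/2+1}_{2,1}}$. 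Finally, the bound on $\partial_{\tau}\phi^{\var}$ reduces to the $\psi$-part: since $\var\partial_{\tau}\phi^{\var}(x,\tau)=\partial_{t}\psi(x,\tau/\var)$, one has $\var\|\partial_{\tau}\phi^{\var}\|_{L^{1}_{\tau}(\dot{B}^{d/2}_{2,1})}=\var\|\partial_{t}\psi\|_{L^{1}_{t}(\dot{B}^{d/2}_{2,1})}\lesssim\var\mathcal{X}_{0}$, where the last bound uses $\|\partial_{t}\psi\|^{\ell}_{L^{1}_{t}(\dot{B}^{d/2}_{2,1})}\lesssim\mathcal{X}_{0}$ directly and the high-frequency piece is absorbed by the same Bernstein trick from $\|\partial_{t}\psi\|^{h}_{L^{1}_{t}(\dot{B}^{d/2+1}_{2,1})}\lesssim\mathcal{X}_{0}$. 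No genuine obstacle arises: the effective velocity $v^{\var}$ is where the argument is tightest, and it is exactly the new low-frequency structure built in Theorem \ref{theorem11} that makes the $\mathcal{O}(\var)$ gain available.
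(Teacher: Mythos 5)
Your bookkeeping of the time rescaling is correct, and lines 1, 2, 4 and 5 of \eqref{boundsRelax} are handled essentially as in the paper --- in particular, your observation that $v(x,\tau/\var)=\var v^{\var}(x,\tau)$ and the resulting $\mathcal{O}(\var)$ gain, as well as the identity $\var\partial_{\tau}\phi^{\var}=\partial_{t}\psi$, match the paper's argument. However, your treatment of the density bounds (line 3 of \eqref{boundsRelax}) has a genuine gap.

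You propose to control $\rho-\bar\rho$ via the diffeomorphism $\rho\mapsto n$ and the generic composition estimate (\ref{F1}) (or (\ref{F3})), which yields $\|\rho-\bar\rho\|_{\dot{B}^{s}_{2,1}}\lesssim\|n\|_{\dot{B}^{s}_{2,1}}$ for a \emph{single} regularity index $s$ and with the \emph{full}-frequency Besov norm on the right-hand side. But line 3 involves a hybrid norm with different regularity exponents in low and high frequency, and the composition operator is not frequency-localized: the low-frequency part of a nonlinear image of $n$ draws on all frequencies of $n$, and conversely. If you try to salvage the argument by enlarging the left-hand side to the full-frequency norm before applying (\ref{F1}), you run into uncontrolled quantities. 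For instance, bounding $\var^{-1}\|\rho^{\var}-\bar\rho\|^{h}_{L^{1}_{\tau}(\dot{B}^{d/2+1}_{2,1})}=\|\rho-\bar\rho\|^{h}_{L^{1}_{t}(\dot{B}^{d/2+1}_{2,1})}$ by $\|n\|_{L^{1}_{t}(\dot{B}^{d/2+1}_{2,1})}$ forces you to control $\|n\|^{\ell}_{L^{1}_{t}(\dot{B}^{d/2+1}_{2,1})}$, which is \emph{not} in $\mathcal{X}_{L}$: the only $L^{1}_{t}$ control of $n$ in low frequency is $\var\|n\|^{\ell}_{L^{1}_{t}(\dot{B}^{d/2+2}_{2,1})}$, and for $j\to-\infty$ one has $2^{j(d/2+1)}\gg\var\,2^{j(d/2+2)}$, so the former cannot dominate. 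Similarly, $\|\rho^{\var}-\bar\rho\|^{\ell}_{L^{1}_{\tau}(\dot{B}^{d/2+2}_{2,1})}=\var\|\rho-\bar\rho\|^{\ell}_{L^{1}_{t}(\dot{B}^{d/2+2}_{2,1})}$ estimated via the full-frequency norm would involve $\var\|n\|^{h}_{L^{1}_{t}(\dot{B}^{d/2+2}_{2,1})}$, which may even be infinite for solutions in the regularity class of Theorem \ref{theorem11}.

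What the paper does, and what you are missing, is the algebraic decomposition $a(\rho-\bar\rho)=c_{1}n+H(n)$ into a \emph{linear} term (whose low/high-frequency pieces are literally those of $n$, hence controllable exponent-by-exponent) plus the quadratic remainder $H(n)$, to which the \emph{frequency-restricted} composition estimates (\ref{q1})--(\ref{q2}) of Lemma \ref{compositionlp} are applied. These tailor-made estimates express the low/high-frequency piece of $H(n)$ in terms of the low- and high-frequency pieces of $n$ at the correct regularity exponents, with the favorable powers of $\var$ that make the hybrid bound close. Without Lemma \ref{compositionlp} --- or an equivalent paralinearization argument isolating the linear part of the diffeomorphism --- your step for line 3 does not go through.
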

\begin{remark}
Note that $(\ref{boundsRelax})_{4}$-$(\ref{boundsRelax})_{5}$ are the main ingredients to show the relaxation limit. 
\end{remark}

\begin{proof}
The inequality $(\ref{boundsRelax})_{1}$-$(\ref{boundsRelax})_{2}$ can be derived by (\ref{XX0}) and the scaling (\ref{scaling1}) directly. Recall that in (\ref{m1n}), we have 
$a(\rho-\bar{\rho})=c_{1}n+H(n).$ Thus, we are able to apply (\ref{scaling1}), (\ref{hl}) and the composition estimates (\ref{q1})-(\ref{q2}) to have
\begin{equation}\nonumber
\begin{aligned}
&\|\rho^{\var}-\bar{\rho}\|_{\widetilde{L}^{\infty}_{t}(\dot{B}^{\frac{d}{2}}_{2,1})}^{\ell}+\var\|\rho^{\var}-\bar{\rho}\|_{\widetilde{L}^{\infty}_{t}(\dot{B}^{\frac{d}{2}+1}_{2,1})}^{h}\\
&\quad=\|\rho-\bar{\rho}\|_{\widetilde{L}^{\infty}_{t}(\dot{B}^{\frac{d}{2}}_{2,1})}^{\ell}+\var\|\rho-\bar{\rho}\|_{\widetilde{L}^{\infty}_{t}(\dot{B}^{\frac{d}{2}+1}_{2,1})}^{h}\\
&\quad\lesssim\|n\|_{\widetilde{L}^{\infty}_{t}(\dot{B}^{\frac{d}{2}}_{2,1})}^{\ell}+\|H(n)\|_{\widetilde{L}^{\infty}_{t}(\dot{B}^{\frac{d}{2}}_{2,1})}^{\ell}+\var\|n\|_{\widetilde{L}^{\infty}_{t}(\dot{B}^{\frac{d}{2}+1}_{2,1})}^{h}+\var\|H(n)\|_{\widetilde{L}^{\infty}_{t}(\dot{B}^{\frac{d}{2}+1}_{2,1})}^{h}\\
&\quad\lesssim \|n\|_{\widetilde{L}^{\infty}_{t}(\dot{B}^{\frac{d}{2}}_{2,1})}^{\ell}+\var\|n\|_{\widetilde{L}^{\infty}_{t}(\dot{B}^{\frac{d}{2}+1}_{2,1})}^{h}\lesssim \mathcal{X}_{0}.
\end{aligned}
\end{equation}
Similarly, one gets
\begin{equation}\nonumber
\begin{aligned}
&\|\rho^{\var}-\bar{\rho}\|_{L^1_{t}(\dot{B}^{\frac{d}{2}+2}_{2,1})}^{\ell}+\frac{1}{\var}\|\rho^{\var}-\bar{\rho}\|_{L^1_{t}(\dot{B}^{\frac{d}{2}+1}_{2,1})}^{h}\\
&\quad=\var\|\rho-\bar{\rho}\|_{L^1_{t}(\dot{B}^{\frac{d}{2}+2}_{2,1})}^{\ell}+\|\rho-\bar{\rho}\|_{L^1_{t}(\dot{B}^{\frac{d}{2}+1}_{2,1})}^{h}\\
&\quad\lesssim\var\|n\|_{L^1_{t}(\dot{B}^{\frac{d}{2}+2}_{2,1})}^{\ell}+\var\|H(n)\|_{L^1_{t}(\dot{B}^{\frac{d}{2}+2}_{2,1})}^{\ell}+\|n\|_{L^1_{t}(\dot{B}^{\frac{d}{2}+1}_{2,1})}^{h}+\var\|H(n)\|_{L^1_{t}(\dot{B}^{\frac{d}{2}+1}_{2,1})}^{h}\\
&\quad\lesssim \var\|n\|_{L^1_{t}(\dot{B}^{\frac{d}{2}+2}_{2,1})}^{\ell}+\|n\|_{L^1_{t}(\dot{B}^{\frac{d}{2}+1}_{2,1})}^{h}\lesssim \mathcal{X}_{0}.
\end{aligned}
\end{equation}
Thence, $(\ref{boundsRelax})_{4}$ follows by $(\ref{boundsRelax})_{2}$-$(\ref{boundsRelax})_{3}$ and standard interpolation inequalities. We also obtain from (\ref{scaling1}), (\ref{XX0}), (\ref{hl}) and $(\ref{boundsRelax})_{2}$ that
\begin{equation}\nonumber
\begin{aligned}
&\| v^{\var}\|_{L^1_t(\dot{B}^{\frac{d}{2}}_{2,1})}=\var\|\frac{1}{\var}u+ \nabla n-\mu \nabla \psi\|_{L^1_t(\dot{B}^{\frac{d}{2}}_{2,1})}\\
&\quad\quad\quad\quad\quad\lesssim \var\|\frac{1}{\var}u+\nabla n- \mu \nabla \psi\|_{L^1_t(\dot{B}^{\frac{d}{2}}_{2,1})}^{\ell}+\var\|(n,u,\nabla\psi)\|_{L^1_{t}(\dot{B}^{\frac{d}{2}+1}_{2,1})}^{h}\lesssim\mathcal{X}_{0} \var.
\end{aligned}
\end{equation}
Finally, it holds from (\ref{XX0}) and $\var\partial_{\tau}\phi^{\var}(x,\tau)=\partial_{t}\psi(x,\var t)$ that
\begin{equation}\nonumber
\begin{aligned}
&\|\partial_{t}\phi^{\var}\|_{L^1_t(\dot{B}^{\frac{d}{2}}_{2,1})}=\|\partial_{t}\psi\|_{L^1_t(\dot{B}^{\frac{d}{2}}_{2,1})}\lesssim \|\partial_{t}\psi\|_{L^1_t(\dot{B}^{\frac{d}{2}}_{2,1})}^{\ell}+\var\|\partial_{t}\psi\|_{L^1_t(\dot{B}^{\frac{d}{2}+1}_{2,1})}^{h}\lesssim\mathcal{X}_{0}.
\end{aligned}
\end{equation}
\end{proof}

\underline{\it\textbf{Proof of Theorem \ref{theorem13}:~}} Let the assumptions $(\ref{ABmu})$-$(\ref{a1})$ hold and let $(\rho^{\var}, u^{\var},\phi^{\var})$ be the global solution to System (\ref{HPCvar}) subject to the initial data $(\rho_{0},u_{0},\phi_{0})$ given by Theorem \ref{theorem11}. According to the equations in (\ref{HPCvar}) and Corollary \ref{boundsrho}, we have the uniform estimates
\begin{equation}\label{m123}
\left\{
    \begin{aligned}
        &\|\rho^{\var} v^{\var}\|_{L^1_{t}(\dot{B}^{\frac{d}{2}}_{2,1})}\lesssim\big( \|\rho^{\var}-\bar{\rho}\|_{\widetilde{L}^{\infty}_{t}(\dot{B}^{\frac{d}{2}}_{2,1})}+\bar{\rho} \big) \|v^{\var}\|_{L^1_t(\dot{B}^{\frac d2}_{2,1})}\lesssim  \mathcal{X}_{0}\var,\\
        &\|-\Delta\phi^{\var}-a\rho^{\var}+b\phi^{\var}\|_{L^1_{t}(\dot{B}^{\frac{d}{2}}_{2,1})}=\var\|\partial_{t}\phi^{\var}\|_{L^1_t(\dot{B}^{\frac{d}{2}}_{2,1})}\lesssim \mathcal{X}_{0}\var,\\
    \end{aligned}
    \right.
\end{equation}
which yields (\ref{uniform1}).

Assume further that \eqref{ar} holds. With these bounds in hand, we can derive expected quantitative error estimates. From the Taylor formula, there exists a smooth function $G_1$ vanishing at $\bar\rho$ such that 
 $$ P(\rho^\varepsilon)-P(\bar{\rho})= P'(\bar{\rho})\,(\rho^\varepsilon-\bar{\rho})+ G_1(\rho^\varepsilon)\,(\rho^\varepsilon-\bar{\rho}).$$ 
 It is easy to verify that  $\rho^\var$ satisfies
  \begin{equation}\label{m1nvar2}
  \left\{
 \begin{aligned}
 &\partial_{t}\rho^{\var}-\tilde{\Delta}_{*} \rho^{\var}=\mathcal{R}^{\var}+\Delta(G_1(\rho^\var)\,(\rho^\var-\bar{\rho}))-\mu\div((\rho^\var-\bar{\rho})\nabla \phi^\var),\\
 &\phi^{\var}=(b-\Delta)^{-1}\big{(} -\var \partial_{t}\phi^{\var}+a \rho^\var \big{)}.
  \end{aligned}
  \right.
 \end{equation}
 with
 $$
 \mathcal{R}^{\var}:=-\div( \rho^{\var} v^{\var} )+\var\mu\bar{\rho}\Delta(b-\Delta)^{-1}\partial_{t}\phi^{\var}.
 $$

Introducing the error variables
 \begin{equation}\nonumber
 \begin{aligned}
 &\delta \rho^{\var}:=\rho^{\var}-\rho^{*},\quad\quad \delta u^{\var}:=u^{\var}-u^{*},\quad\quad \delta \phi^{\var}:=\phi^{\var}-\phi^{*},
 \end{aligned}
 \end{equation}
we deduce from \eqref{KS1} and (\ref{m1nvar2}) that $\delta \rho^{\var}$ and $\delta \phi^{\var}$ satisfies the error equations
  \begin{equation}\label{errorm1}
 \left\{
 \begin{aligned}
 &\partial_{t}\delta \rho^{\var}-\tilde{\Delta}_{*} \delta\rho^\var=\mathcal{R}^{\var}+\Delta((G_1(\rho^\var)-G_1(\rho^*))(\rho^\var-\bar{\rho}))+\Delta(\delta\rho^\var G_1(\rho^*))\\
 &\qquad\qquad\qquad\quad\quad~-\mu\div(\delta\rho^\var\nabla\phi^\var)-\mu\div((\rho^*-\bar{\rho})\nabla\delta\phi^\var),\\
 &\delta\phi^{\var}=(b-\Delta)^{-1}( -\var \partial_{t}\phi^{\var}+a\delta \rho^\var ).
 \end{aligned}
 \right.
 \end{equation}
 with the operator $\tilde{\Delta}_{*}$ defined by (\ref{widedelta}). It follows by Lemma \ref{lemmadelta} and \eqref{ar} that 
\begin{equation}\label{erroreq5}
\begin{aligned}
  &  \|\delta \rho^{\var}\|_{\widetilde{L}^\infty_t(\dot{B}^{\frac{d}{2}-1}_{2,1})}+\|\delta \rho^{\var}\|_{L^1_t(\dot{B}^{\frac{d}{2}+1}_{2,1})}\\
  &\quad  \lesssim \var+\|\mathcal{R}^\var\|_{L^1_t(\dot{B}^{\frac{d}{2}-1}_{2,1})}+\|(G_1(\rho^\var)-G_1(\rho^*))(\rho^\var-\bar{\rho})\|_{L^1_t(\dot{B}^{\frac{d}{2}+1}_{2,1})}\\
  &\quad\quad\quad+\|\delta \rho^{\var}G_1(\rho^*)\|_{L^1_t(\dot{B}^{\frac{d}{2}+1}_{2,1})}+\|\delta \rho^{\var}\nabla\phi^\var\|_{L^1_t(\dot{B}^{\frac{d}{2}}_{2,1})}+\|(\rho^*-\bar{\rho})\nabla\delta\phi^\var\|_{L^1_t(\dot{B}^{\frac{d}{2}}_{2,1})}.
\end{aligned}
\end{equation}
Using (\ref{m123}), we can immediately get that $\mathcal{R}^\var$ is a $\mathcal{O}(\var)$ in $L^1_t(B^{d/2-1}_{2,1})$:
\begin{equation}\nonumber
\begin{aligned}
     \|\mathcal{R}^{\var}\|_{L^1_t(\dot{B}^{\frac d2-1}_{2,1})}\lesssim \|\rho^{\var}v^{\var}\|_{L^1_t(\dot{B}^{\frac d2}_{2,1})}+\var\|\partial_{t}\phi^{\var}\|_{L^1_{t}(\dot{B}^{\frac{d}{2}}_{2,1})}\lesssim\mathcal{X}_{0} \var .
\end{aligned}
\end{equation}
 In accordance with (\ref{boundsRelax}), the product law (\ref{uv1}) and the composition estimate (\ref{F3}) from the Appendix, one has
\begin{equation}\nonumber
\begin{aligned}
&  \|(G_1(\rho^\var)-G_1(\rho^*))(\rho^\var-\bar{\rho})\|_{L^1_t(\dot{B}^{\frac{d}{2}+1}_{2,1})}\\
&\quad\lesssim \|\delta\rho^\var\|_{L^1_t(\dot{B}^{\frac{d}{2}+1}_{2,1})}\|\rho^\var-\bar{\rho}\|_{L^\infty_t(\dot{B}^{\frac{d}{2}}_{2,1})}+\|\delta\rho^\var\|_{L^2_t(\dot{B}^{\frac{d}{2}}_{2,1})}\|\rho^\var-\bar{\rho}\|_{L^2_t(\dot{B}^{\frac{d}{2}+1}_{2,1})}\\
 &\quad\lesssim  \mathcal{X}_{0}\big(\|\delta \rho^{\var}\|_{\widetilde{L}^\infty_t(\dot{B}^{\frac{d}{2}-1}_{2,1})}+\|\delta \rho^{\var}\|_{L^1_t(\dot{B}^{\frac{d}{2}+1}_{2,1})} \big).
\end{aligned}
\end{equation}
 Similarly, it also holds by (\ref{r2}), (\ref{boundsRelax}) and (\ref{uv1}) that
\begin{equation}\nonumber
\begin{aligned}
 & \|\delta \rho^{\var}G_1(\rho^*)\|_{L^1_t(\dot{B}^{\frac{d}{2}+1}_{2,1})}\\
 &\quad\lesssim \|\delta\rho^\var\|_{L^1_t(\dot{B}^{\frac{d}{2}+1}_{2,1})}\|\rho^*-\bar{\rho}\|_{L^\infty_t(\dot{B}^{\frac{d}{2}}_{2,1})}+\|\delta\rho^\var\|_{L^2_t(\dot{B}^{\frac{d}{2}}_{2,1})}\|\rho^*-\bar{\rho}\|_{L^2_T(\dot{B}^{\frac{d}{2}+1}_{2,1})}\\
 &\quad\lesssim  \|\rho_{0}^{*}-\bar{\rho}\|_{\dot{B}^{\frac{d}{2}}_{2,1}} (\|\delta \rho^{\var}\|_{\widetilde{L}^\infty_t(\dot{B}^{\frac{d}{2}-1}_{2,1})}+\|\delta \rho^{\var}\|_{L^1_t(\dot{B}^{\frac{d}{2}+1}_{2,1})}).
\end{aligned}
\end{equation}
One deduces from (\ref{bessel}), (\ref{boundsRelax}) and $(\ref{errorm1})_{2}$ that
\begin{equation}\label{deltaphi}
\begin{aligned}
&\|\delta\phi^\var\|_{L^1_t(\dot{B}^{\frac{d}{2}+1}_{2,1}\cap\dot{B}^{\frac{d}{2}+2}_{2,1})}\\
&\quad\lesssim\var\|(b-\Delta)^{-1}\partial_{t}\phi^{\var}\|_{L^1_t(\dot{B}^{\frac{d}{2}+1}_{2,1}\cap\dot{B}^{\frac{d}{2}+2}_{2,1})}+\|(b-\Delta)^{-1}\delta\rho^{\var}\|_{L^1_t(\dot{B}^{\frac{d}{2}+1}_{2,1}\cap\dot{B}^{\frac{d}{2}+2}_{2,1})}\\
&\quad\lesssim   \var+\|\delta \rho^{\var}\|_{L^1_t(\dot{B}^{\frac{d}{2}+1}_{2,1})},
\end{aligned}
\end{equation}
which together with (\ref{r2}) and (\ref{boundsRelax}) yields
\begin{equation}\nonumber
\begin{aligned}
 & \|\delta \rho^{\var}\nabla\phi^\var\|_{L^1_t(\dot{B}^{\frac{d}{2}}_{2,1})}+\|(\rho^*-\bar{\rho})\nabla\delta\phi^\var\|_{L^1_t(\dot{B}^{\frac{d}{2}}_{2,1})} \\
 &\quad\lesssim \|\delta\rho^\var\|_{L^2_t(\dot{B}^{\frac{d}{2}}_{2,1})}\|\phi^\var-\bar{\phi}\|_{L^2_t(\dot{B}^{\frac{d}{2}+1}_{2,1})}+\|\rho^*-\bar{\rho}\|_{L^\infty_t(\dot{B}^{\frac{d}{2}}_{2,1})}\|\delta\phi^\var\|_{L^1_t(\dot{B}^{\frac{d}{2}+1}_{2,1})}\\
 &\quad\lesssim \mathcal{X}_{0}\big(\|\delta \rho^{\var}\|_{\widetilde{L}^\infty_t(\dot{B}^{\frac{d}{2}-1}_{2,1})}+\|\delta \rho^{\var}\|_{L^1_t(\dot{B}^{\frac{d}{2}+1}_{2,1})} \big)+\var.
\end{aligned}
\end{equation}
Thence, gathering all the above estimates, we obtain
\begin{equation}\nonumber
\begin{aligned}
  &  \|\delta \rho^{\var}\|_{L^\infty_t(\dot{B}^{\frac{d}{2}-1}_{2,1})}+\|(\delta \rho^{\var}, \delta\phi^{\var})\|_{L^{1}_{t}(\dot{B}^{\frac{d}{2}+1}_{2,1})}\\
    &\quad\lesssim \big(\mathcal{X}_{0}+\|\rho_{0}^{*}-\bar{\rho}\|_{\dot{B}^{\frac{d}{2}}_{2,1}} \big)\big(\|\delta \rho^{\var}\|_{\widetilde{L}^\infty_t(\dot{B}^{\frac{d}{2}-1}_{2,1})}+\|\delta \rho^{\var}\|_{L^1_t(\dot{B}^{\frac{d}{2}+1}_{2,1})}\big)+ \var.
\end{aligned}
\end{equation}
Thus, with the help of the smallness of $\mathcal{X}_{0}+\|\rho_{0}^{*}-\bar{\rho}\|_{\dot{B}^{\frac{d}{2}}_{2,1}}$, we conclude that
\begin{equation}\label{delta1l}
\begin{aligned}
  &  \|\delta \rho^{\var}\|_{\widetilde{L}^\infty_t(\dot{B}^{\frac{d}{2}-1}_{2,1})}+\|\delta \rho^{\var}\|_{L^{1}_{t}(\dot{B}^{\frac{d}{2}+1}_{2,1})}+\|\delta\phi^\var\|_{L^1_t(\dot{B}^{\frac{d}{2}+1}_{2,1}\cap\dot{B}^{\frac{d}{2}+2}_{2,1})}\lesssim \var.
\end{aligned}
\end{equation}
Furthermore, using $(\ref{boundsRelax})_{3}$, (\ref{deltaphi})-\eqref{delta1l}, (\ref{F3}) and $\displaystyle\delta u^{\var}= v^{\var}-\nabla \int_{\rho_{*}}^{\rho^{\var}}\frac{P'(s)}{s}ds+\mu\nabla\delta\phi^{\var}$, we get
\begin{equation}\label{delta2l}
\begin{aligned}
&\|\delta u^{\var}\|_{L^{1}_{t}(\dot{B}^{\frac{d}{2}}_{2,1})}\lesssim \|v^{\var}\|_{L^{1}_{t}(\dot{B}^{\frac{d}{2}}_{2,1})}+\|\delta \rho^{\var}\|_{L^{1}_{t}(\dot{B}^{\frac{d}{2}+1}_{2,1})}+\|\delta \phi^{\var}\|_{L^{1}_{t}(\dot{B}^{\frac{d}{2}+1}_{2,1})}\lesssim \var.
\end{aligned}
\end{equation}
By (\ref{delta1l})-(\ref{delta2l}), (\ref{error}) holds. Consequently, as $\var\rightarrow0$, the global solution $(\rho^{\var},u^{\var},\phi^{\var})$ for System (\ref{HPCvar}) converges strongly to the global solution $(\rho^{*},u^{*},\phi^{*})$ for System (\ref{KS}) in the sense (\ref{conver}).

\section{Appendix}\label{section6}

\subsection{Functional spaces}\label{sec:func-spaces}

Before stating our main results, we explain the notations and definitions used throughout this paper. $C>0$ denotes a constant independent of $\var$ and time, $f\lesssim g~(\text{resp.}\;f\gtrsim g)$ means $f\leq Cg~(\text{resp.}\;f\geq Cg)$, and $f\sim g$ stands for $f\lesssim g$ and $f\gtrsim g$. For any Banach space $X$ and the functions $f,g\in X$, let $\|(f,g)\|_{X}:=\|f\|_{X}+\|g\|_{X}$. For any $T>0$ and $1\leq \varrho\leq\infty$, we denote by $L^{\varrho}(0,T;X)$ the set of measurable functions $g:[0,T]\rightarrow X$ such that $t\mapsto \|g(t)\|_{X}$ is in $L^{\varrho}(0,T)$ and write $\|\cdot\|_{L^{\varrho}(0,T;X)}:=\|\cdot\|_{L^{\varrho}_{T}(X)}$.  $\mathcal{F}$ and $\mathcal{F}^{-1}$ stand for the Fourier transform and its inverse. In addition, let $\Lambda^{\sigma}$ be defined by $\Lambda^{\sigma}f:=\mathcal{F}^{-}(|\xi|^{\sigma}\mathcal{F}f)$.

We recall the notations of the Littlewood-Paley decomposition and Besov spaces. The reader can refer to \cite{bahouri1}[Chapter 2] for a complete overview. Choose a smooth radial non-increasing function $\chi(\xi)$ with compact supported in $B(0,\dfrac{4}{3})$ and $\chi(\xi)=1$ in $B(0,\dfrac{3}{4})$ such that
$$
\varphi(\xi):=\chi(\frac{\xi}{2})-\chi(\xi),\quad \sum_{j\in \mathbb{Z}}\varphi(2^{-j}\cdot)=1,\quad \text{{\rm{Supp}}}~ \varphi\subset \{\xi\in \mathbb{R}^{d}~|~\frac{3}{4}\leq |\xi|\leq \frac{8}{3}\}.
$$
For any $j\in \mathbb{Z}$, the homogeneous dyadic blocks $\dot{\Delta}_{j}$ and the low-frequency cut-off operator $\dot{S}_{j}$ are defined by
$$
\dot{\Delta}_{j}u:=\mathcal{F}^{-1}(\varphi(2^{-j}\cdot )\mathcal{F}u),\quad\quad \dot{S}_{j}u:= \mathcal{F}^{-1}( \chi (2^{-j}\cdot) \mathcal{F} u).
$$
 From now on, we use the shorthand notation $\dot{\Delta}_{j}u=u_{j}.$

Let $\mathcal{S}_{h}'$ be the set of tempered distributions on $\mathbb{R}^{d}$ such that every $u\in \mathcal{S}_{h}'$ satisfies $u\in \mathcal{S}'$ and $\lim_{j\rightarrow-\infty}\|\dot{S}_{j}u\|_{L^{\infty}}=0$. Then one has
\begin{equation}\nonumber
\begin{aligned}
&u=\sum_{j\in \mathbb{Z}}u_{j}\quad\text{in}~\mathcal{S}',\quad\quad \dot{S}_{j}u= \sum_{j'\leq j-1}u_{j'},\quad \forall u\in \mathcal{S}_{h}',\\
\end{aligned}
\end{equation}

With the help of these dyadic blocks, the homogeneous Besov space $\dot{B}^{s}_{p,r}$ for any $p,r\in[1,\infty]$ and $s\in \mathbb{R}$ is defined by
$$
\dot{B}^{s}_{p,r}:=\{u\in \mathcal{S}_{h}'~|~\|u\|_{\dot{B}^{s}_{p,r}}:=\|\{2^{js}\|u_{j}\|_{L^{p}}\}_{j\in\mathbb{Z}}\|_{l^{r}}<\infty\}.
$$
We denote the Chemin-Lerner type space $\widetilde{L}^{\varrho}(0,T;\dot{B}^{s}_{p,r})$ for any $\varrho,r, q\in[1,\infty]$, $s\in\mathbb{R}$ and $T>0$:
$$
\widetilde{L}^{\varrho}(0,T;\dot{B}^{s}_{p,r}):=\{u\in L^{\varrho}(0,T;\mathcal{S}'_{h})~|~ \|u\|_{\widetilde{L}^{\varrho}_{T}(\dot{B}^{s}_{p,r})}:=\|\{2^{js}\|u_{j}\|_{L^{\varrho}_{T}(L^{p})}\}_{j\in\mathbb{Z}}\|_{l^{r}}<\infty\}.
$$
By the Minkowski inequality, it holds that
\begin{equation}\nonumber
\begin{aligned}
&\|u\|_{\widetilde{L}^{\varrho}_{T}(\dot{B}^{s}_{p,r})}\leq \|u\|_{L^{\varrho}_{T}(\dot{B}^{s}_{p,r})}\:\:\text{for}\:\: r\geq\varrho \quad \text{and}\quad \|u\|_{\widetilde{L}^{\varrho}_{T}(\dot{B}^{s}_{p,r})}\geq \|u\|_{L^{\varrho}_{T}(\dot{B}^{s}_{p,r})} \:\:\text{for}\:\: r\leq\varrho,
\end{aligned}
\end{equation}
where $\|\cdot\|_{L^{\varrho}_{T}(\dot{B}^{s}_{p,r})}$ is the usual Lebesgue-Besov norm. Moreover, we write
\begin{equation}\nonumber
\begin{aligned}
&\mathcal{C}_{b}(\mathbb{R}_{+};\dot{B}^{s}_{p,r}):=\{u\in\mathcal{C}(\mathbb{R}_{+};\dot{B}^{s}_{p,r})~|~\|f\|_{\widetilde{L}^{\infty}(\mathbb{R}_{+};\dot{B}^{s}_{p,r})}<\infty\}.
\end{aligned}
\end{equation}

In order to restrict our Besov norms to the low and high frequencies regions, we set an integer $J_{\var}$, called threshold, to denote the following notations for $p\in[1,\infty]$ and $s\in\mathbb{R}$:
\begin{equation}\nonumber
\begin{aligned}
&\|u\|_{\dot{B}^{s}_{p,r}}^{\ell,J_{\var}}:=\|\{2^{js}\|u_{j}\|_{L^{p}}\}_{j\leq J_{\var}}\|_{\ell^{r}},\quad\quad\quad\quad~ \|u\|_{\dot{B}^{s}_{p,r}}^{h,J_{\var}}:=\|\{2^{js}\|u_j\|_{L^{p}}\}_{j\geq J_{\var}-1}\|_{\ell^{r}},\\
&\|u\|_{\widetilde{L}^{\varrho}_{T}(\dot{B}^{s}_{p,r})}^{\ell,J_{\var}}:=\|\{2^{js}\|u_j\|_{L^{\varrho}_{T}(L^{p})}\}_{j\leq J_{\var}}\|_{\ell^{r}},\quad \|u\|_{\widetilde{L}^{\varrho}_{T}(\dot{B}^{s}_{p,r})}^{h,J_{\var}}:=\|\{2^{js}\|u_j\|_{L_{T}^{\varrho}(L^{p})}\}_{j\geq J_{\var}-1}\|_{\ell^{r}}.
\end{aligned}
\end{equation}
For any $u\in\mathcal{S}'_{h}$, we also define the low-frequency part $u^{\ell,J_{\var}}$ and the high-frequency part $u^{h,J_{\var}}$ by
$$
u^{\ell,J_{\var}}:=\sum_{j\leq J_{\var}-1}u_{j},\quad\quad u^{h,J_{\var}}:=u-u^{\ell,J_{\var}}=\sum_{j\geq J_{\var}}u_{j}.
$$
It is easy to check for any $s'>0$ that
\begin{equation}\label{lh}
\begin{aligned}
&\|u^{\ell,J_{\var}}\|_{\dot{B}^{s}_{p,r}}\leq \|u\|_{\dot{B}^{s}_{p,r}}^{\ell,J_{\var}}\leq 2^{J_{\var}s'}\|u\|_{\dot{B}^{s-s'}_{p,r}}^{\ell,J_{\var}},\quad\quad\|u^{h,J_{\var}}\|_{\dot{B}^{s}_{p,r}}\leq \|u\|_{\dot{B}^{s}_{p,r}}^{h,J_{\var}}\leq 2^{-(J_{\var}-1)s'}\|u\|_{\dot{B}^{s+s'}_{p,r}}^{h,J_{\var}}.
\end{aligned}
\end{equation}
Whenever the value of $J_{\var}$ is clear from the context (which will also be the case after),  the exponent $J_{\var}$ will be omitted in the above notations for simplicity. Note also that $J_{\var}$ will depend on $\var$ which implies that all the frequency-restricted norms also depend on $\var$. 

\subsection{Technical lemmas}

We recall some basic properties of Besov spaces and product estimates which will be used repeatedly in this paper. The reader can refer to \cite[Chapters 2-3]{bahouri1} for more details. Remark that all the properties remain true for the Chemin--Lerner type spaces, up to the modification of the regularity exponent $s$ according to Hölder's inequality for the time variable.

The first lemma pertains to so-called Bernstein inequalities.
\begin{lemma}[\!\!\cite{bahouri1}]\label{lemma61}
Let $0<r<R$, $1\leq p\leq q\leq \infty$ and $k\in \mathbb{N}$. For any function $u\in L^p$ and $\lambda>0$, it holds
\begin{equation}\nonumber
\left\{
\begin{aligned}
&{\rm{Supp}}~ \mathcal{F}(u) \subset \{\xi\in\mathbb{R}^{d}~| ~|\xi|\leq \lambda R\}\Rightarrow \|D^{k}u\|_{L^q}\lesssim\lambda^{k+d(\frac{1}{p}-\frac{1}{q})}\|u\|_{L^p},\\
&{\rm{Supp}}~ \mathcal{F}(u) \subset \{\xi\in\mathbb{R}^{d}~|~ \lambda r\leq |\xi|\leq \lambda R\}\Rightarrow \|D^{k}u\|_{L^{p}}\sim\lambda^{k}\|u\|_{L^{p}}.
\end{aligned}
\right.
\end{equation}
\end{lemma}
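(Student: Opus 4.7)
The plan is to prove this by the classical Littlewood--Paley argument: scale out $\lambda$, then realize $u$ as a convolution of itself with a Schwartz kernel of compactly supported Fourier transform, and apply Young's convolution inequality.

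First, I would reduce to $\lambda=1$ by dilation. Setting $u_{\lambda}(x):=u(x/\lambda)$, one has $\widehat{u_{\lambda}}(\xi)=\lambda^{d}\hat u(\lambda\xi)$, so Fourier support in $\{|\xi|\leq \lambda R\}$ (resp.\ in an annulus $\{\lambda r\leq|\xi|\leq \lambda R\}$) becomes Fourier support in $\{|\xi|\leq R\}$ (resp.\ $\{r\leq|\xi|\leq R\}$). Under this dilation, $\|u_{\lambda}\|_{L^{p}}=\lambda^{d/p}\|u\|_{L^{p}}$ and $\|D^{k}u_{\lambda}\|_{L^{q}}=\lambda^{d/q-k}\|D^{k}u\|_{L^{q}}$. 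Hence the $\lambda=1$ inequality $\|D^{k}u_{1}\|_{L^{q}}\lesssim\|u_{1}\|_{L^{p}}$ rescales to exactly the claimed factor $\lambda^{k+d(1/p-1/q)}$, and similarly for the annulus case at $\lambda=1$ giving $\|D^{k}u\|_{L^{p}}\sim\|u\|_{L^{p}}$.

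Second, for the upper bound at $\lambda=1$, I would pick a radial $\tilde\chi\in C_{c}^{\infty}(\mathbb{R}^{d})$ that equals $1$ on $\{|\xi|\leq R\}$ and is supported in $\{|\xi|\leq 2R\}$. Then $\hat u=\tilde\chi\hat u$, so $u=h\ast u$ with $h:=\mathcal{F}^{-1}\tilde\chi\in\mathcal{S}(\mathbb{R}^{d})$. Consequently $D^{k}u=(D^{k}h)\ast u$, and Young's convolution inequality with exponents $1+1/q=1/r+1/p$ yields $\|D^{k}u\|_{L^{q}}\leq\|D^{k}h\|_{L^{r}}\|u\|_{L^{p}}$. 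Since $D^{k}h$ is Schwartz, $\|D^{k}h\|_{L^{r}}$ is a finite constant depending only on $R,k,d$, which completes the first inequality and the upper direction of the second.

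Third, for the reverse direction in the annulus case ($\|u\|_{L^{p}}\lesssim\|D^{k}u\|_{L^{p}}$ at $\lambda=1$), the key observation is that $|\xi|^{-2k}$ is smooth on any annulus bounded away from zero. I would choose $\tilde\varphi\in C_{c}^{\infty}(\{r/2\leq|\xi|\leq 2R\})$ with $\tilde\varphi\equiv 1$ on the Fourier support of $u$, and define $\psi(\xi):=\tilde\varphi(\xi)|\xi|^{-2k}$, which lies in $C_{c}^{\infty}$ and hence has a Schwartz inverse Fourier transform. Since $|\xi|^{2k}\hat u(\xi)=\mathcal{F}\bigl((-\Delta)^{k}u\bigr)(\xi)$, we have $u=\mathcal{F}^{-1}(\psi)\ast(-\Delta)^{k}u$, and Young's inequality gives $\|u\|_{L^{p}}\lesssim\|(-\Delta)^{k}u\|_{L^{p}}\lesssim\|D^{2k}u\|_{L^{p}}$; iterating (or doing the same convolution trick with $\tilde\varphi(\xi)(i\xi_{j})^{-1}$ summed in $j$ for odd $k$) yields the bound in terms of $\|D^{k}u\|_{L^{p}}$.

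There is no genuine obstacle here, as this is standard material; the only point requiring a bit of care is the bookkeeping of the exponent $d(1/p-1/q)$ produced by Young's inequality at $\lambda=1$, which combines correctly with the $\lambda^{-k}$ factor from the derivatives after rescaling back to general $\lambda$.
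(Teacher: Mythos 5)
The paper cites this lemma directly from Bahouri--Chemin--Danchin \cite{bahouri1} and does not reproduce a proof, so there is no ``paper proof'' to compare against; your argument is the standard one used in that reference (rescale to $\lambda=1$, realize $u$ as a convolution with a Schwartz kernel whose Fourier transform is a compactly supported bump adapted to the ball or annulus, then apply Young's inequality). The scaling bookkeeping and the upper-bound part are correct.

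There is, however, a small slip in your treatment of the reverse inequality in the annulus case. Your fallback for odd $k$ proposes the multiplier $\tilde\varphi(\xi)(i\xi_j)^{-1}$ summed over $j$, but this function is \emph{not} smooth: the hyperplane $\{\xi_j=0\}$ passes through the annulus $\{r\le|\xi|\le R\}$, so $\xi_j^{-1}$ is singular there and $\mathcal{F}^{-1}\bigl(\tilde\varphi(\xi)\xi_j^{-1}\bigr)$ is not Schwartz (indeed not even in $L^1$ in general). The correct choice is the Riesz-type symbol $\tilde\varphi(\xi)\,\xi_j/|\xi|^2$, which is $C^\infty_c$ because $|\xi|\ge r>0$ on the support of $\tilde\varphi$; since $\sum_j (\xi_j/|\xi|^2)\cdot\xi_j=1$, this yields $u=\sum_j \mathcal{F}^{-1}\bigl(\tilde\varphi\,\xi_j/|\xi|^2\bigr)*\partial_j u$ and hence $\|u\|_{L^p}\lesssim\|\nabla u\|_{L^p}$, which you then iterate $k$ times (each $\partial^\alpha u$ still has Fourier support in the same annulus). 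Likewise, your primary route via $\psi=\tilde\varphi/|\xi|^{2k}$ only gives $\|u\|_{L^p}\lesssim\|D^{2k}u\|_{L^p}$, and ``iterating'' that inequality moves in the wrong direction; to land on $\|D^k u\|_{L^p}$ directly you can instead use the multinomial identity $|\xi|^{2k}=\sum_{|\alpha|=k}\tfrac{k!}{\alpha!}\xi^{2\alpha}$ to write $\hat u=\sum_{|\alpha|=k}\tfrac{k!}{\alpha!}\bigl(\tilde\varphi(\xi)\xi^\alpha/|\xi|^{2k}\bigr)\widehat{\partial^\alpha u}$, each factor being a $C^\infty_c$ symbol, and conclude by Young. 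Apart from this, the proposal is sound.
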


Due to the Bernstein inequalities, the Besov spaces have many useful properties:
\begin{lemma}[\!\!\cite{bahouri1}]\label{lemma62}
The following properties hold{\rm:}
\begin{itemize}
\item{} For any $s\in\mathbb{R}$, $1\leq p_{1}\leq p_{2}\leq \infty$ and $1\leq r_{1}\leq r_{2}\leq \infty$, it holds
\begin{equation}\nonumber
\begin{aligned}
\dot{B}^{s}_{p_{1},r_{1}}\hookrightarrow \dot{B}^{s-d(\frac{1}{p_{1}}-\frac{1}{p_{2}})}_{p_{2},r_{2}}.
\end{aligned}
\end{equation}
\item{} For any $1\leq p\leq q\leq\infty$, we have the following chain of continuous embedding{\rm:}
\begin{equation}\nonumber
\begin{aligned}
\dot{B}^{0}_{p,1}\hookrightarrow L^{p}\hookrightarrow \dot{B}^{0}_{p,\infty}\hookrightarrow \dot{B}^{\sigma}_{q,\infty} \quad \text{for} \quad\sigma=-d(\frac{1}{p}-\frac{1}{q})<0.
\end{aligned}
\end{equation}
\item{} The following real interpolation property is satisfied for all $1\leq p\leq\infty$, $s_{1}<s_{2}$ and $\theta\in(0,1)${\rm:}
\begin{equation}
\begin{aligned}
&\|u\|_{\dot{B}^{\theta s_{1}+(1-\theta)s_{2}}_{p,1}}\lesssim \frac{1}{\theta(1-\theta)(s_{2}-s_{1})}\|u\|_{\dot{B}^{ s_{1}}_{p,\infty}}^{\theta}\|u\|_{\dot{B}^{s_{2}}_{p,\infty}}^{1-\theta}.\label{inter}
\end{aligned}
\end{equation}
\item{}
For any $\sigma\in \mathbb{R}$, the operator $\Lambda^{\sigma}$ is an isomorphism from $\dot{B}^{s}_{p,r}$ to $\dot{B}^{s-\sigma}_{p,r}$.
\item{} Let $1\leq p_{1},p_{2},r_{1},r_{2}\leq \infty$, $s_{1}\in\mathbb{R}$ and $s_{2}\in\mathbb{R}$ satisfy
    \begin{align}
    s_{2}<\frac{d}{p_{2}}\quad\text{\text{or}}\quad s_{2}=\frac{d}{p_{2}}~\text{and}~r_{2}=1.\label{banach}
    \end{align}
    The space $\dot{B}^{s_{1}}_{p_{1},r_{1}}\cap \dot{B}^{s_{2}}_{p_{2},r_{2}}$ endowed with the norm $\|\cdot \|_{\dot{B}^{s_{1}}_{p_{1},r_{1}}}+\|\cdot\|_{\dot{B}^{s_{2}}_{p_{2},r_{2}}}$ is a Banach space and satisfies weak compact and Fatou properties: If $u_{n}$ is a uniformly bounded sequence of $\dot{B}^{s_{1}}_{p_{1},r_{1}}\cap \dot{B}^{s_{2}}_{p_{2},r_{2}}$, then an element $u$ of $\dot{B}^{s_{1}}_{p_{1},r_{1}}\cap \dot{B}^{s_{2}}_{p_{2},r_{2}}$ and a subsequence $u_{n_{k}}$ exist such that
    \begin{equation}\nonumber
    \begin{aligned}
    \lim_{k\rightarrow\infty}u_{n_{k}}=u\quad\text{in}\quad\mathcal{S}'\quad\text{and}\quad\|u\|_{\dot{B}^{s_{1}}_{p_{1},r_{1}}\cap \dot{B}^{s_{2}}_{p_{2},r_{2}}}\lesssim \liminf_{n_{k}\rightarrow \infty} \|u_{n_{k}}\|_{\dot{B}^{s_{1}}_{p_{1},r_{1}}\cap \dot{B}^{s_{2}}_{p_{2},r_{2}}}.
    \end{aligned}
    \end{equation}
\end{itemize}
\end{lemma}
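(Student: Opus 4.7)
The plan is to establish the five items separately, each relying on the Bernstein inequalities of Lemma \ref{lemma61} together with elementary properties of weighted $\ell^r$ sequence spaces.

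For the embedding (item 1), I would apply the first Bernstein inequality to each dyadic block to obtain $\|\dot{\Delta}_j u\|_{L^{p_2}}\lesssim 2^{jd(1/p_1-1/p_2)}\|\dot{\Delta}_j u\|_{L^{p_1}}$, which is available because $\dot{\Delta}_j u$ is spectrally localized in an annulus of size $\sim 2^j$. Multiplying by $2^{j(s-d(1/p_1-1/p_2))}$ and invoking the monotonicity $\ell^{r_1}\hookrightarrow \ell^{r_2}$ (for $r_1\leq r_2$) on the $j$-sum yields the claim. The chain of inclusions in item 2 then assembles from the uniform $L^p$-boundedness of each $\dot{\Delta}_j$ (giving $L^p\hookrightarrow \dot{B}^0_{p,\infty}$), the triangle inequality applied to the Littlewood--Paley reconstruction $u=\sum_j\dot{\Delta}_j u$ (giving $\dot{B}^0_{p,1}\hookrightarrow L^p$), and item 1 specialized to $s=0$, $r_1=r_2=\infty$ for the last inclusion.

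For the real interpolation (item 3), I would split the dyadic sum defining $\|u\|_{\dot{B}^{\theta s_1+(1-\theta)s_2}_{p,1}}$ at an integer $N$, bounding the low-frequency part $j\leq N$ by a geometric series controlled by $2^{(s_2-s_1)(1-\theta)N}\|u\|_{\dot{B}^{s_1}_{p,\infty}}$ and the high-frequency part $j>N$ by one controlled by $2^{-(s_2-s_1)\theta N}\|u\|_{\dot{B}^{s_2}_{p,\infty}}$; optimizing $N$ produces the sharp prefactor $[\theta(1-\theta)(s_2-s_1)]^{-1}$ arising from summing the two geometric series. Item 4 is immediate from the Fourier-multiplier characterization of $\Lambda^\sigma$ together with the second Bernstein estimate $\|\dot{\Delta}_j\Lambda^\sigma u\|_{L^p}\sim 2^{j\sigma}\|\dot{\Delta}_j u\|_{L^p}$ applied termwise inside the sequence norm.

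The main obstacle is item 5, whose subtlety lies in the definition of $\dot{B}^s_{p,r}$ as a subspace of $\mathcal{S}_h'$ rather than of $\mathcal{S}'/\mathcal{P}$. The admissibility condition \eqref{banach} on $(s_2,p_2,r_2)$ is precisely what ensures that the low-frequency tail $\dot{S}_j u$ tends to $0$ in $L^\infty$ as $j\to-\infty$, so that the dyadic series $\sum_j\dot{\Delta}_j u_n$ converges unambiguously and Cauchy sequences possess limits within the space; this gives the Banach structure. The weak compactness then follows by extracting, via Banach--Alaoglu on each block (each $\dot{\Delta}_j u_n$ is uniformly bounded in $L^{p_1}\cap L^{p_2}$), a subsequence converging weakly for every $j$, and diagonalizing over $j\in\mathbb{Z}$; the resulting pointwise-in-$j$ limits reassemble into a distribution $u\in\mathcal{S}_h'$ thanks to condition \eqref{banach}. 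Finally, the Fatou-type inequality is obtained by combining the lower semi-continuity of $L^p$-norms under weak convergence with Fatou's lemma applied to the weighted $\ell^r$ sums over the dyadic index.
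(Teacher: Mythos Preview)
The paper does not prove this lemma; it is stated with a citation to \cite{bahouri1} and no argument is given. Your sketch follows the standard route found in that reference (Bernstein inequalities for items 1, 2 and 4; the geometric-series splitting argument for item 3; and the combination of condition \eqref{banach} with blockwise weak compactness and Fatou for item 5), so there is nothing to compare against and your proposal is appropriate.
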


The following Morse-type product estimates in Besov spaces play a fundamental role in our analysis of nonlinear terms:
\begin{lemma}[\!\!\cite{bahouri1}]\label{lemma63}
The following statements hold:
\begin{itemize}
\item{} Let $p,r\in[1,\infty]$ and $s>0$. Then $\dot{B}^{s}_{p,r}\cap L^{\infty}$ is a algebra and
    \begin{equation}\label{uv1}
\begin{aligned}
\|uv\|_{\dot{B}^{s}_{p,r}}\lesssim \|u\|_{L^{\infty}}\|v\|_{\dot{B}^{s}_{p,r}}+ \|v\|_{L^{\infty}}\|u\|_{\dot{B}^{s}_{p,r}}.
\end{aligned}
\end{equation}
\item{}  For $p,r\in[1,\infty]$ and $s\in(-\min\{\frac{d}{p},\frac{d(p-1)}{p}\}, \frac{d}{p}]$, there holds
\begin{equation}\label{uv2}
\begin{aligned}
&\|uv\|_{\dot{B}^{s}_{p,r}}\lesssim \|u\|_{\dot{B}^{\frac{d}{p}}_{p,1}}\|v\|_{\dot{B}^{s}_{p,r}}.
\end{aligned}
\end{equation}
\item{}  If $p\in[1,\infty]$ and $s\in(-\min\{\frac{d}{p},\frac{d(p-1)}{p}\}, \frac{d}{p}]$, then it holds 
\begin{equation}\label{uv3}
\begin{aligned}
&\|uv\|_{\dot{B}^{s}_{p,\infty}}\lesssim \|u\|_{\dot{B}^{\frac{d}{p}}_{p,1}}\|v\|_{\dot{B}^{s}_{p,\infty}}.
\end{aligned}
\end{equation}
\end{itemize}
\end{lemma}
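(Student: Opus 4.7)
\textbf{Proof proposal for Lemma \ref{lemma63}.} The plan is to use Bony's paraproduct decomposition
\begin{equation}\nonumber
uv = T_u v + T_v u + R(u,v), \qquad T_u v := \sum_{j\in\mathbb{Z}} \dot{S}_{j-1}u \,\dot{\Delta}_j v, \qquad R(u,v) := \sum_{j\in\mathbb{Z}} \dot{\Delta}_j u\, \widetilde{\dot{\Delta}}_j v,
\end{equation}
where $\widetilde{\dot{\Delta}}_j := \dot{\Delta}_{j-1} + \dot{\Delta}_j + \dot{\Delta}_{j+1}$, and then to estimate each piece after applying $\dot{\Delta}_k$ and exploiting the spectral localization of each summand: $\dot{\Delta}_k(\dot{S}_{j-1}u\, \dot{\Delta}_j v)$ vanishes unless $|k-j|\le N_0$ for a fixed integer $N_0$ (since the spectrum of $\dot{S}_{j-1}u\,\dot{\Delta}_j v$ sits in an annulus of size $\sim 2^j$), while $\dot{\Delta}_k(\dot{\Delta}_j u\, \widetilde{\dot{\Delta}}_j v)$ vanishes for $k\ge j+N_0$ (its spectrum sits in a ball of radius $\sim 2^j$).

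For the algebra estimate, I would bound the paraproduct $T_u v$ by
\begin{equation}\nonumber
\|\dot{\Delta}_k(T_u v)\|_{L^p} \lesssim \sum_{|k-j|\le N_0} \|\dot{S}_{j-1}u\|_{L^\infty} \|\dot{\Delta}_j v\|_{L^p} \lesssim \|u\|_{L^\infty} \|\dot{\Delta}_j v\|_{L^p},
\end{equation}
and after multiplying by $2^{ks}$ and taking the $\ell^r$-norm in $k$, I get $\|T_u v\|_{\dot{B}^s_{p,r}}\lesssim \|u\|_{L^\infty}\|v\|_{\dot{B}^s_{p,r}}$; $T_v u$ is treated symmetrically. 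For the remainder, using $s>0$, I write
\begin{equation}\nonumber
2^{ks}\|\dot{\Delta}_k R(u,v)\|_{L^p} \lesssim 2^{ks}\sum_{j\ge k-N_0} \|\dot{\Delta}_j u\|_{L^\infty}\|\widetilde{\dot{\Delta}}_j v\|_{L^p} \lesssim \|u\|_{L^\infty}\sum_{j\ge k-N_0} 2^{(k-j)s}\, 2^{js}\|\widetilde{\dot{\Delta}}_j v\|_{L^p},
\end{equation}
which is a convolution on $\mathbb{Z}$ of $\ell^1$ (summable since $s>0$) with $\ell^r$, yielding the required bound. This proves \eqref{uv1}.

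For \eqref{uv2}, the strategy is the same, but now I control $u$ by its $\dot{B}^{d/p}_{p,1}$-norm using $\dot{B}^{d/p}_{p,1}\hookrightarrow L^\infty$ and Bernstein: in $T_u v$, $\|\dot{S}_{j-1}u\|_{L^\infty}\lesssim \|u\|_{\dot{B}^{d/p}_{p,1}}$, giving $\|T_u v\|_{\dot{B}^s_{p,r}}\lesssim \|u\|_{\dot{B}^{d/p}_{p,1}}\|v\|_{\dot{B}^s_{p,r}}$ for any $s\in\mathbb{R}$. In $T_v u$, Bernstein gives $\|\dot{\Delta}_k(T_v u)\|_{L^p} \lesssim \sum_{|k-j|\le N_0}\|\dot{S}_{j-1}v\|_{L^p}\|\dot{\Delta}_j u\|_{L^\infty}$ with $\|\dot{S}_{j-1}v\|_{L^p}\lesssim \sum_{j'\le j-2}2^{-j's}(2^{j's}\|\dot{\Delta}_{j'}v\|_{L^p})$; the sum over $j'$ converges only if $s<d/p$ (or $s=d/p$ with appropriate summability), matching the upper endpoint $s\le d/p$ when $r=1$. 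The remainder $R(u,v)$ is the delicate part: I have
\begin{equation}\nonumber
2^{ks}\|\dot{\Delta}_k R(u,v)\|_{L^p} \lesssim 2^{ks}\sum_{j\ge k-N_0} 2^{j d/p}\|\dot{\Delta}_j u\|_{L^p}\|\dot{\Delta}_j v\|_{L^p},
\end{equation}
which I rewrite as $\sum_{j\ge k-N_0} 2^{(k-j)s}\bigl(2^{j d/p}\|\dot{\Delta}_j u\|_{L^p}\bigr)\bigl(2^{js}\|\dot{\Delta}_j v\|_{L^p}\bigr)$; this is a convolution in $\mathbb{Z}$ that converges precisely when $s>-d/p$ (so that $2^{(k-j)s}\mathbf{1}_{j\ge k-N_0}\in\ell^1$), which combined with the dual condition $s>-d(p-1)/p$ arising when we swap roles via Hölder with conjugate exponents gives the stated lower bound $s>-\min\{d/p, d(p-1)/p\}$.

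The proof of \eqref{uv3} follows the exact same template, with $r=\infty$ on both the hypothesis on $v$ and the conclusion, and the convolution inequalities still work since we use $\ell^1*\ell^\infty\hookrightarrow\ell^\infty$. The main obstacle throughout is keeping careful track of the precise ranges of $s$ coming from the remainder term in Bony's decomposition; once the paraproduct/remainder estimates are carried out with care, all three inequalities follow from classical techniques as in Bahouri–Chemin–Danchin \cite{bahouri1}, Chapter 2. Since these are standard product laws, I would largely refer to that reference and only sketch the cancellation of the endpoint cases.
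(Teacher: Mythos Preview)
The paper does not supply its own proof of this lemma; it is stated with a direct citation to Bahouri--Chemin--Danchin \cite{bahouri1} and used as a black box. Your sketch via Bony's paraproduct decomposition $uv=T_uv+T_vu+R(u,v)$, with the paraproducts controlled by the almost-orthogonality $|k-j|\le N_0$ and the remainder handled as a discrete convolution on $\mathbb{Z}$, is precisely the standard argument from that reference (Chapter~2), so your approach coincides with what the paper implicitly invokes.

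One minor remark on your treatment of $T_v u$ in \eqref{uv2}: rather than bounding $\|\dot S_{j-1}v\|_{L^p}$ (which need not be finite for $v\in\dot B^{s}_{p,r}$ with $s<0$), the clean route is $\|\dot S_{j-1}v\|_{L^\infty}\lesssim \sum_{j'\le j-2}2^{j'(d/p-s)}\bigl(2^{j's}\|\dot\Delta_{j'}v\|_{L^p}\bigr)$ via Bernstein, and then pair this with $2^{jd/p}\|\dot\Delta_j u\|_{L^p}\in\ell^1$; the geometric sum in $j'$ is what forces $s\le d/p$. Similarly, the lower bound $s>-d/p'$ (your ``dual condition'') arises in the remainder when one estimates $\|\dot\Delta_k(\dot\Delta_j u\,\widetilde{\dot\Delta}_j v)\|_{L^p}$ by first going down to $L^{p/2}$ (or $L^1$ when $p\le 2$) and using Bernstein to return to $L^p$. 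These are cosmetic refinements; your outline is correct and matches the cited source.
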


The following commutator estimates are used to control some nonlinearities in high frequencies:

\begin{lemma}[\!\!\cite{bahouri1}]\label{lemmacom}
Let $p\in[1,\infty]$ and $s\in[-\frac{d}{p}-1, \frac{d}{p}+1]$. Then it holds
\begin{align}
&\sum_{j\in\mathbb{Z}}2^{js}\|\dot{S}_{j-1}u\mathcal{A}v_{j}-\dot{\Delta}_{j}(u\mathcal{A}v)\|_{L^{p}}\lesssim\|u\|_{\dot{B}^{\frac{d}{2}+1}_{2,1}}\|v\|_{\dot{B}^{s}_{p,1}},\label{commutator}
\end{align}
where $\mathcal{A}$ is either $\nabla$ or $\div$.
\end{lemma}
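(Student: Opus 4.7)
The plan is to follow the standard paradifferential treatment of such commutators. The first step is the elementary identity
\begin{equation*}
\dot{S}_{j-1}u\,\mathcal{A}v_{j}-\dot{\Delta}_{j}(u\mathcal{A}v)=\bigl[\dot{S}_{j-1}u,\dot{\Delta}_{j}\bigr]\mathcal{A}v-\dot{\Delta}_{j}\!\bigl((\mathrm{Id}-\dot{S}_{j-1})u\cdot\mathcal{A}v\bigr),
\end{equation*}
which relies on the fact that $\mathcal{A}$ commutes with $\dot{\Delta}_j$ and cleanly separates a localized commutator from a high-frequency remainder.

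The main commutator is treated via the convolution representation $\dot{\Delta}_j f(x)=2^{jd}\!\int h(2^j(x-y))f(y)\,dy$ with $h:=\mathcal{F}^{-1}\varphi$. A first-order Taylor expansion of $\dot{S}_{j-1}u$ inside the kernel yields the pointwise bound
\begin{equation*}
\bigl|[\dot{S}_{j-1}u,\dot{\Delta}_j]g(x)\bigr|\leq C\,2^{-j}\|\nabla u\|_{L^\infty}\bigl(|\tilde h|\ast|g|\bigr)(x),\qquad \tilde h(z):=|z|\,h(z)\in L^1(\mathbb{R}^d).
\end{equation*}
Applied with $g=\mathcal{A}v$, spectral localization forces only dyadic blocks $v_k$ with $|k-j|\leq N_0$ to contribute. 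Combined with the Bernstein inequality $\|\mathcal{A}v_k\|_{L^p}\lesssim 2^k\|v_k\|_{L^p}$ and Young's inequality, this yields $\|[\dot{S}_{j-1}u,\dot{\Delta}_j]\mathcal{A}v\|_{L^p}\lesssim \|\nabla u\|_{L^\infty}\sum_{|k-j|\leq N_0}\|v_k\|_{L^p}$. Multiplying by $2^{js}$, summing in $j$, and invoking the embedding $\|\nabla u\|_{L^\infty}\lesssim \|u\|_{\dot{B}^{\frac{d}{2}+1}_{2,1}}$ produces exactly the right-hand side of the claimed estimate.

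For the remainder $\dot{\Delta}_j\!\bigl((\mathrm{Id}-\dot{S}_{j-1})u\cdot\mathcal{A}v\bigr)$, I would apply Bony's decomposition to the product and write it as a paraproduct $T_{\mathcal{A}v}(\mathrm{Id}-\dot{S}_{j-1})u$ plus a remainder $R((\mathrm{Id}-\dot{S}_{j-1})u,\mathcal{A}v)$; the third piece $T_{(\mathrm{Id}-\dot{S}_{j-1})u}\mathcal{A}v$ is killed by $\dot{\Delta}_j$ through spectral support considerations. Expanding $(\mathrm{Id}-\dot{S}_{j-1})u=\sum_{k\geq j-1}u_k$ and invoking the standard continuity of paraproducts and remainders in Besov spaces, each surviving contribution is bounded by $c_j\,2^{-js}\|u\|_{\dot{B}^{\frac{d}{2}+1}_{2,1}}\|v\|_{\dot{B}^{s}_{p,1}}$ with $(c_j)\in\ell^1$, the summability coming directly from the $\dot{B}^{\frac{d}{2}+1}_{2,1}$ norm of $u$.

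The main obstacle will be to treat the endpoints $s=\pm(\frac{d}{p}+1)$ of the allowed range, where the product and remainder laws in Besov spaces become tight. At these values the $2^{-j}$ gain from the commutator step must be precisely balanced against the top-regularity loss in the paraproduct decomposition, and the index matching between the Littlewood--Paley annuli of $u$ and those of $\mathcal{A}v$ must respect the summability in $j$ needed to reproduce the $\dot{B}^{s}_{p,1}$ norm on the right-hand side. Modulo this technical calibration, the argument is essentially parallel to Lemma~2.100 of~\cite{bahouri1}, with the additional care that the asymmetry between the $L^2$-based Besov norm of $u$ and the $L^p$-based Besov norm of $v$ requires invoking mixed Young-type inequalities for the convolution terms.
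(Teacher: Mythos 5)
The paper does not prove this lemma: it is quoted directly from Bahouri--Chemin--Danchin \cite{bahouri1} and used as a black box, so there is no ``paper's own proof'' to compare against. Your reconstruction is the standard BCD route, and most of it is sound; here is where it works and where it needs repair.

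The opening identity
$\dot{S}_{j-1}u\,\mathcal{A}v_{j}-\dot{\Delta}_{j}(u\mathcal{A}v)=[\dot{S}_{j-1}u,\dot{\Delta}_{j}]\mathcal{A}v-\dot{\Delta}_{j}\bigl((\mathrm{Id}-\dot{S}_{j-1})u\,\mathcal{A}v\bigr)$
is correct, and your treatment of the genuine commutator $[\dot{S}_{j-1}u,\dot{\Delta}_{j}]\mathcal{A}v$ by a first-order Taylor expansion of the convolution kernel, Bernstein, and the embedding $\|\nabla u\|_{L^\infty}\lesssim\|u\|_{\dot{B}^{\frac{d}{2}+1}_{2,1}}$, is accurate. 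The restriction to $|k-j|\leq N_0$ that you invoke is indeed a consequence of the fact that both $\dot{S}_{j-1}u\,\dot{\Delta}_j\mathcal{A}v_k$ and $\dot{\Delta}_j(\dot{S}_{j-1}u\,\mathcal{A}v_k)$ vanish whenever $|k-j|$ is large.

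The concrete error is the claim that $\dot{\Delta}_j T_{(\mathrm{Id}-\dot{S}_{j-1})u}\mathcal{A}v$ is ``killed by $\dot{\Delta}_j$ through spectral support considerations.'' It is not: for $k'\geq j$, the operator $\dot{S}_{k'-1}$ applied to $(\mathrm{Id}-\dot{S}_{j-1})u$ is nonzero (its symbol is $\chi(2^{-(k'-1)}\xi)(1-\chi(2^{-(j-1)}\xi))$, which is supported in the annulus $\tfrac34\,2^{j-1}\leq|\xi|\lesssim\tfrac43\,2^{k'-1}$), and the resulting contribution $\dot{S}_{k'-1}\bigl((\mathrm{Id}-\dot{S}_{j-1})u\bigr)\mathcal{A}v_{k'}$ survives $\dot{\Delta}_j$ precisely for $k'$ in a window $[j,j+N_0]$. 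What is true, and what rescues the argument, is that for such $k'$ the factor $\dot{S}_{k'-1}\bigl((\mathrm{Id}-\dot{S}_{j-1})u\bigr)$ involves only the $O(1)$ blocks $u_m$ with $j-1\leq m\leq k'-1$; you then estimate these in $L^\infty$ via Bernstein and recover the $\|u\|_{\dot{B}^{\frac{d}{2}+1}_{2,1}}$ factor. So the piece is not zero, but it is controllable after this correction.

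The remaining paraproduct $T_{\mathcal{A}v}(\mathrm{Id}-\dot{S}_{j-1})u$ and the remainder $R\bigl((\mathrm{Id}-\dot{S}_{j-1})u,\mathcal{A}v\bigr)$ are swept under ``standard continuity of paraproducts and remainders,'' and this is exactly where the range $s\in[-\tfrac dp-1,\tfrac dp+1]$ becomes binding. For the paraproduct, the Hölder pairing one uses must be $\|\dot{S}_{k-1}\mathcal{A}v\|_{L^\infty}\|u_k\|_{L^p}$ rather than $\|\dot{S}_{k-1}\mathcal{A}v\|_{L^p}\|u_k\|_{L^\infty}$: the latter yields a geometric sum $\sum_{m\le k}2^{m(1-s)}$ that diverges once $s>1$, whereas the former gives $\sum_{m\le k}2^{m(1+\frac dp-s)}$, which is controlled precisely on the stated interval $s\le 1+\frac dp$ and saturates at the upper endpoint where the full $\ell^1$ summability of $(2^{ms}\|v_m\|_{L^p})_m$ is needed. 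Symmetrically, for the remainder one must extract a factor $2^{jd/2}$ from Bernstein applied to $\dot{\Delta}_j$ to make the sum $\sum_{j\le k}2^{j(s+d/2)}$ converge, which ties the lower endpoint to $s>-\tfrac d2$ in the $L^2$/$L^p$ mixed setting; the stated lower bound $-\tfrac dp-1$ then requires an extra duality or rebalancing step that your sketch does not address. These calibration issues are exactly the ``technical obstacle'' you flag, so you are aware of them; but since they govern the very range of $s$ the lemma asserts, they cannot be left unverified, and your sketch does not yet close them.
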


We recall the classical estimates about the continuity for composition of functions:
\begin{lemma}[\!\!\cite{bahouri1,danchin2}]\label{lemma64}
Let $p,r\in[1,\infty]$, $s>0$ and $F\in C^{\infty}(\mathbb{R})$. Then for any $f\in\dot{B}^{s}_{p,r}\cap L^{\infty}$, there exists a constant $C_{f}>0$ depending only on $\|f\|_{L^{\infty}}$, $F$, $s$, $p$ and $d$ such that
\begin{equation}
\begin{aligned}
\|F(f)-F(0)\|_{\dot{B}^{s}_{p,r}}\leq C_{f}\|f\|_{\dot{B}^{s}_{p,r}}.\label{F1}
\end{aligned}
\end{equation}
In addition, if $-\frac{d}{p}<s\leq \frac{d}{p}$ and $f_{1}, f_{2}\in\dot{B}^{s}_{p,r}\cap \dot{B}^{\frac{d}{p}}_{p,1}$, then we have
\begin{equation}
\begin{aligned}
\|F(f_{1})-F(f_{2})\|_{\dot{B}^{s}_{p,1}}\leq C_{f_{1},f_{2}}(1+\|(f_{1},f_{2})\|_{\dot{B}^{\frac{d}{2}}_{2,1}})\|f_{1}-f_{2}\|_{ \dot{B}^{s}_{p,1}},\label{F3}
\end{aligned}
\end{equation}
where the constant $C_{f_{1},f_{2}}>0$ depends only on $\|(f_{1},f_{2})\|_{L^{\infty}}$, $F$, $s$, $p$ and $d$.

\end{lemma}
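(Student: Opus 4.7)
\textbf{Proof proposal for Lemma \ref{lemma64}.}

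The plan is to prove the two inequalities by reduction to the elementary cases via (i) a telescopic dyadic decomposition for the first estimate and (ii) the fundamental theorem of calculus plus the product laws of Lemma \ref{lemma63} for the second. For the first estimate, I will start from the identity
\begin{equation}\nonumber
F(f)-F(0)=\sum_{q\in\mathbb{Z}}\bigl(F(\dot{S}_{q+1}f)-F(\dot{S}_q f)\bigr)=\sum_{q\in\mathbb{Z}} m_q\,\dot{\Delta}_q f,
\qquad m_q:=\int_0^1 F'\bigl(\dot{S}_q f+t\,\dot{\Delta}_q f\bigr)\,dt,
\end{equation}
which converges in $\mathcal{S}'_h$ thanks to $\lim_{q\to-\infty}\|\dot{S}_q f\|_{L^\infty}=0$. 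Since $\|\dot{S}_q f+t\dot{\Delta}_q f\|_{L^\infty}\lesssim \|f\|_{L^\infty}$ uniformly in $q$ and $t$, the multiplier $m_q$ satisfies $\|m_q\|_{L^\infty}\le C_f$, and by Bernstein we also get $\|\nabla^k m_q\|_{L^\infty}\le C_f\,2^{qk}$ for every $k\ge 1$. I then apply $\dot{\Delta}_j$ to the above identity, split the sum into the low-frequency range $q\le j$ and the high-frequency range $q\ge j+1$, and in each case use the quasi-locality of $\dot{\Delta}_j$ (represented by convolution with a function whose $L^1$ norm is bounded independently of $j$) together with the gradient bound on $m_q$ to obtain
\begin{equation}\nonumber
\|\dot{\Delta}_j(F(f)-F(0))\|_{L^p}\lesssim C_f\sum_{q\in\mathbb{Z}} 2^{-|j-q|N}\|\dot{\Delta}_q f\|_{L^p},
\end{equation}
for $N>s$ chosen as large as needed. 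Multiplying by $2^{js}$ and taking the $\ell^r$ norm in $j$, Young's convolution inequality on $\mathbb{Z}$ yields $\|F(f)-F(0)\|_{\dot B^s_{p,r}}\le C_f\|f\|_{\dot B^s_{p,r}}$, provided $s>0$ (this is where positivity of $s$ enters: the discrete convolution requires summability of $2^{-|j|N}$ together with the fact that the tail $q\ll j$ contributes $2^{(q-j)s}$-type factors that are only summable for $s>0$).

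For the difference estimate, I would write
\begin{equation}\nonumber
F(f_1)-F(f_2)=(f_1-f_2)\,G(f_1,f_2),\qquad G(f_1,f_2):=\int_0^1 F'\bigl(\tau f_1+(1-\tau)f_2\bigr)\,d\tau,
\end{equation}
and apply the product law \eqref{uv2} in the range $-\tfrac{d}{p}<s\le \tfrac{d}{p}$, which gives
\begin{equation}\nonumber
\|F(f_1)-F(f_2)\|_{\dot B^{s}_{p,1}}\lesssim \|f_1-f_2\|_{\dot B^{s}_{p,1}}\,\|G(f_1,f_2)\|_{\dot B^{d/p}_{p,1}}.
\end{equation}
It thus remains to bound $\|G(f_1,f_2)\|_{\dot B^{d/p}_{p,1}}$. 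Noting that $G(0,0)=F'(0)$ is a constant, the reduction to the first part of the lemma is performed by writing, via the fundamental theorem of calculus in the parameter $\tau$ and again in the variables $(f_1,f_2)$,
\begin{equation}\nonumber
G(f_1,f_2)-F'(0)=\int_0^1\!\!\int_0^1 \bigl(\tau f_1+(1-\tau)f_2\bigr)\,F''\bigl(s(\tau f_1+(1-\tau)f_2)\bigr)\,ds\,d\tau,
\end{equation}
and applying the first inequality to the smooth function $s\mapsto s\,F''(s\cdot)$ evaluated on convex combinations of $f_1,f_2$; this gives $\|G(f_1,f_2)-F'(0)\|_{\dot B^{d/p}_{p,1}}\lesssim C_{f_1,f_2}\bigl(\|f_1\|_{\dot B^{d/p}_{p,1}}+\|f_2\|_{\dot B^{d/p}_{p,1}}\bigr)$, and combining with the trivial contribution from $F'(0)$ closes the argument (after inserting the factor $1$ in $1+\|(f_1,f_2)\|_{\dot B^{d/2}_{2,1}}$ to absorb the case where $f_1=f_2=0$).

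\textbf{Main obstacle.} The delicate step is the Fourier-localization argument in Part 1: $m_q$ is neither spectrally localized nor compactly supported, so one cannot treat $m_q\,\dot\Delta_q f$ as a spectrally localized block. The saving comes from the Bernstein-type derivative bounds $\|\nabla^k m_q\|_{L^\infty}\lesssim 2^{qk}$, which, by integration by parts in the convolution defining $\dot\Delta_j$, produce the decay $2^{-|j-q|N}$ needed to absorb the loss. Making this precise requires either a careful commutator argument or, more efficiently, invoking Bony's decomposition: $m_q\,\dot\Delta_q f=T_{m_q}\dot\Delta_q f+T_{\dot\Delta_q f}m_q+R(m_q,\dot\Delta_q f)$, estimating each paraproduct/remainder via the Hölder-type bounds on $m_q$ and $\nabla m_q$ obtained above. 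The second part is technically easier but hinges on choosing the right range for $s$ so that \eqref{uv2} applies; the hypothesis $-d/p<s\le d/p$ is precisely what is needed.
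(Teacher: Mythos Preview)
The paper does not supply its own proof of this lemma; it is simply cited from \cite{bahouri1,danchin2} as a standard tool in the appendix. Your outline follows essentially the classical Meyer--type argument that appears in those references (the telescopic decomposition $F(f)-F(0)=\sum_q m_q\,\dot\Delta_q f$ for Part~1 and the factorisation $F(f_1)-F(f_2)=(f_1-f_2)\int_0^1 F'(\tau f_1+(1-\tau)f_2)\,d\tau$ combined with the product law \eqref{uv2} for Part~2), so there is nothing to compare.

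One small inaccuracy worth flagging: the uniform bound $\|\dot\Delta_j(m_q\dot\Delta_q f)\|_{L^p}\lesssim 2^{-|j-q|N}\|\dot\Delta_q f\|_{L^p}$ is only available for $q\le j$ (via the spectral cut $m_q=\dot S_{j-2}m_q+(I-\dot S_{j-2})m_q$ and the derivative bound $\|\nabla^N m_q\|_{L^\infty}\lesssim 2^{qN}$). For $q>j$ one only has the crude estimate $\|\dot\Delta_j(m_q\dot\Delta_q f)\|_{L^p}\le C_f\|\dot\Delta_q f\|_{L^p}$, and it is precisely here that the hypothesis $s>0$ enters: after multiplying by $2^{js}=2^{(j-q)s}2^{qs}$, the factor $2^{(j-q)s}$ (with $j-q<0$) provides the needed summability. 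Your parenthetical remark attributes the role of $s>0$ to the tail $q\ll j$, which is the wrong side; this does not affect the validity of the argument, only the explanation. The Bony-decomposition alternative you mention at the end is a cleaner way to organise the same computation and is indeed how \cite{bahouri1} presents it.
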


In order to control the nonlinear term $H(n)$, we prove some estimates concerning the continuity of composition by quadratic functions.

\begin{lemma}\label{compositionlp} Let $J_{\var}$ be given by {\rm{(\ref{J0})}}, and $F(f)$ be any smooth function. Then  it holds that
\begin{align}
&\|F(f)-F(0)-F'(0)f\|_{\dot{B}^{s}_{2,1}}^{\ell}\leq C_{f} \|f\|_{L^{\infty}}\big{(} \|f\|_{\dot{B}^{s}_{2,1}}^{\ell}+\var^{\sigma-s}\|f\|_{\dot{B}^{\sigma}_{2,1}}^{h}\big{)},\quad\quad s>0,\quad\sigma\geq0,\label{q1}\\
&\|F(f)-F(0)-F'(0)f\|_{\dot{B}^{s}_{2,1}}^{h}\leq C_{f}\|f\|_{L^{\infty}}\big{(} \var^{\sigma-s}\|f\|_{\dot{B}^{\sigma}_{2,1}}^{\ell}+\|f\|_{\dot{B}^{s}_{2,1}}^{h}\big{)},\quad\quad s>0,\quad \sigma\in\mathbb{R},\label{q2}
\end{align}
where $C_{f}>0$ denotes a constant dependent of $\|f\|_{L^{\infty}}$, $F''$, $s$, $\sigma$ and $d$.

Furthermore, we have
\begin{align}
&\|F(f)-F(0)-F'(0)f\|_{\dot{B}^{s}_{2,\infty}}^{\ell}\leq C_{f}\|f\|_{\dot{B}^{\frac{d}{2}}_{2,1}}\big{(} \|f\|_{\dot{B}^{s}_{2,\infty}}^{\ell}+\var^{\frac{d}{2}+1-s}\|f\|_{\dot{B}^{\frac{d}{2}+1}_{2,1}}^{h}\big{)},\quad\quad s\geq-\frac{d}{2}.\label{q5}
\end{align}
\end{lemma}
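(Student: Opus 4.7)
The plan is to prove Lemma \ref{compositionlp} by combining Taylor's formula with Bony's paraproduct decomposition, together with the product and composition estimates of Lemmas \ref{lemma63}--\ref{lemma64} and the frequency-localization inequality \eqref{lh}. First, I would invoke Taylor's formula to write
$$F(f) - F(0) - F'(0)f = f^2\, G(f), \qquad G(f) := \int_0^1 (1-t)\, F''(tf)\, dt,$$
with $G$ smooth. Setting $h := f G(f)$ (which vanishes at zero) and applying Bony's decomposition to the bilinear product $f\cdot h$ gives
$$f^2 G(f) = T_f h + T_h f + R(f,h).$$

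For \eqref{q1}, I would estimate each piece using the spectral-localization property of paraproducts. The paraproduct $T_h f$ inherits its spectrum from $f$, so
$$\|T_h f\|_{\dot B^s_{2,1}}^\ell \lesssim \|h\|_{L^\infty}\, \|f\|_{\dot B^s_{2,1}}^\ell \lesssim C_f \|f\|_{L^\infty}\, \|f\|_{\dot B^s_{2,1}}^\ell,$$
which already yields the first term on the right-hand side of \eqref{q1}. The paraproduct $T_f h$ contributes $\|f\|_{L^\infty}\, \|h\|_{\dot B^s_{2,1}}^\ell$, and iterating $h = \tfrac{1}{2}F''(0) f + f^2 G_1(f)$ together with the classical composition estimate \eqref{F1} bounds this in terms of $\|f\|^\ell_{\dot B^s_{2,1}}$ plus higher-order corrections. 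The remainder $R(f,h)$ is the delicate piece: splitting $f = f^\ell + f^h$, the low--low and mixed low--high interactions are handled directly by product laws, while the high--high interactions $R(f^h, f^h G_1(f))$ are controlled by first bounding one factor pointwise via $\|\Delta_j f\|_{L^\infty} \lesssim \|f\|_{L^\infty}$ (since Littlewood--Paley blocks are bounded on $L^\infty$) and then invoking Bernstein's inequality on the surviving dyadic indices $j \geq J_\var - 1$ to extract the weight $\var^{s-\sigma}$ from $2^{j(\sigma-s)}$.

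Estimates \eqref{q2} and \eqref{q5} would follow by the same framework. For \eqref{q2}, the high-frequency projection exchanges the roles of the two paraproducts, and \eqref{F1} combined with \eqref{lh} produces the $\var^{\sigma-s}$ weight for arbitrary $\sigma \in \mathbb{R}$. For \eqref{q5}, the $\ell^1$-summability in the dyadic decomposition is replaced by $\ell^\infty$-summability, and the pointwise factor $\|f\|_{L^\infty}$ is replaced by $\|f\|_{\dot B^{d/2}_{2,1}}$ via the embedding $\dot B^{d/2}_{2,1} \hookrightarrow L^\infty$.

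The main obstacle is the treatment of the remainder $R(f,h)$ in \eqref{q1} when $\sigma < s$, since in this regime $\var^{\sigma-s}\|f\|^h_{\dot B^\sigma_{2,1}}$ is strictly sharper than $\|f\|^h_{\dot B^s_{2,1}}$ and a naive product bound does not suffice. This is precisely where the quadratic structure of $\Phi(f) = f^2 G(f)$ must be leveraged: the low-frequency projection of high--high interactions benefits from an extra Bernstein factor, as $\Delta_q$ with $q \leq J_\var$ acting on $\Delta_j f \cdot \Delta_{j'}\!\bigl(fG(f)\bigr)$ with $j,j' \geq J_\var - 1$ gains a factor $2^{d(q-j)/2}$ which, combined with $\|f\|_{L^\infty}$ on one factor, yields exactly the $\var$-weighted bound claimed.
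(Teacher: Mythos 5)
Your approach diverges from the paper's. You factor the quadratic part globally via Taylor's formula, $F(f)-F(0)-F'(0)f = f^2 G(f)$ with $G$ smooth, and then run Bony's decomposition on the product $f\cdot(fG(f))$. The paper instead uses a Meyer-type telescoping paralinearization: it expands $F(f)-F(0)-F'(0)f = \sum_{k'}\bigl(m_{1,k'}\dot S_{k'}f\,\dot\Delta_{k'}f + m_{2,k'}(\dot\Delta_{k'}f)^2\bigr)$, where the Taylor remainders $m_{i,k'}$ are computed at the level of the dyadic truncations $\dot S_{k'}f$, then estimates $\dot\Delta_k$ of each summand via a Bernstein-type bound $2^{(k'-k)q}\|f\|_{L^\infty}\|\dot\Delta_{k'}f\|_{L^2}$ and splits the sum over $(k,k')$ according to the threshold $J_\varepsilon$. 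The two routes are both classical, but they are not the same, and the paper's version pays off in a way yours does not: the multipliers $m_{i,k'}$ only ever enter through $L^\infty$ bounds on themselves and their derivatives, so no nonlinear composition of $f$ is ever measured in a Besov norm.

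That is precisely where your plan has a genuine gap. Since $G(0)=F''(0)/2\neq 0$ in general, your $h:=fG(f)$ is not purely quadratic, and the paraproduct $T_f h$ forces you to control $\|h\|^\ell_{\dot B^s_{2,1}}$ (and in \eqref{q2} also $\|h\|^h_{\dot B^\sigma_{2,1}}$) in terms of the \emph{frequency-split} norms of $f$, not the full norm. You propose to iterate $h=\tfrac12 F''(0)f+f^2G_1(f)$ and invoke the classical composition estimate \eqref{F1}, but \eqref{F1} returns the full norm $\|f\|_{\dot B^s_{2,1}}$; splitting this as $\|f\|^\ell_{\dot B^s_{2,1}}+\|f\|^h_{\dot B^s_{2,1}}$ is not enough when $\sigma<s$, because the required high-frequency term in \eqref{q1} is $\varepsilon^{\sigma-s}\|f\|^h_{\dot B^\sigma_{2,1}}$, which by \eqref{lh} is strictly \emph{smaller} than $\|f\|^h_{\dot B^s_{2,1}}$. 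You correctly flag this difficulty for the remainder $R(f,h)$, but the same issue bites $T_f h$, and addressing it requires re-running the same $\varepsilon$-weighted low/high bookkeeping at every level of the $G$-iteration and closing a series, none of which you spell out. Two smaller slips: the weight you say you extract is $\varepsilon^{s-\sigma}$ but the one needed is $\varepsilon^{\sigma-s}$; and for \eqref{q5} you say the $L^\infty$ factor is ``replaced by $\|f\|_{\dot B^{d/2}_{2,1}}$ via the embedding,'' but for $s\le 0$ the embedding $\dot B^{d/2}_{2,1}\hookrightarrow L^\infty$ gives nothing useful here — what is really needed is the product law \eqref{uv3} at negative regularity, as the paper uses. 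In short: your decomposition is a legitimate alternative in principle, but the iterated-composition step is not a routine application of the cited lemmas, and the paper's telescoping-sum formulation is chosen exactly to sidestep it.
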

\begin{proof}
 First, we adapt a similar proof as in \cite{chen1} to prove (\ref{q1}). Using Taylor formula at order 2 yields
\begin{equation}\label{com1}
\begin{aligned}
&F(f)-F(0)-F'(0)f=\sum_{k'\in\mathbb{Z}} (F(\dot{S}_{k'+1}f)-F(\dot{S}_{k'}f))-F'(0)f\\
&\quad\quad\quad\quad\quad\quad\quad\quad\quad~~=\sum_{k'\in\mathbb{Z}} \big{(} m_{1,k'} \dot{S}_{k'}f\dot{\Delta}_{k'}f+m_{2,k'}(\dot{\Delta}_{k'}f)^2\big{)},
\end{aligned}
\end{equation}
with 
\begin{equation}\nonumber
\begin{aligned}
m_{1,k'}:=\int_{0}^{1}\int_{0}^{1} F''(\theta (\dot{S}_{k'}f+\tau\dot{\Delta}_{k'}f))d\tau d\theta,\quad m_{2,k'}:=\int_{0}^{1}\int_{0}^{1} F''(\theta (\dot{S}_{k'}f+\tau\dot{\Delta}_{k'}f)) \tau d\tau d\theta.
\end{aligned}
\end{equation}
Owing to the Bernstein lemma, we deduce for any $q\geq0$ that
\begin{equation}\label{m1aa}
\begin{aligned}
&\|\dot{\Delta}_{k}(m_{1,k'} \dot{S}_{k'} f\dot{\Delta}_{k'}f)\|_{L^2}\leq C(1+\|f\|_{L^{\infty}})^{q}\|f\|_{L^{\infty}}2^{(k'-k)q}\|\dot{\Delta}_{k'}f\|_{L^{2}}.
\end{aligned}
\end{equation}
It holds by (\ref{m1aa}) that
\begin{equation}\nonumber
\begin{aligned}
&\sum_{k\leq J_{\var},k'\in\mathbb{Z}} 2^{ks} \|\dot{\Delta}_{k}(m_{1,k'} \dot{S}_{k'}f \dot{\Delta}_{k'} f)\|_{L^{2}}\\
&\quad\leq  \Big(\sum_{k'\leq k\leq J_{\var}} +\sum_{k\leq k'\leq J_{\var}}+\sum_{k\leq J_{\var}\leq k'}  \Big) 2^{ks} \|\dot{\Delta}_{k}(m_{1,k'} \dot{S}_{k'}f \dot{\Delta}_{k'} f)\|_{L^{2}}\\
&\quad\leq C_{f} \|f\|_{L^{\infty}}\Big( \sum_{k'\leq J_{\var}} 2^{k's}\|\dot{\Delta}_{k'} f\|_{L^{2}}\sum_{k'\geq k} 2^{(k-k')s}+\sum_{k'\leq J_{\var}} 2^{k's} \|\dot{\Delta}_{k'}f\|_{L^{2}}\sum_{k'\leq k}2^{(k-k')(s-[s]-1)}\\
&\quad\quad+\sum_{k\leq J_{\var}} 2^{ks} \sum_{k'\geq J_{\var}}2^{-k'\sigma}2^{k'\sigma}\|\dot{\Delta}_{k'}f\|_{L^{2}} \Big)\\
&\quad\leq C_{f}\|f\|_{L^{\infty}}(\|f\|_{\dot{B}^{s}_{2,1}}^{\ell}+\var^{\sigma-s}\|f\|_{\dot{B}^{\sigma}_{2,1}}^{h}).
\end{aligned}
\end{equation}
The part of $m_{2,k'}$ can be controlled similarly. Thus, (\ref{q1}) holds.

We turn to the proof of \eqref{q2}. Similarly, we begin with the decomposition \eqref{q5}. One obtains by  (\ref{m1aa}) that
\begin{equation}\label{m1ab1}
\begin{aligned}
& \Big(\sum_{J_{\var}-1\leq k\leq k'}  +\sum_{J_{\var}-1\leq  k'\leq k}  \Big) 2^{ks} \|\dot{\Delta}_{k}(m_{1,k'} \dot{S}_{k'}f \dot{\Delta}_{k'} f)\|_{L^{2}}\\
&\quad\leq C_{f}\|f\|_{L^{\infty}}\Big( \sum_{k'\geq J_{\var}-1} 2^{k's}\|\dot{\Delta}_{k'}f\|_{L^2}\sum_{k'\geq k} 2^{(k-k')s}\\
&\quad\quad+\sum_{k'\geq J_{\var}-1} 2^{k's}\|\dot{\Delta}_{k'}f\|_{L^2}\sum_{k'\leq k} 2^{(k-k')(s-[s]-1)} \Big)\\
&\quad\leq C_{f}\|f\|_{L^{\infty}}\|f\|_{\dot{B}^{s}_{2,1}}^{h}.
\end{aligned}
\end{equation}
Concerning the summation on $\{(k,k')~|~ k'\leq J_{\var}-1\leq k\}$, we derive
\begin{equation}\label{m1ab3}
\begin{aligned}
&\sum_{ k'\leq J_{\var}-1\leq k} 2^{ks} \|\dot{\Delta}_{k}(m_{1,k'} \dot{S}_{k'}f \dot{\Delta}_{k'} f)\|_{L^{2}}\\
&\quad\leq C_{f}\|f\|_{L^{\infty}}\sum_{k'\leq J_{\var}-1}2^{k'(s-\sigma)} 2^{k'\sigma}\|\dot{\Delta}_{k'}f\|_{L^{2}}  \sum_{k'\leq k}2^{(k-k')(s-[s]-1) }\\
&\quad \leq C_{f}\var^{\sigma-s} \|f\|_{L^{\infty}} \|f\|_{\dot{B}^{\sigma}_{2,1}}^{\ell},
\end{aligned}
\end{equation}
provided $s\geq \sigma$. For $0<s<\sigma$, it also holds that
\begin{equation}\label{m1ab4}
\begin{aligned}
&\sum_{ k'\leq J_{\var}-1\leq k} 2^{ks} \|\dot{\Delta}_{k}(m_{1,k'} \dot{S}_{k'}f \dot{\Delta}_{k'} f)\|_{L^{2}}\\
&\quad\leq 2^{(J_{\var}-1)(s-\sigma)} \sum_{ k'\leq J_{\var}-1\leq k} 2^{k\sigma} \|\dot{\Delta}_{k}(m_{1,k'} \dot{S}_{k'}f \dot{\Delta}_{k'} f)\|_{L^{2}}\\
&\quad\leq C_{f}\var^{\sigma-s} \|f\|_{L^{\infty}} \|f\|_{\dot{B}^{\sigma}_{2,1}}^{\ell}.
\end{aligned}
\end{equation}
Due to (\ref{m1ab1})-(\ref{m1ab4}), one has
\begin{equation}\nonumber
\begin{aligned}
\sum_{k\geq J_{\var}-1,k'\in\mathbb{Z}} 2^{ks} \|\dot{\Delta}_{k}(m_{1,k'} \dot{S}_{k'}f \dot{\Delta}_{k'} f)\|_{L^{2}}\leq C_{f}\|f\|_{L^{\infty}}(\var^{\sigma-s}\|f\|_{\dot{B}^{s}_{2,1}}^{\ell}+\|f\|_{\dot{B}^{\sigma}_{2,1}}^{h}).
\end{aligned}
\end{equation}
Since the the second part of (\ref{com1}) can be estimated similarly, (\ref{q2}) follows.

We are left with proving (\ref{q5}) for any $s\geq-\frac{d}{2}$. The case of $s>0$ can be derived by similarly arguments as used in (\ref{q1}) and the embedding $\dot{B}^{\frac{d}{2}}_{2,1}\hookrightarrow L^{\infty}$. For $-\frac{d}{2}\leq s\leq 0$, one concludes from (\ref{hl}), (\ref{uv3}), (\ref{F1}), the Bernstein inequality and the fact $F(f)-F(0)-F'(0)f=\tilde{F}(f)f$ for some smooth function $\tilde{F}(f)$ vanishing at $0$ that
\begin{equation}\nonumber
\begin{aligned}
&\|F(f)-F(0)-F'(0)f\|_{\dot{B}^{s}_{2,\infty}}^{\ell}\leq\|\tilde{F}(f)\|_{\dot{B}^{\frac{d}{2}}_{2,1}}\|f\|_{\dot{B}^{s}_{2,\infty}}\leq C_{f} \|f\|_{\dot{B}^{\frac{d}{2}}_{2,1}}\big{(} \|f\|_{\dot{B}^{s}_{2,\infty}}^{\ell}+\var^{\frac{d}{2}+1-s}\|f\|_{\dot{B}^{\frac{d}{2}+1}_{2,1}}^{h}\big{)}.
\end{aligned}
\end{equation}
The proof of Lemma \ref{compositionlp} is complete.
\end{proof}

\vspace{2ex}

\noindent
\textbf{Acknowledgments} The authors are indebted to the anonymous referees for their valuable comments on the manuscript. The authors would like to thank Prof. Rapha${\rm\ddot{e}}$l Danchin and Prof. Hai-Liang Li for their helpful suggestions. The first author is partially  supported  by the ANR project INFAMIE (ANR-15-CE40-0011) and by the European Research Council (ERC) under the European Union's Horizon 2020 research and innovation programme (grant agreement NO: 694126-DyCon). Q. He and L.-Y. Shou are  partially supported by the National Natural Science Foundation of China (No.11931010, 12226327 and 12226326).

\end{document}